\newtheorem{theorem}{Theorem}[section]
\newtheorem{lemma}[theorem]{Lemma}
\newtheorem{proposition}[theorem]{Proposition}
\newtheorem{corollary}[theorem]{Corollary}
\newtheorem{definition}[theorem]{Definition}
\newtheorem{remark}[theorem]{Remark}
\newtheorem{thm}[theorem]{Theorem}
\newtheorem{lem}[theorem]{Lemma}
\newtheorem{prop}[theorem]{Proposition}
\newtheorem{cor}[theorem]{Corollary}
\newtheorem{rem}[theorem]{Remark}
\newcounter{as}[section]
\newcommand{\mc}[1]{{\mathcal #1}}
\newcommand{\mf}[1]{{\mathfrak #1}}
\newcommand{\mb}[1]{{\mathbf #1}}
\newcommand{\bb}[1]{{\mathbb #1}}
\newcommand{\ms}[1]{{\mathscr #1}}
\newcommand{\<}{\langle}
\renewcommand{\>}{\rangle}
\renewcommand{\Cap}{{\rm cap}}
\definecolor{bblue}{rgb}{.2,0.2,.8}
\begin{document}

\title{A Resolvent Approach to Metastability}

\begin{abstract}
We provide a necessary and sufficient condition for the metastability
of a Markov chain, expressed in terms of a property of the solutions
of the resolvent equation. As an application of this result, we prove
the metastability of reversible, critical zero-range processes
starting from a configuration.
\end{abstract}

\subjclass[2000]{Primary 60K35 60K40 60J35; Secondary 60F99 60J45.}

\author{C. Landim, D. Marcondes, I. Seo}

\allowdisplaybreaks
\address{IMPA, Estrada Dona Castorina 110, CEP 22460 Rio de Janeiro, Brasil
and CNRS UMR 6085, Universit\'e de Rouen, France. \\
 e-mail: \texttt{landim@impa.br} }
\address{Institute of Mathematics and Statistics, Universidade de São Paulo,
Brazil. \\
 e-mail: \texttt{dmarcondes@ime.usp.br}}
\address{Department of Mathematical Sciences, Seoul National University and
Research Institute of Mathematics, Republic of Korea. \\
 e-mail: \texttt{insuk.seo@snu.ac.kr} }

\maketitle
\section{Introduction}
\label{sec1}

Metastability is a physical phenomenon ubiquitous in first order phase
transitions. A tentative of a precise description can be traced back,
at least, to Maxwell \cite{M75}.

In the mid-1980s, Cassandro, Galves, Olivieri and Vares \cite{CGOV84},
in the sequel of Lebowitz and Penrose \cite{LP71}, proposed a first
rigorous method for deducing the metastable behavior of Markov
processes, based on the theory of large deviations developed by
Freidlin and Wentsel \cite{fw98}. This method, known as the {\it
  pathwise approach to metastability}, was successfully applied to
many models in statistical mechanics \cite{OV05}.

In the following years, different approaches were put forward. In the
beginning of the century, Bovier, Eckhoff, Gayrard and Klein
\cite{BEGK02, BEGK04, BEGK05}, replaced the large deviations tools
with potential theory to derive sharp estimates for the transition
times between wells, the so-called Eyring-Kramers law. We refer to
\cite{BH15} for a comprehensive review of this method, known as the {\it
  potential theoretic approach to metastability}.

More recently, Beltr\'an and Landim \cite{BL1, BL2} characterized the
metastable behavior of a process as the convergence of the order
process, a coarse-grained projection of the dynamics, to a Markov
chain. Inspired by \cite{BEGK04} and based on the martingale
characterization of Markov processes, they provided different sets of
sufficient conditions for metastability.  We refer to \cite{l-review}
for a review of the {\it martingale approach to metastability}.

In this article, we show that the metastable behavior of a sequence of
Markov chains can be read from a property of the solutions of the
resolvent equation associated to the generator of the process.  It
turns out that this property is not only sufficient, but also {\it
necessary} for metastability. This is the content of Theorem
\ref{1t_main2}.

As these conditions for metastability do not rely on the explicit
knowledge of the stationary state, they can, in principle, be
employed to derive the metastable behavior of non-reversible dynamics
whose stationary states are not known.

To emphasize the strength of our method, we show that the necessary
and sufficient conditions for metastability can be derived from the
ones introduced in \cite{BL1, BL2}, which have been proved to hold for
all models whose metastable behavior has been derived through the
potential theoretic method \cite{BEGK04} or the martingale method
\cite{BL1, BL2}.  Moreover, the recent articles \cite{Lee-Seo, lls23a,
lls23b} successfully apply the approach introduced here to
non-reversible overdamped Langevin dynamics.

We further illustrate the extent of possible applications by proving
that the conditions for metastability required in this article hold
for a dynamics with poor mixing properties: reversible condensing
critical zero-range processes. This is a model which does not satisfy
the conditions in \cite{BL1}, and whose metastable behavior could only
be derived so far when the process starts from measures spread over a
well \cite{LMS}. This new approach permits to extend this result to
reversible dynamics in which the process starts from a configuration.

We leave for the future the investigation of metastability of critical
asymmetric zero-range processes. For this model the mixing condition
$\mf M$, introduced in Subsection \ref{sec61}, is very delicate in
that the mixing time is slightly smaller than the escape time. In the
reversible situation considered here, we verify condition $\mf M$
through a careful construction of a sub-harmonic function. It seems
difficult to extend this construction to the non-reversible
case. Beyond condition $\mf M$, all other steps are identical to the
reversible case.

  \subsection*{Recent advancements}

  Before providing a more detailed statement of the main results, we
  review recent progress in the theory of metastability.

  Markov Chain Monte Carlo algorithms have been widely
  used in order to sample from a given Gibbs measure. Their efficiency
  is expressed by the speed of convergence to equilibrium. It has been
  shown in several different contexts that nonreversible dynamics
  converge faster to equilibrium than their reversible counter
  part. This is derived by Kaiser, Jack and Zimmer \cite{kjz17} for
  the large deviations from the hydrodynamic limit of interacting
  particle systems described by the Macroscopic Fluctuation
  Theory. Bouchet and Reygner \cite{br} show that the transition time
  between two wells in overdamped Langevin dynamics is faster in the
  nonreversible case. A similar result appears in \cite{LS1} for
  random walks in potential fields.

  These results raise the problem of finding the non-reversible
  perturbation of a reversible dynamics that does not alter the
  invariant distribution and optimizes the rate of convergence.
  Leli\`evre, Nier and Pavliotis \cite{lnp13} solve this problem for
  overdamped Langevin equations with quadratic potential.  Guillin and
  Monmarch\'e \cite{gm16} show that the asymptotic rate of convergence
  of generalized Ornstein-Uhlenbeck processes is maximized by
  non-reversible hypoelliptic ones.

There are only a few other results on metastability for nonreversible
dynamics.  Le Peutrec and Michel \cite{lm20} obtain by semiclassical
analysis the Eyring-Kramers formula for the exponentially small
eigenvalues of the generator of a nonreversible overdamped Langevin
dynamics associated to a potential which is a Morse function satisfying
additional regularity properties.

In the last years, the close connection between quasi-stationary
states and exponential exit laws have been exploited in many different
directions. Bianchi, Gaudilli\`ere and Milanesi \cite{BG16, BGM18}
expressed the mean transition time in terms of soft capacities and
derived sufficient conditions for metastability in terms of local and
global mixing characteristics of the dynamics. Miclo \cite{M20}
provided an estimate on the distance between the exit time of a set
and an exponential law.  Di Ges\`u, Leli\`evre, Le Peutrec and Nectoux
\cite{GLLN19, LLN20} investigated the distribution of the exit point
from a domain. Berglund \cite{Berg} reviews analytical methods to
derive metastability.   Di Ges\'u \cite{G21} derived,
recently, the Eyring-Kramers formula for the exponentially small
eigenvalues of the generator of reversible discrete diffusions with
semiclassical analysis, an expansion obtained in \cite{LMT, LS1} by
stochastic methods.

We turn to a precise description of the results.

\smallskip\noindent \textbf{The model.}  Consider a sequence of
countable sets $\mc{H}_{N}$, $N\in \bb{N}$, and a collection of
$\mc{H}_{N}$-valued, irreducible, continuous-time Markov chains
$(\xi_{N}(t):t\ge0)$.  To fix ideas, one may think that the sets
$\mc{H}_{N}$ are finite with cardinality increasing to infinity, but
this is not necessary.

Let $S$ be a fixed finite set and $\Psi_{N}:\mc{H}_{N}\to S$ a
projection in the sense that the cardinality of $S$ is much smaller
than that of $\mc{H}_{N}$. Elements of $\mc{H}_{N}$ are represented by
Greek letters $\eta$, $\xi$, while the ones of $S$ by $x$, $y$. The
problem we address is under what conditions the \textit{order process}
$(Y_N (t): t\ge 0)$, defined by $Y_{N}(t)=\Psi_{N}(\xi_{N}(t))$, is
close to a Markovian dynamics which mimics the dynamics of
$\xi_{N}(t)$.

Denote by $\mc{E}_{N}^{x}$ the inverse image of $x\in S$ by
$\Psi_{N}$, $\mc{E}_{N}^{x}=\Psi_{N}^{-1}(x)$, and by $\ms{L}_{N}$ the
generator of the Markov chain $\xi_{N}(t)$. The sets $\mc{E}_N^x$ are
called \textit{wells}.  The following condition plays a central role
in the article.

\smallskip\noindent \textbf{Resolvent condition.}  Fix a function
$g\colon S \to \bb R$, and denote by $G_N\colon \mc H_N\to \bb R$ its
lifting: $G_N (\eta) = \sum_{x\in S} g(x)\, \chi_{\mc E^x_N} (\eta)$,
where $\chi_{\mc A}$, $A\subset\mc H_N$, stands for the indicator of
the set $\mc A$. For $\lambda>0$, denote by $F_{N}$ the solution of
the resolvent equation
\begin{equation}
\label{f01}
(\,\lambda\,-\,\ms{L}_{N}\,)\,F_{N}\;=\; G_N\;.
\end{equation}

Assume that for each $\lambda>0$, $F_N$ is asymptotically constant on
each set $\mc E^x_N$: there exists a function $f:S\to \bb R$ such that
\begin{equation}
\label{f02}
\lim_{N\to\infty}\max_{x\in S}\sup_{\eta\in\mc{E}_{N}^{x}}
\,\big|\,F_{N}(\eta)\,-\,f(x)\,\big|\,\;=\;0\;.
\end{equation}
Of course, $f$ depends on $\lambda$ and on $g$.

Assume, furthermore, that there exists a generator $\ms{L}$ of an
$S$-valued continuous-time Markov chain such that
\begin{equation}
\label{f02b}
(\,\lambda\,-\,\ms{L} \,)\, f\;=\; g\;.
\end{equation}
for all $\lambda>0$, $g:S\to\bb{R}$.

We claim that, under the resolvent conditions \eqref{f02},
\eqref{f02b}, any limit point of the sequence of processes
$Y_{N}(\cdot)=\Psi_{N}(\xi_{N}(\cdot))$ is the law of the
continuous-time Markov chain whose generator is $\ms{L}$.  The
proof of this claim is so simple that we present it below. It relies
on the martingale characterization of Markovian dynamics.

Assume that the sequence $Y_{N}(\cdot)$ converges in law. Fix
$\lambda>0$ and a function $f:S\to\bb{R}$. Denote by $F_{N}$ the
solution of the resolvent equation \eqref{f01} with
$g =(\,\lambda\,-\,\ms{L} \,)\, f$. Since $\xi_{N}(\cdot)$
is a Markov process,
\begin{equation}
M_{N}(t)\;=\; e^{-\lambda t}\, F_{N}(\xi_{N}(t))\,
-\,{F}_{N}(\xi_{N}(0))
\,+\,\int_{0}^{t}e^{-\lambda r}\,
[\, \,( \lambda \,-\, {\ms{L}}_{N}) \,{F}_{N}\,]
(\xi_{N}(r))\,dr
\label{07}
\end{equation}
is a martingale. As $F_{N}$ solves the resolvent equation \eqref{f01},
$\lambda F_{N} - \ms{L}_{N}\,{F}_{N}=G_{N}$. By \eqref{f02},
$F_{N}(\eta)$ is close to $f(\Psi_{N}(\eta))$. Hence, since
$\Psi_{N}(\xi_{N}(t))=Y_{N}(t)$ and $G_N(\eta) = g(\Psi_{N}(\eta))$, we can write
\begin{equation*}
M_{N}(t)\;=\;e^{-\lambda t}\, f(Y_{N}(t))\,-\,f(Y_{N}(0))\,
+\,\int_{0}^{t}\,e^{-\lambda r}\, g(Y_{N}(r))
\,dr\;+\;o_{N}(1)\;,
\end{equation*}
where $o_{N}(1)$ is a small error which vanishes uniformly as
$N\to\infty$.  As $g =(\lambda -\ms{L})f$,
\begin{equation*}
M_{N}(t)\;=\;e^{-\lambda t}\, f(Y_{N}(t)) \,-\,f(Y_{N}(0))
\,+\,\int_{0}^{t} \, e^{-\lambda r}\,
[\, (\lambda \,-\, \ms{L})\, f\,]\,
(Y_{N}(r))\,dr\;+\;o_{N}(1)\;.
\end{equation*}
Passing to the limit yields that any limit point solves the martingale
problem associated to the generator $\ms{L}$. To complete the argument
it remains to recall the uniqueness of solutions of martingale
problems in finite state spaces.

\smallskip
\noindent{\bf The resolvent condition is also necessary.}
The previous approach provides a general method to describe a complex
system, a Markovian dynamics evolving in a large space $\mc{H}_{N}$,
in terms of a much simpler one, an $S$-valued Markov chain. This
abridgement has been named \textit{Markov chain model reduction} or
\textit{metastability}, see \cite{l-review} and references therein.

The point here is that the existence of this synthetic description of
the dynamics can be read from a simple property of the generator.  It
is in force if the resolvent operator
$\ms{U}_{\lambda,N}:=(\lambda-\ms{L}_{N})^{-1}$ sends functions which
are constant on the sets $\mc{E}_{N}^{x}$ to functions which are
asymptotically constant.

The second main point of the article is that conditions \eqref{f02},
\eqref{f02b} are not only sufficient for the convergence of the order
process $Y_N(\cdot)$, but also necessary.

\smallskip
\noindent{\bf Applications.}
The last claim of the article is that this method to derive the
metastable behavior, in the sense of the model reduction described
above, of a sequence of Markov processes can be applied to a wide
range of dynamics. We support this assertion by providing sufficient
conditions for assumptions \eqref{f02}, \eqref{f02b} to hold. These
conditions rely on mixing properties of the dynamics and have been
derived in several different contexts in previous papers.
Furthermore, in the last part of the article, we show that these
conditions are in force for reversible, critical zero-range
dynamics. In particular, we are able to extend the results presented
in \cite{LMS} to the case in which the process starts from a
configuration instead of a measure spread over a well
$\mc{E}_{N}^{x}$.

\smallskip
\noindent{\bf Comments.}
In concrete examples, one has first to find the time-scale $\theta_N$
at which a metastable behavior is observed. Then, one speed-up the
evolution by this quantity and prove all properties of the dynamics in
this new time-scale. Speeding-up the process by $\theta_N$ corresponds
to multiplying the generator by the time-scale $\theta_N$. In the
previous discussion we started from a generator which has already been
speeded-up, which means that the metastable behavior is observed in
the time-scale $\theta_N=1$.

This approach to metastability, inspired from techniques in PDE to
study the asymptotic behavior of solutions of reaction-diffusion
equations \cite{ET, ST}, appeared in the context of Markov processes
in \cite{OR, LS3, RS}.  In these articles, for different models, it is
proved that the solutions of the Poisson equation
$\ms{L}_{N}F_{N}= G_N$ are asymptotically constant in each well.

Replacing the Poisson equation with resolvent equations has a significant
advantage, as the solutions of the later equation are bounded. It
permits, in particular, to prove $L^{\infty}$ estimates instead of
the $L^{2}$ estimates derived in \cite{LMS}. This, in turn, allows
to start the process from a fixed configuration instead of a measure
spread over the sets $\mc{E}_{N}^{x}$.

The existing methods to derive the metastable behavior of a Markov
processes rely on explicit computations involving the stationary state
\cite{BH15, l-review}. In contrast, as already pointed out at the
beginning of this introduction, the deduction of \eqref{f02} and
\eqref{f02b} does not appeal to the stationary state.

\smallskip
\noindent{\bf Introducing a transition region.}
Condition \eqref{f02} is expected to hold only in very special cases,
where the jump rates between configurations belonging to different
sets $\mc{E}_{N}^{x}$ vanish asymptotically. Only in such a case, one
can hope for a discontinuity of the solution of the resolvent equation
\eqref{f01} at the boundary of the set $\mc{E}_{N}^{x}$ [an aftermath
of condition \eqref{f02b}].

To surmount this problem, we introduce a transition set $\Delta_{N}$
which separates the wells $\mc{E}_{N}^{x}$. The set $\Delta_{N}$ has
to be sufficiently large to isolate the wells, but small enough to be
irrelevant from the point of view of the dynamics.

In this new set-up, $\Delta_{N}$, $\mc{E}_{N}$ forms a partition of
the state space $\mc{H}_{N}$, where
$\mc{E}_{N}=\cup_{x\in S}\mc{E}_{N}^{x}$.  To bypass the set
$\Delta_{N}$, we focus our attention on the trace of process
$\xi_{N}(\cdot)$ on $\mc{E}_{N}$ and provide sufficient conditions for
the projection of the trace process to converge to a Markovian
dynamics. This result requires conditions \eqref{f02}, \eqref{f02b}
to hold only on the set $\mc{E}_{N}$, as stated in the first equation.

Furthermore, in this framework, Theorem \ref{1t_main2} asserts that
the resolvent conditions \eqref{f02}, \eqref{f02b} hold if, and only
if, (a) the order process converges to the $S$-valued Markov chain
whose generator is $\mc L$ and (b) the process $\xi_N(\cdot)$ spends
only a negligible amount of time outside the wells $\mc E^x_N$.

\smallskip
\noindent{\bf Critical zero-range processes.}
As mentioned above, this approach is applied to a special class of
zero-range processes. This Markovian dynamics describes the evolution
of particles on a finite set $S$. Denote by $N\ge1$ the total number
of particles and by $\eta=(\eta_{x}:x\in S)$ a configuration of
particles.  Here, $\eta_{x}$ represents the number of particles at
site $x$ for the configuration $\eta$. Let
$\mc{H}_{N}=\{\eta\in\bb{N}^{S}:\sum_{x\in S}\eta_{x}=N\}$ be the
state space.

Particles jump on $S$ according to some rates which will be specified
in the next section. It has been shown that a condensation phenomenon
occurs for this family of rates. The precise statement requires some
notation. Fix a sequence $(\ell_{N}:N\ge1)$ such that $\ell_{N}\to\infty$,
$\ell_{N}/N\to0$. Denote by $\mc{E}_{N}^{x}$, $x\in S$, the
set of configurations given by
\begin{equation*}
\mc{E}_{N}^{x}\;=\;\big\{\eta\in\mc{H}_{N}:\eta_{x}\,\ge\,N-\ell_{N}\}\;.
\end{equation*}
For the models alluded to above, $\mu_{N}(\mc{E}_{N}^{x}) \to 1/|S|$,
where $\mu_{N}$ represents the stationary state of the dynamics
\cite{ev1, jmp, gss, BL3, al, al2, AGL1}.

This means that under the stationary state, essentially all particles
sit on a single site. In consequence, in terms of the dynamics, one
expects the zero-range process to evolve as follows.  When it reaches
a set $\mc{E}_{N}^{x}$, it remains there a very long time, performing
short excursions in $\Delta_{N}$. Its sojourn at $\mc{E}_{N}^{x}$
before it hits a new well $\mc{E}_{N}^{y}$, $y\not = x$, is long
enough for the process to equilibrate inside the well
$\mc{E}_{N}^{x}$.  The transition from $\mc{E}_{N}^{x}$ to a new well
$\mc{E}_{N}^{y}$ is abrupt in the sense that its duration is much
shorter compared to the total time the process stayed in
$\mc{E}_{N}^{x}$.

We apply the method presented at the beginning of this introduction
to derive the asymptotic evolution of the position of the condensate
(the site $x$ where almost all particles sit) for critical, reversible
zero-range dynamics.

The metastable behavior of condensing zero-range processes has a long
history \cite{BL3, Lan, AGL, Seo, OR, LMS}. The critical case,
examined here and in \cite{LMS}, presents a major difference with
respect to the super-critical case considered before. While in the
super-critical case, when entering a well, the process visits all its
configurations before visiting a new well, this is no longer true in
the critical case. This difference prevents the use of the martingale
approach, proposed in \cite{BL1,BL2}, to prove the metastable behavior
of a sequence of Markov chains.

To overcome this problem, we show that in the critical case, when
entering a well, the process hits the bottom of this well before
reaching another well. The proof of this result relies on the
super-harmonic functions constructed in \cite{LMS} and on mixing
properties of the process reflected at the boundary of the wells. The
fact that the process visits one specific configuration inside the
well permits to prove its metastable behavior starting from any
configuration inside a well.

Adding together the property that the process hits quickly the bottom
of a well and that it mixes inside the well before it reaches its
boundary permits to prove that the solution of the resolvent equation
fulfills \eqref{f02}. The proof of property \eqref{f02b} relies also
on a computation of capacities between wells. Details are given in
Sections \ref{sec5}--\ref{sec9}.

To our knowledge, this is the first model which does not visit points
and for which one can prove metastability starting from points and
derive explicit formulae for the time-scale at which metastability
occurs and for the generator $\ms{L}$ of the asymptotic
dynamics.

  Along the same lines, Schlichting and Slowik \cite{ss19} extended
  the investigation of metastability to continuous-time Markov chains
  which do not hit single points. They derived asymptotic sharp
  estimates for mean hitting times by generalizing the potential
  theoretic approach to deal with metastable sets, instead of just
  metastable points. This technique has been applied by Bovier, den
  Hollander, Marello, Pulvirenti and Slowik \cite{bhmps22} to
  inhomogeneous mean-field models.

\smallskip\noindent{\bf Directions for future research.}  As observed
above, it is conceivable to derive properties \eqref{f02},
\eqref{f02b} without turning to the stationary state.  In particular,
this approach might permit to deduce the metastable behavior of
non-reversible dynamics for which the stationary measure is not known
explicitly (say, non-reversible diffusions \cite{br}). Furthermore, proving
properties \eqref{f02} and \eqref{f02b} for a generator $\ms L_N$
becomes an interesting problem since they yield (modulo a third
property) the metastable behavior of the associated Markovian
dynamics.

% add to LMS1:
% We conclude this introduction with an interesting open problem. It
% consists in describing the nucleation phase of the critical zero-range
% process. Place an even number of particles at each site of $S$ and
% let the process evolve on the diffusive time scale. The density of
% particles per site is expected to evolve as a diffusion process until
% all particles condensate at a single site. From this time on, the
% process is fixed and does not evolve anymore. This problem has been
% solved for super-critical zero-range processes \cite{BJL}, and is
% open in the critical case.

\section{A Resolvent Approach to Metastability}
\label{sec11}

In this section, we provide a set of sufficient conditions for a
sequence of con\-tin\-uous-time Markov chains to exhibit a metastable
behavior.  If the framework below seems too abstract, the reader may
read this section together with the next, where we apply these results
to a concrete example, the critical zero-range process.

We start introducing the general framework proposed in \cite{BL1, BL2}
to describe the metastable behavior of a Markovian dynamics as a
Markov chain model reduction.  Let $\color{blue} (\mc{H}_{N}:N\ge1)$
be a collection of finite sets. Elements of the set $\mc{H}_{N}$ are
designated by the letters $\eta$, $\xi$, and $\zeta$.

Consider a sequence $\color{blue} (\xi_{N}(t):t\ge0)$ of
$\mc{H}_{N}$-valued, irreducible, continuous-time Markov chains, whose
generator is represented by $\color{blue} \ms{L}_{N}$.  Therefore,
for every function $f: \mc H_N \to \bb R$,
\begin{equation*}
(\ms L_N\, f)(\eta) \;=\; \sum_{\xi\in \mc H_N} R_N(\eta,\xi)\,
\big[\, f(\xi) - f(\eta)\, \big]\;,
\end{equation*}
where $\color{blue} R_N(\eta,\xi)$ stands for the jump rates.  Denote
by $\lambda_N(\eta)$ the holding times of the Markov chain,
$\color{blue} \lambda_N(\eta) = \sum_{\xi \not = \eta} R_N(\eta,\xi)$,
and by $\color{blue} \mu_N$ the unique stationary state.

Denote by $D(\bb{R}_{+},\,\mc{H}_{N})$ the space of right-continuous
functions ${\bf x}:\bb{R}_{+}\to\mc{H}_{N}$ with left-limits, endowed
with the Skorohod topology and its associated Borel
$\sigma$-field. Let ${\color{blue}\mb{P}_{\eta}^{N}}$,
$\eta\in\mc{H}_{N}$, be the probability measure on
$D(\bb{R}_{+},\,\mc{H}_{N})$ induced by the process $\xi_{N}(\cdot)$
starting from $\eta\in\mc{H}_{N}$.  Expectation with respect to
$\mb{P}{}_{\eta}^{N}$ is represented by
${\color{blue}\mb{E}_{\eta}^{N}}$.

Fix a finite set $S$, and denote by $\mc{E}_{N}^{x}$, $x\in S$,
a family of disjoint subsets of $\mc{H}_{N}$. Let
\begin{equation*}
\mc{E}_{N}\,=\,\bigcup\limits _{x\in S}\mc{E}_{N}^{x}
\;\;\;\;\text{and\;\;\;\;}
\Delta_{N}\,=\,\mc{H}_{N}\,\setminus\,
\Big(\,\bigcup_{x\in S}\mc{E}_{N}^{x}\,\Big)\;.
\end{equation*}

The sets $\mathcal{E}_N^x$, $x\in S$, represent the metastable sets of
the dynamics $\xi_N(\cdot)$, in the sense that, as soon as the process
$\xi_N(\cdot)$ enters one of these sets, say $\mathcal{E}_N^x$, it
equilibrates in $\mathcal{E}_N^x$ before hitting a new set
$\mathcal{E}_N^y$, $y\not = x$.  The goal of the theory is to describe
the evolution between these sets. To this end, we introduce the order
process.

For $\mc{A}\subset\mc{H}_{N}$, denote by $T^{\mc{A}}(t)$ the total
time the process $\xi_{N}(\cdot)$ spends in $\mc{A}$ in the
time-interval $[0,t]$:
\begin{equation*}
T^{\mc{A}}(t)\;=\;\int_{0}^{t}\,\chi_{\mc{A}}(\xi_{N}(s))\,ds\;,
\end{equation*}
where \textcolor{blue}{$\chi_{\mc{A}}$} represents the characteristic
function of the set $\mc{A}$. Denote by $S^{\mc{A}}(t)$ the
generalized inverse of $T^{\mc{A}}(t)$:
\begin{equation}
\label{15}
S^{\mc{A}}(t)\;=\;\sup\{\,s\ge0\,:\,T^{\mc{A}}(s)\le t\,\}\;.
\end{equation}

The trace of $\xi_{N}(\cdot)$ on $\mc{A}$, denoted by
$\color{blue}(\xi_{N}^{\mc{A}}(t) : t \ge 0)$, is defined by
\begin{equation}
\label{104}
\xi_{N}^{\mc{A}}(t)\;=\;\xi_{N}(\,S^{\mc{A}}(t)\,)\;;\;\;\;t\ge0\;.
\end{equation}
It is an $\mc{A}$-valued, continuous-time Markov chain, obtained by
turning off the clock when the process $\xi_{N}(\cdot)$ visits the set
$\mc{A}^{c}$, that is, by deleting all excursions to $\mc{A}^{c}$. For
this reason, it is called the trace process of $\xi_{N}(\cdot)$ on
$\mc{A}$.

Let ${\color{blue}\Psi_{N}}:\mc{E}_{N}\to S$ be the projection given by
\begin{equation*}
\Psi_{N}(\eta)\;=\;\sum_{x\in S}x\cdot\chi_{\mc{E}_{N}^{x}}(\eta)\;.
\end{equation*}
The order process \textcolor{blue}{$(Y_{N}(t) : t\ge0)$} is defined
as
\begin{equation}
Y_{N}(t)\;=\;\Psi_{N}(\xi_{N}^{\mc{E}_{N}}(t))\;,\;\;\;t\ge0\;.\label{105}
\end{equation}
Denote by $\color{blue} \bb Q^N_\eta$, $\eta\in\mc E_N$, the
probability measure on $D(\bb R_+, S)$ induced by the measure
$\mb P_\eta^N$ and the order process $Y_N$.

The definition of metastability relies on two conditions. Let $\ms L$
be a generator of an $S$-valued, continuous-time Markov chain.  Denote
by $\color{blue} \bb Q^{\mc L}_x$, $x\in S$, the probability measure
on $D(\bb R_+, S)$ induced by the Markov chain whose generator is
$\ms L$ and which starts from $x$.

\medskip
\noindent
\textbf{Condition $\mf{C}_{\ms{L}}$}. For all $x\in S$ and sequences
$(\eta^{N})_{N\in\bb{N}}$, such that $\eta^{N}\in{\mc{E}}_{N}^{x}$ for
all $N\in\bb{N}$, the sequence of laws
$(\bb Q_{\eta^{N}}^{N})_{N\in\bb{N}}$ converges to $\bb Q_{x}^{\ms L}$
as $N\to\infty$.  \smallskip

The next condition asserts that the process $\xi_{N}(\cdot)$ spends a
negligible amount of time on $\Delta_{N}$ on each finite time
interval. It ensures that the trace process does not differ much from
the original one when starting from a well.

\medskip
\noindent
\textbf{Condition $\mf{D}$}. For all $t>0$,
\begin{equation*}
\lim_{N\to\infty}\,\max_{x\in S}\,
\sup_{\eta\in\mc{E}_{N}^{x}}\mb{E}_{\eta}^{N}\,
\Big[\,\int_{0}^{t}\,\chi_{\Delta_{N}}(\xi_{N}(s))\,ds\,\Big]\;=\;0\;.
\end{equation*}
\smallskip

The next definition is taken from \cite{BL1}.

\begin{definition}
\label{def1}
The process $\xi_N(\cdot)$ is said to be $\ms L$-metastable if
conditions $\mf C_{\ms L}$ and $\mf D$ hold.
\end{definition}

The first main result of this article provides sufficient conditions,
expressed in terms of properties of the solutions of resolvent
equations, for condition $\mf C_{\ms L}$ to hold.  The second one
asserts that these sufficient conditions are also necessary.

Fix a function $g\colon S\to \bb R$, and let
$G_{N}: \mc{H}_{N}\rightarrow\bb{R}$ be its lifting to $\mc H_N$ given
by
\begin{equation}
\label{b32}
G_{N}(\eta)\;=\;\sum_{x\in S}
g (x)\,\chi_{\mc{E}_{N}^{x}}(\eta)\;.
\end{equation}
Note that the function $G_{N}$ is constant on each well
$\mc{E}_{N}^{x}$ and vanishes on $\Delta_{N}$.  For $\lambda >0$,
denote by $F_N = F^{\lambda, g}_N$ the unique solution of the
resolvent equation
\begin{equation}
\label{1f01}
(\, \lambda \,-\, \ms L_N\,) \, F_N \;=\; G_N\;.
\end{equation}

\medskip
\noindent
\textbf{Condition $\mf{R}_{\ms{L}}$}. For all $\lambda>0$ and
$g\colon S\to\bb{R}$, the unique solution $F_{N}$ of the resolvent
equation \eqref{1f01} is asymptotically constant in each set
$\mc{E}_{N}^{x}$:
\begin{equation}
\label{1f02}
\lim_{N\to\infty}
\sup_{\eta\in\mc{E}_{N}^{x}}\,\big|\,F_{N}(\eta)\,-\,f(x)\,\big|
\,\;=\;0\;, \quad x\in S\;,
\end{equation}
where $f:S\rightarrow\bb{R}$ is the unique solution of the reduced
resolvent equation
\begin{equation}
\label{nf06}
(\lambda - \ms L) \, f \,=\, g \;.
\end{equation}
\smallskip

\begin{remark}
\label{rm2}
Condition $\mf R_{\ms L}$ is usually proved in two steps. One first
show that for every $\lambda>0$, $g:S\to\bb R$ the solution $F_N$ of
the resolvent equation \eqref{1f01} is asymptotically constant on each
well. In other words, that \eqref{1f02} holds for some $f$. Then, one
proves that $(\lambda - \ms L)f = g$ for some generator $\ms L$.
\end{remark}

The first  main result of the article reads as follows.

\begin{thm}
\label{1t_main2}
The process $\xi_N(\,\cdot\,)$ is $\ms L$-metastable if, and only if,
condition $\mf{R}_{\ms{L}}$ is fulfilled.  In other words,
Conditions $\mf{D}$ and $\mf{C}_{\ms{L}}$ hold if, and only if,
condition $\mf{R}_{\ms{L}}$ is in force.

\end{thm}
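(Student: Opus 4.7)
The theorem splits into two implications, and for both the crucial technical device is the Feynman--Kac representation
\[F_N(\eta)\;=\;\mb E_\eta^N \int_0^\infty e^{-\lambda t}\, G_N(\xi_N(t))\, dt\]
of the solution of \eqref{1f01}. A first easy corollary is $\mf R_{\ms L}\Rightarrow \mf D$: testing $\mf R_{\ms L}$ with $g\equiv 1$ (so the reduced equation gives $f\equiv 1/\lambda$) and subtracting the identity $\int_0^\infty e^{-\lambda t}\, dt = 1/\lambda$ yields
\[\lim_{N\to\infty}\,\max_{x\in S}\,\sup_{\eta\in \mc E_N^x}\,\mb E_\eta^N \int_0^\infty e^{-\lambda t}\,\chi_{\Delta_N}(\xi_N(t))\, dt\;=\;0,\]
and the pointwise bound $\int_0^{t_0}\chi_{\Delta_N}(\xi_N(s))\,ds\le e^{\lambda t_0}\int_0^\infty e^{-\lambda s}\chi_{\Delta_N}(\xi_N(s))\,ds$ converts this into $\mf D$.

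For the remaining part $\mf R_{\ms L}\Rightarrow \mf C_{\ms L}$ of the sufficient direction, I would adapt the martingale argument sketched in the introduction to the trace/order process. Fix $f:S\to\bb R$, $\lambda>0$, set $g=(\lambda-\ms L)f$ and let $F_N$ solve \eqref{1f01}. The process
\[M_N(t)\;=\;e^{-\lambda t}F_N(\xi_N(t))\,-\,F_N(\xi_N(0))\,+\,\int_0^t e^{-\lambda r}\,G_N(\xi_N(r))\,dr\]
is a uniformly bounded $\mb P_{\eta^N}^N$-martingale; evaluating it at the stopping time $S^{\mc E_N}(t)$ produces, via optional stopping, a martingale with respect to the natural filtration of the trace process. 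On $\xi_N^{\mc E_N}(t)\in \mc E_N$, condition $\mf R_{\ms L}$ lets me replace $F_N$ by $f\circ Y_N$ up to a uniform $o(1)$; the change of variables $u=T^{\mc E_N}(r)$ in the integral converts $G_N(\xi_N(r))$ into $g(Y_N(u))$ and introduces the factor $e^{-\lambda S^{\mc E_N}(u)}$, which $\mf D$ in turn replaces by $e^{-\lambda u}$ (indeed $S^{\mc E_N}(u)-u=T^{\Delta_N}(S^{\mc E_N}(u))\to 0$ in probability by Markov's inequality). After establishing tightness of $(\bb Q^N_{\eta^N})$ -- for instance via the Aldous criterion applied to the bounded martingale $M_N\circ S^{\mc E_N}$ -- any weak limit $\bb Q$ satisfies the martingale identity for $\ms L$, and uniqueness of the martingale problem on the finite set $S$ forces $\bb Q=\bb Q_x^{\ms L}$.

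The converse $\mf D+\mf C_{\ms L}\Rightarrow \mf R_{\ms L}$ runs in reverse and is the cleaner of the two. The same time change $u=T^{\mc E_N}(r)$ applied to the Feynman--Kac representation (using $G_N\equiv 0$ on $\Delta_N$) gives
\[F_N(\eta^N)\;=\;\mb E_{\eta^N}^N \int_0^\infty e^{-\lambda S^{\mc E_N}(u)}\, g(Y_N(u))\,du.\]
Condition $\mf D$ implies $S^{\mc E_N}(u)\to u$ in probability uniformly in $\eta^N\in \mc E_N^x$, and dominated convergence (with integrable envelope $\|g\|_\infty e^{-\lambda u}$) eliminates the time change; condition $\mf C_{\ms L}$, together with the boundedness and Skorohod-continuity of $y\mapsto \int_0^\infty e^{-\lambda u}g(y(u))\,du$ on $D(\bb R_+,S)$ (automatic since $S$ is finite and discrete, so $\bb Q_x^{\ms L}$-a.s.\ limits have only countably many jumps), lets me pass $Y_N$ to $Y\sim \bb Q_x^{\ms L}$ and obtain $F_N(\eta^N)\to \mb E_x^{\ms L}\int_0^\infty e^{-\lambda u}g(Y(u))\,du=f(x)$, where $f$ is the unique solution of $(\lambda-\ms L)f=g$. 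Uniformity over $\eta^N\in \mc E_N^x$ is then recovered by a standard sub-subsequence argument. I expect the main technical obstacle to lie in the sufficient direction, where the three simultaneous approximations -- of $F_N$ by $f$ on each well, of $\xi_N$ by its trace $\xi_N^{\mc E_N}$ (equivalently by $Y_N$ after projection), and of $S^{\mc E_N}(u)$ by $u$ -- must all be controlled uniformly in $\eta^N$ inside the time-changed martingale identity, and where tightness of the order process has to be extracted from the purely abstract resolvent bound with no structural information on the jump rates $R_N$.
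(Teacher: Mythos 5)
Your proof of the converse implication ($\mf D + \mf C_{\ms L}\Rightarrow \mf R_{\ms L}$) is correct and takes a genuinely different, cleaner route than the paper. The paper decomposes the Feynman--Kac integral at the stopping times $J_k$ (visits to new wells) and $\phi_k$ (jumps of the order process), estimates four remainders $R^{(j)}_N$, and then has to run a separate argument with Lemmas \ref{l08}--\ref{l09} when the asymptotic chain has absorbing states (since $\phi_k\to\infty$ fails there). You avoid both the stopping-time bookkeeping and the absorbing-state dichotomy by performing the time change $u=T^{\mc E_N}(t)$ once and for all, using $\mf D$ (via the dominated-convergence argument of Lemma \ref{l01}) to replace $e^{-\lambda S^{\mc E_N}(u)}$ by $e^{-\lambda u}$, and then feeding condition $\mf C_{\ms L}$ the bounded, Skorohod-continuous functional $y\mapsto\int_0^\infty e^{-\lambda u}g(y(u))\,du$. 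This is a genuine simplification, and the sub-subsequence argument does recover the required uniformity over $\eta\in\mc E_N^x$.

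For the sufficient direction, the reduction of $\mf R_{\ms L}\Rightarrow\mf D$ via $g\equiv 1$, and the uniqueness-of-limit-point argument via the time-changed martingale $\widehat M_N=M_N\circ S^{\mc E_N}$, coincide with the paper's Lemma \ref{nl1} and Proposition \ref{p-uniq}. The gap is in tightness. ``Aldous' criterion applied to the bounded martingale $M_N\circ S^{\mc E_N}$'' does not provide a concrete estimate: uniform boundedness of a martingale gives you nothing about the modulus of continuity of the underlying process $Y_N$. What Aldous actually needs, since $S$ is finite and discrete, is a bound on the quick-jump probability $\sup_{\eta\in\mc E_N^x}\mb P^N_\eta[\tau_{\breve{\mc E}^x_N}<t]$; this is the paper's Lemma \ref{R3}, and it is the only genuinely non-formal ingredient of the forward implication. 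Its proof requires (a) choosing the specific test function $f(y)=1-\delta_{x,y}$, (b) upgrading the approximate non-negativity of $F_N$ from $\mc E_N$ (given by $\mf R_{\ms L}$) to all of $\mc H_N$ via a maximum-principle argument that crucially uses $G_N\equiv 0$ on $\Delta_N$, and (c) applying Dynkin's formula at $t\wedge\tau_{\breve{\mc E}^x_N}$. Step (b) is exactly the place where ``no structural information on the jump rates'' -- your own worry -- is overcome, and your sketch does not supply it. Without this lemma, the tightness argument (and hence the forward implication) is not closed.
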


\begin{remark}
This result provides a new tool to prove metastability. The existing
methods rely on explicit computations involving the stationary
state. In particular, they can not be applied to non-reversible
dynamics whose stationary states are not known explicitly. For
example, to small perturbations of dynamical systems or to the
superposition of Glauber and Kawasaki dynamics. Proving that the
solution of a resolvent equation is constant on the wells might be
proven without turning to the stationary state.
\end{remark}

\begin{rem}
\label{rm6}
The introduction of the set $\Delta_{N}$ which separates
the wells makes condition \textbf{$\mf{R}_{\ms{L}}$} plausible.
The challenge is to tune correctly $\Delta_{N}$. Sufficiently large
to prove \textbf{$\mf{R}_{\ms{L}}$}, but small enough
for \textbf{$\mf{D}$} to hold.
\end{rem}

\begin{rem}
\label{rm4}
Solving the martingale problem through a resolvent equation, instead
of a Poisson equation, simplifies considerably the proof of the
metastable behavior of the process. As the solutions of the resolvent
equations are bounded (cf. \eqref{1f10}), one can hope to obtain
bounds and convergence in $L^{\infty}$, as we do here, instead of in
$L^{2}$. Moreover, many $L^{1}$-estimates simplify substantially due
to the $L^{\infty}$-bound on the solution of the resolvent equation.
\end{rem}

\begin{remark}
Condition $\mf R_{\ms L}$ being necessary and sufficient for
metastability implies that it holds for all models whose metastable
behavior have been derived so far. The reader will find in \cite{BH15,
l-review} a list of such dynamics.
\end{remark}

\subsection{Applications}
\label{1sec1.1}

To convince the skeptical reader that condition $\mf R_{\ms L}$ is not
too stringent, besides the fact, alluded to before, that it is also
necessary for metastability, we provide two frameworks where this
condition can be proven. First, Theorem \ref{mt1} states that
condition $\mf R_{\ms L}$ follows from properties (H0) and (H1),
introduced in \cite{BL1, BL2}. Then, in the next section, to
illustrate how to prove condition $\mf R_{\ms L}$ when assumption (H1)
is violated, we prove that it holds for critical condensing zero-range
processes.

The statement of Theorem \ref{mt1} requires some notation. Denote by
$r_N(x,y)$ the mean-jump rate between the sets $\mc{E}_{N}^{x}$ and
$\mc{E}_{N}^{y}$:
\begin{equation}
\label{07b}
r_N(x,y) \;=\; \frac{1}{\mu_N(\mc E^x_N)}
\sum_{\eta\in \mc E^x_N} \mu_N(\eta)  \, \lambda_N(\eta)\,
\mb P^N_{\eta} \big[\,
\tau_{\mc{E}_{N}^{y}} \,<\, \tau^+_{\breve{\mc{E}}_{N}^{y}}
\,\big]\;.
\end{equation}
In this formula, $\tau_{\mc A}$, $\tau^+_{\mc A}$,
$\mc A\subset \mc H_N$, stand for the hitting, return time of the set
$\mc A$, respectively:
\begin{equation}
\label{1.05}
\tau_{\mc A} \;=\; \inf\big\{\, t>0 : \xi_N(t) \in \mc A\,\big\}\;.
\end{equation}
\begin{equation*}
\tau^+_{\mc A} \;=\; \inf\{\, t\ge \sigma_1 : \xi_N(t) \in\mc
A\,\big\}\;, \quad\text{where}\quad
\sigma_1 \;=\; \inf\{\, t\ge 0 : \xi_N(t) \not = \xi_N(0) \,\big\}\;.
\end{equation*}

\smallskip\noindent{\bf Condition (H0)}.  For all $x\not = y\in S$,
the sequence $r_N(x,y)$ converges. Denote its limit by $r(x,y)$:
\begin{equation*}
r(x,y) \;=\; \lim_{N\to \infty} r_N(x,y)\;.
\end{equation*}

Let $\ms{D}_{N}(F)$ be the Dirichlet form of a function
$F:\mc{H}_{N}\rightarrow\bb{R}$ with respect to the generator $\ms{L}_N$:
\begin{equation*}
\ms{D}_{N}(F) \;=\; \<\, F\,,\,
(-\ms{L}_{N}F)\,\>_{\mu_N} \;.
\end{equation*}
A summation by parts yields that
\begin{equation*}
\ms{D}_{N}(F)=\frac{1}{2}\,
\sum_{\eta,\,\eta'\in\mc{H}_{N}}\mu_{N}(\eta)\,
R_{N}(\eta,\,\eta')\,[F(\eta')-F(\eta)]^{2}\;.
\end{equation*}

Fix two disjoint and non-empty subsets $\mc{A}$ and $\mc{B}$ of
$\mc{H}_{N}$. The equilibrium potential between $\mc{A}$ and $\mc{B}$
with respect to the process $\xi_{N}(\cdot)$ is denoted by
$h_{\mc{A},\,\mc{B}}$ and is given by
\begin{equation}
\label{03}
h_{\mc{A},\,\mc{B}}(\eta) \,=\, \mb{P}_{\eta}^{N}
[\, \tau_{\mc{A}}<\tau_{\mc{B}}\, ] \;,\quad
\eta\in\mc{H}_{N}\;.
\end{equation}
The capacity between $\mc{A}$ and $\mc{B}$ is given by
\begin{equation*}
\textup{cap}_{N}(\mc{A},\,\mc{B})
\;=\; \ms{D}_{N}(h_{\mc{A},\,\mc{B}})\;.
\end{equation*}

\medskip\noindent{\bf Condition (H1)}. For each $x\in S$, there exists
a sequence of configurations $(\xi_{N}^x : N\ge 1)$ such that in
$\xi_{N}^x \in \mc E^x_N$ for all $N\ge 1$ and
\begin{equation*}
\lim_{N\to \infty} \max_{\eta\in \mc E^x_N}
\frac{\Cap_N(\mc E^x_N,\breve{\mc E}^x_N)}
{\Cap_N(\xi_N^x, \eta)}\;=\;0\; ,
\end{equation*}
in which $\breve{\mc{E}}_{N}^{x}\;=\;\bigcup_{y\in S\setminus\{x\}}\mc{E}_{N}^{y}\;, x\,\in\,S\;.$

\begin{theorem}
\label{mt1}
Assume that condition {\rm (H0)}, {\rm (H1)} are in force.  Then, the
solution $F_N$ of the resolvent equation \eqref{1f01} is
asymptotically constant on each well $\mc E^x_N$ in the sense that
\begin{equation*}
\lim_{N\rightarrow\infty} \, \max_{x\in S}\,
\max_{\eta, \zeta \in\mc{E}_{N}^{x}}
\,|\,{F}_{N}(\eta)\,-\, F_{N}(\zeta)\,|\;=\;0\;.
\end{equation*}
Furthermore, let $f_N:S \to \bb R$ be the function given by
\begin{equation*}
f_{N}(x)\;=\;
\frac{1}{\mu_{N}(\mc{E}_{N}^{x})}\,
\sum_{\eta\in\mc{E}_{N}^{x}}\,F_{N}(\eta)\,\mu_{N}(\eta)\;,
\quad x\,\in\, S\;,
\end{equation*}
and let $f$ be a limit point of the sequence $f_N$. Then,
\begin{equation}
\label{f011}
\big[\, (\, \lambda \,-\, \ms L_Y\,) \, f\,\big](y) \;=\; g(y)
\end{equation}
for all $y\in S$ such that $\mu_N(\Delta_N)/\mu_N(\mc E^y_N)\to 0$, in which $g$ is the function in equality \eqref{b32}.
In this formula, $\ms L_Y$ is the generator of the continuous-time
Markov process whose jump rates are given by $r(x,y)$, introduced in
{\rm (H0)}.
\end{theorem}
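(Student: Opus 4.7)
The plan is to split the argument into three blocks: (i) an $L^\infty$ bound and a Dirichlet form bound on $F_N$ derived directly from the resolvent equation \eqref{1f01}; (ii) asymptotic constancy on each well via the electrostatic inequality combined with (H1); and (iii) a $\mu_N$-averaging of \eqref{1f01} on each well, using reversibility to turn the generator flux into the mean rates $r_N$ that converge by (H0).

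First, the probabilistic representation
\begin{equation*}
F_N(\eta) \;=\; \mb E_{\eta}^N\Big[\int_0^\infty e^{-\lambda t}\,G_N(\xi_N(t))\,dt\Big]
\end{equation*}
immediately yields the maximum-principle bound $\|F_N\|_\infty\le\|g\|_\infty/\lambda$, and in particular $|f_N(x)|\le\|g\|_\infty/\lambda$. Pairing \eqref{1f01} against $F_N$ in $L^2(\mu_N)$ produces the identity $\ms D_N(F_N)+\lambda\|F_N\|_{L^2(\mu_N)}^2=\<F_N,G_N\>_{\mu_N}$, whence the crude estimate $\ms D_N(F_N)\le\|g\|_\infty^2/\lambda$. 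In order for step (ii) below to close, I would then upgrade this to the sharper bound
\begin{equation*}
\ms D_N(F_N)\;\le\; C\,\max_{x\in S}\Cap_N\big(\mc E_N^x,\,\breve{\mc E}_N^x\big)\,+\,o(1)\;,
\end{equation*}
which I would obtain by testing the resolvent quadratic form against a test function equal to $f_N(x)$ on each $\mc E_N^x$ and harmonic on $\Delta_N$, and comparing via the Dirichlet principle. This sharpening is where I expect the main technical difficulty, since the gap between the crude $O(1)$ bound and $O(\Cap_N(\mc E_N^x,\breve{\mc E}_N^x))$ can be arbitrarily large when transitions between wells are rare.

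Second, with the Dirichlet form under control, the standard capacity--energy inequality
\begin{equation*}
\big(F_N(\eta)-F_N(\xi_N^x)\big)^2\,\Cap_N(\eta,\xi_N^x)\;\le\;\ms D_N(F_N)
\end{equation*}
combined with (H1) gives, uniformly over $x\in S$ and $\eta\in\mc E_N^x$,
\begin{equation*}
\big|F_N(\eta)-F_N(\xi_N^x)\big|^2 \;\le\; C\,\frac{\Cap_N(\mc E_N^x,\breve{\mc E}_N^x)}{\Cap_N(\xi_N^x,\eta)}\;\longrightarrow\; 0\;.
\end{equation*}
Since $f_N(x)$ is, by definition, the $\mu_N$-average of $F_N$ on $\mc E_N^x$, this uniform bound in turn gives $|F_N(\eta)-f_N(x)|\to 0$ uniformly on $\mc E_N^x$, proving the first assertion of the theorem.

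Third, to derive \eqref{f011}, I would integrate \eqref{1f01} against $\mu_N$ on $\mc E_N^y$. Reversibility cancels the internal $\mc E_N^y\times\mc E_N^y$ contribution of $\ms L_N$, leaving
\begin{equation*}
\lambda f_N(y)\,\mu_N(\mc E_N^y)\,-\,\sum_{\eta\in\mc E_N^y}\sum_{\xi\notin\mc E_N^y}\mu_N(\eta)R_N(\eta,\xi)\big[F_N(\xi)-F_N(\eta)\big] \;=\; g(y)\,\mu_N(\mc E_N^y)\;.
\end{equation*}
Splitting the inner sum according to whether $\xi\in\mc E_N^z$ or $\xi\in\Delta_N$, using the asymptotic constancy proved above to replace $F_N(\xi)-F_N(\eta)$ by $f_N(z)-f_N(y)$ (respectively, by a quantity of absolute value at most $2\|g\|_\infty/\lambda$), and then identifying the well-to-well $\mu_N$-weighted flux with $r_N(y,z)\,\mu_N(\mc E_N^y)$ through the trace process (so that excursions across $\Delta_N$ are properly accounted for), the well-to-$\Delta_N$ error term is bounded by a constant multiple of $\mu_N(\Delta_N)/\mu_N(\mc E_N^y)$. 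Dividing the whole identity by $\mu_N(\mc E_N^y)$ yields
\begin{equation*}
\lambda f_N(y)\,-\,\sum_{z\neq y} r_N(y,z)\big[f_N(z)-f_N(y)\big]\;=\;g(y)\,+\,o(1)
\end{equation*}
precisely at those $y\in S$ for which $\mu_N(\Delta_N)/\mu_N(\mc E_N^y)\to 0$. Passing to the limit along the subsequence defining $f$ and invoking (H0) to replace $r_N$ by $r$ produces $(\lambda-\ms L_Y)f(y)=g(y)$, as required.
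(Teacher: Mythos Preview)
There are genuine gaps in both step (ii) and step (iii).

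In step (ii) your route via the electrostatic inequality $|F_N(\eta)-F_N(\xi_N^x)|^2\le \ms D_N(F_N)/\Cap_N(\eta,\xi_N^x)$ only closes if you can show $\ms D_N(F_N)\le C\,\Cap_N(\mc E_N^x,\breve{\mc E}_N^x)+o(1)$; you flag this yourself as the hard part, and your suggested fix (compare to the harmonic extension of $f_N$ via the resolvent variational principle) is circular. The variational identity yields $\ms D_N(F_N-\phi_N)+\lambda\|F_N-\phi_N\|^2=\ms D_N(\phi_N)+\lambda\|\phi_N\|^2-2\<\phi_N,G_N\>+\<F_N,G_N\>$, and you cannot control the last term without already knowing $F_N$ is close to $\phi_N$. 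In the regime at hand $\Cap_N(\mc E_N^x,\breve{\mc E}_N^x)$ is of order $\mu_N(\mc E_N^x)$ (Lemma~\ref{l07b} together with (H0)), while the crude energy bound is $O(1)$, so the gap can be unbounded. The paper avoids the Dirichlet--capacity route altogether: from (H0) and (H1) one obtains, via \cite[Theorem~2.1]{BL2} and Lemma~\ref{l16}, that every point of $\mc E_N^x$ is hit in time $o(1)$ (Lemma~\ref{l05}, i.e.\ condition $\mf V$), and then the semigroup continuity $\|F_N-\ms P_N(t)F_N\|_\infty\le 2t\|G_N\|_\infty$ plus the strong Markov property give $\mf R^{(1)}$ directly (Proposition~\ref{p01}).

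In step (iii) your integration of \eqref{1f01} against $\chi_{\mc E_N^y}\mu_N$ does not produce the trace rates $r_N(y,z)$. In the models covered by the theorem the wells are separated by $\Delta_N$, so there are \emph{no} direct jumps $\mc E_N^y\to\mc E_N^z$: the entire generator contribution is the $\Delta_N$ boundary flux $\sum_{\eta\in\mc E_N^y}\sum_{\xi\in\Delta_N}\mu_N(\eta)R_N(\eta,\xi)[F_N(\xi)-F_N(\eta)]$. Here $F_N(\xi)$ for $\xi\in\Delta_N$ is not controlled by the constancy you proved on $\mc E_N$, and the prefactor $\sum_{\eta,\xi}\mu_N(\eta)R_N(\eta,\xi)$ is not bounded by $\mu_N(\Delta_N)$ after the time rescaling (the rates $R_N$ are large). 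The paper's device is to test \eqref{1f01} against the equilibrium potential $h^\dagger_y=h^\dagger_{\mc E_N^y,\breve{\mc E}_N^y}$ instead of $\chi_{\mc E_N^y}$: since $\ms L_N^\dagger h^\dagger_y=0$ on $\Delta_N$, the $\Delta_N$ contribution moves entirely into the $\lambda$-term (where it is bounded by $C\mu_N(\Delta_N)/\mu_N(\mc E_N^y)$ via \eqref{34}), and on $\mc E_N$ the identity $(\ms L_N^\dagger h^\dagger_y)(\eta)=\pm\lambda_N(\eta)\mb P^{\dagger,N}_\eta[\cdots]$ reproduces exactly the definition \eqref{07b} of $r_N^\dagger$ and hence, by Lemma~\ref{l07b}, of $r_N$.
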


\begin{remark}
Under assumptions {\rm (H0)}, {\rm (H1)} and $\mf D$, condition
$\mf R_{\ms L_Y}$ is in force. Indeed, by \cite[Theorem 2.1]{BL2},
$\mf C_{\ms L_Y}$ holds. Hence, by Theorem \ref{1t_main2},
$\mf R_{\ms L_Y}$ is fulfilled. A direct proof is also possible.
\end{remark}

\begin{remark}
Conditions {\rm (H0)} and {\rm (H1)} have been proved in many
different contexts such as condensing zero-range models \cite{BL3,
Lan, AGL, Seo, LMS}, inclusion processes \cite{BDG, K, KS}, Ising,
Potts and Blume-Capel models at low temperature \cite{LS0, LL, LLM2,
KS2}, random walks and diffusions in potential fields \cite{LMT, LS1,
LS3, RS} and many others \cite{LLM, l-review}.
\end{remark}

\begin{remark}
Lemma \ref{l13} provides a sufficient condition for the identity
\eqref{f011} to hold in the case where
$\mu_N(\Delta_N)/\mu_N(\mc E^y_N)$ does not vanish asymptotically.
\end{remark}

The rest of the article is organized as follows. In Section
\ref{sec2}, we introduce the critical zero-range process and state, in
Theorem \ref{t_main2}, that it fulfills conditions $\mf{R}_{\ms{L}}$
and $\mf{D}$. In Section \ref{sec25}, we prove Theorem \ref{1t_main2}.
In Sections \ref{sec8}--\ref{s7}, we prove Theorem \ref{mt1} and
provide further different sets of sufficient conditions, namely,
conditions $\mathfrak{V}$ and $\mathfrak{M}$, for $\mf{D}$ or
$\mf{R}_{\ms{L}}$ to hold. These families of sufficient conditions
were designed to encompass most dynamics whose metastable behavior
have been derived so far.  Sections \ref{sec5}-\ref{sec9} of the
article are devoted to the proof of Theorem \ref{t_main2}.

\section{Critical Zero-range Dynamics\label{sec2}}

In this section, we introduce the critical condensing zero-range process
to which we apply the resolvent approach described in the previous
section. Fix a finite set $S$ with ${\color{blue}|S|=\kappa\ge2}$
elements, and consider a continuous-time Markov chain on $S$ with
generator $L_{X}$ acting on functions $f:S\rightarrow\bb{R}$
as
\begin{equation*}
(L_{X}f)(x)\;=\;\sum_{y\in S}\,r(x,\,y)\,[\,f(y)\,-\,f(x)\,]
\end{equation*}
for some jump rate $r:S\times S\rightarrow\bb{R}_{+}$ assumed to be
symmetric {[}$r(x,\,y)=r(y,\,x)$ for all $x,\,y\in S${]}.  Set
$r(x,\,x)=0$ for all $x\in S$ for convenience. Denote by
\textcolor{blue}{$(X(t))_{t\geq0}$} the Markov chain generated by
$L_{X}$ and assume that this chain is irreducible. Note that the
process $X(\cdot)$ is reversible with respect to the uniform measure
$m(\cdot)$ on $S$ {[}$m(x)=1/\kappa$ for all $x\in S${]}.

The zero-range process describes the evolution of particles on $S$.
A configuration $\eta\in\bb{N}^{S}$ of particles is written as
$\eta=(\eta_{x})_{x\in S}$ where $\eta_{x}$ represents the number
of particle at $x$ under the configuration $\eta$. For $N\in\bb{N}$
and $S_{0}\subset S$, denote by $\mc{H}_{N,\,S_{0}}\subset\bb{N}^{S_{0}}$
the subset of configurations on $S_{0}$ with exactly $N$ particles:
\begin{equation}
\mc{H}_{N,\,S_{0}}\;=\;\Big\{\,\eta\in\bb{N}^{S_{0}}\,:\,\sum_{x\in S_{0}}\eta_{x}\,=\,N\,\Big\}\;.\label{e_Hn}
\end{equation}
Let ${\color{blue}\mc{H}_{N}=\mc{H}_{N,\,S}}$. The critical
zero-range process is the continuous-time Markov chain
\textcolor{blue}{$\{\eta_{N}(t)\}_{t\ge0}$} on $\mc{H}_{N}$ with
generator acting on functions $F:\mc{H}_{N}\rightarrow\bb{R}$ as
\begin{equation}
(\ms{L}_{N}F)(\eta)\;=\;\sum_{x,\,y\in S}\,g(\eta_{x})\,r(x,\,y)\,[\,F(\sigma^{x,\,y}\eta)\,-\,F(\eta)\,]\;,\;\;\;\;\eta\in\mc{H}_{N}\;,\label{e_genZRP}
\end{equation}
where
\begin{equation*}
g(0)\,=\,0\;,\;\;\;\;g(1)\,=\,1\;,\text{\;\;\; and \;\;\;\;}g(n)\,=\,\frac{n}{n-1}\text{ for }n\ge2\;.
\end{equation*}
In this equation, $\sigma^{x,\,y}\eta$, $x,\,y\in S$, stands for
the configuration obtained from $\eta$ by moving a particle from
$x$ to $y$, when there is at least one particle at $x$:
\begin{equation*}
(\sigma^{x,\,y}\eta)_{z}=\begin{cases}
\eta_{x}-1 & \text{ if }z=x\;,\\
\eta_{y}+1 & \text{ if }z=y\;,\\
\eta_{z} & \text{ otherwise\;,}
\end{cases}
\end{equation*}
if $\eta_{x}\ge1$. Otherwise, $\sigma^{x,\,y}\eta=\eta$.

\subsection{Condensation of particles\label{sec22}}

It is elementary to check that the unique invariant measure for the
irreducible Markov chain $\eta_{N}(\cdot)$ is given by
\begin{equation*}
\mu_{N}(\eta)\;=\;\frac{N}{Z_{N,\,\kappa}
  \,(\log N)^{\kappa-1}}\,\frac{1}{\mb{a}(\eta)}\;,
\end{equation*}
where
\begin{equation*}
\mb{a}(\eta)\;=\;\prod_{x\in S}a(\eta_{x})\text{ \;\;\;with\;\;\; }a(n)\;=\;\max\{n,\,1\}\text{ for }n\ge0\;,
\end{equation*}
and where the partition function $Z_{N,\,\kappa}$ is defined by
\begin{equation}
Z_{N,\,\kappa}\;=\;\frac{N}{(\log N)^{\kappa-1}}\,\sum_{\eta\in\mc{H}_{N}}\frac{1}{\mb{a}(\eta)}\;.\label{e_partition}
\end{equation}
The factor $N/(\log N)^{\kappa-1}$ was introduced so that $Z_{N,\,\kappa}$
has a non-degenerate limit when $N$ tends to infinity: By \cite[Proposition 4.1]{LMS},
$\lim_{N\to\infty}Z_{N,\,\kappa}=\kappa$. Furthermore, the zero-range
process $\eta_{N}(\cdot)$ is reversible with respect to $\mu_{N}(\cdot)$.

Define the metastable well as
\begin{equation*}
\mc{E}_{N}^{x}\;=\;\{\eta\in\mc{H}_{N}\,:\,\eta_{x}\,\geq\,N-\ell_{N}\}\;;\;\;\;x\in S\;,
\end{equation*}
where $\ell_{N}$ is any sequence satisfying
\begin{equation*}
\lim\limits _{N\to\infty}\frac{\ell_{N}}{N}\,=\,0\;\;\;\;\text{and\;\;\;\;}\lim\limits _{N\to\infty}\frac{\log\ell_{N}}{\log N}\,=\,1\;.
\end{equation*}
Assume that \textcolor{blue}{$\ell_{N}=N/\log N$ }for simplicity.
The set $\mc{E}_{N}^{x}$ can be regarded as a collection of
configurations in which almost all particles are sitting at site $x$.
As defined previously, let
\begin{equation*}
\mc{E}_{N}\,=\,\bigcup\limits _{x\in S}\mc{E}_{N}^{x}\;\;\;\;\text{and\;\;\;\;}\Delta_{N}\,=\,\mc{H}_{N}\,\setminus\,\Big(\,\bigcup_{x\in S}\mc{E}_{N}^{x}\,\Big)
\end{equation*}
so that $\mc{H}_{N}=\mc{E}_{N}\cup\Delta_{N}$ gives a partition
of $\mc{H}_{N}.$ The following result is \cite[Theorem 2.3]{LMS}.
\begin{thm}
\label{t_cond}
For all $x\in S$, $\lim_{N\rightarrow\infty}\mu_{N}(\mc{E}_{N}^{x})=1/\kappa$
. In particular,
\begin{equation*}
\lim_{N\rightarrow\infty}\mu_{N}(\mc{E}_{N})\,=\,1\;\;\;\;\text{and\;\;\;\;}\lim_{N\rightarrow\infty}\mu_{N}(\Delta_{N})\,=\,0\;.
\end{equation*}
\end{thm}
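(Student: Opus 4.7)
The plan is to combine the permutation symmetry of the stationary measure $\mu_N$ with the stated asymptotic $Z_{N,\kappa}\to\kappa$. Since $\mb{a}(\eta)=\prod_{x\in S}a(\eta_x)$ is invariant under permutations of the coordinates of $\eta$, $\mu_N(\mc E_N^x)=\mu_N(\mc E_N^y)$ for every pair $x,y\in S$. Moreover, as $\ell_N=N/\log N<N/2$ for all sufficiently large $N$, at most one coordinate of $\eta$ can exceed $N-\ell_N$, and hence the wells $\mc E_N^x$ are pairwise disjoint. Together with the partition $\mc H_N=\mc E_N\cup\Delta_N$, this yields
\begin{equation*}
\kappa\,\mu_N(\mc E_N^x)\,+\,\mu_N(\Delta_N)\;=\;1\;,
\end{equation*}
so the theorem reduces to the single limit $\mu_N(\mc E_N^x)\to 1/\kappa$; the statements about $\mc E_N$ and $\Delta_N$ then follow immediately.

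The main task is to estimate $\sum_{\eta\in\mc E_N^x}1/\mb{a}(\eta)$ asymptotically. Parametrising a configuration $\eta\in\mc E_N^x$ by $m\,=\,N-\eta_x\in[0,\ell_N]$ and letting the remaining coordinates $(\eta_y)_{y\neq x}$ range over $\mc H_{m,S\setminus\{x\}}$, I would write
\begin{equation*}
\sum_{\eta\in\mc E_N^x}\frac{1}{\mb{a}(\eta)}\;=\;\sum_{m=0}^{\ell_N}\frac{1}{a(N-m)}\,T_{m,\kappa-1}\;,\qquad T_{m,\kappa-1}\;:=\;\sum_{\zeta\in\mc H_{m,S\setminus\{x\}}}\frac{1}{\mb{a}(\zeta)}\;.
\end{equation*}
Since $a(N-m)=N\,(1+o(1))$ uniformly in $m\le\ell_N$, the factor $1/a(N-m)$ can be replaced by $(1+o(1))/N$, and it suffices to prove that $\sum_{m=0}^{\ell_N}T_{m,\kappa-1}\sim(\log N)^{\kappa-1}$. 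I would handle this by induction on $\kappa$: the base $\kappa=2$ is the classical estimate $\sum_{m=1}^{\ell_N}1/m\sim\log\ell_N\sim\log N$, and the inductive step uses the sharp asymptotic $T_{m,\kappa-1}\sim(\kappa-1)(\log m)^{\kappa-2}/m$ summed against $dm$ to produce an extra factor of $\log N$.

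Combining the previous step with the identity $T_{N,\kappa}\,=\,Z_{N,\kappa}\,(\log N)^{\kappa-1}/N\,\sim\,\kappa\,(\log N)^{\kappa-1}/N$ (which uses $Z_{N,\kappa}\to\kappa$ from \cite[Proposition 4.1]{LMS}) gives $\mu_N(\mc E_N^x)=\sum_{\eta\in\mc E_N^x}(1/\mb{a}(\eta))/T_{N,\kappa}\to 1/\kappa$. The main obstacle is the inductive asymptotic for $T_{m,\kappa-1}$: while the integration against $(\log m)^{\kappa-2}/m$ is conceptually routine, some care is required to absorb the boundary terms for small $m$ (where $T_{m,\kappa-1}$ is $O(1)$ rather than harmonic) and to propagate error terms cleanly through the induction. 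Since the very same combinatorial technique already underlies the proof of $Z_{N,\kappa}\to\kappa$, this obstacle is not structurally new, and the restriction to $\mc E_N^x$ is effected simply by truncating the outer sum to $m\le\ell_N$.
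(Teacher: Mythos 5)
The paper does not prove this result --- it is quoted as \cite[Theorem 2.3]{LMS} with no argument supplied --- so there is no in-paper proof to compare against. Your reconstruction is nonetheless correct: the symmetry and disjointness observations are right, the decomposition of $\sum_{\eta\in\mc E_N^x}1/\mb a(\eta)$ by $m=N-\eta_x$ is valid, and the needed asymptotic $T_{m,\kappa-1}\sim(\kappa-1)(\log m)^{\kappa-2}/m$ is precisely $Z_{m,\kappa-1}\to\kappa-1$, that is, \cite[Proposition 4.1]{LMS} applied to a $(\kappa-1)$-site system --- the same proposition you already cite. Once that asymptotic is in hand, the sum
\begin{equation*}
\sum_{m\le\ell_N}(\kappa-1)\,\frac{(\log m)^{\kappa-2}}{m}\;\sim\;(\log\ell_N)^{\kappa-1}\;\sim\;(\log N)^{\kappa-1}
\end{equation*}
closes the argument, with the small-$m$ contribution $O_\kappa(1)$ absorbed harmlessly and $1/a(N-m)=N^{-1}(1+o(1))$ uniform on $m\le\ell_N=N/\log N$.

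One imprecision worth flagging: there is no genuine induction on $\kappa$ left at this point. Your ``inductive step'' does not derive the asymptotic for $T_{m,\kappa-1}$ from that of $T_{m,\kappa-2}$; it simply quotes the $(\kappa-1)$-site instance of LMS Proposition 4.1 and sums, and the base $\kappa=2$ is the same quotation with $T_{m,1}=1/a(m)$ exact. (An induction does underlie that proposition --- compare the paper's own Lemma~\ref{lem_aux2} --- but you are citing it, not re-proving it.) Also note that your symmetry identity $\kappa\,\mu_N(\mc E_N^x)+\mu_N(\Delta_N)=1$ already reduces the whole theorem to the single claim $\mu_N(\Delta_N)\to0$, which needs only an upper bound on $\sum_{\eta\in\Delta_N}1/\mb a(\eta)$ (exactly the flavour of Lemma~\ref{lem_aux2} with $a=1$) rather than a sharp two-sided asymptotic over $\mc E_N^x$; this would shorten the write-up slightly, though the underlying estimate is of the same kind.
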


Hence, as $N\rightarrow\infty$, under the invariant measure, almost
all particles are condensed at a single site. In this sense, the critical
zero-range process $\eta_{N}(\cdot)$ condensates. The main result
of the article describes the evolution of the condensate.

This model is said to be ``critical'' for the following reason.
Suppose that we replace $g(\eta_{x})$ in \eqref{e_genZRP} by $[g(\eta_{x})]^{\alpha}$
for some $\alpha>0$. It is known that the condensation phenomenon
occurs if $\alpha\ge1$, while a diffusive behavior without condensation
is observed if $\alpha<1$. For this reason the zero-range process
$\eta_{N}(\cdot)$ is said to be critical at $\alpha=1$.

\subsection{Order process}
\label{sec23}

Let ${\color{blue}\theta_{N}=N^{2}\log N}$ be the time-scale at which
the condensate moves, and denote by
\textcolor{blue}{$\xi_{N}(\cdot)$} the process obtained by speeding
up the zero-range process $\eta_{N}(\cdot)$ by $\theta_{N}$, i.e.,
$\xi_{N}(t)=\eta_{N}(t\,\theta_{N})$ for all $t\ge0$. Note that the
process \textcolor{blue}{$\xi_{N}(\cdot)$ }is the $\mc{H}_{N}$-valued,
continuous-time Markov chain whose generator is given by
${\color{blue}\ms {L}_{N}^{\xi}=\theta_{N}\,\ms{L}_{N}}$.

Denote by \textcolor{blue}{ $\mb{P}_{\eta}^{N}$ }the
probability measure on $D(\bb{R}_{+},\,\mc{H}_{N})$ induced
by the process $\xi_{N}(\cdot)$ starting from $\eta\in\mc{H}_{N}$
and by \textcolor{blue}{$\mb{E}_{\eta}^{N}$ }the expectation
with respect to $\mb{P}_{\eta}^{N}$.

Recall from \eqref{104}, \eqref{105} the definition of the trace
process ${\color{blue}(\xi_{N}^{\mc{E}_{N}}(t))_{t\ge0}}$,
of the projection $\Psi_{N}:\mc{E}_{N}\to S$, and of the order
process $(Y_{N}(t))_{t\ge0}$. For critical zero-range processes,
the order process $Y_{N}(\cdot)$ specifies the position of the condensate
for the trace process $\xi_{N}^{\mc{E}_{N}}(t)$. Recall that
{$\bb Q_{\eta}^{N}$}, $\eta\in\mc{H}_{N}$, denotes the
probability law on $D(\bb{R}_{+},\,S)$ induced by the order process
$Y_{N}(\cdot)$ when the underlying zero-range process $\xi_{N}(\cdot)$
starts from $\eta$.

\subsection{Main result\label{sec24}}

We first introduce the $S$-valued Markov chain $(Y(t))_{t\ge0}$
describing the evolution of the condensate. Denote by $\tau_{C}^{X}$,
$C\subset S$, the hitting time of the set $C$ with respect to the
random walk $X(\cdot)$ introduced above.
\begin{equation*}
\tau_{C}^{X}\;=\;\inf\big\{\,t>0:X(t)\in C\,\big\}\;.
\end{equation*}

Let ${\color{blue}\bb Q_{x}^X}$, $x\in S$, be the law of the process
$X(\cdot)$ starting at $x$. For two non-empty, disjoint subsets $A$,
$B$ of $S$, the equilibrium potential between $A$ and $B$ with respect
to the process $X(\cdot)$ is the function
$h_{A,\,B}^X:S\rightarrow\bb{R}$ defined by
\begin{equation}\label{epX}
h_{A,\,B}^X(x)\;=\;\bb Q_{x}^X \,[\,\tau_{A}^{X}<\tau_{B}^{X}\,]\;,
\quad x\in S \;.
\end{equation}

The capacity between $A$ and $B$ is given by
\begin{equation}
{\rm {cap}}_{X}(A,\,B)\;=\;D_{X}(h_{A,\,B}^{X})\;,\label{e_capX}
\end{equation}
where $D_{X}(\cdot)$ stands for the Dirichlet form associated to
the process $X(\cdot)$, which can be written as
\begin{equation}
D_{X}(f)\;=\;\frac{1}{2}\sum_{x,\,y\in S}\,m(x)\,r(x,\,y)\,[\,f(y)-f(x)\,]^{2}\label{e_DirX}
\end{equation}
for $f:S\rightarrow\bb{R}$. If the sets $A$, $B$ are singletons,
we write ${\rm {cap}}_{X}(x,\,y)$ instead of $\textrm{cap}_{X}(\{x\},\,\{y\})$.

Denote by \textcolor{blue}{$(Y(t))_{t\ge0}$} the $S$-valued,
continuous-time Markov chain associated to the generator $L_{Y}$
acting on $f:S\rightarrow\bb{R}$ as
\begin{equation}
(L_{Y}f)(x)\;=\;6\,\kappa\,\sum_{y\in S}
\,{\rm cap}_{X}(x,\,y)\,[\,f(y)-f(x)\,]\;.
\label{e_genZ}
\end{equation}

Recall from the previous section that we denote by $\bb Q_{x}^{L_Y}$,
$x\in S$, the probability measure on $D(\bb{R}_{+},\,S)$ induced by
the Markov chain $Y(\cdot)$ starting from $x$. Sometimes, we represent
$\bb Q_{x}^{L_Y}$ by ${\color{blue}\bb Q_{x}^{Y}}$.

The next theorem is the third main result of the article.

\begin{thm}
\label{t_main2}
Conditions $\mf{R}_{L_{Y}}$ and $\mf{D}$ hold for the critical
zero-range process, where $L_{Y}$ is given by \eqref{e_genZ}. In
particular, the critical zero-range process is $L_{Y}$-metastable.
\end{thm}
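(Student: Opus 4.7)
The critical zero-range fails condition (H1) of Theorem \ref{mt1} — the process does not visit every configuration of a well before leaving it — so the proof cannot invoke Theorem \ref{mt1} and must verify $\mf R_{L_Y}$ directly. The key advantage of working with the resolvent equation, rather than with the Poisson equation as in \cite{LMS}, is the $L^\infty$-bound $\|F_N\|_\infty\le\|g\|_\infty/\lambda$, which makes it possible to control $F_N$ pointwise at a single configuration rather than only in $L^2(\mu_N)$. The plan is to combine this bound with three inputs imported from \cite{LMS}: the capacity asymptotics $\theta_N\,\Cap_N(\mc E_N^x,\mc E_N^y)\to 6\kappa\,{\rm cap}_X(x,y)$, the super-harmonic functions on the wells, and the stationary estimate $\mu_N(\mc E_N^x)\to 1/\kappa$ of Theorem \ref{t_cond}.

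\textbf{Condition $\mf D$.} To establish $\sup_{\eta\in\mc E_N^x}\mb E_\eta^N[\int_0^t\chi_{\Delta_N}(\xi_N(s))\,ds]\to 0$, I would decompose the trajectory into successive sojourns in $\mc E_N$ alternated with excursions in $\Delta_N$. The expected length of each excursion, uniformly over entry points on the boundary of $\Delta_N$, is bounded via the super-harmonic functions of \cite{LMS}, while the expected number of excursions in $[0,t]$ is controlled by $t$ divided by the mean sojourn in a well, which is of order one on the speeded-up scale by the capacity asymptotics. Combining the two estimates yields $\mf D$.

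\textbf{Condition $\mf R_{L_Y}$.} Let $\eta^x=N\delta_x$ denote the ground state of $\mc E_N^x$. The argument splits into two halves. \emph{(i) Asymptotic constancy.} Using the super-harmonic barrier of \cite{LMS} I would first prove that, uniformly in $\eta\in\mc E_N^x$,
\begin{equation*}
\mb P_\eta^N\big[\,\tau_{\eta^x}<\tau_{\breve{\mc E}_N^x}\,\big]\;\longrightarrow\;1
\quad\text{and}\quad
\mb E_\eta^N\big[\,\tau_{\eta^x}\wedge\tau_{\breve{\mc E}_N^x}\,\big]\;\longrightarrow\;0.
\end{equation*}
Optional stopping of the martingale \eqref{07} at $\tau=\tau_{\eta^x}\wedge\tau_{\breve{\mc E}_N^x}$ gives
\begin{equation*}
F_N(\eta)\;=\;\mb E_\eta^N\!\big[\,e^{-\lambda\tau}\,F_N(\xi_N(\tau))\,\big]\;+\;\mb E_\eta^N\!\Big[\int_0^\tau e^{-\lambda r}\,G_N(\xi_N(r))\,dr\Big],
\end{equation*}
so combining the two displays with the $L^\infty$-bound on $F_N$ forces $F_N(\eta)-F_N(\eta^x)\to 0$ uniformly in $\eta\in\mc E_N^x$. \emph{(ii) Identification of the limit.} Set $f_N(x)=F_N(\eta^x)$ and let $f$ be a subsequential limit. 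Multiplying the resolvent equation by the equilibrium potentials $h_{\mc E_N^y,\breve{\mc E}_N^y}$, integrating against $\mu_N$, and applying the capacity asymptotics of \cite{LMS} together with Theorem \ref{t_cond}, shows, by a computation parallel to the one behind Theorem \ref{mt1}, that $(\lambda-L_Y)f=g$ with $L_Y$ given by \eqref{e_genZ}. Uniqueness for the reduced resolvent equation then propagates convergence to the full sequence.

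\textbf{Main obstacle.} The hard step is (i): showing that on the critical scale $\theta_N=N^2\log N$ the process started at any $\eta\in\mc E_N^x$ hits the ground state $\eta^x$ before leaving the well. This is where the critical structure genuinely enters, because the process does not visit points generically and no off-the-shelf mixing-time estimate suffices. The required pointwise hitting estimate has to be produced by refining the super-harmonic construction of \cite{LMS} into a quantitative barrier that is small at $\eta^x$ and large on $\mc E_N^x\setminus\{\eta^x\}$, together with a control of the chain reflected on $\mc E_N^x$. It is precisely this pointwise upgrade, made possible by the $L^\infty$ rather than $L^2$ character of the resolvent bounds, that extends the metastability of the critical zero-range from measures spread over wells, as in \cite{LMS}, to arbitrary initial configurations.
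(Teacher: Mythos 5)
Your plan for condition $\mf R_{L_Y}$ aligns closely with the paper's argument. Step (i) — proving the process started anywhere in $\mc E_N^x$ hits the ground state $\zeta_N^x$ before leaving the well and in vanishing time — is the paper's condition $\mf V$, established quantitatively in Proposition \ref{p_mixmain} ($\sup_{\eta\in\mc E_N^x}\mb P_\eta^N[\tau_{\zeta_N^x}\ge\mb u_N]\to 0$ for $\mb u_N\to 0$), and then turned into asymptotic constancy of $F_N$ by Proposition \ref{p01}. You identify the bottleneck correctly, and the paper resolves it exactly as you predict: Lemma \ref{lem_superh} shows the super-harmonic barrier of \cite{LMS} remains super-harmonic for the process reflected at $\partial\mc W_N^x$, Lemmata \ref{lem38}--\ref{lem_hitbot} yield hitting-time bounds for the reflected chain, and Proposition \ref{p_mix1} couples it to the original dynamics. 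Your optional-stopping variant at $\tau=\tau_{\zeta_N^x}\wedge\tau_{\breve{\mc E}_N^x}$ is a legitimate substitute for the paper's semigroup mollification (Lemma \ref{l08b}), though slightly less economical: you need $\mb E_\eta^N[\tau]\to 0$, whereas the paper only uses the high-probability bound on $\tau_{\zeta_N^x}$. Step (ii) is Proposition \ref{l01b} and Corollary \ref{l11}; one small correction is that the capacity asymptotics $\theta_N\,\Cap_N(\mc E_N^x,\mc E_N^y)\to 6\,{\rm cap}_X(x,y)$ are computed afresh in this paper (Proposition \ref{p131}, Section \ref{sec9}) rather than taken from \cite{LMS}.

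Your treatment of condition $\mf D$ departs from the paper and has a real gap. You propose bounding the expected length of each excursion in $\Delta_N$ "uniformly over entry points" via the super-harmonic functions, but those functions are built only on $\mc{W}_N^x\setminus\mc{D}_N^x$ and vanish outside $\overline{\mc{W}_N^x\setminus\mc{D}_N^x}$; they say nothing about excursions that leave $\mc W_N^x$ and wander in the bulk of $\Delta_N$. The paper avoids this entirely: it proves condition $\mf M$ (the process does not escape the enlarged well $\mc V_N^x$ before time $\mb h_N$, Corollary \ref{cor_m1}, and the reflected process mixes in time $\mb s_N\ll\mb h_N$, Proposition \ref{pro_mix2} via the spectral gap of Theorem \ref{t_gap}), and then Corollary \ref{cor01} converts local equilibration plus the stationary estimate $\mu_N(\Delta_N)/\mu_N(\mc E_N^x)\to 0$ into $\mf D$ without ever controlling individual excursions. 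In fact you do not need a separate argument for $\mf D$ at all: once $\mf R_{L_Y}$ is established, Lemma \ref{nl1} (take $g\equiv 1$, whence $f\equiv 1/\lambda$ and \eqref{nf01} holds) or Theorem \ref{1t_main2} directly give $\mf D$ for free. So the excursion decomposition, besides being incomplete, is unnecessary.
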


In view of Theorem \ref{1t_main2}, this result establishes that, in
the time-scale $\theta_{N}=N^{2}\log N$, the condensate evolves as the
Markov chain $Y(\cdot)$ and that, with the exception of time intervals
whose total length is negligible, almost all particles sit on a single
site.

\begin{rem}
\label{rm}
The so-called martingale approach developed in \cite{BL1,BL2} to
derive the metastable behavior of a Markov process, based on potential
theory, does not apply here because the process does not visit all
points of a well before jumping to a new one, and condition (H1) of
\cite{BL1} is violated. This characteristic is the main difference
between the critical zero-range process and the super-critical ones.
\end{rem}

\begin{rem}
\label{rm11}
By using the so-called Poisson equation approach developed in
\cite{LS3, OR, RS}, we proved in \cite{LMS} a weaker version of
Theorem \ref{t_main2}.  Denote by $\mu_{N}^{x}(\cdot)$, $x\in S$, the
measure on $\mc{E}_{N}^{x}$ obtained by conditioning $\mu_{N}$ on
$\mc{E}_{N}^{x}$:
\begin{equation*}
\mu_{N}^{x}(\eta)\;=\;\frac{\mu_{N}(\eta)}{\mu_{N}(\mc{E}_{N}^{x})}
\;,\quad\eta\,\in\,\mc{E}_{N}^{x}\;.
\end{equation*}
We assumed in \cite{LMS} that the initial distribution is a measure
$\nu_{N}$ concentrated on a set $\mc{E}_{N}^{x}$ for some $x$, and
satisfying the following $L^{2}$-condition: there exists a finite
constant $C_{0}$ such that
\begin{equation}
E_{\mu_{N}^{x}}\,\Big[\,\Big(
\frac{d\nu_{N}}{d\mu_{N}^{x}}\Big)^{2}\,\Big]
\;=\;\sum_{\eta\in\mc{E}_{N}^{x}}
\frac{\nu_{N}(\eta)^{2}}{\mu_{N}^{x}(\eta)}
\;\le\;C_{0}\;\;\;\;\;\text{for all }N\in\bb{N}\;.
\label{e_L2con}
\end{equation}
The main novelty of Theorem \ref{t_main2} lies in the fact that it
removes assumption \eqref{e_L2con} and allows the process to start
from a fixed configuration inside some well.
\end{rem}

\begin{rem}
\label{rm5} The proof of Theorem \ref{t_main2} relies on many estimates
obtained in \cite{LMS}. In particular, on the construction of a super-harmonic
function inside the wells.
\end{rem}

\begin{rem}
\label{rm10}
The equilibration inside the well, or the loss of memory, is obtained
in two different manners. First, we derive a sharp bound on the
relaxation time of the process reflected at the boundary of a
well. This relaxation time is shown to be much smaller than the
metastable time-scale $\theta_{N}$.

Then, we show that the process visits the bottom of the well before
visiting a new well. This crucial property is derived with the help
of the super-harmonic function alluded to above. Thus, although the
process does not visit all configurations in a well before reaching
a new one, it visits a specific configuration.
\end{rem}

\begin{rem}
\label{rm1}
The symmetry of the jump rates $r$ of the chain $X$ is used in the
construction of the super-harmonic function. Theorem \ref{t_main2}
should be in force without this assumption, but a proof is missing.
\end{rem}

The proof of Theorem \ref{t_main2} relies on Theorem \ref{1t_main2}.
The strategy is presented in Section \ref{sec5} and the details in
Sections \ref{sec3}--\ref{sec9}.

\section{Proof Theorem \ref{1t_main2}}
\label{sec25}

In the first part of this section, we show that condition
$\mf{R}_{\ms{L}}$ implies assertions $\mf{C}_{\ms{L}}$ and
$\mf{D}$. In the second part, we prove the reverse statement.

\subsection{Condition $\mf{R}_{\ms{L}}$ entails $\mf{C}_{\ms{L}}$ and
$\mf{D}$}

We first show that the solution of the resolvent equation is bounded.
Fix a function $g:S\to\bb{R}$ and $\lambda>0$. It is well known that
the solution of the resolvent equation \eqref{1f01} can be represented
as
\begin{equation}
{F}_{N}(\eta)\;=\;\mb{E}_{\eta}^{N}\,
\Big[\,\int_{0}^{\infty}\,e^{-\lambda s}\,
G_{N}(\xi_{N}(s))\,ds\,\Big]\;.
\label{1f11}
\end{equation}
In particular, there exists a finite constant $C_{0}=C_{0}(\lambda,g)$
such that
\begin{equation}
\max_{\eta\in\mc{H}_{N}}|\,{F}_{N}(\eta)\,|\;\le\;C_{0}\;.
\label{1f10}
\end{equation}

The next result asserts that condition $\mf{R}_{\ms{L}}$ implies assertion
$\mf{D}$.

\begin{lem}
\label{nl1}
Assume that condition $\mf{R}_{\ms{L}}$ holds. Then, assertion
$\mf{D}$ is in force.
\end{lem}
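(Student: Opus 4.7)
The plan is to apply condition $\mf{R}_{\ms{L}}$ with the specific choice $g \equiv 1$ on $S$, and then exploit the probabilistic representation of $F_N$ given by \eqref{1f11} to control the time spent in $\Delta_N$.

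First I would note that with $g \equiv 1$ the lifting $G_N$ equals $\chi_{\mc{E}_N} = 1 - \chi_{\Delta_N}$. On the $S$ side, since $\ms L$ is the generator of a Markov chain it annihilates constants, so the reduced resolvent equation $(\lambda - \ms L) f = 1$ admits the unique solution $f \equiv 1/\lambda$. Condition $\mf{R}_{\ms{L}}$ therefore tells us that the unique solution $F_N$ of $(\lambda - \ms L_N)F_N = 1 - \chi_{\Delta_N}$ satisfies
\begin{equation*}
\lim_{N\to\infty}\,\max_{x\in S}\,\sup_{\eta\in\mc E_N^x}\Big|\,F_N(\eta)-\tfrac{1}{\lambda}\,\Big|\;=\;0\,.
\end{equation*}

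Next, using the representation \eqref{1f11} and linearity, I rewrite
\begin{equation*}
F_N(\eta)\;=\;\frac{1}{\lambda}\,-\,\mb E_\eta^N\!\Big[\int_0^\infty e^{-\lambda s}\,\chi_{\Delta_N}(\xi_N(s))\,ds\Big]\,.
\end{equation*}
Combining these two displays yields the key uniform estimate
\begin{equation*}
\lim_{N\to\infty}\,\max_{x\in S}\,\sup_{\eta\in\mc E_N^x}\mb E_\eta^N\!\Big[\int_0^\infty e^{-\lambda s}\,\chi_{\Delta_N}(\xi_N(s))\,ds\Big]\;=\;0\,.
\end{equation*}

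Finally, to pass from this Laplace-transform statement to the integrated form required by $\mf D$, I would simply bound $\chi_{\Delta_N}(\xi_N(s))\le e^{\lambda t}\,e^{-\lambda s}\,\chi_{\Delta_N}(\xi_N(s))$ for $s\in[0,t]$ and extend the range of integration to $[0,\infty)$, giving
\begin{equation*}
\mb E_\eta^N\!\Big[\int_0^t\chi_{\Delta_N}(\xi_N(s))\,ds\Big]\;\le\;e^{\lambda t}\,\mb E_\eta^N\!\Big[\int_0^\infty e^{-\lambda s}\,\chi_{\Delta_N}(\xi_N(s))\,ds\Big]\,,
\end{equation*}
which by the previous display tends to $0$ uniformly for $\eta\in\mc E_N^x$, $x\in S$. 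Since $\lambda > 0$ is arbitrary and $t$ is fixed, this is precisely condition $\mf D$. There is no genuine obstacle here; the proof is a direct one-line manipulation once the correct test function $g\equiv 1$ has been chosen, and the main ``insight'' is simply recognizing that $\chi_{\Delta_N} = 1 - G_N$ for that choice.
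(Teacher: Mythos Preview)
Your proof is correct and follows essentially the same argument as the paper: choose $g\equiv 1$ so that $G_N=\chi_{\mc E_N}$ and $f\equiv 1/\lambda$, use the stochastic representation \eqref{1f11} to identify $1/\lambda - F_N(\eta)$ with the Laplace-weighted time in $\Delta_N$, and then bound the finite-horizon integral by $e^{\lambda t}$ times the infinite one. The only superfluous remark is that ``$\lambda>0$ is arbitrary''; a single fixed $\lambda$ already suffices for each $t$.
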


\begin{proof}
We first claim that for all $\lambda>0$,
\begin{equation}
\label{nf01}
\lim_{N\to\infty}\,
\max_{\eta\in\mc{E}_{N}}\mb{E}_{\eta}^{N}\,
\Big[\,\int_{0}^{\infty}\,e^{-\lambda s}\,
\chi_{\Delta_{N}}(\xi_{N}(s))\,ds\,\Big]\;=\;0\;.
\end{equation}
Indeed, fix $\lambda>0$ and $g:S \to \bb R$ given by $g(x)=1$ for all
$x\in S$. Let $G_N$, $F_N$ be given by \eqref{b32} and \eqref{1f01},
respectively. By \eqref{1f11} and since $G_N = \chi_{_{\mc E_N}}$, for
all $\eta\in \mc H_N$,
\begin{equation*}
F_N(\eta) \;-\; \frac{1}{\lambda}\;=\;
\mb{E}_{\eta}^{N}\,
\Big[\,\int_{0}^{\infty}\,e^{-\lambda s}\,
[\, G_{N}(\xi_{N}(s)) \,-\, 1\,] \,ds\,\Big]
\;=\; -\,
\mb{E}_{\eta}^{N}\,
\Big[\,\int_{0}^{\infty}\,e^{-\lambda s}\,
\chi_{\Delta_N} (\xi_{N}(s)) \,ds\,\Big]\;.
\end{equation*}
Since the solution $f$ of the reduced resolvent equation
$(\lambda - \ms L) f = g$ is $f(x) = 1/\lambda$ for all $x\in S$,
claim \eqref{nf01} follows from \eqref{1f02}.

Fix $t>0$, $\lambda>0$, and observe that
\begin{equation*}
\begin{aligned}
\mb{E}_{\eta}^{N}\,
\Big[\,\int_{0}^{t}\,
\chi_{\Delta_N} (\xi_{N}(s)) \,ds\,\Big] \; & \le\;
e^{\lambda t} \, \mb{E}_{\eta}^{N}\,
\Big[\,\int_{0}^{t}\,e^{-\lambda s}\,
\chi_{\Delta_N} (\xi_{N}(s)) \,ds\,\Big] \\
& \le\;
e^{\lambda t} \, \mb{E}_{\eta}^{N}\,
\Big[\,\int_{0}^{\infty}\,e^{-\lambda s}\,
\chi_{\Delta_N} (\xi_{N}(s)) \,ds\,\Big]
\end{aligned}
\end{equation*}
for all $\eta\in\mc H_N$. Hence, condition $\mf D$ follows from
\eqref{nf01}.
\end{proof}

We prove some consequences of condition $\mf{R}_{\ms{L}}$.  The next
result asserts that the process $\xi_{N}(\cdot)$ can not jump from one
well to another quickly. The proof of this result is similar to the
one of \cite[Proposition 5.2]{RS}. Recall from \eqref{1.05} that we
denote by $\tau_{\mc A}$, $\mc A\subset \mc H_N$, the hitting time of
the set $\mc A$.  Let
\begin{equation}
\label{breveE}
\breve{\mc{E}}_{N}^{x}\;=\;
\bigcup_{y\in S\setminus\{x\}}\mc{E}_{N}^{y}\;,\quad x\,\in\,S\;.
\end{equation}

\begin{lem}
\label{R3}
Assume that condition $\mf{R}_{\ms{L}}$ holds.
Then, for all $x\in S$,
\begin{equation}
\limsup_{t\rightarrow0}\limsup_{N\rightarrow\infty}
\sup_{\eta\in\mc{E}_{N}^{x}}
\mb{P}_{\eta}^{N}\,[\,\tau_{\breve{\mc{E}}_{N}^{x}}\,<\,t\,]\;=\;0\;.
\label{r3}
\end{equation}
\end{lem}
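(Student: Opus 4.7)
The plan is to choose $g$ in the resolvent equation \eqref{1f01} strategically and apply the strong Markov property at $\sigma := \tau_{\breve{\mc E}_N^x}$ to convert the resulting identity directly into an upper bound for $\mb P_\eta^N[\sigma < t]$. I would take $g(x) = 0$ and $g(y) = 1$ for every $y \neq x$, so that $G_N = \chi_{\breve{\mc E}_N^x}$; let $F_N$ and $f$ denote the corresponding solutions of \eqref{1f01} and $(\lambda - \ms L)f = g$.

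Since $G_N$ vanishes outside $\breve{\mc E}_N^x$ and $\xi_N(s)\notin \breve{\mc E}_N^x$ for every $s<\sigma$ by definition of $\sigma$, the probabilistic representation \eqref{1f11} combined with the strong Markov property (equivalently, optional stopping applied to the martingale \eqref{07}) yields
\[
F_N(\eta) \;=\; \mb E_\eta^N\!\left[\, e^{-\lambda \sigma}\, F_N(\xi_N(\sigma))\,\right]
\qquad \text{for every } \eta \in \mc E_N^x .
\]
On $\{\sigma < t\}$ I would bound $e^{-\lambda \sigma} \geq e^{-\lambda t}$ and invoke condition $\mf R_{\ms L}$ to replace $F_N(\xi_N(\sigma))$ (which lies in $\mc E_N^y$ for some $y \neq x$) by $\min_{y \neq x} f(y) - o_N(1)$; together with $F_N \geq 0$ on the complementary event, this gives
\[
\mb P_\eta^N\!\left[\, \sigma < t \,\right] \;\leq\; e^{\lambda t}\,\frac{F_N(\eta)}{\min_{y \neq x} f(y) \,-\, o_N(1)} .
\]
Taking $\limsup_{N}$ and using \eqref{1f02} once more to get $F_N(\eta) \to f(x)$ uniformly on $\mc E_N^x$, and then letting $t \to 0^+$, the bound reduces to $f(x)/\min_{y \neq x} f(y)$, valid for each $\lambda > 0$.

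To conclude, I would observe that the left-hand side of \eqref{r3} does not depend on $\lambda$, while the ratio $f(x)/\min_{y \neq x} f(y)$ vanishes as $\lambda \to \infty$. Indeed, from the representation $f(z) = \mb E^{\ms L}_z\bigl[\int_0^\infty e^{-\lambda s}\, g(Z(s))\, ds\bigr]$ (with $Z$ the $S$-valued chain generated by $\ms L$) and the choice of $g$, a one-line computation using the first holding time of $Z$ gives $f(x) \leq q_x/\lambda^2$ and $\min_{y \neq x} f(y) \geq 1/(q_{\max} + \lambda)$, where $q_z := -\ms L(z,z)$ and $q_{\max} := \max_z q_z$; hence the ratio is $O(1/\lambda) \to 0$. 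The only delicate point is ensuring that the Dynkin/strong-Markov decomposition at $\sigma$ cleanly isolates the exit probability, which is exactly why we pick $g$ to vanish at $x$: this forces $G_N \equiv 0$ on every state visited before $\sigma$, eliminating the extra integral term. The $\lambda \to \infty$ asymptotics of $f$ are routine.
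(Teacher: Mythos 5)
Your proof is correct, but it takes a genuinely different route from the paper's, and the tradeoffs are worth spelling out. The paper \emph{chooses $f$ first} — it takes $f = 1-\delta_x$, sets $g = (\lambda - \ms L)f$, stops the resolvent martingale at $t\wedge\tau$, and bounds the boundary term from below. Because $g$ defined this way need not be nonnegative, neither is $F_N$, and the paper has to prove a separate minimum-principle claim (Claim~A) to push the lower bound $F_N + c_N \ge 0$ from $\mc E_N$ (where condition $\mf R_{\ms L}$ gives it) out to all of $\mc H_N$ — this is precisely where the structure $(\lambda - \ms L_N)F_N = 0$ on $\Delta_N$ is used. You instead \emph{choose $g$ first} ($g = \chi_{S\setminus\{x\}}$), which makes $G_N\ge 0$ and hence $F_N\ge 0$ automatically from the probabilistic representation \eqref{1f11}, so the minimum-principle step disappears. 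You also apply the strong Markov property at $\sigma = \tau_{\breve{\mc E}^x_N}$ rather than optional stopping at $t\wedge\tau$; since $G_N$ vanishes before $\sigma$, the integral term drops out cleanly and gives $F_N(\eta) = \mb E_\eta^N[e^{-\lambda\sigma}F_N(\xi_N(\sigma))]$, from which the exit-probability bound follows by keeping only the event $\{\sigma < t\}$. The price of your choice is that $f(x)$ is no longer exactly zero, so for a single $\lambda$ the bound $e^{\lambda t} f(x)/\min_{y\ne x}f(y)$ does not vanish as $t\to 0$; you recover the result by taking $\lambda\to\infty$, and your estimates $f(x)\le q_x/\lambda^2$ and $\min_{y\ne x}f(y)\ge 1/(q_{\max}+\lambda)$ are correct (and handle absorbing $y$, where $q_y=0$, without trouble). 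In short: the paper's argument works for a single fixed $\lambda$ but needs a minimum-principle lemma on $\Delta_N$; yours avoids that lemma entirely at the cost of a final large-$\lambda$ limit, which is elementary for a finite-state resolvent. Both are valid uses of condition $\mf R_{\ms L}$.
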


\begin{proof}
Fix $\lambda>0$, $x\in S$ and $\eta^N\in\mc{E}_{N}^{x}$. Let
$f:S\to\bb{R}$ be the function given by $f(y)=1-\delta_{x,y}$. Set
$g = (\lambda - \ms L) f$, and denote by $F_{N}$ the solution of the
resolvent equation \eqref{1f01}. Let $M_N(t)$ be the martingale
defined by
\begin{equation*}
M_N(t) \;=\;  F_{N}(\xi_{N}(t))\,
-\,{F}_{N}(\xi_{N}(0))
\,-\,\int_{0}^{t} (\, {\ms{L}}_{N} \,{F}_{N}\,)
(\xi_{N}(r))\,dr\;.
\end{equation*}
As $\ms{L}_{N}F_{N}=\lambda F_{N}-G_{N}$, for every $t>0$,
\begin{equation}
\mb{E}_{\eta}^{N}\big[\,F_{N}(\xi_{N}(t\wedge\tau))\,\big]
\;=\;{F}_{N}(\eta^N)\,+\,
\mb{E}_{\eta}^{N}\Big[\,\int_{0}^{t\wedge\tau}\,
(\lambda{F}_{N}-G_{N})(\xi_{N}(r))\,dr\,\Big]\;,
\label{08}
\end{equation}
where $\tau=\tau_{\breve{\mc{E}}_{N}^{x}}$.

By condition $\mf{R}_{\ms{L}}$ and the definition of $f$,
$\lim_{N\to\infty}F_{N}(\eta^N)=0$. By \eqref{1f10} and by definition
of $G_{N}$, $\lambda{F}_{N}-G_{N}$ is bounded. The right-hand side of
\eqref{08} is thus bounded by $a_{N}+C_{0}t$ for some finite constant
$C_{0}$ and a sequence $a_{N}$ such that $a_{N}\to0$.

We turn to the left-hand side of \eqref{08}. Since $f\ge0$, by
condition $\mf R_{\ms L}$, there exists a constant $c_{N}\ge0$ such
that $c_{N}\to0$ and $\widetilde{F}_{N}(\zeta)=F_{N}(\zeta)+c_{N}\ge0$
for all $\zeta\in\mc{E}_{N}$.

\smallskip \noindent
\textit{Claim A: $\widetilde{F}_{N}(\zeta)\ge0$ for all $\zeta\in\mc{H}_{N}$.}
\smallskip

\noindent To prove this claim, let $\zeta$ be a configuration at which
$\widetilde{F}_{N}$ achieves its minimum value so that
$(\ms{L}_{N}F_{N})(\zeta)=(\ms{L}_{N}\widetilde{F}_{N})(\zeta)\ge0$.
If $\zeta\in\mc{E}_{N}$, there is nothing to prove. If
$\zeta\in\Delta_{N}$, since $G_{N}$ vanishes on $\Delta_{N}$ and since
$(\lambda-\ms{L}_{N})F_{N}=G_{N}$,
$F_{N}(\zeta)=\lambda^{-1}(\ms{L}_{N}F_{N})(\zeta)\ge0$ so that
$\widetilde{F}_{N}(\zeta)=F_{N}(\zeta)+c_{N}\ge0$, as
claimed. \smallskip

The left-hand side of \eqref{08} is equal to
$\mb{E}_{\eta}^{N}[\,\widetilde{F}_{N}(\xi_{N}(t\wedge\tau))\,]-c_{N}$.
By $\mf R_{\ms L}$, for $N$ sufficiently large,
$\widetilde{F}_{N}(\zeta)\ge1/2$ on $\breve{\mc{E}}_{N}^{x}$. Hence,
the left-hand side of \eqref{08} is bounded below by
$(1/2)\mb{P}_{\eta}^{N}[\,\xi_{N}(t\wedge\tau)\in\breve{\mc{E}}_{N}^{x}]-c_{N}$.

Putting together the previous estimates yields that
\begin{equation*}
\mb{P}_{\eta}^{N}\big[\,\xi_{N}(t\wedge\tau)
\in\breve{\mc{E}}_{N}^{x}\,\big]\;
\le\;2\,\big(c_{N}\,+\,a_{N}\,+\,C_{0}t\,\big)\;.
\end{equation*}
To complete the proof of the lemma, it remains to remark that
\begin{equation*}
\mb{P}_{\eta}^{N}\,[\,\tau_{\breve{\mc{E}}_{N}^{x}}\,<\,t\,]
\;\le\;\mb{P}_{\eta}^{N}\big[\,\xi_{N}(t\wedge\tau)
\in\breve{\mc{E}}_{N}^{x}\,\big]\;.
\end{equation*}
\end{proof}

The next result states that the sequence
$(\bb Q_{\eta^{N}}^{N})_{N\in\bb{N}}$ is tight.

\begin{prop}
\label{1t_tight}Assume that condition $\mf{D}$ and \eqref{r3}
hold. Then, the sequence $(\bb Q_{\eta^{N}}^{N})_{N\in\bb{N}}$
is tight, and any limit point $\bb Q^{*}$ of this sequence is
such that
\begin{equation}
\bb Q^{*}\,[\,Y(t)\not=Y(t-)\,]\,=\,
0\text{ for all }t>0\;.\label{1e_leftcont}
\end{equation}
\end{prop}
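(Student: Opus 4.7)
The plan is to verify Aldous's tightness criterion and then derive \eqref{1e_leftcont} from essentially the same estimate applied at deterministic times. The central technical point is to translate properties of the order process $Y_N$, which evolves in trace-time, into properties of $\xi_N$, which evolves in real time: condition $\mf D$ controls the discrepancy between the two time scales, while \eqref{r3} prevents $\xi_N$ from exiting a well too quickly.

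For tightness, since $S$ is finite, by Aldous's criterion it suffices to prove that, for every $T>0$,
\begin{equation*}
\lim_{\delta\downarrow 0}\,\limsup_{N\to\infty}\,\sup_{\tau,\,\theta}\,\mb P^N_{\eta^N}\big[\,Y_N(\tau+\theta)\neq Y_N(\tau)\,\big]\;=\;0,
\end{equation*}
the supremum running over $Y_N$-stopping times $\tau\le T$ and $0\le\theta\le\delta$. Fix such $\tau,\theta,\delta$, set $R:=S^{\mc E_N}(\tau)$ (a $\xi_N$-stopping time by standard properties of time changes), and let $X\in S$ be the random index with $\xi_N(R)\in\mc E_N^X$, so that $Y_N(\tau)=X$. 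Introduce the events
\begin{equation*}
B\,:=\,\big\{\xi_N\text{ visits }\breve{\mc E}_N^X\text{ during }[R,R+2\delta]\big\}\,,\qquad C\,:=\,\Big\{\int_R^{R+2\delta}\chi_{\Delta_N}(\xi_N(s))\,ds>\delta\Big\}\,.
\end{equation*}
On $B^c\cap C^c$, $\xi_N$ accumulates at least $\delta\ge\theta$ units of real time inside $\mc E_N^X$ during $[R,R+2\delta]$, whence $T^{\mc E_N}(R+2\delta)\ge\tau+\theta$; consequently $S^{\mc E_N}(\tau+\theta)\in[R,R+2\delta]$ and $\xi_N(S^{\mc E_N}(\tau+\theta))\in\mc E_N^X$, i.e.\ $Y_N(\tau+\theta)=X$. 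The strong Markov property at $R$, combined respectively with \eqref{r3} and with Markov's inequality plus $\mf D$, yields
\begin{equation*}
\mb P^N_{\eta^N}[B]\,\le\,\max_{x\in S}\sup_{\eta\in\mc E_N^x}\mb P^N_\eta\big[\tau_{\breve{\mc E}_N^x}<2\delta\big]\,,\qquad
\mb P^N_{\eta^N}[C]\,\le\,\frac{1}{\delta}\max_{x\in S}\sup_{\eta\in\mc E_N^x}\mb E^N_\eta\!\Big[\int_0^{2\delta}\chi_{\Delta_N}(\xi_N(s))\,ds\Big]\,.
\end{equation*}
Taking $\limsup_N$ and then $\delta\downarrow 0$ makes both right-hand sides vanish, so Aldous's criterion holds and the sequence $(\bb Q_{\eta^N}^N)$ is tight.

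For \eqref{1e_leftcont}, use the splitting $\{Y_N(t+\delta)\neq Y_N(t-\delta)\}\subseteq\{Y_N(t+\delta)\neq Y_N(t)\}\cup\{Y_N(t)\neq Y_N(t-\delta)\}$. Applying the bound above to the deterministic times $\tau=t$ (resp.\ $\tau=t-\delta$) with $\theta=\delta$ gives $\limsup_{\delta\downarrow 0}\limsup_N\mb P^N_{\eta^N}[Y_N(t+\delta)\neq Y_N(t-\delta)]=0$. Given a limit point $\bb Q^*$, choose $\delta_n\downarrow 0$ so that both $t\pm\delta_n$ are continuity points of the one-dimensional marginals of $\bb Q^*$ (possible since the set of discontinuities is countable); the continuous mapping theorem then gives $\bb Q^*[Y(t+\delta_n)\neq Y(t-\delta_n)]\to 0$. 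Since $S$ is discrete and trajectories are c\`adl\`ag, $\mathbf 1\{Y(t+\delta_n)\neq Y(t-\delta_n)\}\to\mathbf 1\{Y(t)\neq Y(t-)\}$ almost surely, and \eqref{1e_leftcont} follows by dominated convergence. The main technical delicacy, in my view, lies in the bookkeeping around the time change: verifying that $R=S^{\mc E_N}(\tau)$ is indeed a $\xi_N$-stopping time on which the strong Markov property applies with $\xi_N(R)\in\mc E_N$, and confirming the pathwise inequality relating trace-time to real time on $B^c\cap C^c$. Once this is in place, the proposition reduces cleanly to the two inputs $\mf D$ and \eqref{r3}.
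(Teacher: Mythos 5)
Your argument is correct and follows exactly the route the paper gestures at: the paper itself provides no proof, citing only ``conditions $\mf D$, \eqref{r3} and Aldous' criterion'' and deferring to a companion paper, so your contribution is to supply the details, and they check out. The reduction to Aldous for finite $S$, the change of clock $R = S^{\mc E_N}(\tau)$ (indeed an $(\ms F_t)$-stopping time since $\tau$ is a $(\ms G^N_t)$-stopping time and $S^{\mc E_N}(\cdot)$ is a right-continuous increasing family of $(\ms F_t)$-stopping times), the decomposition into $B$ and $C$, the strong Markov estimate, and the passage from the deterministic-time estimate to \eqref{1e_leftcont} via a.s.\ continuity points and dominated convergence are all sound.

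One small slip worth fixing: you deduce $S^{\mc E_N}(\tau+\theta)\in[R,R+2\delta]$ from $T^{\mc E_N}(R+2\delta)\ge\tau+\theta$. But the paper defines $S^{\mc A}(t)=\sup\{s:T^{\mc A}(s)\le t\}$, which is the \emph{upper} inverse; if $\theta=\delta$ and $T^{\mc E_N}(R+2\delta)=\tau+\delta$ exactly while $\xi_N(R+2\delta)\in\Delta_N$, then $S^{\mc E_N}(\tau+\theta)$ overshoots $R+2\delta$ and the process could re-enter $\mc E_N$ in a different well, breaking the identification $Y_N(\tau+\theta)=X$. The remedy is immediate: define $B$ and $C$ on the window $[R,R+3\delta]$ (with $C=\{\int_R^{R+3\delta}\chi_{\Delta_N}\,ds>\delta\}$). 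Then on $B^c\cap C^c$ the real time spent in $\mc E_N^X$ is at least $2\delta>\delta\ge\theta$, hence $T^{\mc E_N}(R+3\delta)>\tau+\theta$, which forces $S^{\mc E_N}(\tau+\theta)<R+3\delta$ strictly. The asymptotics are unchanged, and the bound on $\mb P[B]$ then uses $\eqref{r3}$ with $3\delta$ in place of $2\delta$, which makes no difference in the double limit.

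One secondary remark: you phrase the choice of $\delta_n$ as avoiding ``discontinuity points of the one-dimensional marginals of $\bb Q^*$''; the cleaner statement is to pick $t\pm\delta_n$ outside the at-most-countable set $\{s>0:\bb Q^*[Y(s)\ne Y(s-)]>0\}$, so that the coordinate projections at $t\pm\delta_n$ are $\bb Q^*$-a.s.\ continuous and the mapping theorem applies to the (automatically continuous, $S$ being discrete) function $(a,b)\mapsto\mathbf 1\{a\ne b\}$. This is what you mean, but it is worth being explicit.
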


\begin{proof}
This result follows from conditions $\mf{D}$, \eqref{r3} and
Aldous' criterion. We refer to \cite[Theorem 5.4]{LMS} for a proof.
\end{proof}

Recall from \eqref{15} the definition of the time-change
$S^{\mc A}(t)$, $\mc A \subset\mc H_N$.  Clearly, for all $t\ge0$,
\begin{equation}
\label{11}
T^{\mc{A}}\big(\,S^{\mc{A}}(t)\,\big)\;=\;t\;.
\end{equation}
In contrast, we only have
$S^{\mc{A}}\big(\,T^{\mc{A}}(t)\,\big)\ge t$ and a strict
inequality may occur. Furthermore, for all $t>0$ and $\epsilon>0$,
\begin{equation}
\label{12}
\Big\{\,S^{\mc{A}}(t)\,-\,t\;\ge\;
\epsilon\,\Big\}\;\subset\;
\Big\{\,\int_{0}^{t+\epsilon}\chi_{\mc{A}^{c}}(\xi_{N}(s))\,ds
\;\ge\;\epsilon\,\Big\}\;.
\end{equation}
Indeed, if $S^{\mc{A}}(t)\ge t+\epsilon$, applying $T^{\mc{A}}$
on both sides of this inequality, as $T^{\mc{A}}$ is an increasing
function, by \eqref{11},
\begin{equation*}
t\;\ge\;T^{\mc{A}}(\,t+\epsilon\,)
\quad\text{so that}\quad
t\;+\;\epsilon\;-\;T^{\mc{A}}(\,t+\epsilon\,)\;\ge\;\epsilon\;.
\end{equation*}
This last relation corresponds exactly to the right-hand side of
\eqref{12}.

Denote by ${\color{blue}\{{\ms{F}}_{t}^{\,0}\}_{t\ge0}}$ the natural
filtration of $D(\bb{R}_{+},\,\mc{H}_{N})$ generated by the process
$\xi_{N}(\cdot)$, $\ms{F}_{t}^{\,0}=\sigma(\xi_{N}(s):s\in[0,\,t])$,
and by ${\color{blue}\{{\ms{F}}_{t}\}_{t\ge0}}$ its usual
augmentation.  Let $\ms{G}_{t}^{N}$ be the filtrations defined by
\begin{equation}
\ms{G}_{t}^{N}\;:=\;\ms{F}_{S^{\mc{E}_{N}}(t)}\;\;\;\text{for }t>0\;.
\label{1e_filt}
\end{equation}

\begin{lem}
\label{l01}
Assume that condition $\mf{D}$ is in force. Then, for all $\lambda>0$
and $t>0$,
\begin{equation*}
\lim_{N\to\infty}\sup_{\eta\in\mc{E}_{N}}\mb{E}_{\eta}^{N}\,
\Big[\,e^{-\lambda t}\,-\,e^{-\lambda S^{\mc{E}_{N}}(t)}\,\Big]
\;=\;0\;,
\end{equation*}
and
\begin{equation*}
\lim_{N\to\infty}\sup_{\eta\in\mc{E}_{N}}\mb{E}_{\eta}^{N}\,
\Big[\,\int_{0}^{t}\,\Big\{ e^{-\lambda r}\,
-\,e^{-\lambda S^{\mc{E}_{N}}(r)}\,\Big\}\,dr\,\Big]\;=\;0\;.
\end{equation*}
Note that these expressions are positive since
$S^{\mc{E}_{N}}(r)\ge r$ for all $r\ge0$.
\end{lem}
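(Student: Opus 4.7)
The plan is to exploit the fact that $S^{\mc E_N}(t) - t$ equals the amount of time the process $\xi_N(\cdot)$ spends in $\Delta_N$ during the interval $[0, S^{\mc E_N}(t)]$; this control is already packaged in the inclusion \eqref{12}. Condition $\mf D$ then bounds $S^{\mc E_N}(t) - t$ in a time-averaged sense, and the elementary inequality $1 - e^{-\lambda u} \le \min(1, \lambda u)$ for $u \ge 0$ transfers this to exponential differences. No martingale or resolvent machinery is needed.

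For the first identity, I would write
\begin{equation*}
e^{-\lambda t} \,-\, e^{-\lambda S^{\mc E_N}(t)} \;=\; e^{-\lambda t}\,\bigl[\, 1 \,-\, e^{-\lambda (S^{\mc E_N}(t) - t)}\,\bigr]
\end{equation*}
and split the expectation according to whether the gap $S^{\mc E_N}(t) - t$ is at most $\epsilon$ or exceeds $\epsilon$, for a fixed $\epsilon > 0$. On the small-gap event the right-hand side is at most $\lambda \epsilon$, while on its complement it is at most $1$. This yields
\begin{equation*}
\mb E_\eta^N\bigl[\, e^{-\lambda t} \,-\, e^{-\lambda S^{\mc E_N}(t)}\,\bigr] \;\le\; \lambda\, \epsilon \,+\, \mb P_\eta^N\bigl[\, S^{\mc E_N}(t) \,-\, t \,>\, \epsilon \,\bigr]\;.
\end{equation*}
By the inclusion \eqref{12} and Markov's inequality, the probability is bounded by $\epsilon^{-1}\, \mb E_\eta^N[\,\int_0^{t+\epsilon} \chi_{\Delta_N}(\xi_N(s))\, ds\,]$, which by condition $\mf D$ tends to $0$ uniformly in $\eta \in \mc E_N$ (taking the maximum over $x \in S$). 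Sending first $N \to \infty$ and then $\epsilon \to 0$ completes the first claim.

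For the second identity, apply Fubini to interchange $\int_0^t dr$ with $\mb E_\eta^N$, and then invoke the bound from the first step with $t$ replaced by each $r \in [0,t]$. Since $\mb E_\eta^N[\,\int_0^{r+\epsilon} \chi_{\Delta_N}\,] \le \mb E_\eta^N[\,\int_0^{t+\epsilon}\chi_{\Delta_N}\,]$, the integrand is uniformly bounded by $\lambda\epsilon + o_N(1)$ on $[0,t]$, so the full integral is at most $t\,\lambda\epsilon + t\cdot o_N(1)$. Letting $N \to \infty$ and then $\epsilon \to 0$ yields the second assertion.

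The only mild subtlety is recognizing that $S^{\mc E_N}(t) - t$ is controlled by the $\Delta_N$-occupation time on $[0, t+\epsilon]$ rather than on $[0,t]$, which is precisely what \eqref{12} delivers; beyond that, the argument is routine.
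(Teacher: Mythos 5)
Your proof is correct and takes essentially the same route as the paper: both arguments split on whether the gap $S^{\mc E_N}(t)-t$ is small or large, use the inclusion \eqref{12} with Markov's/Chebyshev's inequality to control the large-gap probability, and then invoke condition $\mf D$, sending $N\to\infty$ before the threshold parameter $\to 0$. The only differences are cosmetic: you use the explicit bound $1-e^{-\lambda u}\le\lambda u$ where the paper appeals to continuity of $a\mapsto 1-e^{-\lambda a}$, and for the second assertion you Fubini and bound the integrand uniformly in $r\in[0,t]$, whereas the paper observes that $r\mapsto S^{\mc E_N}(r)-r$ is increasing and so bounds the whole integral by $\lambda^{-1}$ times the quantity from the first assertion; both variants are correct and equally short.
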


\begin{proof}
To prove the first assertion, note that the expectation is bounded
by
\begin{equation*}
\mb{E}_{\eta}^{N}\,\big[\,K_\lambda\big(\,S^{\mc{E}_{N}}(t)-t\,\big)\,\big] \;,
\end{equation*}
where $K_\lambda(a)=1\,-\,e^{-\lambda a}$.

Fix $\epsilon>0$. As $K_\lambda$ is continuous, there exists
$\delta>0$ such that $K_\lambda(a)\le\epsilon$ for all
$0\le a\le\delta$. Since $K_\lambda$ is bounded by $1$, the previous
expectation is less than or equal to
\begin{equation*}
\epsilon\;+\;\mb{P}_{\eta}^{N}\,\big[\,S^{\mc{E}_{N}}(t)-t\,>\,\delta\,\big]\;.
\end{equation*}
By \eqref{12} and Chebyshev's inequality, this expression is bounded
by
\begin{equation*}
\epsilon\;+\;\frac{1}{\delta}\,\mb{E}_{\eta}^{N}\,
\big[\,\int_{0}^{t+\delta}\chi_{\Delta_{N}}(\xi_{N}(s))\,ds\,\big]\;.
\end{equation*}
At this point, the first claim of the lemma follows from condition
$\mf{D}$ by taking the limit $N\to\infty$ and then $\epsilon\to0$.

The proof of the second assertion is similar. The expectation is equal
to
\begin{equation*}
\mb{E}_{\eta}^{N}\,\Big[\,\int_{0}^{t}\,
e^{-\lambda r}\,K_\lambda\big(\,S^{\mc{E}_{N}}(r)-r\,\big)\,dr\,\Big]\;.
\end{equation*}
As $r\mapsto S^{\mc{E}_{N}}(r)-r$ and $K_\lambda$ are increasing
maps, this expectation is bounded by
\begin{equation*}
\frac{1}{\lambda}\,\mb{E}_{\eta}^{N}\,
\big[\,K_\lambda\big(\,S^{\mc{E}_{N}}(t)-t\,\big)\,\big]\;.
\end{equation*}
At this point, the second assertion of the lemma follows from the
first one.
\end{proof}

The next result establishes the uniqueness of limit points of the sequence
$\bb Q_{\eta^{N}}^{N}$.

\begin{prop}
\label{p-uniq}
Assume that condition $\mf{R}_{\ms{L}}$ is in force. Fix $x\in S$ and
a sequence $(\eta^{N})_{N\in\bb{N}}$ such that
$\eta^{N}\in\mc{E}_{N}^{x}$ for all $N\in\bb{N}$.  Let $\bb Q^{*}$ be
a limit point of the sequence $\bb Q_{\eta^{N}}^{N}$ which satisfies
\eqref{1e_leftcont}. Then, $\bb Q^{*}=\bb Q_{x}^{\ms L}$.
\end{prop}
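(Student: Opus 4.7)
The plan is to run the resolvent martingale argument sketched in the introduction at the level of the trace and order processes, and then conclude by uniqueness of solutions to the martingale problem on the finite set $S$. Fix $\lambda>0$ and an arbitrary $f : S \to \bb R$, set $g = (\lambda - \ms L) f$, let $G_N$ be the lifting \eqref{b32} of $g$, and let $F_N$ be the solution of the resolvent equation \eqref{1f01}. By \eqref{07}, the process
\[
M_N(t) \;=\; e^{-\lambda t}\, F_N(\xi_N(t)) \,-\, F_N(\xi_N(0)) \,+\, \int_0^t e^{-\lambda r}\, G_N(\xi_N(r))\, dr
\]
is an $\ms F_t$-martingale, uniformly bounded on bounded intervals by \eqref{1f10}.

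Next I would apply optional sampling at $S^{\mc{E}_N}(t)$, yielding a $\ms G_t^N$-martingale $M_N(S^{\mc{E}_N}(\cdot))$. The central pathwise identity is the change of variables $u = T^{\mc{E}_N}(r)$ in the integral term: since $G_N$ vanishes on $\Delta_N$, and using \eqref{11} together with $\xi_N(S^{\mc{E}_N}(u)) = \xi_N^{\mc{E}_N}(u)$,
\[
\int_0^{S^{\mc{E}_N}(t)} e^{-\lambda r}\, G_N(\xi_N(r))\, dr \;=\; \int_0^t e^{-\lambda S^{\mc{E}_N}(u)}\, g(Y_N(u))\, du\;.
\]
Combined with $\xi_N(S^{\mc{E}_N}(t)) = \xi_N^{\mc{E}_N}(t)$, this expresses $M_N(S^{\mc{E}_N}(t))$ entirely in terms of $F_N$ evaluated on $\mc{E}_N$ and of the order process $Y_N$.

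I would then use condition $\mf{R}_{\ms L}$ to replace $F_N(\xi_N^{\mc{E}_N}(t))$ by $f(Y_N(t))$ and $F_N(\eta^N)$ by $f(x)$, both with errors uniform in $t$. Since $\mf{R}_{\ms L}$ implies $\mf{D}$ by Lemma \ref{nl1}, Lemma \ref{l01} permits replacing $e^{-\lambda S^{\mc{E}_N}}$ by $e^{-\lambda \cdot}$ in expectation, so that the process
\[
\widetilde M_N(t) \;:=\; e^{-\lambda t}\, f(Y_N(t)) \,-\, f(x) \,+\, \int_0^t e^{-\lambda u}\, g(Y_N(u))\, du
\]
satisfies $\mb E_{\eta^N}^N\big[\,|M_N(S^{\mc{E}_N}(t)) - \widetilde M_N(t)|\,\big] \to 0$ uniformly on bounded intervals. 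As $(Y_N(r))_{r\le s}$ is $\ms G_s^N$-measurable, the martingale property of $M_N(S^{\mc{E}_N}(\cdot))$ transfers to give, for every $0 \le s < t$ and every bounded continuous $H$ on $D([0,s], S)$,
\[
\mb E_{\eta^N}^N\big[\, (\widetilde M_N(t) - \widetilde M_N(s))\, H\big((Y_N(r))_{r\le s}\big)\,\big] \;=\; o_N(1)\;.
\]
Passing to the limit along the subsequence along which $\bb Q_{\eta^N}^N \to \bb Q^*$, and invoking \eqref{1e_leftcont} to ensure that $Y(t)$ is a.s.\ a continuity point of the limit at each fixed $t$, yields that the analogous $\widetilde M$ built from $Y$ is a $\bb Q^*$-martingale. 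Varying $f$ and $\lambda$, $\bb Q^*$ solves the resolvent martingale problem for $\ms L$ with initial state $x$; since $S$ is finite this problem has the unique solution $\bb Q_x^{\ms L}$, so $\bb Q^* = \bb Q_x^{\ms L}$.

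The main obstacle I anticipate is the time-change step: justifying optional sampling at the possibly unbounded stopping time $S^{\mc{E}_N}(t)$, carrying out the substitution $u = T^{\mc{E}_N}(r)$ pathwise, and controlling uniformly in $t$ the error produced when one replaces $S^{\mc{E}_N}$ by the identity inside exponentials. The estimates of Lemma \ref{l01} (available because $\mf{D}$ follows from $\mf{R}_{\ms L}$ via Lemma \ref{nl1}) and the $L^\infty$-bound \eqref{1f10} on $F_N$ are exactly what make these uniform replacements possible.
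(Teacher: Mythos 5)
Your proposal is correct and follows essentially the same route as the paper's proof: the time-changed resolvent martingale $M_N(S^{\mc{E}_N}(\cdot))$, the change of variables $r'=T^{\mc{E}_N}(r)$ made possible because $G_N$ vanishes on $\Delta_N$, the replacement of $F_N$ by $f$ via $\mf{R}_{\ms L}$ and of $e^{-\lambda S^{\mc{E}_N}(\cdot)}$ by $e^{-\lambda\cdot}$ via Lemmata \ref{nl1} and \ref{l01}, and the conclusion via uniqueness of the martingale problem on $S$. The only cosmetic differences are that you use bounded continuous test functionals on $D([0,s],S)$ where the paper uses finite-dimensional cylinder functions, and you explicitly flag the optional-sampling step at the (a.s.\ finite but unbounded) stopping time $S^{\mc{E}_N}(t)$, which the paper takes for granted; both are harmless since $M_N$ is uniformly bounded by \eqref{1f10} and the integrability of $e^{-\lambda r}G_N$.
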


\begin{proof}
Fix $\lambda>0$, a function $f:S\rightarrow\bb{R}$, and let
$g = (\lambda - \ms L) f$. Denote by ${F}_{N}$ the solution of
\eqref{1f01}. Under the measure $\mb{P}_{\eta^{N}}^{N}$, the process
$M_{N}(t)$ given by
\begin{equation*}
M_{N}(t)\;=\;e^{-\lambda t}\,{F}_{N}(\xi_{N}(t))\,
-\,{F}_{N}(\xi_{N}(0))\,+\,\int_{0}^{t}e^{-\lambda r}
\big[\,(\,\lambda\,-\,\,{\ms{L}}_{N}\,)
\,{F}_{N}\,\big](\xi_{N}(r))\,dr
\end{equation*}
is a martingale with respect to the filtration
$\{\ms{F}_{t}\}_{t\ge0}$ defined above \eqref{1e_filt}. By
\eqref{1f01}, we may replace $(\,\lambda\,-\,{\ms{L}}_{N}\,)\,{F}_{N}$
by $G_{N}$. Thus, since $G_{N}$ vanishes on $\Delta_{N}$,
\begin{align*}
M_{N}(t)\;=\,\; & e^{-\lambda t}\,{F}_{N}(\xi_{N}(t))\,
-\,{F}_{N}(\xi_{N}(0))\,\\
&\qquad +\,\int_{0}^{t}\,e^{-\lambda r}\,G_{N}(\xi_{N}(r))\,
\chi_{\mc{E}_{N}}(\xi_{N}(r))\,dr\;.
\end{align*}

Recall the definition of the filtration
$\{\ms{G}_{t}^{N}\}_{t\ge0}$ from \eqref{1e_filt}.  Since $S^{\mc{E}_{N}}(t)$ is a stopping
time with respect to $\ms{F}_{t}$, the process
$\widehat{M}_{N}(t)=M(S^{\mc{E}_{N}}(t))$ is a martingale with respect
to the filtration $\{\ms{G}_{t}^{N}\}_{t\ge0}$:
\begin{equation*}
\begin{aligned}\widehat{M}_{N}(t)\;=\;\, &
e^{-\lambda S^{\mc{E}_{N}}(t)}\,{F}_{N}(\xi_{N}^{\mc{E}_{N}}(t))\,
-\,{F}_{N}(\xi_{N}^{\mc{E}_{N}}(0))\\
& \qquad +\,\int_{0}^{S^{\mc{E}_{N}}(t)}\,e^{-\lambda r}\,
G_{N}(\xi_{N}(r))\,\chi_{\mc{E}_{N}}(\xi_{N}(r))\,dr\;.
\end{aligned}
\end{equation*}

The presence of the indicator of the set $\mc{E}_{N}$ in the
integral permits to perform the change of variables $r'=T^{\mc{E}_{N}}(r)$.
Hence, by \eqref{11},
\begin{equation*}
\widehat{M}_{N}(t)\;=\;e^{-\lambda S^{\mc{E}_{N}}(t)}\,
{F}_{N}(\xi_{N}^{\mc{E}_{N}}(t))\,-\,{F}_{N}(\,\xi_{N}^{\mc{E}_{N}}(0)\,)\,
+\,\,\int_{0}^{t}\,e^{-\lambda S^{\mc{E}_{N}}(r')}\,\,
G_{N}(\xi^{\mc{E}_{N}}(r'))\,dr'\;.
\end{equation*}

By definitions of $G_{N}$, $Y_{N}(\cdot)$, by condition
$\mf R_{\ms L}$ and by Lemmata \ref{nl1} and \ref{l01},
\begin{equation*}
\widehat{M}_{N}(t)\;=\;e^{-\lambda t}\,f(Y_{N}(t))\,
-\,f(Y_{N}(0))\,-\,\int_{0}^{t}\,e^{-\lambda r'}\,g(Y_{N}(r'))\,dr'\,
+\,R_{N}(t)\;,
\end{equation*}
where, for all $t>0$,
\begin{equation}
\lim_{N\to\infty}\sup_{\eta\in\mc{E}_{N}^{x}}\mb{E}_{\eta}^{N}\,
\big[\,R_{N}(t)\,\big]\;=\;0\;.
\label{1e_82}
\end{equation}

Fix $0\le s<t$, $p\ge1$, $0\le s_{1}<s_{2}<\cdots<s_{p}\le s$ and a
bounded measurable function $h:S^{p}\to\bb{R}$. Let
\begin{gather*}
\mf{M}_{f}^{s,\,t}(Y(\cdot))\;:=\;e^{-\lambda t}f(Y(t))\,
-\,e^{-\lambda s}f(Y(s))\,+\,\int_{s}^{t}e^{-\lambda r}
\,[\,(\lambda\,-\,\ms{L}_{Y}\,)f\,](Y(r))\,dr\;,\\
\mf{H}(Y(\cdot))\;:=\;h(\,Y(s_{1}),\,\dots,\,Y(s_{p})\,)\;,
\end{gather*}
and let $\bb Q^{*}$ be a limit point of the sequence
$\bb Q_{\eta^{N}}^{N}$ satisfying the hypothesis of the
proposition. As $\widehat{M}_{N}(t)$ is a martingale and
$\eta^{N}\in\mc{E}_{N}^{x}$, by \eqref{1e_82},
\begin{equation*}
E_{\bb Q^{*}}\,\big[\,\mf{M}_{f}^{s,\,t}(Y(\cdot))\;
\mf{H}(Y(\cdot))\,\big]\;=\;\lim_{N\rightarrow\infty}
\mb{E}_{\eta^{N}}^{N}\Big[\,\mf{M}_{f}^{s,\,t}(Y_{N}(\cdot))\;
\mf{H}(Y_{N}(\cdot))\,\Big]\;=\;0\;.
\end{equation*}
To complete the proof, it remains to appeal to the uniqueness of
solutions of martingale problems in finite state spaces.
\end{proof}

We are now in a position to prove that condition $\mf{R}_{\ms{L}}$
entails $\mf{C}_{\ms{L}}$ and $\mf{D}$.

\smallskip\noindent{\bf Proof:}  The statement follows from Lemma
\ref{nl1} and Propositions \ref{1t_tight} and
\ref{p-uniq}. \qed

\subsection{Conditions $\mf{C}_{\ms{L}}$ and $\mf{D}$ imply
$\mf{R}_{\ms{L}}$}

Recall equation \eqref{1f11} for $F_N$. Since $G_N$ vanishes on
$\Delta_N$, we may rewrite this identity as
\begin{equation*}
F_N(\eta) \;=\;
\mb E^N_{\eta} \Big[ \int_0^{\infty} e^{-\lambda t}
\, G_N(\xi(t)) \, \chi_{\mc E_N} (\xi(t))\; dt \,\Big] \,.
\end{equation*}
As the chain $\xi_N(t)$ is irreducible,
$\lim_{t\to\infty} T^{\mc E}(t) = \infty$.  Hence, by the change of
variables $t' = T^{\mc E}(t)$,
\begin{equation*}
F_N(\eta) \;=\;
\mb E^N_{\eta} \Big[ \int_0^{\infty} e^{-\lambda S^{\mc E}(t)}
\, G_N(\xi^{\mc E}(t)) \; dt \,\Big]
\;=\;
\mb E^N_{\eta} \Big[ \int_0^{\infty} e^{-\lambda S^{\mc E}(t)}
\, g(Y_N(t)) \; dt \,\Big]
\end{equation*}
because $G_N(\xi^{\mc E}(t)) = g(Y_N(t))$. Therefore,
\begin{equation*}
F_N(\eta) \;=\;
\mb E^N_{\eta} \Big[ \int_0^{\infty} e^{-\lambda t}
\, g(Y_N(t)) \; dt \,\Big]
\;+\; R^{(1)}_N(\eta) \,,
\end{equation*}
where the absolute value of the remainder $R^{(1)}_N(\eta) $ is
bounded by
\begin{equation*}
\Vert g\Vert_\infty \,
\mb E^N_{\eta} \Big[ \int_0^{\infty}
\big\{ e^{-\lambda t} \,-\, e^{-\lambda S^{\mc E}(t)}\,\big\}
\; dt \,\Big]
\end{equation*}
because $S^{\mc E}(t) \ge t$ for all $t\ge 0$.

By Condition $\mf C_{\ms L}$, for all $x\in S$,
\begin{equation*}
\lim_{N\to\infty}
\sup_{\eta\in\mc{E}_{N}^{x}}\,\Big|\,
\mb E^N_{\eta} \Big[ \int_0^{\infty} e^{-\lambda t}
\, g(Y_N(t)) \; dt \,\Big] \,-\,
\bb Q^{\ms L}_x \Big[ \int_0^{\infty} e^{-\lambda t}
\, g(Y(t)) \; dt \,\Big] \,\Big|
\,\;=\;0\;.
\end{equation*}
Note that the convergence is uniform in $\ms E^x_N$ because we may
consider a subsequence $\eta^N \in \ms E^x_N$ of initial conditions
which attains the maximum and apply condition $\mf C_{\ms L}$ to this
sequence. By \eqref{1f11}, the second term in the previous formula is
$f(x)$, where $f$ is the solution of \eqref{nf06}.

To complete the proof of the theorem, it remains to show that the
remainder $R^{(1)}_N(\eta) $ converges uniformly to $0$. This is a
consequence of the second assertion of Lemma \ref{l01}.

\section{Potential theory}
\label{sec8}

We review below some results on potential theory used in the next
three sections. The notation is the one introduced in Section
\ref{sec11}.  Recall that we represent by
$\color{blue} R_{N}:\mc{H}_{N}\times\mc{H}_{N}\rightarrow
[0,\,\infty)$ the jump rates of the process $\xi_{N}(\cdot)$, and by
$\color{blue} \lambda_{N} (\eta) = \sum_{\zeta\not =\eta} R_{N}
(\eta, \zeta)$ the holding times. We adopt the convention that the
jump rates vanish on the diagonal: $R_{N}(\eta,\,\eta) =0$ for all
$\eta\in\mc{H}_{N}$.  Denote the jump probabilities by
$\color{blue} p_{N} (\eta, \zeta) = R_{N} (\eta, \zeta)/\lambda_{N}
(\eta)$.

We represent by $\<\,\cdot\,,\,\cdot\,\>_{\mu_N}$ the scalar product
in $L^2(\mu_N)$: for $F$, $G:\mc H_N \to\bb R$,
\begin{equation*}
\<\, F\,,\, G\,\>_{\mu_N} \;=\;
\sum_{\eta\in\mc{H}_{N}}F(\eta)\, G(\eta)\,
\mu_{N}(\eta)\;.
\end{equation*}

Denote by $\color{blue} \ms L^\dagger_N$ the adjoint of the generator
$\ms L_N$ in $L^2(\mu_N)$. It is well known that $\ms L^\dagger_N$ is
the generator of a $\mc H_N$-valued, continuous-time Markov chain,
represented by $\color{blue} \xi^\dagger_N(\cdot)$. The jump rates,
holding times and jump probabilities of this process are denoted by
$\color{blue} R^\dagger_{N} (\eta, \zeta)$,
$\color{blue} \lambda^\dagger_{N} (\eta)$ and
$\color{blue} p^\dagger_{N} (\eta, \zeta)$, respectively. For a
probability measure $\nu$ on $\mc H_N$, we denote by
$\mb P^{\dagger, N}_\nu$ the measure on $D(\bb R_+, \mc H_N)$ induced
by $\xi^\dagger_N(\cdot)$ starting from $\nu$. Expectation with
respect to $\color{blue} \mb P^{\dagger, N}_\nu$ is represented by
$\color{blue} \mb E^{\dagger, N}_\nu$.

Fix two disjoint and non-empty subsets $\mc{A}$ and $\mc{B}$ of
$\mc{H}_{N}$. The equilibrium potential between $\mc{A}$ and $\mc{B}$
with respect to the process $\xi_{N}(\cdot)$ has been introduced in
\eqref{03}. The one for the adjoint process $\xi^\dagger_{N}(\cdot)$
is denoted by
$h^\dagger_{\mc{A},\,\mc{B}}:\mc{H}_{N}\rightarrow[0,\,1]$ and is
given by
\begin{equation}
\label{03b}
h^\dagger_{\mc{A},\,\mc{B}} (\eta) \,=\, \mb{P}^{\dagger, N}_{\eta}
[\, \tau_{\mc{A}}<\tau_{\mc{B}}\, ] \;,\quad
\eta\in\mc{H}_{N}\;.
\end{equation}

Recall from \eqref{07b} the definition of the mean-jump rates between
$\mc{E}_{N}^{x}$ and $\mc{E}_{N}^{y}$ for the process
$\xi_N(\cdot)$. The ones for the adjoint process
$\xi^\dagger_N(\cdot)$, represented by
$\color{blue} r^\dagger_N(x,y)$, are defined analogously. Since the
holding times of the adjoint process coincide with the original ones,
$\lambda^\dagger_N(\eta) = \lambda_N(\eta)$, $r^\dagger_N(x,y)$ is
equal to the right-hand side of \eqref{07b} with $\mb P^N_{\eta}$
replaced by $\mb P^{\dagger, N}_{\eta}$.

The first result of this section establishes an elementary identity
between mean jump rates of the process and its adjoint. Recall that
$\breve{\mc{E}}_N^x$ has been introduced in \eqref{breveE}.

\begin{lemma}
\label{l07b}
For all $x \not = y\in S$,
\begin{equation*}
\mu_N(\mc E^x_N) \, r^\dagger_N(x,y) \;=\; \mu_N(\mc E^y_N) \,
r_N(y,x)\;.
\end{equation*}
and
\begin{equation*}
\sum_{z\not = x} r^\dagger_N(x,z) \;=\; \sum_{z\not = x} r_N(x,z)
\;=\; \frac{1}{\mu_N(\mc E^x_N)}\,
\Cap_N(\mc{E}_{N}^{x} \,,\, \breve{\mc{E}}_{N}^{x})\;.
\end{equation*}
\end{lemma}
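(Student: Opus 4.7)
The plan is to reduce both identities to the iterated detailed-balance relation. Starting from $\mu_N(\eta)\,R_N(\eta,\zeta)=\mu_N(\zeta)\,R^\dagger_N(\zeta,\eta)$, I would telescope along a finite path $\eta_0\to\eta_1\to\cdots\to\eta_n$ to obtain
\[
\mu_N(\eta_0)\prod_{i=0}^{n-1} R_N(\eta_i,\eta_{i+1}) \;=\; \mu_N(\eta_n)\prod_{i=0}^{n-1} R^\dagger_N(\eta_{i+1},\eta_i)\;.
\]
Since $\lambda^\dagger_N(\eta)=\lambda_N(\eta)$ by stationarity of $\mu_N$, dividing through by $\prod_{i=1}^{n-1}\lambda_N(\eta_i)$ recasts this as a statement about the embedded jump chains: for any oriented path,
\[
\mu_N(\eta_0)\,\lambda_N(\eta_0)\,\mb P^N_{\eta_0}[\,\eta_0\to\eta_1\to\cdots\to\eta_n\,] \;=\; \mu_N(\eta_n)\,\lambda_N(\eta_n)\,\mb P^{\dagger,N}_{\eta_n}[\,\eta_n\to\eta_{n-1}\to\cdots\to\eta_0\,]\;.
\]

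For the first identity, I would expand $\mu_N(\eta)\,\lambda_N(\eta)\,\mb P^N_\eta[\tau_{\mc E^y_N}<\tau^+_{\breve{\mc E}^y_N},\,\xi_N(\tau_{\mc E^y_N})=\zeta]$, for $\eta\in\mc E^x_N$ and $\zeta\in\mc E^y_N$, as a sum over trajectories $\eta\to\eta_1\to\cdots\to\eta_{n-1}\to\zeta$ with $\eta_1,\dots,\eta_{n-1}\in\Delta_N$; no intermediate state can belong to a well, since a visit to $\mc E^y_N$ would realise $\tau_{\mc E^y_N}$ earlier while a visit to any other well already lies in $\breve{\mc E}^y_N$. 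Applying the trajectory reversal termwise converts each contribution into the corresponding quantity for the adjoint process running from $\zeta$ to $\eta$ through $\Delta_N$. Summing over $\eta\in\mc E^x_N$ and $\zeta\in\mc E^y_N$ yields $\mu_N(\mc E^x_N)\,r_N(x,y)=\mu_N(\mc E^y_N)\,r^\dagger_N(y,x)$, which is the stated identity after swapping $x\leftrightarrow y$.

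For the second identity, I would first observe that for $\eta\in\mc E^x_N$ the events $\{\tau_{\mc E^y_N}<\tau^+_{\breve{\mc E}^y_N}\}$, $y\ne x$, are pairwise disjoint (two of them would force the visits to two different wells strictly before one another) and their union equals $\{\tau_{\breve{\mc E}^x_N}<\tau^+_{\mc E^x_N}\}$. Hence
\[
\mu_N(\mc E^x_N)\sum_{z\ne x} r_N(x,z) \;=\; \sum_{\eta\in\mc E^x_N}\mu_N(\eta)\,\lambda_N(\eta)\,\mb P^N_\eta\big[\,\tau_{\breve{\mc E}^x_N}<\tau^+_{\mc E^x_N}\,\big]\;.
\]
A standard integration by parts against $h=h_{\mc E^x_N,\breve{\mc E}^x_N}$ — using that $-\ms L_N h$ is supported in $\mc E^x_N\cup\breve{\mc E}^x_N$, that $h$ vanishes on $\breve{\mc E}^x_N$, and that $(-\ms L_N h)(\eta)=\lambda_N(\eta)\,\mb P^N_\eta[\tau_{\breve{\mc E}^x_N}<\tau^+_{\mc E^x_N}]$ on $\mc E^x_N$ (a one-line strong-Markov computation) — identifies the right-hand side with $\Cap_N(\mc E^x_N,\breve{\mc E}^x_N)$. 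For the adjoint rate I would apply the first identity to get $\mu_N(\mc E^x_N)\sum_{z\ne x}r^\dagger_N(x,z)=\sum_{z\ne x}\mu_N(\mc E^z_N)\,r_N(z,x)$, run the same disjoint-events argument starting from configurations in $\breve{\mc E}^x_N$, and recognise the result as $\Cap_N(\breve{\mc E}^x_N,\mc E^x_N)=\Cap_N(\mc E^x_N,\breve{\mc E}^x_N)$ (the last equality from $h_{\mc A,\mc B}=1-h_{\mc B,\mc A}$ and $\ms D_N(1-f)=\ms D_N(f)$).

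The one genuinely delicate point will be the trajectory-level reversal, since the process is not assumed reversible and one must reconcile the denominators $\lambda_N(\eta_i)$ with their adjoint counterparts; once the coincidence $\lambda_N=\lambda_N^\dagger$ is used to align them, everything reduces to bookkeeping of hitting events and one standard Dirichlet-form computation.
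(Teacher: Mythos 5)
Your proof is correct. For the first identity your route is essentially the paper's: both reverse trajectories of the embedded jump chain, using $\lambda_N^\dagger = \lambda_N$ and the invariance of $M_N(\eta) = \mu_N(\eta)\lambda_N(\eta)$ for the discrete skeleton; you simply make the mechanism fully explicit through telescoping detailed balance along a path rather than citing ``reversing the trajectory'' as the paper does, and you correctly observe that the intermediate states must lie in $\Delta_N$ (a visit to any other well would realise $\tau^+_{\breve{\mc E}^y_N}$, and a visit to $\mc E^y_N$ would realise $\tau_{\mc E^y_N}$ earlier).

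For the second identity the two arguments diverge slightly. The paper computes $\mu_N(\mc E^x_N)\sum_{z\ne x}r_N(x,z) = \Cap_N(\breve{\mc E}^x_N,\mc E^x_N)$ directly (the same disjoint-events decomposition you use, followed by the standard Dirichlet-form identification), then cites \cite[eq.\ (2.4) and Lemma 2.3]{GL} for $\Cap_N = \Cap_N^\dagger$, and finally reruns the same computation for the adjoint chain. You instead feed the first identity back in: $\mu_N(\mc E^x_N)\sum_{z\ne x}r^\dagger_N(x,z) = \sum_{z\ne x}\mu_N(\mc E^z_N)r_N(z,x)$, which after the disjoint-events decomposition over configurations in $\breve{\mc E}^x_N$ is again $\Cap_N(\breve{\mc E}^x_N,\mc E^x_N)$, and you close with the elementary capacity symmetry $\ms D_N(1-h)=\ms D_N(h)$. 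This is a self-contained route that trades the external citation for one extra Dirichlet-form computation; both are correct, and yours has the mild pedagogical advantage of not invoking the nontrivial-sounding (though actually elementary) fact $\Cap_N = \Cap_N^\dagger$. All the intermediate steps you flag as ``one-line'' or ``standard'' do in fact go through as claimed.
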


\begin{proof}
By the definition \eqref{07b} of the jump rates $r_N(x,y)$,
\begin{equation*}
\mu_N(\mc E^x_N)\, r_N(x,y) \;=\;
\sum_{\eta\in \mc E^x_N} \mu_N(\eta)  \, \lambda_N(\eta)\,
\mb P^N_{\eta} \big[\,
\tau_{\mc{E}_{N}^{y}} \,<\, \tau^+_{\breve{\mc{E}}_{N}^{y}} \,\big]\;.
\end{equation*}
Let $\color{blue} M_N(\eta) = \mu_N(\eta) \, \lambda_N(\eta)$. This
measure is invariant for the embedded, discrete-time Markov
chain. With this notation, the right-hand side can be written as
\begin{equation*}
\sum_{\eta\in \mc E^x_N} \sum_{\zeta\in \mc E^y_N}
\, M_N(\eta)\, \mb P^N_{\eta} \big[\,
\tau_{\zeta} \,=\, \tau^+_{\breve{\mc{E}}_{N}} \,\big]\;.
\end{equation*}
Reversing the trajectory, this sum is seen to be equal to

\begin{equation*}
\sum_{\eta\in \mc E^x_N} \sum_{\zeta\in \mc E^y_N}
\, M_N(\zeta)\, \mb P^{\dagger, N}_{\zeta} \big[\,
\tau_{\eta} \,=\, \tau^+_{\breve{\mc{E}}_{N}} \,\big]
\;=\; \mu_N(\mc E^y_N)\, r^\dagger_N(y,x)
\;,
\end{equation*}
which proves the first assertion of the lemma.

To prove the second one, note that
\begin{equation*}
\mu_N(\mc E^x_N)\, \sum_{z\not =x} r_N(x,z) \;=\;
\sum_{\eta\in \mc E^x_N} M_N(\eta)
\mb P^N_{\eta} \big[\, \tau_{\breve{\mc{E}}_{N}^{x}}
\,<\, \tau^+_{ \mc{E}_{N}^{x}} \,\big] \;=\;
\Cap_N(\breve{\mc{E}}_{N}^{x} \,,\,  \mc{E}_{N}^{x}) \;.
\end{equation*}
By equation (2.4) and Lemma 2.3 in \cite{GL},
$\Cap_N(\breve{\mc{E}}_{N}^{x} \,,\, \mc{E}_{N}^{x}) =
\Cap^\dagger_N(\breve{\mc{E}}_{N}^{x} \,,\, \mc{E}_{N}^{x})$, where
this later expression represents the capacity with respect to the
adjoint process. To conclude the proof, it remains to rewrite the same
two identities for the adjoint process.
\end{proof}

\smallskip\noindent{\bf Conditions (H0), (H1)}.  Recall from Section
\ref{sec11} the statement of these conditions.  We present below some
consequences of them. The next result is \cite[Proposition 5.10]{BL1},
which essentially asserts that the process hits every configuration
inside a metastable set before arriving at another metastable set.

\begin{lemma}
\label{l16}
Assume that condition {\rm (H1)} is in force.  Fix $x\in S$ and a
sequence $(\zeta^N:N\ge 1)$ such that $\zeta^N\in\mc E^x_N$ for all
$N\ge 1$. Then,
\begin{equation*}
\limsup_{N\rightarrow\infty} \max_{\eta\in\mc E^x_N}
\mb P_{\eta} \big[ \, \tau_{\zeta^N}
\,>\, \tau_{\breve{\ms E}^x_N}\, \big]\;=\;0\;.
\end{equation*}
\end{lemma}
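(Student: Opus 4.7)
The plan is to bound the probability of interest by splitting trajectories according to whether they visit the reference configuration $\xi_N^x$ supplied by condition (H1) before reaching $\breve{\mc E}^x_N$, and then to reduce each piece to a ratio of capacities controlled directly by (H1). The main tool is the standard potential-theoretic inequality
\begin{equation*}
\mb P^N_\eta\big[\,\tau_A < \tau_B\,\big] \;\le\; \frac{\Cap_N(\eta, A)}{\Cap_N(\eta, A\cup B)} \;\le\; \frac{\Cap_N(\eta, A)}{\Cap_N(\eta, B)}
\end{equation*}
for disjoint $A, B$ and $\eta \notin A\cup B$, which follows from the excursion identity $\mb P^N_\eta[\tau_A < \tau_B] = \mb P^N_\eta[\tau_A < \tau_B \wedge \tau^+_\eta]\big/\mb P^N_\eta[\tau_{A\cup B} < \tau^+_\eta]$ combined with $\mu_N(\eta)\lambda_N(\eta)\,\mb P^N_\eta[\tau_C < \tau^+_\eta] = \Cap_N(\eta, C)$.

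First I decompose by the strong Markov property at $\tau_{\xi_N^x}$: on the event $\{\tau_{\xi_N^x} \le \tau_{\breve{\mc E}^x_N}\}\cap\{\tau_{\breve{\mc E}^x_N} < \tau_{\zeta^N}\}$ the process has hit neither $\breve{\mc E}^x_N$ nor $\zeta^N$ before reaching $\xi_N^x$, so the Markov property after $\tau_{\xi_N^x}$ yields
\begin{equation*}
\mb P^N_\eta\big[\,\tau_{\breve{\mc E}^x_N} < \tau_{\zeta^N}\,\big] \;\le\; \mb P^N_\eta\big[\,\tau_{\breve{\mc E}^x_N} < \tau_{\xi_N^x}\,\big] \;+\; \mb P^N_{\xi_N^x}\big[\,\tau_{\breve{\mc E}^x_N} < \tau_{\zeta^N}\,\big].
\end{equation*}

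To each term I apply the basic inequality, then use the monotonicity $\Cap_N(\zeta, \breve{\mc E}^x_N) \le \Cap_N(\mc E^x_N, \breve{\mc E}^x_N)$ for $\zeta \in \mc E^x_N$ (immediate from the Dirichlet variational characterization) together with the symmetry $\Cap_N(a,b) = \Cap_N(b,a)$ of capacity. This gives
\begin{equation*}
\max_{\eta \in \mc E^x_N} \mb P^N_\eta\big[\,\tau_{\breve{\mc E}^x_N} < \tau_{\zeta^N}\,\big] \;\le\; \max_{\eta \in \mc E^x_N}\frac{\Cap_N(\mc E^x_N, \breve{\mc E}^x_N)}{\Cap_N(\xi_N^x, \eta)} \;+\; \frac{\Cap_N(\mc E^x_N, \breve{\mc E}^x_N)}{\Cap_N(\xi_N^x, \zeta^N)},
\end{equation*}
and both terms tend to $0$ by condition (H1): the first directly, the second by specializing the uniform bound to the choice $\eta = \zeta^N \in \mc E^x_N$.

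The main delicate point is the strong Markov decomposition, which must carefully keep track of the three hitting times $\tau_{\breve{\mc E}^x_N}$, $\tau_{\zeta^N}$ and $\tau_{\xi_N^x}$ so that after reaching $\xi_N^x$ the original event is still in force; everything else reduces to the standard capacity calculus and an appeal to (H1). The degenerate case $\eta = \zeta^N$ is trivial since then $\tau_{\zeta^N} = 0$.
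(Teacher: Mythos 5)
The paper does not reproduce a proof of this lemma; it simply cites \cite[Proposition~5.10]{BL1}. Your reconstruction is correct and is essentially the natural argument behind that reference: condition (H1) only controls capacities anchored at the distinguished point $\xi_N^x$, so one must route through $\xi_N^x$ to relate the pair $(\eta,\zeta^N)$ to (H1). Your strong Markov decomposition at $\tau_{\xi_N^x}$ accomplishes this, producing the two terms
\begin{equation*}
\mb P^N_\eta\big[\,\tau_{\breve{\mc E}^x_N}<\tau_{\xi_N^x}\,\big]
\quad\text{and}\quad
\mb P^N_{\xi_N^x}\big[\,\tau_{\breve{\mc E}^x_N}<\tau_{\zeta^N}\,\big]\;,
\end{equation*}
each of which is bounded by a capacity ratio controlled directly by (H1) after applying the excursion/last-exit identity, the symmetry $\Cap_N(a,b)=\Cap_N(b,a)$, and the monotonicity $\Cap_N(\eta,\breve{\mc E}^x_N)\le\Cap_N(\mc E^x_N,\breve{\mc E}^x_N)$ for $\eta\in\mc E^x_N$. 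Two small observations. First, the monotonicity and symmetry you invoke via the Dirichlet characterization are immediate in the reversible case, and in the non-reversible setting of this section they remain valid through the Gaudilli\`ere--Landim variational principle and the identity $\Cap_N=\Cap^\dagger_N$ that the paper already uses in the proof of Lemma~\ref{l07b}. Second, the degenerate cases worth recording are rather $\eta=\xi_N^x$ (first term is identically zero) and $\xi_N^x=\zeta^N$ (second term is identically zero); the case $\eta=\zeta^N$ you mention makes the left-hand side vanish outright. None of these affects the bound. An equivalent route, avoiding the explicit strong Markov step, is the series law for effective resistance, $\Cap_N(\eta,\zeta^N)^{-1}\le\Cap_N(\eta,\xi_N^x)^{-1}+\Cap_N(\xi_N^x,\zeta^N)^{-1}$, applied after the single bound $\mb P^N_\eta[\tau_{\breve{\mc E}^x_N}<\tau_{\zeta^N}]\le\Cap_N(\eta,\breve{\mc E}^x_N)/\Cap_N(\eta,\zeta^N)$; both yield the same two capacity ratios and the same appeal to (H1).
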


The next result asserts that, starting from a well $\mc E^x_N$, the
process $\xi_N(\cdot)$ visits any point in $\mc E^x_N$ quickly.

\begin{lemma}
\label{l05}
Assume that conditions {\rm (H0)} and {\rm (H1)} are in force.  Fix
$x\in S$, and let $(\zeta^N: N\ge 1)$ be a sequence of configurations
such that $\zeta^N \in \mc E^x_N$ for all $N\ge 1$.  Then, for all $\delta>0$,
\begin{equation*}
\limsup_{N\rightarrow\infty}
\max_{\eta\in\mc{E}_{N}^{x}}\,
\mb P_\eta \big[ \, \tau_{\zeta^N} \,>\, \delta\, \big]\;=\;0\;.
\end{equation*}
\end{lemma}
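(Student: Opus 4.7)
The plan is to combine Lemma \ref{l16}, which says the distinguished configuration $\zeta^N$ is reached before the process escapes the well with probability tending to one, with a capacity-based estimate showing that the time to either hit $\zeta^N$ or leave the well vanishes asymptotically. Markov's inequality converts the capacity estimate into the required probability bound.

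First I would decompose, for any $\eta\in\mc E^x_N$,
\begin{equation*}
\mb{P}_\eta[\,\tau_{\zeta^N}>\delta\,] \;\le\; \mb{P}_\eta[\,\tau_{\zeta^N}>\tau_{\breve{\mc E}^x_N}\,] \;+\; \mb{P}_\eta[\,\tau_A>\delta\,], \qquad A\;:=\;\{\zeta^N\}\cup\breve{\mc E}^x_N,
\end{equation*}
using that on the complement of $\{\tau_{\zeta^N}>\tau_{\breve{\mc E}^x_N}\}$ one has $\tau_A=\tau_{\zeta^N}$. The first probability vanishes uniformly in $\eta\in\mc E^x_N$ by Lemma \ref{l16}, so by Markov's inequality it suffices to prove $\max_{\eta\in\mc E^x_N}\mb{E}_\eta[\tau_A]\to 0$.

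For $\mb{E}_\eta[\tau_A]$ I would use the reversible potential-theoretic identity (coming from the Green-function representation $\mb{E}_\eta[\tau_A]=\sum_{\zeta\notin A}G_A(\eta,\zeta)$ together with the reversibility relation $\mu_N(\eta)G_A(\eta,\zeta)=\mu_N(\zeta)G_A(\zeta,\eta)$ and the formula $G_A(\zeta,\zeta)=\mu_N(\zeta)/\Cap_N(\{\zeta\},A)$),
\begin{equation*}
\mb{E}_\eta[\tau_A] \;=\; \frac{1}{\Cap_N(\{\eta\},A)}\sum_{\zeta\notin A}\mu_N(\zeta)\,\mb{P}_\zeta[\tau_\eta<\tau_A]\;\le\;\frac{1}{\Cap_N(\{\eta\},\{\zeta^N\})},
\end{equation*}
where the final inequality bounds $\mb{P}_\zeta[\cdot]\le 1$, uses $\mu_N(\mc H_N\setminus A)\le 1$, and invokes the monotonicity of capacity in the target set.

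It then remains to show that $\Cap_N(\eta,\zeta^N)\to\infty$ uniformly in $\eta\in\mc E^x_N$. For this I would use the triangle inequality for effective resistances,
\begin{equation*}
\frac{1}{\Cap_N(\eta,\zeta^N)} \;\le\; \frac{1}{\Cap_N(\eta,\xi_N^x)} \;+\; \frac{1}{\Cap_N(\xi_N^x,\zeta^N)},
\end{equation*}
a standard consequence of the Dirichlet variational characterization. Condition (H1) applied to $\eta$ and to $\zeta^N$ forces each summand on the right to be $o(1/\Cap_N(\mc E^x_N,\breve{\mc E}^x_N))$, and by Lemma \ref{l07b} combined with (H0),
\begin{equation*}
\Cap_N(\mc E^x_N,\breve{\mc E}^x_N)\;=\;\mu_N(\mc E^x_N)\sum_{y\neq x}r_N(x,y)
\end{equation*}
stays bounded away from zero in the nondegenerate regime in which $x$ is not asymptotically absorbing. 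The main technical point will be verifying the capacity–hitting time identity with the claimed uniformity in $\eta$; the potentially delicate degenerate case in which the asymptotic chain is absorbing at $x$ would need a separate argument, most naturally built out of a direct renewal-type iteration of Lemma \ref{l16}.
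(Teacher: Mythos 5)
Your decomposition of $\mb P_\eta[\tau_{\zeta^N}>\delta]$ through the event $\{\tau_{\zeta^N}>\tau_{\breve{\mc E}^x_N}\}$, the appeal to Lemma~\ref{l16} for the first term, and the Markov-inequality reduction to bounding $\mb E_\eta[\tau_A]$ are all correct, and the direct potential-theoretic route via the capacity representation of the mean hitting time and the effective-resistance triangle inequality is genuinely different from the paper's proof, which is a two-line citation to the convergence of the order process (Theorem~2.1 of \cite{BL2}) combined with Lemma~\ref{l16} and Lemma~3.1 of \cite{BL1}. Your version is more self-contained, which is a real advantage, but there are two gaps.

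The first, quantitative, gap is in the final step. From (H1) and the resistance triangle inequality you obtain $1/\Cap_N(\eta,\zeta^N)\le 2a_N/\Cap_N(\mc E^x_N,\breve{\mc E}^x_N)$ with $a_N\to 0$, and you conclude by asserting that $\Cap_N(\mc E^x_N,\breve{\mc E}^x_N)$ is bounded away from zero when $x$ is not asymptotically absorbing. By Lemma~\ref{l07b}, $\Cap_N(\mc E^x_N,\breve{\mc E}^x_N)=\mu_N(\mc E^x_N)\sum_{z\neq x}r_N(x,z)$, so even when $\sum_z r(x,z)>0$ this also requires $\liminf_N\mu_N(\mc E^x_N)>0$, which is not a consequence of (H0)--(H1); those hypotheses fix only the ratio of capacities, not their absolute scale. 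So the issue is strictly broader than the absorbing case you flag. The lossy step is replacing $\langle h_{\eta,A},1\rangle_{\mu_N}$ by $1$: since $h_{\eta,A}$ vanishes on $\breve{\mc E}^x_N\subset A$, one has $\langle h_{\eta,A},1\rangle_{\mu_N}\le\mu_N(\mc E^x_N)+\mu_N(\Delta_N)$, which together with Lemma~\ref{l07b} would give
\begin{equation*}
\mb E_\eta[\tau_A]\;\le\;2\,a_N\,\Big(\,\frac{1}{\sum_{z\neq x}r_N(x,z)}\;+\;\frac{\mu_N(\Delta_N)}{\Cap_N(\mc E^x_N,\breve{\mc E}^x_N)}\,\Big)\;,
\end{equation*}
removing the spurious dependence on $\mu_N(\mc E^x_N)$ when $x$ is not absorbing, but still leaving the $\Delta_N$ term and the absorbing case unhandled. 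The second gap is that the capacity--hitting-time identity $\mb E_\eta[\tau_A]=\langle h_{\eta,A},1\rangle_{\mu_N}/\Cap_N(\eta,A)$ and the triangle inequality for effective resistance are facts about reversible chains, whereas the lemma is stated with no reversibility hypothesis and the paper's cited theorem from \cite{BL2} covers the non-reversible case.
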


\begin{proof}
Fix a sequence $(\eta^N:N\ge 1)$ such that $\eta^N\in\mc E^x_N$ for
all $N\ge 1$.  By \cite[Theorem 2.1]{BL2}, the process $Y_N(\cdot)$
converges to $Y(\cdot)$. The assertion of the lemma follows from this
fact, Lemma \ref{l16} and \cite[Lemma 3.1]{BL1}.
\end{proof}

In the reversible case, the mean jump rate $r_N(\cdot,\,\cdot)$
can be expressed in terms of capacities: By
\cite[Lemma 6.8]{BL1}, $r_{N}(x,\,y)$ is equal to
\begin{equation}
\label{mjrc}
\frac{1}{\mu_{N}(\mc{E}_{N}^{x})}\big[\,
\textup{cap}_{N}(\mc{E}_{N}^{x},\,\breve{\mc{E}}_{N}^{x})\,+\,
\textup{cap}_{N}(\mc{E}_{N}^{y},\,\breve{\mc{E}}_{N}^{y})\,-\,
\textup{cap}_{N}(\mc{E}_{N}^{x}\cup\mc{E}_{N}^{x},\,
\mc{E}_{N}(S\setminus\{x,\,y\}))\,\big]\;,
\end{equation}
in which $\mc{E}_{N}(S\setminus\{x,\,y\}) = \cup_{z \in S\setminus\{x,\,y\}} \mc{E}_{N}^{z}$. Hence, estimating the mean-jump rates boils down to that of the
capacity between metastable wells, which can be achieved by using the
variational characterizations of capacities, known as the Dirichlet
and the Thomson principles \cite{l-review}.  In the non-reversible
case, a robust strategy of estimating mean-jump rates via capacities
between wells has also been developed in \cite{BL2, Lan, LS1}.

We complete this section with a formula for the average of equilibrium
potentials. Fix two disjoint and non-empty subsets $\mc{A}$ and
$\mc{B}$ of $\mc{H}_{N}$. According to \cite[Proposition A.2]{BL2},
\begin{equation}
\label{34}
\sum_{\eta\not\in \mc A \cup \mc B} \mu_N(\eta)  \,
h^\dagger_{\mc A, \mc B} (\eta)
\;=\; \Cap_N(\mc A, \mc B)\,\,
\mb E^{N}_{\nu^\dagger_{\mc A, \mc B}}
\Big[ \, \int_0^{\tau_{\mc B}}
\chi_{[\mc A \cup \mc B]^c} (\xi_N(s))\, ds\,\Big]\;,
\end{equation}
where $\nu^\dagger_{\mc A, \mc B}$ is the equilibrium measure between
$\mc A$ and $\mc B$:
\begin{equation}
\label{38}
\nu^\dagger_{\mc A, \mc B} (\zeta) \;
=\; \frac{1}{\Cap_N(\mc A, \mc B)}
\, \mu_N(\zeta) \, \lambda_N(\zeta)\,
\mb P^{\dagger, N}_\zeta\big[\, \tau_{\mc B} <
\tau^+_{\mc A}\,\big]\;,
\quad \zeta\in \mc A\;.
\end{equation}

\section{The solutions of the resolvent equation}
\label{s5}

Theorem \ref{1t_main2} asserts that a sequence of Markov processes is
metastable if conditions $\mf R_{\ms L}$ and $\mf D$ are fulfilled. In
this section and in the next, we present sufficient conditions for
$\mf R_{\ms L}$ and $\mf D$ to hold.  We start by dividing the
condition $\mf R_{\ms L}$ into two sub-conditions, namely, conditions
$\mf R^{(1)}$ and $\mf R_{\ms L}^{(2)}$.

In this section, we present two mixing properties, assumptions
$\mf{V}$ and $\mf{M}$, which imply condition $\mf R^{(1)}$. As a
by-product, we show that condition $\mf{M}$ implies condition $\mf{D}$
if $\mu_N(\Delta_N)/\mu_N(\ms E^x_N) \to 0$ for all $x\in S$.  We
leave condition $\mf R_{\ms L}^{(2)}$ to the next section.

\medskip\noindent{\bf Condition $\mf{R}^{(1)}$}.  The solution $F_N$
of the resolvent equation \eqref{1f01} is asymptotically constant on
each well $\mc E^x_N$ in the sense that
\begin{equation*}
\lim_{N\rightarrow\infty} \, \max_{x\in S}\,
\max_{\eta, \zeta \in\mc{E}_{N}^{x}}
\,|\,{F}_{N}(\eta)\,-\, F_{N}(\zeta)\,|\;=\;0\;.
\end{equation*}

\begin{remark}
Clearly, the condition $\mf{R}^{(1)}$ is satisfied if the wells
$\mc E^x_N$ are singletons as in the Ising model under Glauber
dynamics \cite{BM} or the simple inclusion process \cite{BDG, KS}.
\end{remark}

\subsection{Visiting condition $\mf{V}$}\label{secV}

The first condition is build upon the existence in each well
of a configuration which is visited in a time-scale much shorter than
the metastable one.

\medskip\noindent{\bf Condition $\mf{V}$}.  There exist configurations
$\zeta^x_N \in \mc E^x_N$, $x\in S$, such that
\begin{equation}
\label{30}
\lim_{N\to\infty}
\max_{\eta\in\mc{E}_{N}^{y}} \mb{P}^N_{\eta}
\,[\, \tau_{\zeta^y_N} \ge s \,]\;=\; 0
\end{equation}
for all $s>0$, $y\in S$.
\medskip

The next result asserts that this property is a sufficient condition for
$\mf R^{(1)}$ to hold.  Its proof is postponed to the end of the
subsection.

\begin{proposition}
\label{p01}
Condition $\mf{V}$ implies condition $\mf R^{(1)}$.
\end{proposition}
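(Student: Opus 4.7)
The strategy is to express $F_N(\eta)$, for any $\eta\in\mc E^x_N$, as a combination of a vanishing ``excursion'' term and a factor times the value $F_N(\zeta^x_N)$ at the distinguished configuration supplied by condition $\mf V$. Specifically, for fixed $x\in S$ and $\eta\in\mc E^x_N$, I would apply the strong Markov property at the stopping time $\tau_{\zeta^x_N}$ to the stochastic representation \eqref{1f11}. Since the chain is irreducible, $\tau_{\zeta^x_N}<\infty$ almost surely, and using $\xi_N(\tau_{\zeta^x_N})=\zeta^x_N$ this yields
\begin{equation*}
F_N(\eta) \;=\; \mb E^N_\eta\Big[\int_0^{\tau_{\zeta^x_N}} e^{-\lambda s}\, G_N(\xi_N(s))\, ds\Big] \;+\; \mb E^N_\eta\big[\,e^{-\lambda \tau_{\zeta^x_N}}\,\big]\, F_N(\zeta^x_N)\;.
\end{equation*}

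The first step is to control the two expectations appearing on the right in terms of $\mb E^N_\eta[1-e^{-\lambda\tau_{\zeta^x_N}}]$. The integral term is bounded in absolute value by $\lambda^{-1}\|g\|_\infty\,\mb E^N_\eta[1-e^{-\lambda\tau_{\zeta^x_N}}]$, while $|\mb E^N_\eta[e^{-\lambda \tau_{\zeta^x_N}}]-1|=\mb E^N_\eta[1-e^{-\lambda\tau_{\zeta^x_N}}]$. The second step is to show that this last quantity tends to $0$ uniformly over $\eta\in\mc E^x_N$: for any $t>0$,
\begin{equation*}
\mb E^N_\eta\big[\,1-e^{-\lambda\tau_{\zeta^x_N}}\,\big]\;\le\;(1-e^{-\lambda t})\;+\;\mb P^N_\eta\big[\,\tau_{\zeta^x_N}\ge t\,\big]\;,
\end{equation*}
so condition $\mf V$ gives $\limsup_N\max_{\eta\in\mc E^x_N}\mb E^N_\eta[1-e^{-\lambda\tau_{\zeta^x_N}}]\le 1-e^{-\lambda t}$, and letting $t\downarrow 0$ makes this zero.

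Combining these observations with the a priori bound $|F_N(\zeta^x_N)|\le C_0$ from \eqref{1f10}, the decomposition gives
\begin{equation*}
\max_{\eta\in\mc E^x_N}\big|\,F_N(\eta)-F_N(\zeta^x_N)\,\big|\;\longrightarrow\;0 \quad\text{as } N\to\infty\;,
\end{equation*}
for each $x\in S$. A triangle inequality $|F_N(\eta)-F_N(\zeta)|\le|F_N(\eta)-F_N(\zeta^x_N)|+|F_N(\zeta)-F_N(\zeta^x_N)|$, together with the finiteness of $S$, yields condition $\mf R^{(1)}$. No step is a real obstacle; the only delicate point is ensuring the two-parameter limit $N\to\infty$ followed by $t\downarrow 0$ is handled in the right order, which is why the uniform-in-$\eta$ statement of $\mf V$ is essential.
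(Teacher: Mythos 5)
Your proof is correct, and it takes a genuinely different route from the paper's. You decompose the probabilistic representation \eqref{1f11} of the resolvent directly at the stopping time $\tau_{\zeta^x_N}$ via the strong Markov property, obtaining
\begin{equation*}
F_N(\eta) \;=\; \mb E^N_\eta\Big[\int_0^{\tau_{\zeta^x_N}} e^{-\lambda s}\, G_N(\xi_N(s))\, ds\Big] \;+\; \mb E^N_\eta\big[\,e^{-\lambda \tau_{\zeta^x_N}}\,\big]\, F_N(\zeta^x_N)\;,
\end{equation*}
and then control everything by $\mb E^N_\eta[1-e^{-\lambda\tau_{\zeta^x_N}}]$, which condition $\mf V$ together with a two-parameter limit ($N\to\infty$ then $t\downarrow0$) drives to zero. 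The paper instead first proves a general mollification bound (Lemma \ref{l08b}): $\sup_{0\le t\le T}\|F_N-\ms P_N(t)F_N\|_\infty\le 2T\|G_N\|_\infty$, then splits $(\ms P_N(s)F_N)(\eta)$ over $\{\tau_{\zeta^x_N}\le s\}$ and its complement, applies the strong Markov property, and invokes Lemma \ref{l08b} a second time to absorb the residual $\ms P_N(s-\tau_{\zeta^x_N})$. Both arguments rest on the same two ingredients (the resolvent representation and the strong Markov property) and produce the same error bound structure, so neither is more or less rigorous. What your route buys is brevity and self-containment: you never need the auxiliary semigroup lemma, since the integral remainder and the discount factor $\mb E^N_\eta[e^{-\lambda\tau}]$ can be estimated in one stroke by the elementary inequality $\mb E^N_\eta[1-e^{-\lambda\tau}]\le(1-e^{-\lambda t})+\mb P^N_\eta[\tau\ge t]$. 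What the paper's route buys is reusability: Lemma \ref{l08b} is stated independently precisely so that it can be invoked again, verbatim, in the proof of the mixing version (Proposition \ref{p03}), where there is no single distinguished configuration to hit and your stopping-time decomposition would not carry over. One small point worth recording explicitly if you write this up: the validity of the decomposition at $\tau_{\zeta^x_N}$ uses $\tau_{\zeta^x_N}<\infty$ almost surely, which holds because the chains are assumed irreducible; you do mention this, and it is indeed part of the standing hypotheses.
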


\begin{remark}
\label{rm8}
Condition \eqref{30} requires the process to visit the bottom of the
well quickly. It is weaker than (H1), which implies that the process
visits all configurations in a well before jumping to a new
one. Actually, Proposition \ref{p_mixmain} asserts that a stronger
version of condition \eqref{30} holds for reversible, critical
zero-range processes, a model which does not satisfy condition (H1).
\end{remark}

\begin{corollary}
\label{l04}
Assume that condition {\rm (H0)}, {\rm (H1)} are in force.  Then,
$\mf R^{(1)}$ holds.
\end{corollary}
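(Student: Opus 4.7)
The plan is simply to chain together two already-established results: the visiting estimate of Lemma \ref{l05} and the implication $\mf V \Rightarrow \mf R^{(1)}$ of Proposition \ref{p01}.

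First, for each $x \in S$, pick a configuration $\zeta^x_N \in \mc E^x_N$; the natural choice is the bottom-of-well configuration $\xi^x_N$ provided by hypothesis (H1), but any fixed sequence works. Applying Lemma \ref{l05} to each $x \in S$ separately, we obtain that for every $\delta > 0$ and every $x \in S$,
\begin{equation*}
\limsup_{N\to\infty}\; \max_{\eta\in\mc E^x_N}\, \mb P^N_\eta\bigl[\, \tau_{\zeta^x_N} > \delta\,\bigr]\;=\;0\;.
\end{equation*}
Since $S$ is finite, this is exactly the statement of condition $\mf V$ for the family $\{\zeta^x_N\}_{x\in S}$.

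Invoking Proposition \ref{p01} then yields condition $\mf R^{(1)}$, which is the desired conclusion. There is no real obstacle in this argument, since all of the work has already been done: hypothesis (H1) packages the super-harmonic/mixing content needed to control hitting times of a reference point, Lemma \ref{l05} extracts the quantitative form of this fact, and Proposition \ref{p01} translates a quick visit to a reference configuration into asymptotic constancy of the resolvent solution on each well. The only thing to check is that the quantifiers match, which they do verbatim.
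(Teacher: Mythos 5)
Your argument matches the paper's proof exactly: both chain Lemma \ref{l05} (which, under (H0) and (H1), gives the visiting estimate for any sequence $\zeta^x_N \in \mc E^x_N$, hence condition $\mf V$) with Proposition \ref{p01} ($\mf V \Rightarrow \mf R^{(1)}$). The quantifier check you mention is indeed the only thing to verify, and it does go through verbatim since $S$ is finite.
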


\begin{proof}
By Lemma \ref{l05}, condition \eqref{30} holds under the assumptions
(H0) and (H1). The assertion of the corollary follows, therefore, from
Proposition \ref{p01}.
\end{proof}

\begin{remark}
\label{rm9}
Condition (H0) and (H1) have been derived for super-critical
condensing zero-range processes in \cite{BL3, Lan, Seo} and for many
other dynamics. These results support the introduction of condition
\eqref{30}.
\end{remark}

We turn to the proof of Proposition \ref{p01}.  We start showing that
we may mollify the solution with the semigroup
$\color{blue} (\ms P_N(t) : t\ge 0)$ associated to the generator
$\ms L_N$.

\begin{lemma}
\label{l08b}
For all $T>0$,
\begin{equation*}
\sup_{0\le t\le T} \, \max_{\eta\in \mc{H}_N} \, \big|\,
F_N(\eta) \;-\; (\ms P_N(t)\, F_N) (\eta)\,\big|\;\le\;
2\, T\, \Vert G_N\Vert_\infty \;.
\end{equation*}
\end{lemma}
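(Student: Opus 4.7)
The plan is to use the forward equation for the semigroup together with the $L^\infty$ bound on $F_N$ coming from the probabilistic representation \eqref{1f11}.

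First I would write, by standard semigroup theory,
\begin{equation*}
(\ms P_N(t)\, F_N)(\eta) \,-\, F_N(\eta) \;=\; \int_0^t \ms P_N(s)\, (\ms L_N F_N)(\eta)\, ds\;,
\end{equation*}
which is just the fundamental theorem of calculus applied to $s \mapsto \ms P_N(s) F_N$ combined with $\frac{d}{ds} \ms P_N(s) F_N = \ms P_N(s) \ms L_N F_N$. Since $F_N$ solves the resolvent equation \eqref{1f01}, we may substitute $\ms L_N F_N = \lambda F_N - G_N$, giving
\begin{equation*}
(\ms P_N(t)\, F_N)(\eta) \,-\, F_N(\eta) \;=\; \int_0^t \ms P_N(s)\, (\lambda F_N \,-\, G_N)(\eta)\, ds\;.
\end{equation*}

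Next I would bound the integrand in $L^\infty$. Because $\ms P_N(s)$ is a Markov semigroup, it is an $L^\infty$-contraction, so $\|\ms P_N(s)(\lambda F_N - G_N)\|_\infty \le \lambda\|F_N\|_\infty + \|G_N\|_\infty$. The key input is the uniform bound $\|F_N\|_\infty \le \lambda^{-1}\|G_N\|_\infty$, which follows directly from the probabilistic representation \eqref{1f11}: since $|G_N(\xi_N(s))| \le \|G_N\|_\infty$,
\begin{equation*}
|F_N(\eta)| \;\le\; \|G_N\|_\infty \int_0^\infty e^{-\lambda s}\, ds \;=\; \lambda^{-1}\|G_N\|_\infty\;.
\end{equation*}
Therefore $\|\lambda F_N - G_N\|_\infty \le 2\|G_N\|_\infty$.

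Combining the two estimates, for every $\eta\in\mc H_N$ and $0\le t\le T$,
\begin{equation*}
\big|\,F_N(\eta) \,-\, (\ms P_N(t) F_N)(\eta)\,\big| \;\le\; \int_0^t 2\,\|G_N\|_\infty\, ds \;\le\; 2T\,\|G_N\|_\infty\;,
\end{equation*}
which is the claimed bound after taking the supremum over $t\in[0,T]$ and $\eta$. There is no real obstacle here; the only point worth emphasizing is that one needs an $L^\infty$-bound on $F_N$ that does not involve $\lambda$ in a bad way, and this is exactly what the representation \eqref{1f11} supplies, cancelling the factor $\lambda$ in front of $F_N$ and yielding the constant $2$.
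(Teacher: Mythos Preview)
Your proof is correct and reaches the same bound with the same constant, but the argument is organized differently from the paper's. The paper works directly with the probabilistic representation \eqref{1f11}: it writes $(\ms P_N(t)F_N)(\eta)=\mb E^N_\eta[\int_0^\infty e^{-\lambda s}G_N(\xi_N(s+t))\,ds]$, shifts the time variable, and splits the resulting integral into the piece $\int_t^\infty e^{-\lambda s}G_N(\xi_N(s))\,ds$ (which differs from $F_N(\eta)$ by at most $t\|G_N\|_\infty$) and a remainder $\int_t^\infty\{e^{-\lambda(s-t)}-e^{-\lambda s}\}G_N(\xi_N(s))\,ds$ bounded via $1-e^{-a}\le a$. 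You instead invoke the forward equation $\ms P_N(t)F_N-F_N=\int_0^t \ms P_N(s)\ms L_N F_N\,ds$, replace $\ms L_N F_N$ by $\lambda F_N-G_N$ from the resolvent equation, and use the $L^\infty$-contractivity of the semigroup together with the bound $\|F_N\|_\infty\le\lambda^{-1}\|G_N\|_\infty$. Your route is slightly more abstract and makes the role of the resolvent equation explicit; the paper's is a hands-on manipulation of the stochastic representation. Both are equally short, and in the finite state-space setting of the paper the forward equation you use is entirely unproblematic since $\ms L_N$ is bounded.
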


\begin{proof}
Fix $T>0$ and $0<t\le T$.  By the representation \eqref{1f11} of $F_N$,
\begin{equation*}
(\ms P_N(t) \, F_N) (\eta) \;=\;
\mb E^N_\eta \Big[ \int_0^\infty e^{-\lambda s}
\, G_N(\xi_N(s+t)) \; ds \,\Big]\;.
\end{equation*}
By a change of variables, the right-hand side can be rewritten as
\begin{equation*}
\mb E^N_\eta \Big[ \int_t^\infty e^{-\lambda s}
\, G_N(\xi_N(s)) \; ds \,\Big] \;+\;
\mb E^N_\eta \Big[ \int_t^\infty \Big\{\, e^{-\lambda (s-t)} \,-\,
e^{-\lambda s} \,\Big\}
\, G_N(\xi_N(s)) \; ds \,\Big]\;.
\end{equation*}
The first term is equal to $F_N(\eta) + R_N$, where the absolute value
of the remainder $R_N$ is bounded by $t\, \Vert G_N\Vert_\infty$. As
$1-e^{-a} \le a$, $a\ge 0$, the second term is bounded by $t\, \Vert
G_N\Vert_\infty$.
\end{proof}

\begin{proof}[Proof of Proposition \ref{p01}]
Fix $x\in S$, $\eta\in\mc E^x_N$, $s>0$, and write
$(\ms P_{N}(s) F_N)(\eta)$ as
\begin{equation*}
\mb E^N_\eta \big[\, F_N(\xi_N(s)) \,,\,
\tau_{\zeta_{N}^{x}} \,\le \, s \,\big] \;+\; R_N^{(1)}\;,
\end{equation*}
where the remainder $R_N^{(1)}$ is bounded by
$\max_{\zeta\in \mc E^x_N} \mb P^N_\zeta [\, \tau_{\zeta_{N}^{x}} \,> \,
s \,]\, \Vert F_N\Vert_\infty$. By the strong Markov property, the
previous expression is equal to
\begin{equation*}
\mb E^N_\eta \Big[\, \big[\,
\ms P_N(s - \tau_{\zeta_{N}^{x}}) \, F_N\, \big] (\zeta^x_N) \,,\,
\tau_{\zeta_{N}^{x}} \,\le \, s \,\Big] \;+\; R_N^{(1)}\;.
\end{equation*}
By Lemma \ref{l08b}, this expression is equal to $F_N(\xi_{N}^{x}) +
R^{(2)}_N$, where
\begin{equation*}
\big|\, R^{(2)}_N \,\big| \;\le\;
2\, s \, \Vert G_N\Vert_\infty \;+\;
2\, \max_{\zeta\in \mc E^x_N} \mb P^N_\zeta [\, \tau_{\zeta_{N}^{x}} \,> \, s
\,]\, \Vert F_N\Vert_\infty\;.
\end{equation*}
Hence, by Lemma \ref{l08b} once more,
\begin{equation*}
\max_{\eta\in \mc E^x_N} \, \big|\,
F_N(\eta) \;-\; F_N (\zeta^x_N)\,\big|\;\le\;
4\, s \, \Vert G_N\Vert_\infty \;+\;
2\, \max_{\zeta\in \mc E^x_N}
\mb P^N_\zeta [\, \tau_{\zeta_{N}^{x}} \,> \, s\,]\, \Vert F_N\Vert_\infty\;.
\end{equation*}
By \eqref{1f10}, the sequence $F_N$ is uniformly bounded. The same
property holds for the sequence $G_N$ by definition. To complete the
proof of the assertion, it remains to let $N\to\infty$ and then
$s \to 0$ and to recall the hypothesis \eqref{30}.
\end{proof}

\subsection{Mixing condition $\mf{M}$}\label{sec61}

The second set of assumptions requires the mixing time of the
reflected process on a well to be much smaller than the hitting time
of the boundary.

Denote by $\color{blue} \mc V_{N}^{x}$, $x\in S$, a set of large
wells which contains the wells $\mc E^x_N$:
$\mc E^x_N \subset \mc V_{N}^{x}$. Let
$\color{blue}(\xi^{R,x}_N(t) : t \ge 0)$ be the continuous-time Markov
chain on $\mc{V}_{N}^{x}$ obtained by reflecting the process
$\xi_N(\cdot)$ at the boundary of this set. In other words, in the
discrete setting, the process $\xi^{R,x}_N(\cdot)$ behaves as the
original process inside the well $\mc{V}_{N}^{x}$, but its jumps to
the set $(\mc{V}_{N}^{x})^{c}$ are suppressed.

Denote by $d^x_{\rm TV}(\mu, \nu)=d^{x,N}_{\rm TV} (\mu, \nu) $ the total variation distance
between two probability measures $\mu$, $\nu$ on $\mc V^x_N$:
\begin{equation}\label{dtv}
d^x_{\rm TV} (\mu, \nu) \;=\; \frac{1}{2}\, \sup_J
\Big|\, \int J(\eta)\, \mu(d\eta) \,-\,
\int J(\eta)\, \nu(d\eta) \, \Big|\;,
\end{equation}
where the supremum is carried over all measurable functions
$J: \mc V^x_N \to \bb R$ bounded by $1$, $\sup_{\xi\in \mc V^x_N} |\,
J(\xi)\,|\le 1$.

Assume that the reflected process $\xi^{R,x}_N(\cdot)$ is ergodic.
Denote by \textcolor{blue}{$(\ms{P}^{R,x}_N (t) : t\ge 0)$} its
semigroup, by $\color{blue} \pi^{R,x} = \pi^{R,x}_N$ its stationary state, and by
$\color{blue} t^{x}_{\rm mix} (\epsilon) = t^{x}_{{\rm mix}, N} (\epsilon)$, $0<\epsilon<1$, its mixing
time:
\begin{equation*}
t^{x}_{\rm mix} (\epsilon) \;=\;
\inf\big\{t> 0 : \sup_{\eta\in  \mc E^x_N} d^x_{\rm TV} (\delta_\eta \ms
P^{R,x}_N(t) \,,\,  \pi^{R,x}) \,\le\, \epsilon \,\big\}\;.
\end{equation*}

The next result asserts that the following mixing properties entail condition
$\mf R^{(1)}$.

\smallskip
\noindent{\bf Condition $\mf M$.} The process $\xi_N(\cdot)$ starting
from a well $\mc{E}_{N}^{x}$ cannot escape from the well
$\mc{V}_{N}^{x}$ within a time scale $\mb h_N \ll 1$: For all
$x\in S$,
\begin{equation}
\label{23}
\lim_{N\to\infty} \sup_{\eta\in \mc{E}_{N}^{x}}
\mb{P}^N_{\eta}\,[\, \tau_{(\mc{V}_{N}^{x})^{c}}
\le \mb h_{N} \,]\;=\;0\;.
\end{equation}
Furthermore, for every $x\in S$, the reflected process
$\xi^{R,x}_N(\cdot)$ is ergodic, and for all $\epsilon>0$,
\begin{equation}
\label{33}
t^{x}_{\rm mix} (\epsilon) \;\le\; \mb h_N
\end{equation}
for all $N$ sufficiently large.
\begin{proposition}
\label{p03}
If the mixing property $\mf M$ is satisfied, then condition
$\mf R^{(1)}$ holds.
\end{proposition}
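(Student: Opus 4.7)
The plan is to show that for each $x\in S$ and $\eta\in\mc E^x_N$, the value $F_N(\eta)$ is asymptotically equal to the average $\pi^{R,x}(F_N)$ of $F_N$ against the stationary measure of the reflected dynamics on $\mc V^x_N$. Since this average is independent of $\eta$, this will immediately yield $\mf R^{(1)}$. The strategy mirrors the proof of Proposition \ref{p01} but exchanges the role of the hitting assumption \eqref{30} for the combined assumptions \eqref{23} and \eqref{33} of condition $\mf M$.

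Concretely, I would chain three approximations, all quantified at the scale $t=\mb h_N\to 0$. First, by Lemma \ref{l08b} applied with $T=\mb h_N$,
\begin{equation*}
\big|\,F_N(\eta)\,-\,(\ms P_N(\mb h_N)\,F_N)(\eta)\,\big|\;\le\;2\,\mb h_N\,\Vert G_N\Vert_\infty\;,
\end{equation*}
which vanishes as $N\to\infty$. Second, I would couple the original process $\xi_N(\cdot)$ and the reflected process $\xi^{R,x}_N(\cdot)$, both starting from $\eta\in\mc E^x_N\subset\mc V^x_N$, by driving the two dynamics with a common family of Poisson clocks and simply suppressing the outward jumps for the reflected process. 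Under this coupling the two trajectories coincide on $[0,\tau_{(\mc V^x_N)^c})$, whence
\begin{equation*}
\big|\,(\ms P_N(\mb h_N)\,F_N)(\eta)\,-\,(\ms P^{R,x}_N(\mb h_N)\,F_N)(\eta)\,\big|\;\le\;2\,\Vert F_N\Vert_\infty\,\mb P^N_\eta\big[\,\tau_{(\mc V^x_N)^c}\le\mb h_N\,\big]\;,
\end{equation*}
where on the right $F_N$ stands for its restriction to $\mc V^x_N$. Combining \eqref{23} with the uniform bound \eqref{1f10} on $F_N$ shows that this error tends to $0$ uniformly over $\eta\in\mc E^x_N$. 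Third, by the mixing bound \eqref{33}, for any fixed $\epsilon>0$ and every sufficiently large $N$, $d^x_{\rm TV}(\delta_\eta\ms P^{R,x}_N(\mb h_N),\pi^{R,x})\le\epsilon$ for all $\eta\in\mc V^x_N$, and since $|\int J\,d\mu-\int J\,d\nu|\le 2\,\Vert J\Vert_\infty\,d_{\rm TV}(\mu,\nu)$ for any bounded $J$,
\begin{equation*}
\big|\,(\ms P^{R,x}_N(\mb h_N)\,F_N)(\eta)\,-\,\pi^{R,x}(F_N)\,\big|\;\le\;2\,\epsilon\,\Vert F_N\Vert_\infty\;.
\end{equation*}

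The right-hand side $\pi^{R,x}(F_N)$ does not depend on $\eta$. Chaining the three estimates therefore gives, for any $\eta,\zeta\in\mc E^x_N$,
\begin{equation*}
|F_N(\eta)-F_N(\zeta)|\;\le\;o_N(1)\,+\,4\,\epsilon\,\Vert F_N\Vert_\infty\;;
\end{equation*}
letting $N\to\infty$ and then $\epsilon\downarrow 0$ yields $\mf R^{(1)}$. The only delicate point is the coupling construction in the second step, but it is standard: two Markov chains driven by the same Poisson clocks agree up to their first disagreement, which here is exactly the first attempted jump out of $\mc V^x_N$, i.e.~the hitting time $\tau_{(\mc V^x_N)^c}$ of the original chain. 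I expect the real subtlety to lie not in any individual step but in the correct interpretation of $\mf M$: one must identify a single scale $\mb h_N$ that is simultaneously short enough to prevent escape from $\mc V^x_N$ and long enough for the reflected chain to mix, and the existence of such a scale is precisely the content of condition $\mf M$.
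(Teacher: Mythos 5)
Your proof is correct and follows essentially the same three-step decomposition as the paper: mollify $F_N$ by the semigroup via Lemma~\ref{l08b}, pass to the reflected semigroup using the non-escape probability, and then invoke the total-variation mixing bound to compare with the stationary average $\pi^{R,x}(F_N)$. The only cosmetic differences are that the paper phrases your coupling step as an event decomposition (Lemma~\ref{l09b}), and in the final step it invokes the factor-of-two bound from \cite[Section 4.4]{LPW} to pass from $d^x_{\rm TV}$ at time $t^x_{\rm mix}(\epsilon)$ to time $\mb h_N$, whereas you implicitly use the monotonicity in $t$ of $\sup_\eta d^x_{\rm TV}(\delta_\eta\ms P^{R,x}_N(t),\pi^{R,x})$, which is equally valid and gives a slightly sharper constant.
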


\begin{remark}
\label{rm10b}
Barrera and Jara \cite{BJ20} proved that the mixing time of small
random perturbations of dynamical systems satisfying certain
regularity assumptions, is of polynomial order. Since the hitting time
of the boundary is exponentially large \cite{fw98}, the previous
result applies to this setting.
\end{remark}

The proof of Proposition \ref{p03} relies on a simple estimate between
the semigroup of the original process and the semigroup of the
reflected one.

\begin{lemma}
\label{l09b}
For each $x\in S$, $\eta\in \mc E^x_N$ and $t>0$,
\begin{equation*}
\big|\, (\ms P_N(t)\, F_N) (\eta) \,-\,
(\ms P^{R,x}_N(t)\, F_N) (\eta) \,\big|\;\le\;
2\, \Vert F_N\Vert_\infty \,
\mb P^N_\eta [\, \tau_{(\mc{V}_{N}^{x})^{c}} \,\le \, t \,]\;.
\end{equation*}
\end{lemma}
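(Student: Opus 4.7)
The plan is to use the standard coupling of $\xi_N(\cdot)$ and the reflected process $\xi_N^{R,x}(\cdot)$: run both chains from the same initial configuration $\eta\in\mc E^x_N\subset\mc V^x_N$ using the same clocks and jump decisions, suppressing only those jumps of $\xi_N(\cdot)$ that would leave $\mc V^x_N$. Then for every $s<\tau:=\tau_{(\mc V^x_N)^c}$ the two trajectories coincide, i.e.\ $\xi_N(s)=\xi_N^{R,x}(s)$ on the event $\{\tau>s\}$.

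With this coupling in hand, the argument is a one-line decomposition. Writing both semigroups as expectations,
\begin{equation*}
(\ms P_N(t)F_N)(\eta)-(\ms P^{R,x}_N(t)F_N)(\eta)
\;=\;\mb E^N_\eta\!\left[F_N(\xi_N(t))-F_N(\xi_N^{R,x}(t))\right],
\end{equation*}
and splitting the expectation along $\{\tau>t\}\cup\{\tau\le t\}$, the contribution from $\{\tau>t\}$ vanishes by the coupling. On $\{\tau\le t\}$ both integrands are bounded by $\|F_N\|_\infty$ in absolute value, so the total contribution is at most $2\|F_N\|_\infty\,\mb P^N_\eta[\tau\le t]$, yielding the claim.

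There is no real obstacle here; the only point to be careful about is to invoke the correct interpretation of $\ms P^{R,x}_N(t)$ acting on $F_N$, which is defined on all of $\mc H_N$ but only its restriction to $\mc V^x_N$ matters, since $\xi_N^{R,x}(\cdot)$ never leaves $\mc V^x_N$. Alternatively, one can bypass the coupling and use the strong Markov property at time $\tau$ to control the difference, but the coupling presentation is the cleanest.
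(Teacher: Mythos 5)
Your proof is correct and takes essentially the same approach as the paper: the paper also decomposes $(\ms P_N(t)F_N)(\eta)$ according to $\{\tau_{(\mc V^x_N)^c}>t\}$ versus its complement, replaces $\xi_N(\cdot)$ by the reflected process on the first event (which is exactly your coupling observation), and bounds the two remaining terms by $\Vert F_N\Vert_\infty\,\mb P^N_\eta[\tau_{(\mc V^x_N)^c}\le t]$ each.
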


\begin{proof}
Fix $x$ in $S$, $\eta$ in $\mc E^x_N$, and write
$(\ms P_N(t)\, F_N) (\eta)$ as
\begin{equation*}
\mb E^N_\eta \big[\, F_N(\xi_N(t)) \,,\,
\tau_{(\mc{V}_{N}^{x})^{c}} \,>\, t \,\big] \;+\;
\mb E^N_\eta \big[\, F_N(\xi_N(t)) \,,\,
\tau_{(\mc{V}_{N}^{x})^{c}} \,\le \, t \,\big] \;.
\end{equation*}
In the first term, we may replace the process $\xi_N(\cdot)$ by the
reflected one since the process remained in the set $\mc{V}_{N}^{x}$
in the time-interval $[0,t]$.  The second term is bounded by
$\mb P^N_\eta [\, \tau_{(\mc{V}_{N}^{x})^{c}} \,\le \, t \,]\, \Vert
F_N\Vert_\infty$. Writing the indicator function of the set
$\{\tau_{(\mc{V}_{N}^{x})^{c}} \,>\, t \}$ as $1$ minus the indicator of
the complement, we conclude the proof of the lemma.
\end{proof}

\begin{proof}[Proof of Proposition \ref{p03}]
By Lemmata \ref{l08b} and \ref{l09b} with $T=t=\mb h_N$, and
hypothesis \eqref{23}
\begin{equation*}
\lim_{N\to\infty} \, \sup_{\eta\in\mc E^x_N}
\,\big|\,{F}_{N}(\eta)\,
-\, (\ms P^{R,x}_N(\mb h_N)\, F_N) (\eta) \,\big|
\;=\; 0\;.
\end{equation*}

Fix $x\in S$, $\eta\in \mc E^x_N$ and $\epsilon>0$. By definition of
the total variation distance,
\begin{equation}\label{mix1}
\,\big|\, (\ms P^{R,x}_N(\mb h_N)\, F_N) (\eta)
\,-\, E_{\pi^{R,x}} \big[\, F_N \,\big]\,   \,\big|
\;\le\; 2\, \Vert\, F_N\, \Vert_\infty\,
d_{\rm TV}^x \big(\, \delta_\eta\, \ms P^{R,x}_N(\mb h_N)
\,,\, \pi^{R,x})\;.
\end{equation}
 By the contracting property of the semigroup,
$$
 d_{\rm TV}^x \big(\, \delta_\eta\, \ms P^{R,x}_N(t)
\,,\, \pi^{R,x}) \;=\; d_{\rm TV}^x \big(\, \delta_\eta\, \ms P^{R,x}_N(t)
\,,\,   \pi^{R,x} \,\ms P^{R,x}_N(t))
$$
is decreasing in $t$, and thus by  \eqref{33}, the right-hand side of \eqref{mix1} is bounded from above by
\begin{equation*}
2\, \Vert\, F_N\, \Vert_\infty\, \sup_{\xi \in \mc{E}_N^x}
d_{\rm TV}^x\big(\, \delta_\xi \, \ms P^{R,x}_N(t^{x}_{\rm mix}
(\epsilon)) \,,\, \pi^{R,x}) = 2 \Vert\, F_N\, \Vert_\infty\, \epsilon
\end{equation*}
by definition of the mixing time. This completes the proof of
the proposition because the sequence $(F_N)$ is uniformly bounded in $N$.

\end{proof}

%By \cite[Section 4.4]{LPW},
%\begin{equation*}
%\sup_{\xi \in \mc V^x_N} d_{\rm TV}^x \big(\, \delta_\xi\, \ms
%P^{R,x}_N(t) \,,\, \pi^{R,x}) \;\le\;  2 \,
%\sup_{\xi \in \mc V^x_N} d_{\rm TV}^x
%\big(\, \delta_\xi\, \ms P^{R,x}_N(s) \,,\, \pi^{R,x})
%\end{equation*}
%for all $t\ge s$.  This inequality applied to
%$s = t^{x}_{\rm mix} (\epsilon) \le \mb h_N =t$ yields that the
%right-hand side of the previous displayed formula is bounded by

\subsection{Local equilibration and condition $\mf{D}$}\label{secD}

The same argument shows that condition $\mf M$ yields a fast local
equilibration inside each well. In particular, condition $\mf{D}$
results from assumption $\mf M$ and the property that
$\mu_N(\Delta_N)/\mu_N(\ms E^x_N) \to 0$ for all $x\in S$.

Consider a uniformly bounded sequence of functions
$(Q_{N})_{N\in\bb{N}}$, $Q_{N}:\mc{H}_{N}\rightarrow\bb{R}$: There
exists a finite constant $M>0$ such that
\begin{equation}
\sup_{\eta\in\mc{H}_{N}}|\,Q_{N}(\eta)\,|\;\le\;
M\text{\;\;\;\;\;for all }N\in\bb{N}\;.
\label{e_Qbd-1}
\end{equation}
Recall the definition of the probability measure $\mu^x_N$, introduced
in Remark \ref{rm11}.

\begin{prop}
\label{prop_eqbl}
Assume that condition $\mf M$ is in force. Then, for all $x\in S$,
$T>0$,
\begin{equation*}
\sup_{\eta\in\mc{E}_{N}^{x}}\Big|\,
\mb{E}_{\eta}^{N}\,\Big[\,\int_{0}^{T}Q_{N}(\xi_{N}(t))dt\,\Big]\,
-\,\mb{E}_{\mu_{N}^{x}}^{N}\,\Big[\,\int_{0}^{T}
Q_{N}(\xi_{N}(t))dt\,\Big]\,\Big|\;\le \; 6M(T+1)\,o_{N}(1)\;,
\end{equation*}
where the error term $o_{N}(1)$ at the right-hand side is uniform
over $N$, $M$ and $T$.
\end{prop}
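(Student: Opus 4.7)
\smallskip
\noindent\textbf{Plan of proof for Proposition \ref{prop_eqbl}.}

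The overall strategy is to replace the dynamics by its reflection on $\mc V_N^x$, exploit mixing to kill the dependence on the starting point, and then average. Fix $x\in S$, $T>0$, and $\eta\in\mc E_N^x$. I would split the time interval as $[0,T]=[0,\mb h_N]\cup[\mb h_N,T]$. The short interval contributes at most $M\,\mb h_N=M\cdot o_N(1)$ to both expectations, hence at most $2M\cdot o_N(1)$ to their difference, so only the regime $t\in[\mb h_N,T]$ requires work.

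For $t\ge\mb h_N$, I would apply the Markov property at time $\mb h_N$. Setting $\varphi_t(\zeta)=\mb E^N_\zeta[Q_N(\xi_N(t-\mb h_N))]$ (a function bounded by $M$), one has
\[
\mb E^N_\eta[Q_N(\xi_N(t))]\;=\;\mb E^N_\eta[\varphi_t(\xi_N(\mb h_N))].
\]
Using the natural coupling with the reflected process $\xi_N^{R,x}(\cdot)$ (they agree on $\{\tau_{(\mc V_N^x)^c}>\mb h_N\}$), together with hypothesis \eqref{23}, one replaces the law of $\xi_N(\mb h_N)$ by the law $\delta_\eta\ms P_N^{R,x}(\mb h_N)$ up to an error of at most $2M\sup_{\zeta\in\mc E_N^x}\mb P_\zeta^N[\tau_{(\mc V_N^x)^c}\le\mb h_N]=2M\cdot o_N(1)$. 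By hypothesis \eqref{33} with, say, $\epsilon_N\to0$ chosen so that $t^{x}_{\rm mix}(\epsilon_N)\le\mb h_N$ eventually, the mixing estimate yields
\[
\bigl|\,\mb E^N_\eta[\varphi_t(\xi_N(\mb h_N))]\,-\,E_{\pi^{R,x}}[\varphi_t]\,\bigr|\;\le\;(2M+2M\epsilon_N)\cdot o_N(1)\;=\;o_N(1),
\]
and crucially the quantity $E_{\pi^{R,x}}[\varphi_t]$ does not depend on the starting point $\eta\in\mc E_N^x$.

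With this in hand, for any $\eta,\eta'\in\mc E_N^x$,
\[
\bigl|\mb E^N_\eta[Q_N(\xi_N(t))]-\mb E^N_{\eta'}[Q_N(\xi_N(t))]\bigr|\;\le\;4M\cdot o_N(1),
\]
uniformly in $t\in[\mb h_N,T]$. Integrating over $t\in[\mb h_N,T]$ and then averaging $\eta'$ with respect to $\mu_N^x$ gives
\[
\bigl|\mb E^N_\eta\bigl[\int_{\mb h_N}^T Q_N(\xi_N(t))\,dt\bigr]-\mb E^N_{\mu_N^x}\bigl[\int_{\mb h_N}^T Q_N(\xi_N(t))\,dt\bigr]\bigr|\;\le\;4MT\cdot o_N(1).
\]
Adding the two contributions from the short interval $[0,\mb h_N]$ (for both the $\eta$- and the $\mu_N^x$-expectation) produces a final bound of the form $6M(T+1)\,o_N(1)$, as claimed. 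The mild technical point I would watch carefully is the choice of $\epsilon_N\to0$ matching the condition $\mf M$ (which only gives $t_{\rm mix}^x(\epsilon)\le\mb h_N$ for each fixed $\epsilon$ and $N$ large): this is a standard diagonal argument but is the only place where some care is needed to keep the error uniform in $\eta$, $M$ and $T$.
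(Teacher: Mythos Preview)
Your proposal is correct and follows essentially the same route as the paper: shift by $\mb h_N$, couple with the reflected process on $\mc V_N^x$ via hypothesis \eqref{23}, use the mixing bound \eqref{33} (through a diagonal sequence $\epsilon_N\to0$, exactly as you anticipate) to replace the distribution at time $\mb h_N$ by $\pi^{R,x}$, and finally average over $\mu_N^x$. The only cosmetic difference is that the paper applies the Markov property once to the integrated quantity $q_N(\eta)=\mb E_\eta^N\big[\int_0^T Q_N(\xi_N(s))\,ds\big]$ (a function bounded by $MT$), whereas you do it pointwise in $t$ with the functions $\varphi_t$ (bounded by $M$) and then integrate; the resulting error terms are the same.
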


\begin{proof}
Fix $x\in S$, $\eta\in\mc{E}_{N}^{x}$, and let
\begin{equation*}
q_{N}(\eta) \;=\; \mb{E}_{\eta}^{N}\Big[\,
\int_{0}^{T}Q_{N}(\xi_{N}(s))ds\, \Big]\;.
\end{equation*}
Note that
\begin{equation}
|q_{N}(\zeta)|\le TM\;\;\;\;
\text{for all }N\in\bb{N}\;
\text{and\;}\zeta\in\mc{H}_{N}\;.
\label{bdqn}
\end{equation}

By \eqref{33}, there exists a sequence $(\epsilon_N : N\ge 1)$ such
that $\lim_N \epsilon_N =0$ and $t^{x}_{\rm mix} (\epsilon_N) \;\le\;
\mb h_N$ for all $N\ge 1$. Let $\mb{s}_{N} = t^{x}_{\rm mix}
(\epsilon_N) $. Since $Q_N$ is uniformly bounded by $M$,
\begin{equation*}
q_{N}(\eta) \;=\; \mb{E}_{\eta}^{N}
\Big[\int_{\mb{h}_{N}}^{T+\mb{h}_{N}}
Q_{N}(\xi_{N}(s))ds\Big] \;+\; M\,O(\mb{h}_{N})\;.
\end{equation*}
By \eqref{23}, this expectation is equal to
\begin{equation*}
\mb{E}_{\eta}^{N}\Big[\,
\int_{\mb{h}_{N}}^{T+\mb{h}_{N}}Q_{N}(\xi_{N}(s))ds\,,\,
\tau_{(\mc V_{N}^{x})^{c}} \,>\, \mb{h}_{N}\, \Big]
\;+\; M\, T\,o_{N}(1) \;.
\end{equation*}
By the Markov property and the definition of $q_N$, we may write the
previous expectation as
\begin{equation*}
\mb{E}_{\eta}^{N}\Big[ \,q_{N}(\xi_{N}(\mb{h}_{N}))\,,\,
\tau_{(\mc V_{N}^{x})^{c}} > \mb{h}_{N}\, \Big]\;.
\end{equation*}

Recall that we denote by $\xi^{R,x}_N(\cdot)$ the reflected process at
the boundary of $\mc V_{N}^{x}$. Denote by
\textcolor{blue}{$\mb{P}_{\eta}^{R,x}$} the law of the reflected
process $\xi^{R,x}_N(\cdot)$, and by
\textcolor{blue}{$\mb{E}_{\eta}^{R,x}$} the expectation with respect
to $\mb{P}_{\eta}^{R,x}$.

Due to the presence of the indicator of the set
$\{\tau_{(\mc V_{N}^{x})^{c}} > \mb{h}_{N}\}$, we may replace in the
previous expectation $\xi_{N}(\mb{h}_{N})$ by
$\xi^{R,x}_{N}(\mb{h}_{N})$ and then remove the indicator of that
set. After these modifications the previous expression becomes
\begin{equation*}
\mb{E}^{R,x}_{\eta} \Big[ \,q_{N}(\xi_{N}(\mb{h}_{N}))\, \Big]
\;+\; M\, T\,o_{N}(1) \;.
\end{equation*}
By definition of $\mb s_N$ and since $\mb s_N \le \mb h_N$, the
expectation is equal to
\begin{equation*}
E_{\pi^{R,x}} \big[ \,q_{N}\, \big] \;+\; M\, T\,o_{N}(1) \;.
\end{equation*}

We just proved that
\begin{equation*}
\sup_{\eta, \zeta\in\mc{E}_{N}^{x}}
\big|\, q_{N}(\eta) \,-\, q_{N}(\zeta) \,\big|
\;\le \;6\, M\, (T+1)\,o_{N}(1)\;.
\end{equation*}
The assertion of the proposition follows from this bound by averaging
$\zeta$ according to $\mu^x_N$.
\end{proof}

\begin{corollary}
\label{cor01}
Assume that condition $\mf M$ is in force. Then, for all $x\in S$,
$T>0$,
\begin{equation*}
\sup_{\eta\in\mc E^x_N}
\mb{E}_{\eta}^{N}\,\Big[\,\int_{0}^{T} \chi_{\Delta_N}
(\xi_{N}(t))\, dt\,\Big]\, \le\;
\frac{\mu_N(\Delta_N)}{\mu_N(\mc E^x_N)}\, T
\; + \; 6(T+1)\,o_{N}(1)\;.
\end{equation*}
In particular, if $\mu_N(\Delta_N)/\mu_N(\mc E^x_N) \to 0$ for all
$x\in S$, then condition $\mf D$ holds.
\end{corollary}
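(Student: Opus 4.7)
The natural approach is to apply Proposition~\ref{prop_eqbl} to the test function $Q_N = \chi_{\Delta_N}$, which is bounded by $M=1$. This immediately yields
\begin{equation*}
\sup_{\eta\in\mc{E}_{N}^{x}}\Big|\,
\mb{E}_{\eta}^{N}\Big[\int_{0}^{T}\chi_{\Delta_N}(\xi_{N}(t))\,dt\Big]
\,-\,\mb{E}_{\mu_{N}^{x}}^{N}\Big[\int_{0}^{T}\chi_{\Delta_N}(\xi_{N}(t))\,dt\Big]\,\Big|
\;\le\; 6\,(T+1)\,o_{N}(1)\;.
\end{equation*}
Thus it suffices to bound the averaged quantity on the right by $[\mu_N(\Delta_N)/\mu_N(\mc E^x_N)]\,T$.

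For the averaged term, the key observation is that $\mu_N^x$ is the restriction of $\mu_N$ to $\mc E^x_N$, normalized by $\mu_N(\mc E^x_N)$, and $\mu_N$ is itself stationary for $\xi_N(\cdot)$. By Fubini and stationarity,
\begin{equation*}
\mb{E}_{\mu_{N}^{x}}^{N}\Big[\int_{0}^{T}\chi_{\Delta_N}(\xi_{N}(t))\,dt\Big]
\;=\; \frac{1}{\mu_N(\mc E^x_N)}\,\int_0^T
\sum_{\eta\in\mc E^x_N}\mu_N(\eta)\,\mb P^N_\eta[\xi_N(t)\in\Delta_N]\,dt
\;\le\; \frac{1}{\mu_N(\mc E^x_N)}\,\int_0^T \mu_N(\Delta_N)\,dt\;,
\end{equation*}
where in the inequality I extended the sum from $\mc E^x_N$ to all of $\mc H_N$ and used $\mb P^N_{\mu_N}[\xi_N(t)\in\Delta_N]=\mu_N(\Delta_N)$. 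This gives exactly $[\mu_N(\Delta_N)/\mu_N(\mc E^x_N)]\,T$, and combining with the previous display proves the claimed bound.

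For the ``in particular'' statement, condition $\mf D$ requires $\max_{x\in S}\sup_{\eta\in\mc E^x_N}\mb E^N_\eta[\int_0^t\chi_{\Delta_N}(\xi_N(s))\,ds]\to 0$ for every fixed $t>0$. Under the assumption $\mu_N(\Delta_N)/\mu_N(\mc E^x_N)\to 0$ for each $x\in S$, both terms on the right-hand side of the displayed inequality vanish as $N\to\infty$ (with $T=t$ fixed), and the maximum over the finite set $S$ vanishes as well. No obstacle is expected here: Proposition~\ref{prop_eqbl} does all the heavy lifting, and the only non-routine step is the elementary observation that averaging $\chi_{\Delta_N}$ against $\mu_N^x$ produces a bound controlled by the ratio $\mu_N(\Delta_N)/\mu_N(\mc E^x_N)$ via stationarity of $\mu_N$.
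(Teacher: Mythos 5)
Your proof is correct and follows essentially the same route as the paper: apply Proposition~\ref{prop_eqbl} with $Q_N=\chi_{\Delta_N}$ (so $M=1$), then bound the $\mu_N^x$-averaged term by extending the sum to all of $\mc H_N$ and invoking stationarity of $\mu_N$, yielding $\mu_N(\Delta_N)T/\mu_N(\mc E^x_N)$. The only cosmetic difference is that you spell out the Fubini/stationarity computation which the paper compresses into a single line.
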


\begin{proof}
By the proposition, for every $x\in S$, $\eta\in \mc E^x_N$ and $T>0$,
\begin{equation*}
\mb{E}_{\eta}^{N}\,\Big[\,\int_{0}^{T} \chi_{\Delta_N}
(\xi_{N}(t))\, dt\,\Big]\, \le\;
\mb{E}_{\mu_{N}^{x}}^{N}\,\Big[\,\int_{0}^{T}
\chi_{\Delta_N} (\xi_{N}(t))\, dt\,\Big]
\;+ \; 6(T+1)\,o_{N}(1)\;.
\end{equation*}
The expectation is bounded by
\begin{equation*}
\frac{1}{\mu_N(\mc E^x_N)} \mb{E}_{\mu_{N}}^{N}\,\Big[\,\int_{0}^{T}
\chi_{\Delta_N} (\xi_{N}(t))\, dt\,\Big]
\;=\; \frac{\mu_N(\Delta_N)}{\mu_N(\mc E^x_N)}\, T\;,
\end{equation*}
where the last identity follows from the fact that $\mu_N$ is the
stationary state.
\end{proof}

\section{Proof of Theorem \ref{mt1}}
\label{s7}

In this section, we examine the possible limits of the average of the
solutions of the resolvent equation \eqref{1f01} in each well and
prove Theorem \ref{mt1}. Most of the notation is borrowed from Section
\ref{sec8}.

Recall from the statement of Theorem \ref{mt1} the definition of the
function $f_N$.  Note that condition $\mf R^{(1)}$ holds if and only
if
\begin{equation*}
\lim_{N\to\infty} \, \max_{z\in S} \, \max_{\eta\in \mc E^z_N} \,
\big|\, F_N(\eta) \,-\, f_N(z)\,\big| \;=\; 0\;.
\end{equation*}

\medskip\noindent{\bf Condition $\mf R^{(2)}_{\ms L}$}. Let $\ms L$ be
the generator of an $S$-valued, continuous-time Markov chain. For all
$x\in S$,
\begin{equation*}
\lim_{N\to\infty} f_N(x) \;=\; f(x)  \;,
\end{equation*}
where $f\colon S\rightarrow\bb{R}$ is the solution of the reduced
resolvent equation
\begin{equation*}
(\, \lambda \,-\, \ms L\,) \, f \;=\; g\;.
\end{equation*}

\begin{remark}
\label{r1r2}
It is clear that $\mf R^{(1)}$ and $\mf R^{(2)}_{\ms L}$ together
imply condition $\mf R_{\ms L}$.
\end{remark}

By \eqref{1f10} and the definition of $f_N$, there exists a finite
constant $C_0 = C_0(\lambda, g)$ such that
\begin{equation*}
\sup_{N\ge 1}\, \max_{x\in S}
\big|\, f_N(x)\,\big| \;\le\; C_0 \;.
\end{equation*}

Let $\ms L$ be the generator of the $S$-value Markov chain induced
by the rates $r$ introduced in condition (H0):
\begin{equation*}
(\ms L f)(x) \;=\; \sum_{y\in S} r(x,y)\, [\, f(y)\,-\, f(x)\,]\;,
\end{equation*}
and let
$\color{blue} \nu^\dagger_y = \nu^\dagger_{\mc{E}_N^y,
  \breve{\mc{E}}_N^y}$, $y\in S$, be the equilibrium measure between
$\mc E^y_N$ and $\breve{\mc E}^y_N$, as defined in \eqref{38}.

\begin{proposition}
\label{l01b}
Assume that conditions {\rm (H0)} and $\mf R^{(1)}$ are in force.
Let $f$ be a limit point of the sequence $f_N$. Then,
\begin{equation*}
\big[\, (\, \lambda \,-\, \ms L\,) \, f\,\big](y) \;=\; g(y)
\end{equation*}
for all $y\in S$ such that
\begin{equation}
\label{36}
\lim_{N\to\infty} \Big( \sum_{z\not = y}  r_N(y,z) \Big) \,
\mb E^N_{\nu^\dagger_y} \Big[ \, \int_0^{\tau_{\breve{\mc{E}}_{N}^{y}}}
\chi_{\Delta_N} (\xi_N(s))\, ds\,\Big] \;=\;0\;.
\end{equation}
\end{proposition}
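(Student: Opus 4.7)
The plan is to test the resolvent equation \eqref{1f01} against the adjoint equilibrium potential $\varphi_y^\dagger := h^\dagger_{\mc E_N^y,\,\breve{\mc E}_N^y}$, which by \eqref{03b} agrees with $\chi_{\mc E_N^y}$ on $\mc E_N$ and is $\ms L_N^\dagger$-harmonic on $\Delta_N$. Pairing with $\mu_N$ and using the adjoint relation gives
\begin{equation*}
\lambda\,\langle \varphi_y^\dagger,\, F_N\rangle_{\mu_N}
\;-\;\langle \ms L_N^\dagger \varphi_y^\dagger,\, F_N\rangle_{\mu_N}
\;=\;\langle \varphi_y^\dagger,\, G_N\rangle_{\mu_N}
\;=\; g(y)\,\mu_N(\mc E_N^y)\;,
\end{equation*}
the last equality because $G_N$ vanishes on $\Delta_N$ and $\varphi_y^\dagger=\chi_{\mc E_N^y}$ on $\mc E_N$. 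I will then show that, after dividing by $\mu_N(\mc E_N^y)$ and passing to a subsequence along which $f_N\to f$ pointwise on $S$ (such a subsequence exists since $f_N$ is uniformly bounded by \eqref{1f10} and $S$ is finite), the left-hand side converges to $[(\lambda-\ms L)f](y)$.

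The $\lambda$-term splits over $\mc E_N$ and $\Delta_N$: the $\mc E_N$-piece equals exactly $\lambda\,\mu_N(\mc E_N^y)\,f_N(y)$ by the definition of $f_N$, while the $\Delta_N$-piece is bounded in absolute value by $\Vert F_N\Vert_\infty\sum_{\eta\in\Delta_N}\mu_N(\eta)\varphi_y^\dagger(\eta)$. Applying \eqref{34} with $\mc A=\mc E_N^y$, $\mc B=\breve{\mc E}_N^y$ (so $(\mc A\cup\mc B)^c=\Delta_N$) and then Lemma \ref{l07b}, which yields $\Cap_N(\mc E_N^y,\breve{\mc E}_N^y)=\mu_N(\mc E_N^y)\sum_{z\ne y}r_N(y,z)$, I rewrite this sum as
\begin{equation*}
\mu_N(\mc E_N^y)\,\Big(\sum_{z\ne y} r_N(y,z)\Big)\,
\mb E^N_{\nu_y^\dagger}\Big[\int_0^{\tau_{\breve{\mc E}_N^y}}\chi_{\Delta_N}(\xi_N(s))\,ds\Big]\;,
\end{equation*}
which, by \eqref{1f10} and hypothesis \eqref{36}, is $o(\mu_N(\mc E_N^y))$.

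For the generator term, $\langle \varphi_y^\dagger,\ms L_N F_N\rangle_{\mu_N}=\langle \ms L_N^\dagger \varphi_y^\dagger,F_N\rangle_{\mu_N}$ and the harmonicity of $\varphi_y^\dagger$ on $\Delta_N$ restrict the sum to $\mc E_N$. Condition $\mf R^{(1)}$ then permits replacing $F_N$ by $f_N\circ\Psi_N$ on $\mc E_N$, with error bounded by $\max_{\mc E_N}|F_N-f_N\circ\Psi_N|$ times $\sum_{\eta\in\mc E_N}\mu_N(\eta)\,|(\ms L_N^\dagger \varphi_y^\dagger)(\eta)|$. The sign structure $\ms L_N^\dagger\varphi_y^\dagger\le 0$ on $\mc E_N^y$ and $\ge 0$ on $\breve{\mc E}_N^y$, combined with the $\mu_N$-invariance identity $\sum_\eta\mu_N(\eta)(\ms L_N^\dagger\varphi_y^\dagger)(\eta)=0$, shows that this $\ell^1$-mass equals $2\,\Cap_N(\mc E_N^y,\breve{\mc E}_N^y)=O(\mu_N(\mc E_N^y))$ by (H0), so the error is $o(\mu_N(\mc E_N^y))$. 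Setting $c_z:=\sum_{\eta\in\mc E_N^z}\mu_N(\eta)(\ms L_N^\dagger\varphi_y^\dagger)(\eta)$, invariance gives $\sum_z c_z=0$, and the leading contribution becomes $\sum_{z\ne y}[f_N(z)-f_N(y)]\,c_z$.

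The crux of the proof, and what I expect to be the main technical point, is the identity $c_z=\mu_N(\mc E_N^y)\,r_N(y,z)$ for $z\ne y$. Since $\varphi_y^\dagger$ vanishes on $\mc E_N^z$, one has $c_z=\sum_{\eta\in\mc E_N^z,\,\xi}\mu_N(\eta)\,R_N^\dagger(\eta,\xi)\,\varphi_y^\dagger(\xi)$; decomposing the first adjoint jump started at $\eta\in\mc E_N^z$ into a direct jump to $\mc E_N^y$ (where $\varphi_y^\dagger=1$), a direct jump to some other well in $\breve{\mc E}_N^y$ (where $\varphi_y^\dagger=0$, hence no contribution), or a jump into $\Delta_N$ weighted afterwards by $h^\dagger_{\mc E_N^y,\breve{\mc E}_N^y}=\varphi_y^\dagger$, one recognises $c_z$ as the probabilistic expression for $\mu_N(\mc E_N^z)\,r_N^\dagger(z,y)$, which Lemma \ref{l07b} converts to $\mu_N(\mc E_N^y)\,r_N(y,z)$. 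Assembling all the pieces, dividing by $\mu_N(\mc E_N^y)$, and passing to the limit along the convergent subsequence finally yields $g(y)=\lambda f(y)-\sum_{z\ne y}r(y,z)[f(z)-f(y)]=[(\lambda-\ms L)f](y)$, where (H0) is used for $r_N(y,z)\to r(y,z)$.
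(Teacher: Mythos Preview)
Your proof is correct and follows essentially the same approach as the paper: test the resolvent equation against the adjoint equilibrium potential $h^\dagger_{\mc E_N^y,\breve{\mc E}_N^y}$, use its harmonicity on $\Delta_N$ to localize the generator term to $\mc E_N$, identify the resulting sums with the adjoint mean jump rates via the one-step decomposition, convert to $r_N(y,z)$ through Lemma~\ref{l07b}, and control the $\Delta_N$-contribution of the $\lambda$-term via \eqref{34} and hypothesis \eqref{36}. Your organization via the coefficients $c_z$ and the invariance identity $\sum_z c_z=0$ is a mild repackaging of the paper's explicit splitting into the $\mc E_N^y$ and $\breve{\mc E}_N^y$ pieces, but the substance is identical.
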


\begin{proof}
Fix $y\in S$, and denote by
$h^\dagger_y= h^\dagger_{\mc{E}_N^y, \breve{\mc{E}}_N^y}$ the
equilibrium potential between $\mc{E}_{N}^{y}$ and
$\breve{\mc{E}}_{N}^{y}$ for the adjoint process, as defined in
\eqref{03}.  Multiply the resolvent equation \eqref{1f01} by
$h^\dagger_y$ and integrate with respect to the stationary measure
$\mu_N$ to get that
\begin{equation}
\label{02}
\lambda\, \< F_N\,,\, h^\dagger_y\>_{\mu_N} \;-\;
\< \ms L_N F_N\,,\, h^\dagger_y\>_{\mu_N} \;=\;
\< G_N\,,\, h^\dagger_y\>_{\mu_N} \;.
\end{equation}

Consider the right-hand side of this equation. Since $G_N$ vanishes on
$\Delta_N$ and is equal to $g(z)$ on $\mc E^z_N$, $z\in S$, and since
on the set $\mc E_N$, $h^\dagger_y$ is equal to the indicator of the set
$\mc E^y_N$,
\begin{equation*}
\< G_N\,,\, h^\dagger_y\>_{\mu_N} \;=\; g(y)\, \mu_N(\mc E^y_N) \;.
\end{equation*}

We turn to the first term on the left-hand side of \eqref{02}. By
similar reasons, it is equal to
\begin{equation*}
\lambda\,  \sum_{\eta\in \mc E^y_N} \mu_N(\eta)  \, F_N(\eta)
\;+\; \sum_{\eta\in \Delta_N} \mu_N(\eta)  \, F_N(\eta) \,
h^\dagger_y (\eta)\;.
\end{equation*}
By \eqref{1f10}, the sequence $F_N$ is uniformly bounded. As
$h^\dagger_y $ is bounded by $1$, the second term is bounded by
$C_0(\lambda, g) \, \sum_{\eta\in \Delta_N} \mu_N(\eta) \, h^\dagger_y
(\eta)$. On the other hand, by definition of $f_N$, the first term is
equal to $\lambda\, \mu_N( \mc E^y_N) \, f_N(y)$.

We turn to the second term on the left-hand side of \eqref{02}.
Since $\ms L^\dagger_N h^\dagger_y =0$ on $\Delta_N$,
\begin{equation*}
\< \, \ms L_N F_N\,,\, h^\dagger_y\, \>_{\mu_N} \;=\;
\<\,   F_N\,,\, \ms L^\dagger_N \, h^\dagger_y\, \>_{\mu_N}
\;=\; \sum_{x\in S}
\sum_{\eta\in \mc E^x_N} \mu_N(\eta)  \, F_N(\eta) \,
(\ms L^\dagger_N h^\dagger_y) (\eta)\;.
\end{equation*}
Since the equilibrium potential $h^\dagger_y$ vanishes on
$\breve{\mc{E}}_{N}^{y}$ and is equal to $1$ on $\mc E_{N}^{y}$, for
$\eta\in \mc E_{N}^{x}$, $x\not =y$, as
$\lambda^\dagger_N(\eta) = \lambda_N(\eta)$,
\begin{equation*}
\begin{aligned}
& (\ms L^\dagger_N h^\dagger_y) (\eta) \;=\; \sum_{\zeta\in\mc H_N}
R^\dagger_N(\eta,\zeta)\, \big[ \, h^\dagger_y(\zeta)
\,-\, h^\dagger_y(\eta) \,\big] \\
& \;=\; \lambda_N(\eta)\, \sum_{\zeta\in\mc H_N}
p^\dagger_N(\eta,\zeta)\,
\mb P^{N,\dagger}_{\zeta} \big[\,
\tau_{\mc{E}_{N}^{y}} \,<\, \tau_{\breve{\mc{E}}_{N}^{y}} \,\big]
\;=\; \lambda_N(\eta)\,
\mb P^{N,\dagger}_{\eta} \big[\,
\tau_{\mc{E}_{N}^{y}} \,<\, \tau^+_{\breve{\mc{E}}_{N}^{y}} \,\big]\;.
\end{aligned}
\end{equation*}
Similarly, as $h^\dagger_y(\zeta) \,-\, 1 = -\,
\mb P^{N,\dagger}_{\zeta} \big[\,
\tau_{\breve{\mc{E}}_{N}^{y}} \,<\, \tau_{\mc{E}_{N}^{y}}   \,\big]$, for
$\eta\in \mc E_{N}^{y}$,
\begin{equation*}
(\ms L^\dagger_N h^\dagger_y) (\eta) \;=\; -\, \lambda_N(\eta)\,
\mb P^{N,\dagger}_{\eta} \big[\, \tau_{\breve{\mc{E}}_{N}^{y}} \,<\,
\tau^+_{\mc{E}_{N}^{y}}  \,\big]\;.
\end{equation*}
Therefore,
\begin{equation*}
\begin{aligned}
\< \, \ms L_N F_N \,,\, h^\dagger_y \, \>_{\mu_N} \;  =& \;
\sum_{x\not = y}
\sum_{\eta\in \mc E^x_N} \mu_N(\eta)  \, \lambda_N(\eta)\,
F_N(\eta) \, \mb P^{N,\dagger}_{\eta} \big[\,
\tau_{\mc{E}_{N}^{y}} \,<\, \tau^+_{\breve{\mc{E}}_{N}^{y}} \,\big] \\
& \qquad -\;
\sum_{\eta\in \mc E^y_N} \mu_N(\eta)  \, \lambda_N(\eta)\,
F_N(\eta) \, \mb P^{N,\dagger}_{\eta} \big[\, \tau_{\breve{\mc{E}}_{N}^{y}} \,<\,
\tau^+_{\mc{E}_{N}^{y}}  \,\big] \;.
\end{aligned}
\end{equation*}
Recall from \eqref{07b} the definition of $r^\dagger_N(z,z')$. Add and
subtract $f_N$ to rewrite the right-hand side as
\begin{equation}
\label{19b}
\sum_{x\not = y} \mu_N(\mc E^x_N) \, f_N(x) \, r^\dagger_N(x,y)
\; -\; \mu_N(\mc E^y_N) \, f_N(y) \,
\sum_{x\not = y}  r^\dagger_N(y,x) \;+\; R_N\;,
\end{equation}
where the absolute value of the remainder $R_N$ is bounded by
\begin{equation*}
\max_{z\in S} \max_{\eta\in \mc E^z_N} \,
\big|\, F_N(\eta) \,-\, f_N(z)\,\big| \;
\Big\{\, \sum_{x\not = y} \mu_N(\mc E^x_N) \, r^\dagger_N(x,y)
\; +\; \mu_N(\mc E^y_N) \, \sum_{x\not = y}  r^\dagger_N(y,x) \,\Big\}\;.
\end{equation*}
By Lemma \ref{l07b}, this expression can be rewritten as
\begin{equation*}
2\, \mu_N(\mc E^y_N) \, \max_{z\in S} \max_{\eta\in \mc E^z_N} \,
\big|\, F_N(\eta) \,-\, f_N(z)\,\big| \;
\sum_{x\not = y}  r_N(y,x)\;.
\end{equation*}
By the same reasons, the sum of the first two terms in \eqref{19b} is
equal to
\begin{equation*}
\mu_N(\mc E^y_N) \,
\sum_{x\not = y}  r_N(y,x)\, [\, f_N(x) \, -\, f_N(y) \,]\;.
\end{equation*}

Recollecting all previous calculations and dividing by
$\mu_N(\mc E^y_N)$ permits to rewrite \eqref{02} as
\begin{equation*}
\lambda\, f_N(y) \;-\; \sum_{x\not = y}  r_N(y,x)\, [\, f_N(x) \, -\,
f_N(y) \,] \;=\; g(y) \;+\; R^{(2)}_N\;,
\end{equation*}
where the absolute value of $R^{(2)}_N$ is bounded by
\begin{equation*}
\frac{C_0}{\mu_N(\mc E^y_N)}\,
\sum_{\eta\in \Delta_N} \mu_N(\eta)  \, h^\dagger_y (\eta)
\;+\; 2\, \max_{z\in S} \max_{\eta\in \mc E^z_N} \,
\big|\, F_N(\eta) \,-\, f_N(z)\,\big| \;
\sum_{x\not = y}  r_N(y,x)
\end{equation*}
for some finite constant $C_0 = C_0(\lambda, g)$. By \eqref{34}, with
$\mc A = \mc E^y_N$, $\mc B = \breve{\mc{E}}_{N}^{y}$, and the second
assertion of Lemma \ref{l07b}, this expression can be rewritten as
\begin{equation*}
C_0 \sum_{x\not = y}  r_N(y,x)\,
\Big\{\,
\mb E^N_{\nu^\dagger_y} \Big[ \, \int_0^{\tau_{\breve{\mc{E}}_{N}^{y}}}
\chi_{\Delta_N} (\xi_N(s))\, ds\,\Big]
\;+\; \max_{z\in S} \max_{\eta\in \mc E^z_N} \,
\big|\, F_N(\eta) \,-\, f_N(z)\,\big| \,\Big\}\,
\end{equation*}
for a possibly different constant $C_0$.  To conclude the proof, it
remains to recall the statement of conditions (H0), $\mf{R}^{(1)}$,
and the hypotheses of the proposition.
\end{proof}

In the previous proof we used the identity,
\begin{equation}
\label{36b}
\Big( \sum_{z\not = y}  r_N(y,z) \Big) \,
\mb E^N_{\nu^\dagger_y} \Big[ \, \int_0^{\tau_{\breve{\mc{E}}_{N}^{y}}}
\chi_{\Delta_N} (\xi_N(s))\, ds\,\Big] \;=\;
\frac{1}{\mu_N(\mc E^y_N)}\,
\sum_{\eta\in \Delta_N} \mu_N(\eta)  \, h^\dagger_y (\eta)\;.
\end{equation}
In particular, \eqref{36} holds for $y\in S$ if and only if the
right-hand side vanishes as $N\to\infty$.

\begin{corollary}
\label{l11}
Assume that conditions {\rm (H0)} and $\mf R^{(1)}$ are in
force.  Let $f$ be a limit point of the sequence $f_N$. Then,
\begin{equation*}
\big[\, (\, \lambda \,-\, \ms L_Y\,) \, f\,\big](y) \;=\; g(y)
\end{equation*}
for all $y\in S$ such that $\mu_N(\Delta_N)/\mu_N(\mc E^y_N)\to 0$.
In this formula, $\ms L_Y$ is the generator of the continuous-time
Markov process whose jump rates are given by $r(x,y)$, introduced in
{\rm (H0)}.
\end{corollary}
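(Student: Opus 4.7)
The plan is to derive Corollary \ref{l11} directly from Proposition \ref{l01b}. Since the proposition already gives the desired conclusion $[(\lambda - \ms L)f](y) = g(y)$ under conditions (H0), $\mf R^{(1)}$, plus the auxiliary condition \eqref{36}, my task reduces to verifying \eqref{36} for every $y \in S$ satisfying $\mu_N(\Delta_N)/\mu_N(\mc E^y_N) \to 0$, and then identifying the generator $\ms L$ appearing there with $\ms L_Y$.

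For the verification of \eqref{36}, I would invoke the identity \eqref{36b} stated right after the proof of Proposition \ref{l01b}, which expresses the quantity in \eqref{36} as
\begin{equation*}
\frac{1}{\mu_N(\mc E^y_N)}\,
\sum_{\eta\in \Delta_N} \mu_N(\eta)\, h^\dagger_y(\eta)\;.
\end{equation*}
Since the equilibrium potential $h^\dagger_y$ is bounded by $1$, this is at most $\mu_N(\Delta_N)/\mu_N(\mc E^y_N)$, which tends to $0$ by the hypothesis of the corollary. Hence \eqref{36} holds at $y$, and Proposition \ref{l01b} applies.

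For the identification of the generator, I note that Proposition \ref{l01b} is phrased with $\ms L$ defined via the jump rates $r(x,y)$ furnished by condition (H0); this is exactly the generator $\ms L_Y$ appearing in the statement of Theorem \ref{mt1} and of Corollary \ref{l11}. So no further work is needed on that side. I do not foresee a serious obstacle: the corollary is a one-line consequence of Proposition \ref{l01b} combined with the bound $h^\dagger_y \le 1$ and the identity \eqref{36b}. The only point worth emphasizing in the write-up is that the bound $\mu_N(\Delta_N)/\mu_N(\mc E^y_N) \to 0$ is precisely what is needed to kill the error term arising from the mass that the adjoint equilibrium potential places on the transition set $\Delta_N$.
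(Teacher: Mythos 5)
Your proposal reproduces the paper's argument exactly: invoke identity \eqref{36b}, bound the right-hand side by $\mu_N(\Delta_N)/\mu_N(\mc E^y_N)$ using $h^\dagger_y \le 1$, conclude \eqref{36} holds, and apply Proposition \ref{l01b} with $\ms L = \ms L_Y$. Correct, and essentially the same (one-line) proof as in the paper.
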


\begin{proof}
The right-hand side of \eqref{36b} is bounded by
$\mu_N(\Delta_N)/\mu_N(\mc E^y_N)$.  Thus, the assertion of the
corollary follows from the statement of Proposition \ref{l01b}.
\end{proof}

\begin{proof}[Proof of Theorem \ref{mt1}]
Theorem \ref{mt1} follows from Corollaries \ref{l04} and \ref{l11}.
\end{proof}

We complete this section with a method to prove condition \eqref{36}
when the hypotheses of Corollary \ref{l11} are not verified. The idea
behind the decomposition below is that $\mc A_N$ is contained in the
basin of attraction of $\breve{\mc{E}}_{N}^{y}$. In particular,
starting from a configuration in $\mc A_N$ the set
$\breve{\mc{E}}_{N}^{y}$ is reached quickly. We refer to Figure
\ref{fig0} for an example of illustration of the set $\mc A_N$.

\begin{figure}
\includegraphics[scale=0.09]{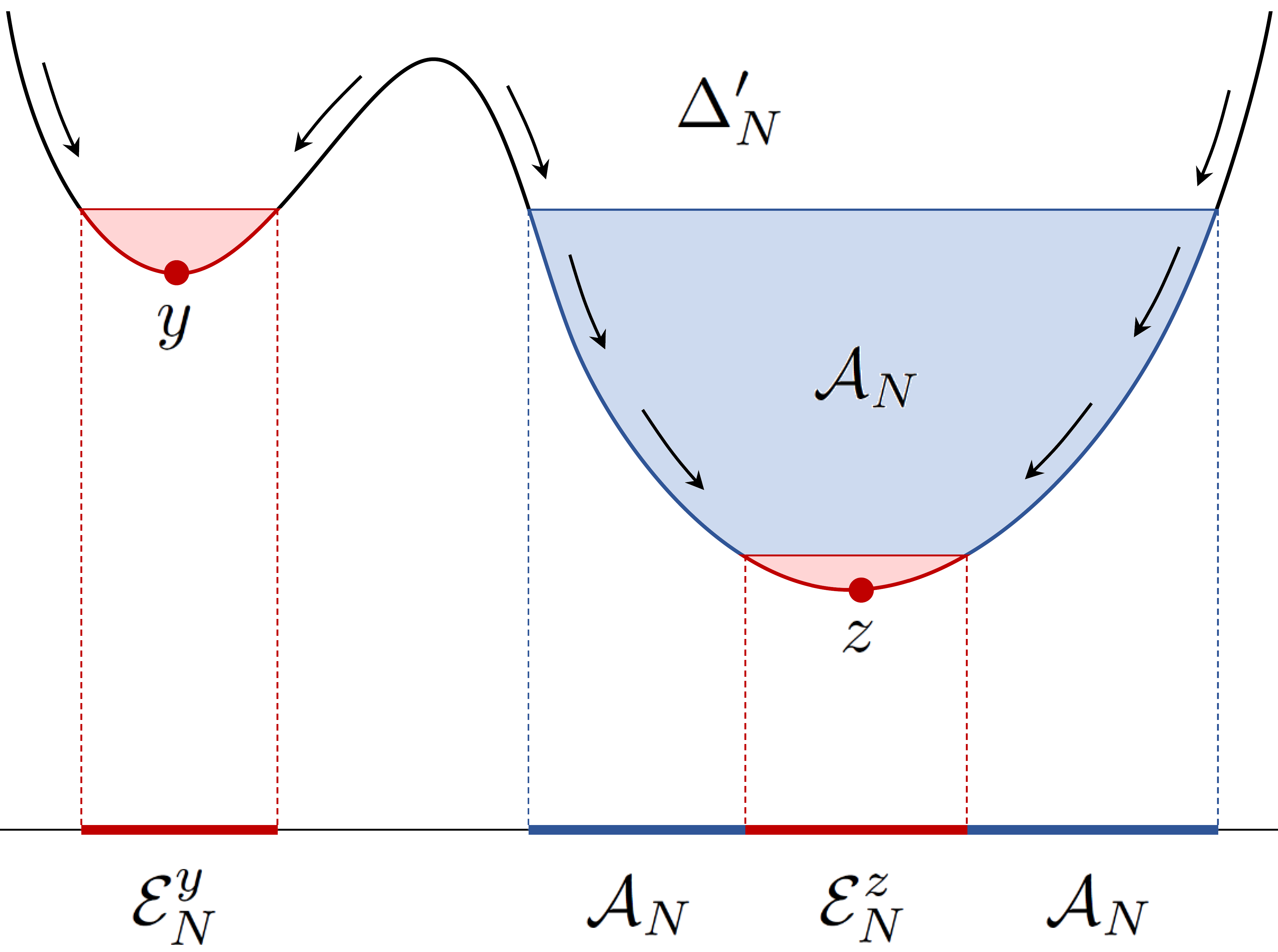}
\caption{ This picture illustrates the idea behind the statement of
  Lemma \ref{l13}. To simplify, we argue in a continuous setting, but
  the same idea applies to the discrete setting. Consider a diffusion
  on the potential field appearing in the picture. The valley
  $\mc{E}_N^y$ is a metastable set and $\mc{E}_N^z$ a stable one. As
  $\mu_N(\Delta_N)/\mu_N(\mc{E}_N^y)$ does not converge to $0$, we
  decompose $\Delta_N$ as $\Delta'_N \cup \mc A_N$, so that
  $\mu_N (\Delta'_N) / \mu_N (\mc E_N^y) \rightarrow 0$. On the other
  hand, as $\mc A_N$ is a subset of the domain of attraction of the
  valley $\mc E_N^z$, we can expect \eqref{an_cond} to hold.}
\label{fig0}
\end{figure}

\begin{lemma}
\label{l13}
Fix $y\in S$, and suppose that $\Delta_N$ may be decomposed as
$\Delta_N = \Delta'_N \cup \mc A_N$,
$\Delta'_N \cap \mc A_N = \varnothing$, where
$\mu_N(\Delta'_N)/\mu_N(\mc E^y_N) \to 0$, and
\begin{equation}
\label{an_cond}
\lim_{N\to\infty} \sup_{\zeta\in \mc A_N}
\mb E^N_{\zeta} \Big[ \, \int_{0}^{\tau_{\breve{\mc{E}}_{N}^{y}}}
\chi_{\mc A_N} (\xi_N(s))\, ds\,\Big] \;=\; 0 \;.
\end{equation}
Then, \eqref{36} holds for $y$.
\end{lemma}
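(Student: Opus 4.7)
The plan is to use the identity \eqref{36b} to rewrite the left-hand side of \eqref{36} as
\begin{equation*}
\frac{1}{\mu_N(\mc E_N^y)}\sum_{\eta\in\Delta_N}\mu_N(\eta)\,h^\dagger_y(\eta)\;,
\end{equation*}
and then split the sum according to the decomposition $\Delta_N=\Delta'_N\cup\mc A_N$. The contribution from $\Delta'_N$ is the easy piece: since $h^\dagger_y\le 1$,
\begin{equation*}
\frac{1}{\mu_N(\mc E_N^y)}\sum_{\eta\in\Delta'_N}\mu_N(\eta)\,h^\dagger_y(\eta)\;\le\;\frac{\mu_N(\Delta'_N)}{\mu_N(\mc E_N^y)}\;\longrightarrow\;0
\end{equation*}
by the first hypothesis.

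For the $\mc A_N$ piece I would first observe that the integration-by-parts proof of \eqref{34} carries over verbatim to any function $H$ that vanishes on $\mc E_N^y\cup\breve{\mc E}_N^y$, giving
\begin{equation*}
\sum_{\eta}\mu_N(\eta)\,H(\eta)\,h^\dagger_y(\eta)\;=\;\Cap_N(\mc E_N^y,\breve{\mc E}_N^y)\,\mb E^N_{\nu^\dagger_y}\Big[\int_0^{\tau_{\breve{\mc E}_N^y}}H(\xi_N(s))\,ds\Big]\;.
\end{equation*}
Applying this to $H=\chi_{\mc A_N}$ and dividing by $\mu_N(\mc E_N^y)$, the second assertion of Lemma \ref{l07b} turns the $\mc A_N$ contribution into
\begin{equation*}
\frac{1}{\mu_N(\mc E_N^y)}\sum_{\eta\in\mc A_N}\mu_N(\eta)\,h^\dagger_y(\eta)\;=\;\Big(\sum_{z\neq y}r_N(y,z)\Big)\,\mb E^N_{\nu^\dagger_y}\Big[\int_0^{\tau_{\breve{\mc E}_N^y}}\chi_{\mc A_N}(\xi_N(s))\,ds\Big]\;.
\end{equation*}

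The key step is now to use \eqref{an_cond} to bound the expectation. Since $\nu^\dagger_y$ is concentrated on $\mc E_N^y$, which is disjoint from $\mc A_N$, the process cannot accumulate any time in $\mc A_N$ before the stopping time $\tau'=\tau_{\mc A_N\cup\breve{\mc E}_N^y}$. I would apply the strong Markov property at $\tau'$: on $\{\xi_N(\tau')\in\breve{\mc E}_N^y\}$ the residual integral vanishes since $\tau_{\breve{\mc E}_N^y}=\tau'$, and on $\{\xi_N(\tau')\in\mc A_N\}$ the residual expectation is controlled by the supremum in \eqref{an_cond}. This yields, uniformly in $\eta\in\mc E_N^y$,
\begin{equation*}
\mb E^N_{\eta}\Big[\int_0^{\tau_{\breve{\mc E}_N^y}}\chi_{\mc A_N}(\xi_N(s))\,ds\Big]\;\le\;\sup_{\zeta\in\mc A_N}\mb E^N_{\zeta}\Big[\int_0^{\tau_{\breve{\mc E}_N^y}}\chi_{\mc A_N}(\xi_N(s))\,ds\Big]\;=\;o_N(1)\;,
\end{equation*}
so $\mb E^N_{\nu^\dagger_y}[\,\cdot\,]=o_N(1)$. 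Since condition {\rm (H0)} (in force in the context of Proposition \ref{l01b}) keeps $\sum_{z\neq y}r_N(y,z)$ bounded, the product is still $o_N(1)$. Combining the two pieces establishes \eqref{36}. The only subtle step is the extension of \eqref{34} to arbitrary integrands vanishing on $\mc E_N^y\cup\breve{\mc E}_N^y$, but this is merely a rerun of the Dirichlet-duality argument that proves \eqref{34}, so it poses no real obstacle.
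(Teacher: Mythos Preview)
Your proof is correct and follows essentially the same approach as the paper. The paper splits $\chi_{\Delta_N}=\chi_{\mc A_N}+\chi_{\Delta'_N}$ directly inside the expectation in \eqref{36}, handles the $\Delta'_N$ piece via \eqref{36b} and $h^\dagger_y\le 1$, and handles the $\mc A_N$ piece by the same strong-Markov argument you use (starting the integral at $\tau_{\mc A_N}$); your route through the $h^\dagger_y$-form first and back via the extension of \eqref{34} is a harmless detour to the same estimates, and your explicit invocation of (H0) to bound $\sum_{z\neq y} r_N(y,z)$ is a point the paper leaves implicit.
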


\begin{proof}
In equation \eqref{36}, write $\chi_{\Delta_N}$ as
$\chi_{\mc A_N} + \chi_{\Delta'_N}$.  We estimate the two pieces
separately.  By \eqref{36b} with $\Delta'_N$ instead of $\Delta_N$,
\begin{equation*}
\Big( \sum_{x\not =y} r_N(y,x) \Big)\,
\mb E^N_{\nu^\dagger_y} \Big[ \, \int_0^{\tau_{\breve{\mc{E}}_{N}^{y}}}
\chi_{\Delta'_N} (\xi_N(s))\, ds\,\Big] \;\le\;
\frac{\mu_N(\Delta'_N)}{\mu_N(\ms E^y_N)}\;\cdot
\end{equation*}
By hypothesis, this expression vanishes as $N\to\infty$.

On the other hand, starting the integral from the hitting time of
$\mc A_N$ and applying the strong Markov property, yields that
\begin{equation*}
\begin{aligned}
\mb E^N_{\nu^\dagger_y} \Big[ \, \int_0^{\tau_{\breve{\mc{E}}_{N}^{y}}}
\chi_{\mc A_N} (\xi_N(s))\, ds\,\Big]  \; & =\;
\mb E^N_{\nu^\dagger_y} \Big[ \,
\int_{\tau_{\mc A_N}}^{\tau_{\breve{\mc{E}}_{N}^{y}}}
\chi_{\mc A_N} (\xi_N(s))\, ds\,\Big] \; \\
& \le\; \sup_{\zeta\in \mc A_N}
\mb E^N_{\zeta} \Big[ \, \int_{0}^{\tau_{\breve{\mc{E}}_{N}^{y}}}
\chi_{\mc A_N} (\xi_N(s))\, ds\,\Big] \;.
\end{aligned}
\end{equation*}
By assumption this expression vanishes as $N\to\infty$, which
completes the proof of the lemma.
\end{proof}

\section{Proof of Theorem \ref{t_main2}}
\label{sec5}

In view of Theorem \ref{1t_main2}, to prove Theorem \ref{t_main2},
we have to show that conditions $\mf{D}$ and $\mf{R}_{\ms{L}}$ hold.
The proof of these properties is based on the theory developed in the
previous sections. We proceed as follows.

\smallskip\noindent{\bf Condition $\mf{R}_{\ms{L}}$}. In Proposition
\ref{p_mixmain}, we show that condition $\mf V$ is
fulfilled. Hence, by Proposition \ref{p01}, property $\mf R^{(1)}$
holds.

In Corollary \ref{corH0} we show that condition (H0) holds. Since we
already proved that condition $\mf R^{(1)}$ is fulfilled, and since,
by Theorem \ref{t_cond}, $\mu_N(\Delta_N)/\mu_N(\mc E^x_N) \to 0$ for
all $x\in S$, by Corollary \ref{l11}, property $\mf R^{(2)}_{L_Y}$ is
in force, where $L_Y$ is the generator introduced in \eqref{e_genZ}.

\smallskip\noindent{\bf Condition $\mf D$}.  Recall assumptions
\eqref{23} and \eqref{33} of condition $\mf M$. In Corollary
\ref{cor_m1}, we show that condition \eqref{23} holds for some
enlarged wells $\mc V^x_N$ and a time-scale $\mb h_N \ll 1$. Then, in
Proposition \ref{pro_mix2}, we prove that, for every $\epsilon>0$, the
mixing time $t^x_{\rm mix} (\epsilon)$ of the zero-range process
reflected at the boundary of the set ${\mc V}^x_N$ is bounded by a
sequence $\mb s_N \ll \mb h_N$. This property implies condition
\eqref{33}.  These two results yield condition $\mf M$, which is the
assertion of Corollary \ref{corM}.  Thus, by Theorem \ref{t_cond} and
Corollary \ref{cor01}, property $\mf D$ is fulfilled.

\begin{remark}
To deduce property $\mf R^{(1)}$ one could also invoke Corollary
\ref{corM} and Proposition \ref{p03}.  On the other hand, condition
$\mf{R}^{(2)}_{L_Y}$ has been proven in an alternative way in
\cite[Section 7]{LMS}.
\end{remark}

\section{Escape from large wells}
\label{sec3}

In this section, we prove that condition \eqref{23} holds for the
critical zero-range process for a sequence
$(\mb{h}_{N})_{N\in\bb{N}}$, $\mb h_N \to 0$, and enlarged wells
$(\mc{V}_N^x,\,x\in S)_{N\in\bb{N}}$, $\mc V^x_N\supset \mc E^x_N$.

For $N\in\bb{N}$, set
\begin{equation*}
m_{N}=N/(\log N)^{\delta}\text{\; with}\;\;\delta\in(0,\,1)
\end{equation*}
and let $\mb{h}_{N}$ be the macroscopic time-scales given by
\begin{equation}\label{h_n}
\mb{h}_{N}\;:=\;\frac{m_{N}^{2}\,(\log N)^{1/2}}
{\theta_{N}}\;=\;\frac{1}{(\log N)^{(1/2)+2\delta}}\;\cdot
\end{equation}
For $x\in S$, define a larger well by
\begin{equation}
\mc{V}_N^x\;=\;\{\,\eta\in\mc{H}_{N}:
\eta_{y}\,\le\,m_{N}\text{ for all }y\in S\setminus\{x\}\,\}\;.
\label{large_well}
\end{equation}
As in \cite{LMS}, denote by $\mc{W}_{N}^{x}$, $\mc{D}_{N}^{x}$,
$x\in S$, the wells given by
\begin{equation}
\mc{W}_{N}^{x}\;=\;\{\,\eta\in\mc{H}_{N}\,:\,
\eta_{x}\,\ge\,N-m_{N}\,\}\;,\quad\mc{D}_{N}^{x}
\;=\;\{\,\eta\in\mc{H}_{N}\,:\,\eta_{x}\,\ge\,N-N^{\gamma}\,\}\;.
\label{e_Dnx}
\end{equation}
In this formula, $\gamma\in(0,\,2/\kappa)$ is a fixed constant. The
sets $\mc{D}_{N}^{x}$ are called deep wells and the sets $\mc{W}_{N}^{x}$
shallow wells.

Denote by ${\color{blue}\zeta_N^x}\in\mc{H}_{N}$ the configuration
such that all $N$ particles are located at site $x$, so that
\begin{equation}\label{setorder}
\zeta_N^x\,\in\,\mc{D}_{N}^{x}\,\subset\,\mc{E}_{N}^{x}\,\subset\,\mc{W}_{N}^{x} \,\subset\, \mc{V}_N^x\;.
\end{equation}

The main result of this section asserts that the process $\xi_{N}(\cdot)$
starting from a well $\mc{E}_{N}^{x}$ cannot escape from the
well $\mc{W}_{N}^{x}$ within the time scale $\mb{h}_{N}$.

\begin{prop}
\label{p_mix1} For all $x\in S$,
\begin{equation*}
\lim_{N\to\infty}\sup_{\eta\in\mc{E}_{N}^{x}}
\mb{P}_{\eta}^{N}\,[\,\tau_{(\mc{W}_{N}^{x})^{c}}\le\mb{h}_{N}\,]\;=\;0\;.
\end{equation*}
\end{prop}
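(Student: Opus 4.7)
The plan is to control the exit probability via a Lyapunov function argument, exploiting the fact that $\mc W_N^x$ is much larger than $\mc E_N^x$ and that the process must traverse the corridor $\{N-m_N \le \eta_x \le N-\ell_N\}$ to leave.

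First I would construct a non-negative function $U_N : \mc H_N \to [0,\infty)$ satisfying: (i) $\max_{\eta \in \mc E_N^x} U_N(\eta) = o_N(1)$; (ii) $U_N(\eta) \ge 1$ for every $\eta \in (\mc W_N^x)^c$ reachable in one jump from $\mc W_N^x$; (iii) $(\mc L_N^\xi U_N)(\eta) \le \delta_N$ for all $\eta \in \mc W_N^x$, with $\delta_N \, \mb h_N \to 0$. A natural candidate, inspired by the super-harmonic functions of \cite{LMS}, is a function depending primarily on the occupation $\eta_x$, convex on the corridor $N - m_N \le \eta_x \le N - \ell_N$ and tuned so that the discrepancy between the departure rate $\theta_N g(\eta_x)\sum_{z\ne x} r(x,z)$ and the return rate $\theta_N \sum_{y \neq x} g(\eta_y) r(y,x)$ is absorbed, at worst, into the admissible defect $\delta_N$.

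Given such $U_N$, I apply optional stopping to the Dynkin martingale
\begin{equation*}
M_t \;=\; U_N(\xi_N(t)) \,-\, U_N(\xi_N(0)) \,-\, \int_0^t (\mc L_N^\xi U_N)(\xi_N(s))\, ds
\end{equation*}
at $\tau \wedge \mb h_N$, where $\tau := \tau_{(\mc W_N^x)^c}$. For $\eta \in \mc E_N^x$, this yields
\begin{equation*}
\mb E^N_\eta \big[\,U_N(\xi_N(\tau \wedge \mb h_N))\,\big] \;\le\; U_N(\eta) \,+\, \delta_N \mb h_N \;=\; o_N(1)\;.
\end{equation*}
On the event $\{\tau \le \mb h_N\}$, property (ii) gives $U_N(\xi_N(\tau)) \ge 1$, so Markov's inequality produces the sought bound $\sup_{\eta \in \mc E_N^x} \mb P_\eta^N[\tau \le \mb h_N] \to 0$.

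The main obstacle is the construction of $U_N$ satisfying (iii) uniformly on $\mc W_N^x$. Acting $\mc L_N^\xi$ on any function of $\eta_x$ alone produces a term $\theta_N[\sum_{y\ne x} g(\eta_y) r(y,x) \,-\, g(\eta_x)\sum_{z\ne x} r(x,z)]$ whose sign depends on how the $k := N - \eta_x$ particles are distributed on $S \setminus \{x\}$: configurations with many singletons give a drift pointing away from $x$, while clustered configurations pull back toward $x$. In particular no purely one-dimensional ansatz can be super-harmonic, and $U_N$ must depend on finer statistics of the configuration. The super-harmonic functions constructed in \cite{LMS} are designed for the deep well $\mc D_N^x$ (where $N-\eta_x < N^\gamma$), and extending them to the corridor $N^\gamma \le N - \eta_x \le m_N$ of the larger well $\mc W_N^x$ is the delicate step; one should expect to regularize or truncate them, paying a controlled defect $\delta_N$ compatible with the time scale $\mb h_N = (\log N)^{-(1/2+2\delta)}$ chosen in \eqref{h_n}.
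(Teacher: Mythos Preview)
Your Lyapunov/Dynkin scheme is natural, but the proposal contains a factual error and leaves the central step unresolved. The super-harmonic function $G_N^{x}$ of \cite{LMS} (Theorem~\ref{tlms1} here) is super-harmonic precisely on the corridor $\mc W_N^x\setminus\mc D_N^x$, not on the deep well $\mc D_N^x$; so the ``extension to the corridor $N^\gamma\le N-\eta_x\le m_N$'' that you flag as delicate is already available. The real obstruction for your programme is the opposite one: condition~(iii) must hold on \emph{all} of $\mc W_N^x$, hence also inside $\mc D_N^x$. If you set $U_N=G_N^x/(c_1 m_N)$ on the corridor and extend it by a constant into $\mc D_N^x$, then along $\partial\mc D_N^x$ the outward jumps (rate $\sim\theta_N$) produce increments $\sim 1/m_N$ in $U_N$ with no compensating inward term, giving $(\mc L_N^\xi U_N)\sim\theta_N/m_N$ there; since $(\theta_N/m_N)\,\mb h_N=N(\log N)^{1/2-\delta}\to\infty$, the defect is far too large. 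Any smooth interpolation faces the same balance, and your proposal does not close this gap.

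The paper avoids a global Lyapunov function. It bounds $\textup{cap}_N(\mc E_N^x,(\mc W_N^x)^c)\le C\theta_N/(m_N^2\log N)$ via a one-dimensional test function in $\eta_x$ (Lemma~\ref{p_capb}), then applies the $L^2$-type inequality of Lemma~\ref{p_BLbd} with the spread initial law $\pi_N^x=\mu_N(\,\cdot\mid\mc D_N^x)$ to obtain $\mb P_{\pi_N^x}^N[\tau_{(\mc W_N^x)^c}\le\mb h_N]\to 0$ (Lemma~\ref{p_escbd}). The passage to an arbitrary $\eta\in\mc E_N^x$ is a hitting argument: Lemma~\ref{p_visit2} shows the process reaches any fixed $\zeta\in\mc D_N^x$ before leaving $\mc W_N^x$, and the strong Markov property plus averaging over $\zeta\sim\pi_N^x$ reduces to the previous step.
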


By the last inclusion of \eqref{setorder}, the next result is a
straightforward consequence of Proposition \ref{p_mix1}.

\begin{cor}
\label{cor_m1} For all $x\in S$,
\begin{equation*}
\lim_{N\rightarrow\infty}\sup_{\eta\in\mc{E}_{N}^{x}}
\mb{P}_{\eta}^{N}\,[\,\tau_{(\mc{V}_N^x)^{c}}\le\mb{h}_{N}\,]\;=\;0\;.
\end{equation*}
\end{cor}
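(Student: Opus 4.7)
The plan is to derive Corollary \ref{cor_m1} as an immediate consequence of Proposition \ref{p_mix1} via the set inclusion
\[
\mc{W}_{N}^{x}\;\subset\;\mc{V}_N^x
\]
recorded in \eqref{setorder}. Indeed, since $\eta\in\mc{W}_N^x$ means $\eta_x\ge N-m_N$ and hence $\eta_y\le m_N$ for all $y\ne x$ (because the total number of particles is $N$), every shallow-well configuration lies in the large well $\mc{V}_N^x$, which is exactly the inclusion above.

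From this inclusion I pass to complements, $(\mc{V}_N^x)^{c}\subset(\mc{W}_N^x)^{c}$, and then to hitting times: for any trajectory, hitting the smaller set $(\mc{V}_N^x)^c$ first requires hitting the larger set $(\mc{W}_N^x)^c$ no later, so $\tau_{(\mc{V}_N^x)^c}\;\ge\;\tau_{(\mc{W}_N^x)^c}$ pointwise. Consequently, the event inclusion
\[
\{\tau_{(\mc{V}_N^x)^{c}}\le\mb{h}_N\}\;\subset\;\{\tau_{(\mc{W}_N^x)^{c}}\le\mb{h}_N\}
\]
holds, which yields the probability bound
\[
\mb{P}_{\eta}^{N}\bigl[\,\tau_{(\mc{V}_N^x)^{c}}\le\mb{h}_N\,\bigr]
\;\le\;\mb{P}_{\eta}^{N}\bigl[\,\tau_{(\mc{W}_N^x)^{c}}\le\mb{h}_N\,\bigr]
\]
uniformly in $\eta\in\mc{E}_{N}^{x}$.

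Taking the supremum over $\eta\in\mc{E}_N^x$ on both sides and then the limit $N\to\infty$, Proposition \ref{p_mix1} makes the right-hand side vanish, hence so does the left-hand side. Since every step is just a set-theoretic manipulation followed by invoking Proposition \ref{p_mix1}, there is no substantive obstacle; the real work is already contained in the proof of Proposition \ref{p_mix1}, which is where the probabilistic estimates on the critical zero-range dynamics intervene.
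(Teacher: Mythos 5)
Your argument is correct and is exactly the route the paper takes: the paper itself dispatches this corollary in one line as a "straightforward consequence" of Proposition \ref{p_mix1} via the inclusion $\mc{W}_{N}^{x}\subset\mc{V}_{N}^{x}$ from \eqref{setorder}. You have merely spelled out the elementary monotonicity chain (complements, hitting times, events, probabilities) that the paper leaves implicit.
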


\subsection{Estimates based on capacity\label{sec31}}

In this subsection, we state several estimates based on the following
bound of the capacity between $\mc{E}_{N}^{x}$ and
$(\mc{W}_{N}^{x})^{c}$ with respect to the critical zero-range
processes.

\begin{lem}
\label{p_capb} There exists a finite constant $C$ such that
\begin{equation*}
\textup{cap}_{N}(\mc{E}_{N}^{x},\,(\mc{W}_{N}^{x})^{c})\;\le\;\frac{C\,\theta_{N}}{m_{N}^{2}\log N}
\end{equation*}
for all $x\in S$, $N\ge1$.
\end{lem}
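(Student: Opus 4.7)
\medskip\noindent\textbf{Proof plan.}
My strategy is a Dirichlet-principle upper bound with a one-dimensional test function. Since
\[
\textup{cap}_N(\mc E_N^x,(\mc W_N^x)^c) \;=\; \inf_{f} \ms D_N(f),
\]
where the infimum runs over $f:\mc H_N \to \bb R$ with $f \equiv 1$ on $\mc E_N^x$ and $f \equiv 0$ on $(\mc W_N^x)^c$, it suffices to exhibit one admissible $f$ whose Dirichlet form is $O(\theta_N/(m_N^2 \log N))$. I take $f(\eta) = \phi(\eta_x)$, depending only on the occupation at site $x$, which reduces the problem to a one-dimensional birth-death-type computation on $k=\eta_x$.

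For such an $f$, only jumps that change $\eta_x$ contribute, and reversibility pairs up jumps into and out of $x$; using the rates in \eqref{e_genZRP} one obtains
\[
\ms D_N(f) \;=\; \theta_N\Big(\sum_{y\neq x}r(x,y)\Big)\sum_{k=1}^{N} \tilde\mu_N(k)\,[\phi(k-1)-\phi(k)]^2,
\]
with $\tilde\mu_N(k) := g(k)\,\mu_N(\{\eta:\eta_x=k\})$. Decomposing $\eta=(k,\eta')$ with $\eta' \in \mc H_{N-k,\,S\setminus\{x\}}$ and applying the asymptotic $Z_{M,p}\to p$ from \cite[Proposition 4.1]{LMS} to both $Z_{N,\kappa}$ and $Z_{N-k,\kappa-1}$, one gets, uniformly for $j := N-k \in [\ell_N, m_N]$ and some constant $C=C(\kappa)$,
\[
\tilde\mu_N(k) \;\le\; \frac{C\,(\log j)^{\kappa-2}}{(\log N)^{\kappa-1}\,j}.
\]

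Now take the quadratic profile $\phi(N-j) := (m_N^2-j^2)/(m_N^2-\ell_N^2)$ for $\ell_N\le j\le m_N$, extended by $1$ for $j\le\ell_N$ and by $0$ for $j\ge m_N$. A direct calculation gives $|\phi(N-j-1)-\phi(N-j)|^2 \le 9\,j^2/m_N^4$; since $\ell_N/m_N \to 0$ and $\log j$ is comparable to $\log N$ throughout $[\ell_N, m_N]$,
\[
\ms D_N(f)\;\le\; \frac{C'\,\theta_N}{(\log N)\,m_N^4}\sum_{j=\ell_N}^{m_N} j \;\le\; \frac{C''\,\theta_N}{m_N^2\,\log N},
\]
which is the asserted bound. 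The main subtlety is the choice of profile: the naive piecewise-linear $\phi$ would introduce an extra $\log\log N$ factor, because $\tilde\mu_N$ decays essentially like $1/j$ in the transition region; the quadratic profile above is precisely what the effective birth-death chain on $k=\eta_x$ picks out as optimal, and it is what delivers the sharp power of $\log N$ in the denominator.
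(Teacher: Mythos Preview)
Your proof is correct and follows the same strategy as the paper: apply the Dirichlet principle with a test function depending only on $\eta_x$, reduce to a one-dimensional sum over $j=N-\eta_x\in[\ell_N,m_N]$, and use the estimate $\mu_N(\{\eta_x=N-j\})\le C/(j\log N)$ coming from the explicit stationary measure. The only cosmetic difference is that the paper writes down the exact one-dimensional minimizer $q(k)=\big(\sum_{i=\ell_N}^{k-1}\mu_N(\mc R_i)^{-1}\big)\big/\big(\sum_{i=\ell_N}^{m_N-1}\mu_N(\mc R_i)^{-1}\big)$ and then bounds the resulting denominator, whereas you plug in the quadratic profile directly; since $\mu_N(\mc R_i)^{-1}\sim c\,i\log N$, these two profiles are asymptotically identical, and both avoid the $\log\log N$ loss you correctly flag for the linear interpolant.
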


\begin{proof}
Let ${\color{blue}Q(\eta)}=q(N-\eta_{x})$ for some function $q:\bb{Z}\to\bb{R}$
such that
\begin{equation*}
q(k)\;=\;\begin{cases}
0 & \text{if }k\le\ell_{N}\;,\\
1 & \text{if }k\ge m_{N}\;.
\end{cases}
\end{equation*}
The precise expression for $q$ will be specified below in \eqref{e_defq}.
By the Dirichlet principle and since $Q(\sigma^{y,\,z}\eta)=Q(\eta)$
if $y,\,z\neq x$ or $\eta\notin\mc{W}_{N}^{x}\setminus\mc{E}_{N}^{x}$,
\begin{align*}
 & \textup{cap}_{N}(\mc{E}_{N}^{x},\,(\mc{W}_{N}^{x})^{c})\\
 & \qquad\le\;\ms{D}_{N}(Q)\;=\;\theta_{N}\,\sum_{\eta\in\mc{W}_{N}^{x}\setminus\mc{E}_{N}^{x}}\,\sum_{y\in S}\,\mu_{N}(\eta)\,g(\eta_{x})\,r(x,\,y)\,[\,Q(\sigma^{x,\,y}\eta)\,-\,Q(\eta)\,]^{2}\;.
\end{align*}
By definition of the jump rates and of $Q$, this expression is bounded
by
\begin{align*}
C\,\theta_{N}\,\sum_{\eta\in\mc{W}_{N}^{x}\setminus\mc{E}_{N}^{x}}\mu_{N}(\eta)\,[\,q(N-\eta_{x}+1)\,-\,q(N-\eta_{x})\,]^{2}\;,
\end{align*}
for some finite constant $C$. Let
\begin{equation*}
\mc{R}_{k}\;=\;\{\eta\in\mc{H}_{N}:N-\eta_{x}=k\}
\end{equation*}
so that
\begin{equation*}
\textup{cap}_{N}(\mc{E}_{N}^{x},\,(\mc{W}_{N}^{x})^{c})\;\le\;C\,\theta_{N}\,\sum_{k=\ell_{N}}^{m_{N}-1}\,\mu_{N}(\mc{R}_{k})\,[\,q(k+1)\,-\,q(k)\,]^{2}\;.
\end{equation*}
Define
\begin{equation}
q(k)\;=\;\frac{\sum_{i=\ell_{N}}^{k-1}\,\mu_{N}(\mc{R}_{i})^{-1}}{\sum_{i=\ell_{N}}^{m_{N}-1}\,\mu_{N}(\mc{R}_{i})^{-1}}\;\;\;\;;\;k\,\in\,[\,\ell_{N},\,m_{N}\,]\;.\label{e_defq}
\end{equation}
It follows from the penultimate displayed equation that
\begin{equation}
\textup{cap}_{N}(\mc{E}_{N}^{x},\,(\mc{W}_{N}^{x})^{c})\;\le\;\frac{C\,\theta_{N}}{\sum_{i=\ell_{N}}^{m_{N}-1}\,\mu_{N}(\mc{R}_{i})^{-1}}\;.\label{e_37}
\end{equation}
For $\ell_{N}\le i<m_{N}$,
\begin{equation*}
\mu_{N}(\mc{R}_{i})\;=\;\frac{N}{Z_{N,\,\kappa}\,(\log N)^{\kappa-1}}\,\frac{1}{N-i}\,\frac{Z_{i,\,\kappa-1}\,(\log i)^{\kappa-2}}{i}\;\le\;\frac{C}{\log N}\,\frac{1}{i}\;\cdot
\end{equation*}
Here, we used the facts that $Z_{N,\,\kappa}$, $Z_{i,\,\kappa-1}$
are bounded, that $N/(N-i)\simeq1$ and $\log i/\log N\simeq1$ for
$i\in[\ell_{N},\,m_{N}-1]$. Inserting this into (\ref{e_37}) yields
\begin{equation*}
\textup{cap}_{N}(\mc{E}_{N}^{x},\,(\mc{W}_{N}^{x})^{c})\;\le\;\frac{C\,\theta_{N}}{(\log N)\,\sum_{i=\ell_{N}}^{m_{N}-1}i}\;\le\;\frac{C\,\theta_{N}}{m_{N}^{2}\,\log N}\;,
\end{equation*}
as claimed.
\end{proof}
Based on the previous estimate of the capacity, we can refine
\cite[Propositions 9.4 and 8.6]{LMS}, replacing the set
$\breve{\mc{E}}_{N}^{x}=\mc{E}_{N}\setminus\mc{E}_{N}^{x}$ by the much
larger set $(\mc{W}_{N}^{x})^{c}$.
\begin{lem}
\label{p_visit1} For all $x\in S$,
\begin{equation*}
\lim_{N\rightarrow\infty}\,\inf_{\eta\in\mc{D}_{N}^{x}}\,\inf_{\zeta\in\mc{D}_{N}^{x}}\mb{P}_{\eta}^{N}\,[\,\tau_{\zeta}\,<\,\tau_{(\mc{W}_{N}^{x})^{c}}\,]\;=\;1\;.
\end{equation*}
\end{lem}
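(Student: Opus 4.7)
The plan is to follow the strategy used to prove \cite[Proposition 9.4]{LMS}, with the sharper capacity bound of Lemma \ref{p_capb} replacing the bound on $\Cap_N(\mc E_N^x,\breve{\mc E}_N^x)$ employed there. The starting point is the standard capacity-ratio estimate (see e.g.\ \cite[Lemma 3.1]{BL1}): for disjoint non-empty sets $A,B\subset\mc H_N$ and $\eta\notin A\cup B$,
\[
\mb P_\eta^N\bigl[\tau_B<\tau_A\bigr]\;\le\;\frac{\Cap_N(B,A)}{\Cap_N(\eta,A)}\,.
\]
Taking $A=\{\zeta\}$ and $B=(\mc W_N^x)^c$, and using monotonicity of capacity (since $\{\zeta\}\subset\mc E_N^x$) together with Lemma \ref{p_capb}, one obtains
\[
\mb P_\eta^N\bigl[\tau_{(\mc W_N^x)^c}<\tau_\zeta\bigr]\;\le\;\frac{\Cap_N((\mc W_N^x)^c,\mc E_N^x)}{\Cap_N(\eta,\zeta)}\;\le\;\frac{C\,\theta_N}{m_N^2\,(\log N)\,\Cap_N(\eta,\zeta)}\,.
\]

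Next, I would import from \cite{LMS} a lower bound of the form $\Cap_N(\eta,\zeta)\ge c\,\theta_N/[N^\gamma(\log N)^{a}]$, uniform for $\eta,\zeta\in\mc D_N^x$ and for some constant $a\ge 0$. Such a bound is obtained there via Thomson's principle by constructing a unit flow between $\eta$ and $\zeta$ supported entirely in $\mc D_N^x$, moving particles one at a time between sites while keeping at least $N-N^\gamma$ particles at $x$, and bounding its energy using the explicit form of $\mu_N$ and of the jump rates $g(\eta_x)\,r(x,y)$. Combining this with the previous display and recalling that $m_N=N/(\log N)^\delta$ with $\delta\in(0,1)$ and $\gamma<2/\kappa\le 1$, one gets
\[
\sup_{\eta,\zeta\in\mc D_N^x}\mb P_\eta^N\bigl[\tau_{(\mc W_N^x)^c}<\tau_\zeta\bigr]\;\le\;\frac{C'\,(\log N)^{a+2\delta-1}}{N^{2-\gamma}}\;\longrightarrow\;0\,,
\]
which is the desired conclusion.

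The main obstacle is the flow construction underlying the lower bound on $\Cap_N(\eta,\zeta)$: one must transport a unit flow between two arbitrary deep-well configurations without ever leaving $\mc D_N^x$, which forces the flow to travel through configurations with up to $N^\gamma$ free particles and requires careful bookkeeping of the invariant measure along the way. However, this delicate estimate is already available in \cite{LMS}, so the essential new input in the present proof is the improved capacity upper bound of Lemma \ref{p_capb}, which allows $\breve{\mc E}_N^x$ to be replaced by the much larger set $(\mc W_N^x)^c$.
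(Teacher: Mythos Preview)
Your approach is exactly the paper's: the capacity-ratio bound, then monotonicity to replace the numerator by $\Cap_N(\mc E_N^x,(\mc W_N^x)^c)$ and Lemma~\ref{p_capb}, then the lower bound on $\Cap_N(\eta,\zeta)$ imported from \cite[Lemma~9.3]{LMS}. Two small corrections: the standard capacity-ratio inequality has $\Cap_N(\eta,(\mc W_N^x)^c)$ in the numerator (not $\Cap_N((\mc W_N^x)^c,\zeta)$), and the monotonicity step then uses $\eta\in\mc E_N^x$ rather than $\zeta\in\mc E_N^x$; and the lower bound from \cite{LMS} is $\Cap_N(\eta,\zeta)\ge c\,\theta_N/[N^{\gamma\kappa}(\log N)^{\kappa-1}]$, with exponent $\gamma\kappa$ rather than $\gamma$, which is precisely why the hypothesis $\gamma<2/\kappa$ (and not merely $\gamma<2$) is what makes the ratio vanish.
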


\begin{proof}
By \cite[equation (3.3)]{LL} and the monotonicity of the capacity,
\begin{equation*}
\mb{P}_{\eta}^{N}\,[\,\tau_{\zeta}\,>\,\tau_{(\mc{W}_{N}^{x})^{c}}\,]\;\le\;\frac{\textup{cap}_{N}(\eta,\,(\mc{W}_{N}^{x})^{c})}{\textup{cap}_{N}(\eta,\,\zeta)}\;\le\;\frac{\textup{cap}_{N}(\mc{E}_{N}^{x},\,(\mc{W}_{N}^{x})^{c})}{\textup{cap}_{N}(\eta,\,\zeta)}\;.
\end{equation*}
Hence, by \cite[Lemma 9.3]{LMS} and Lemma \ref{p_capb},
\begin{equation*}
\mb{P}_{\eta}^{N}\,[\,\tau_{\zeta}\,>\,\tau_{(\mc{W}_{N}^{x})^{c}}\,]\;\le\;C\,\frac{N^{\gamma\kappa}\,(\log N)^{\kappa-1}}{m_{N}^{2}\,\log N}\;=\;o_{N}(1)\;,
\end{equation*}
where the last equality holds because $\gamma<2/\kappa$. This completes
the proof.
\end{proof}
Recall from \cite[Proposition~9.1]{LMS} that the deep wells $\mc{D}_{N}^{x}$
are attractors, in the sense that
\begin{equation}
\lim\limits _{N\rightarrow\infty}\inf\limits _{\eta\in\mc{E}_{N}^{x}}\mb{P}_{\eta}^{N}[\,\tau_{\mc{D}_{N}^{x}}\,<\,\tau_{(\mc{W}_{N}^{x})^{c}}\,]\;=\;1\;\;\;\;\text{for all}\;x\in S\;.\label{deep_atract}
\end{equation}

\begin{lem}
\label{p_visit2} For all $x\in S$,
\begin{equation*}
\lim_{N\rightarrow\infty}\,\inf_{\zeta\in\mc{D}_{N}^{x}}\,\inf_{\eta\in\mc{E}_{N}^{x}}\mb{P}_{\eta}^{N}\,[\,\tau_{\zeta}\,<\,\tau_{(\mc{W}_{N}^{x})^{c}}\,]\;=\;1\;.
\end{equation*}
\end{lem}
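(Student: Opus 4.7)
The plan is to combine the two results already established: equation \eqref{deep_atract}, which says that starting from any $\eta\in\mc E_N^x$ the process reaches the deep well $\mc D_N^x$ before escaping $\mc W_N^x$ with probability going to $1$, and Lemma \ref{p_visit1}, which says that starting from any configuration \emph{inside} $\mc D_N^x$ the process hits any prescribed $\zeta\in\mc D_N^x$ before escaping $\mc W_N^x$, again with probability tending to $1$. The idea is simply to chain these two events together by the strong Markov property at the hitting time of $\mc D_N^x$.

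More precisely, fix $x\in S$, $\eta\in\mc E_N^x$ and $\zeta\in\mc D_N^x$. On the event $\{\tau_{\mc D_N^x}<\tau_{(\mc W_N^x)^c}\}$, the process enters $\mc D_N^x$ through some (random) configuration $\eta'=\xi_N(\tau_{\mc D_N^x})\in\mc D_N^x$. Applying the strong Markov property at $\tau_{\mc D_N^x}$ and then bounding the inner probability below by the infimum from Lemma \ref{p_visit1}, I get
\begin{equation*}
\mb P_\eta^N[\,\tau_\zeta<\tau_{(\mc W_N^x)^c}\,]\;\ge\;
\mb P_\eta^N[\,\tau_{\mc D_N^x}<\tau_{(\mc W_N^x)^c}\,]\cdot
\inf_{\eta'\in\mc D_N^x}\mb P_{\eta'}^N[\,\tau_\zeta<\tau_{(\mc W_N^x)^c}\,]\;.
\end{equation*}
The first factor converges to $1$ uniformly in $\eta\in\mc E_N^x$ by \eqref{deep_atract}, and the second factor converges to $1$ uniformly in $\zeta\in\mc D_N^x$ by Lemma \ref{p_visit1}. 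Taking the infimum over $\eta\in\mc E_N^x$ and $\zeta\in\mc D_N^x$ and letting $N\to\infty$ yields the claim.

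There is essentially no obstacle: the entire argument is a one-line application of the strong Markov property, and the two ingredients are already in place in the paper. The only thing one has to notice, to make the chaining work, is that after hitting $\mc D_N^x$ one is still entitled to ask for $\tau_\zeta<\tau_{(\mc W_N^x)^c}$ (not a restarted analogue), but this is automatic because $\tau_{(\mc W_N^x)^c}$ is the hitting time of the \emph{same} set before and after $\tau_{\mc D_N^x}$, and on the event considered the process has not yet exited $\mc W_N^x$. So the proof is completely mechanical.
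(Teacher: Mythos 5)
Your proof is correct and is essentially identical to the paper's: both chain \eqref{deep_atract} and Lemma \ref{p_visit1} by applying the strong Markov property at $\tau_{\mc D_N^x}$ and then taking infima.
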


\begin{proof}
Fix $x\in S$, $\eta\in\mc{E}_{N}^{x}$, and $\zeta\in\mc{D}_{N}^{x}$.
Then, by the strong Markov property,
\begin{align*}
\mb{P}_{\eta}^{N}[\,\tau_{\zeta}\,<\,\tau_{(\mc{W}_{N}^{x})^{c}}\,]\; & \ge\;\mb{P}_{\eta}^{N}\,[\,\tau_{\zeta}\,<\,\tau_{(\mc{W}_{N}^{x})^{c}}\,,\;\tau_{\mc{D}_{N}^{x}}\,<\,\tau_{(\mc{W}_{N}^{x})^{c}}\,]\\
 & \ge\;\mb{P}_{\eta}^{N}\,[\,\tau_{\mc{D}_{N}^{x}}\,<\,\tau_{(\mc{W}_{N}^{x})^{c}}\,]\,\inf_{\xi\in\mc{D}_{N}^{x}}\mb{P}_{\xi}^{N}\,[\,\tau_{\zeta}\,<\,\tau_{(\mc{W}_{N}^{x})^{c}}\,]\;.
\end{align*}
Therefore, we have
\begin{align*}
 & \inf_{\zeta\in\mc{D}_{N}^{x}}\,\inf_{\eta\in\mc{E}_{N}^{x}}\mb{P}_{\eta}^{N}\,[\,\tau_{\zeta}\,<\,\tau_{(\mc{W}_{N}^{x})^{c}}\,]\\
\ge\; & \inf_{\eta\in\mc{E}_{N}^{x}}\mb{P}_{\eta}^{N}\,[\,\tau_{\mc{D}_{N}^{x}}\,<\,\tau_{(\mc{W}_{N}^{x})^{c}}\,]\,\times\,\inf_{\zeta\in\mc{D}_{N}^{x}}\,\inf_{\xi\in\mc{D}_{N}^{x}}\mb{P}_{\xi}^{N}\,[\,\tau_{\zeta}\,<\,\tau_{(\mc{W}_{N}^{x})^{c}}\,]\;.
\end{align*}
The first term at the right-hand side is $1-o_{N}(1)$ by \eqref{deep_atract},
and the second one is $1-o_{N}(1)$ by Lemma \ref{p_visit1}. This
completes the proof of the lemma.
\end{proof}

\subsection{Proof of Proposition \ref{p_mix1}\label{sec33}}

The proof of Proposition \ref{p_mix1} is similar to the one of \cite[Theorem 3.2]{LMS}.
First, we establish the following estimate, whose proof is omitted
since it is completely identical to the proof of \cite[Proposition 8.4]{LMS}.
It suffices to replace $\breve{\mc{E}}_{N}^{x}$ by $(\mc{W}_{N}^{x})^{c}$
and $1/\gamma_{N}$ by $t_{N}$.
\begin{lem}
\label{p_BLbd} For all $x\in S$ and probability measure $\upsilon_{N}$
concentrated on $\mc{E}_{N}^{x}$,
\begin{equation*}
\big(\,\mb{P}_{\upsilon_{N}}^{N}\,[\,\tau_{(\mc{W}_{N}^{x})^{c}}\,\le\,\mb{h}_{N}\,]\,\big)\,^{2}\;\le\;e^{2}\,\mb{h}_{N}\,\bb{E}_{\mu_{N}^{x}}\,\bigg[\,\bigg(\,\frac{\upsilon_{N}}{\mu_{N}^{x}}\,\bigg)^{2}\,\bigg]\,\frac{1}{\mu_{N}(\mc{E}_{N}^{x})}\,\textup{cap}_{N}(\mc{E}_{N}^{x},\,(\mc{W}_{N}^{x})^{c})\;.
\end{equation*}
\end{lem}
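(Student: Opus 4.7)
The proof is a standard Bovier--Eckhoff--Gayrard--Klein capacity bound and, as indicated in the text, follows \cite[Proposition 8.4]{LMS} essentially verbatim with the replacement $\breve{\mc E}_N^x \leadsto (\mc W_N^x)^c$ and $1/\gamma_N \leadsto \mb h_N$; the specific identity of the target set plays no role in the derivation. The plan is to split the estimate into three steps. Write $\mc B = (\mc W_N^x)^c$ and $\tau = \tau_{\mc B}$ throughout.

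First, I would reduce to the case $\upsilon_N = \mu_N^x$ by Cauchy--Schwarz. Combining the pointwise inequality $\mb P_\eta^N[\tau \le \mb h_N]^2 \le \mb P_\eta^N[\tau \le \mb h_N]$ with the rewriting $\upsilon_N = (\upsilon_N/\mu_N^x)\,\mu_N^x$ and Cauchy--Schwarz in $L^2(\mu_N^x)$ gives
\begin{equation*}
\bigl(\mb P_{\upsilon_N}^N[\tau \le \mb h_N]\bigr)^{2} \;\le\; \mb E_{\mu_N^x}\!\Big[\Big(\tfrac{\upsilon_N}{\mu_N^x}\Big)^{\!2}\,\Big]\; \mb P_{\mu_N^x}^N[\tau \le \mb h_N]\;.
\end{equation*}

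Second, I would convert the remaining probability into an exponential moment via the elementary bound $\chi_{\{s \le \mb h_N\}} \le e\, e^{-s/\mb h_N}$:
\begin{equation*}
\mb P_{\mu_N^x}^N[\tau \le \mb h_N] \;\le\; e\, \mb E_{\mu_N^x}^N\bigl[\,e^{-\tau/\mb h_N}\,\bigr]\;.
\end{equation*}

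Third, I would bound this exponential moment by a capacity. Set $V(\eta) := \mb E_\eta^N[e^{-\tau/\mb h_N}]$, which satisfies $V|_{\mc B} = 1$ together with the Dirichlet-type resolvent equation $(\mb h_N^{-1} - \ms L_N)\,V = 0$ on $\mc B^{c}$. Reversibility of $\xi_N(\cdot)$ with respect to $\mu_N$, together with the Dirichlet principle applied to the equilibrium potential of $(\mc E_N^x, \mc B)$, yields the classical estimate
\begin{equation*}
\mu_N(\mc E_N^x)\, \mb E_{\mu_N^x}^N\bigl[\,e^{-\tau/\mb h_N}\,\bigr] \;\le\; \mb h_N\cdot \textup{cap}_N(\mc E_N^x, \mc B)\;,
\end{equation*}
which when combined with the first two steps produces the lemma. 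The only delicate point is this third step; it requires some careful bookkeeping of boundary terms when testing the resolvent equation against $V$ and using the variational characterization of the capacity, but it is precisely the argument carried out in \cite[Proposition 8.4]{LMS}, which transfers without modification because it never exploits any structural property of $\breve{\mc E}_N^x$ beyond its role as a disjoint target set for the hitting time.
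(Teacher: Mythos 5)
Your proposal is correct and follows the same three-part BEGK-type capacity argument that the paper delegates to \cite[Proposition 8.4]{LMS}: Cauchy--Schwarz in $L^2(\mu_N^x)$ to reduce $\upsilon_N$ to $\mu_N^x$, the elementary domination $\chi_{\{\tau\le\mb h_N\}}\le e\,e^{-\tau/\mb h_N}$, and the reversibility/capacity bound for the Laplace transform $V(\eta)=\mb E^N_\eta[e^{-\tau/\mb h_N}]$, which one obtains by testing $\ms L_N h_{\mc E_N^x,(\mc W_N^x)^c}$ against $1-V$ and using self-adjointness. One small remark: with your ordering (Cauchy--Schwarz first, exponential bound second) the prefactor is in fact $e\,\mb h_N$ rather than $e^2\,\mb h_N$; the $e^2$ in the statement comes from first writing $\mb P^N_{\upsilon_N}[\tau\le\mb h_N]\le e\,\mb E^N_{\upsilon_N}[V]$ and then applying Cauchy--Schwarz together with $V^2\le V$, but both orderings yield the lemma, yours slightly more sharply.
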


By inserting $\upsilon_{N}={\color{blue}\pi_{N}^{x}(\cdot)}\coloneqq\mu_{N}(\,\cdot\,|\,\mc{D}_{N}^{x})$
into the previous equation, we obtain the following estimate. The
order of magnitude of $\mb{h}_{N}$ is critically used in the
proof of this result.
\begin{lem}
\label{p_escbd} For all $x\in S$,
\begin{equation*}
\lim_{N\to\infty}\mb{P}_{\pi_{N}^{x}}^{N}\,[\,\tau_{(\mc{W}_{N}^{x})^{c}}\,\le\,\mb{h}_{N}\,]\;=\;0\;.
\end{equation*}
\end{lem}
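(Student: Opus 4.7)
The plan is to apply Lemma \ref{p_BLbd} with the choice $\upsilon_N = \pi_N^x = \mu_N(\,\cdot\mid\mc D_N^x)$, so that the probability in question is controlled by the product of three factors: the prefactor $\mb h_N$, the Radon--Nikodym $L^2$-norm of $\pi_N^x$ against $\mu_N^x$, and the capacity $\textup{cap}_N(\mc E_N^x,(\mc W_N^x)^c)$ divided by $\mu_N(\mc E_N^x)$. The first will be made small by hand, the third is handled by Lemma \ref{p_capb} together with the convergence $\mu_N(\mc E_N^x)\to 1/\kappa$ from Theorem \ref{t_cond}, so the real work is computing and bounding the Radon--Nikodym norm.

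Since $\mc D_N^x\subset\mc E_N^x$, for $\eta\in\mc D_N^x$ one has
\begin{equation*}
\frac{\pi_N^x(\eta)}{\mu_N^x(\eta)}\;=\;\frac{\mu_N(\mc E_N^x)}{\mu_N(\mc D_N^x)}\;,
\end{equation*}
and the ratio vanishes outside $\mc D_N^x$. A one-line calculation then gives
\begin{equation*}
\bb E_{\mu_N^x}\Big[\Big(\frac{\pi_N^x}{\mu_N^x}\Big)^{\!2}\Big]\;=\;\frac{\mu_N(\mc E_N^x)}{\mu_N(\mc D_N^x)}\;.
\end{equation*}
Plugging this identity together with Lemma \ref{p_capb} into Lemma \ref{p_BLbd}, the two factors $\mu_N(\mc E_N^x)$ cancel and one obtains
\begin{equation*}
\big(\mb P^N_{\pi_N^x}[\tau_{(\mc W_N^x)^c}\le \mb h_N]\big)^{2}\;\le\;\frac{C\,e^{2}\,\mb h_N\,\theta_N}{m_N^{2}\,(\log N)\,\mu_N(\mc D_N^x)}\;.
\end{equation*}
By the very definition $\mb h_N=m_N^{2}(\log N)^{1/2}/\theta_N$, the right-hand side collapses to $Ce^{2}/[(\log N)^{1/2}\,\mu_N(\mc D_N^x)]$, so it suffices to show that $\mu_N(\mc D_N^x)$ is bounded away from $0$ (or, more modestly, that $(\log N)^{1/2}\mu_N(\mc D_N^x)\to\infty$).

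The last point is the main obstacle and is handled by decomposing $\mc D_N^x=\bigcup_{i=0}^{\lfloor N^{\gamma}\rfloor}\mc R_i$ and mimicking the stationary-measure computation already carried out in the proof of Lemma \ref{p_capb}. Using the explicit formula for $\mu_N$, one writes
\begin{equation*}
\mu_N(\mc R_i)\;=\;\frac{N}{Z_{N,\kappa}\,(\log N)^{\kappa-1}}\,\frac{Z_{i,\kappa-1}\,(\log i)^{\kappa-2}}{(N-i)\,i}\qquad(i\ge 1)\;,
\end{equation*}
and, thanks to $N-i\sim N$ for $i\le N^{\gamma}$, the convergence $Z_{N,\kappa}\to\kappa$ (and the analogous one for $Z_{i,\kappa-1}$), together with the Riemann sum $\sum_{i=1}^{N^{\gamma}}(\log i)^{\kappa-2}/i\sim(\gamma\log N)^{\kappa-1}/(\kappa-1)$, one obtains that $\mu_N(\mc D_N^x)$ converges to the strictly positive constant $\gamma^{\kappa-1}/\kappa$. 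Inserting this into the displayed bound proves $\mb P_{\pi_N^x}^N[\tau_{(\mc W_N^x)^c}\le\mb h_N]=O((\log N)^{-1/4})$, which completes the proof.
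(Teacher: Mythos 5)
Your proof is correct and follows the same route as the paper: take $\upsilon_N = \pi_N^x$ in Lemma \ref{p_BLbd}, note that the $L^2$-Radon–Nikodym factor equals $\mu_N(\mc E_N^x)/\mu_N(\mc D_N^x)$, feed in the capacity bound from Lemma \ref{p_capb} and the definition of $\mb h_N$, and conclude once $\mu_N(\mc D_N^x)$ is known to stay bounded away from zero. The only difference is cosmetic: the paper cites \cite[Lemma 4.2]{LMS} for $\mu_N(\mc D_N^x)\simeq\gamma^{\kappa-1}/\kappa$, whereas you re-derive this by decomposing $\mc D_N^x$ into the shells $\mc R_i$ and summing the explicit formula for $\mu_N(\mc R_i)$ — a self-contained substitute for the same fact, but not a different method.
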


\begin{proof}
Since
\begin{equation*}
\bb{E}_{\mu_{N}^{x}}\,\Big[\,\Big(\,\frac{\upsilon_{N}}{\mu_{N}^{x}}\,\Big)\,\Big]^{2}\;=\;\frac{\mu_{N}(\mc{E}_{N}^{x})}{\mu_{N}(\mc{D}_{N}^{x})}\;,
\end{equation*}
by Lemmata \ref{p_capb} and \ref{p_BLbd}, we get
\begin{equation*}
\big(\,\mb{P}_{\upsilon_{N}}^{N}\,[\,\tau_{(\mc{W}_{N}^{x})^{c}}\le\mb{h}_{N}\,]\,\big)\,^{2}\;\le\;C\,\mb{h}_{N}\,\frac{1}{\mu_{N}(\mc{D}_{N}^{x})}\,\frac{\theta_{N}}{m_{N}^{2}\log N}\;=\;\frac{C}{(\log N)^{1/2}}\,\frac{1}{\mu_{N}(\mc{D}_{N}^{x})}\;\cdot
\end{equation*}
By \cite[Lemma 4.2]{LMS}, $\mu_{N}(\mc{D}_{N}^{x})\simeq\frac{1}{\kappa}\gamma^{\kappa-1}$,
which completes the proof.
\end{proof}
\smallskip{}

\begin{proof}[Proof of Proposition \ref{p_mix1}]
Fix $\eta\in\mc{E}_{N}^{x}$ and $\zeta\in\mc{D}_{N}^{x}$.
By Lemma \ref{p_visit2},
\begin{align*}
\mb{P}_{\eta}^{N}\,[\,\tau_{(\mc{W}_{N}^{x})^{c}}\le\mb{h}_{N}\,]\; & \le\;\mb{P}_{\eta}^{N}\,[\,\tau_{(\mc{W}_{N}^{x})^{c}}\,\le\,\mb{h}_{N}\,,\,\,\tau_{\zeta}\,<\,\tau_{(\mc{W}_{N}^{x})^{c}}\,]\,+\,\mb{P}_{\eta}^{N}\,[\,\tau_{\zeta}\,>\,\tau_{(\mc{W}_{N}^{x})^{c}}\,]\,\\
 & =\;\mb{P}_{\eta}^{N}\,[\,\tau_{(\mc{W}_{N}^{x})^{c}}\,\le\,\mb{h}_{N}\,,\,\,\tau_{\zeta}\,<\,\tau_{(\mc{W}_{N}^{x})^{c}}\,]\,+\,o_{N}(1)\;.
\end{align*}
By the strong Markov property,
\begin{equation*}
\mb{P}_{\eta}^{N}\,[\,\tau_{(\mc{W}_{N}^{x})^{c}}\,\le\,\mb{h}_{N}\,,\;\tau_{\zeta}\,<\,\tau_{(\mc{W}_{N}^{x})^{c}}\,]\;\le\;\mb{P}_{\zeta}^{N}\,[\,\tau_{(\mc{W}_{N}^{x})^{c}}\,\le\,\mb{h}_{N}\,]\,
\end{equation*}
so that
\begin{equation*}
\mb{P}_{\eta}^{N}\,[\,\tau_{(\mc{W}_{N}^{x})^{c}}\,\le\,\mb{h}_{N}\,]\;\le\;\mb{P}_{\zeta}^{N}\,[\,\tau_{(\mc{W}_{N}^{x})^{c}}\,\le\,\mb{h}_{N}\,]\,+\,o_{N}(1)\;.
\end{equation*}
Multiplying both sides by $\pi_{N}^{x}(\zeta)$ and summing over $\zeta\in\mc{D}_{N}^{x}$,
we get
\begin{equation*}
\mb{P}_{\eta}^{N}\,[\,\tau_{(\mc{W}_{N}^{x})^{c}}\,\le\,\mb{h}_{N}\,]\;\le\;\mb{P}_{\pi_{N}^{x}}^{N}\,[\,\tau_{(\mc{W}_{N}^{x})^{c}}\,\le\,\mb{h}_{N}\,]\,+\,o_{N}(1)\;.
\end{equation*}
Apply Lemma \ref{p_escbd} to complete the proof.
\end{proof}

\section{Condition $\mf{V}$ for critical zero-range processes}
\label{sec10}

In this section, we prove that the process $\xi_{N}(\cdot)$ starting
from a well $\mc{E}_{N}^{x}$ hits quickly the configuration
$\zeta_N^x$.
For $N\ge1$, define the time scale $\mb u_N$ by
\begin{equation*}
\mb{u}_{N}\;:=\;\frac{m_{N}^{2}}{\theta_{N}}
\;=\;\frac{1}{(\log N)^{1+2\delta}}\;.
\end{equation*}

\begin{prop}
\label{p_mixmain}For all $x\in S$,
\begin{equation*}
\lim_{N\to\infty}\sup_{\eta\in\mc{E}_{N}^{x}}\mb{P}_{\eta}^{N}\,[\,\tau_{\zeta_N^x}\ge\mb{u}_{N}\,]\;=\;0\;.
\end{equation*}
In particular, the condition $\mf{V}$ holds for the critical
zero-range processes.
\end{prop}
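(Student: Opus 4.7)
My strategy parallels that of the proof of Proposition \ref{p_mix1}: an $L^{2}$-Chebyshev bound starting from a concentrated initial measure, followed by a reduction of uniform estimates over $\mc{E}_N^{x}$ to that measure. Since $\mb{u}_N/\mb{h}_N=(\log N)^{-1/2}\to 0$, Proposition \ref{p_mix1} ensures that the process does not leave $\mc{W}_N^{x}$ within time $\mb{u}_N$ with probability $1-o_N(1)$, so it suffices to analyse the process inside $\mc{W}_N^{x}$ and to bound
\begin{equation*}
\sup_{\eta\in\mc{E}_N^{x}}\, \mb{P}_\eta^N\!\big[\, \tau_{\zeta_N^{x}} \ge \mb{u}_N \,,\, \tau_{(\mc{W}_N^{x})^c} > \mb{u}_N \,\big] \;=\; o_N(1)\;.
\end{equation*}

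My first step is to transfer the initial configuration from an arbitrary $\eta\in\mc{E}_N^{x}$ to the conditioned measure $\pi_N^{x} := \mu_N(\,\cdot\,|\,\mc{D}_N^{x})$. Combining \eqref{deep_atract} --- the process hits $\mc{D}_N^{x}$ before leaving $\mc{W}_N^{x}$ with probability $1-o_N(1)$ --- with the strong Markov property at the hitting time of $\mc{D}_N^{x}$, and then averaging the post-entrance law against $\pi_N^{x}$, one gets
\begin{equation*}
\sup_{\eta\in\mc{E}_N^{x}}\, \mb{P}_\eta^N\!\big[\, \tau_{\zeta_N^{x}} \ge \mb{u}_N \,\big] \;\le\; \mb{P}_{\pi_N^{x}}^N\!\big[\, \tau_{\zeta_N^{x}} \ge \mb{u}_N/2 \,\big] \;+\; o_N(1)\;,
\end{equation*}
up to a short-time contribution that is negligible compared to $\mb{u}_N$, exactly as in the final step of the proof of Proposition \ref{p_mix1}.

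The second step, and the core of the proof, is to establish $\mb{P}_{\pi_N^{x}}^N[\tau_{\zeta_N^{x}} \ge \mb{u}_N/2] = o_N(1)$ via a Chebyshev--capacity inequality of the same flavour as Lemma \ref{p_BLbd}, but now for the hitting time of a single configuration rather than the escape time from a set. Such an inequality takes the form
\begin{equation*}
\big(\mb{P}_{\pi_N^{x}}^N[\tau_{\zeta_N^{x}} \ge \mb{u}_N/2]\big)^{2} \;\le\; C\, \mb{u}_N \cdot \bb{E}_{\mu_N^{x}}\!\Big[\big(\tfrac{d\pi_N^{x}}{d\mu_N^{x}}\big)^{2}\Big] \cdot \frac{\textup{cap}_N(\{\zeta_N^{x}\},\, \mc{W}_N^{x}\setminus\{\zeta_N^{x}\})}{\mu_N(\zeta_N^{x})\,\mu_N(\mc{E}_N^{x})}\;,
\end{equation*}
with the $L^{2}$-density factor of order $1$ since $\mu_N(\mc{D}_N^{x}) \asymp 1$. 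The main obstacle is a sufficiently sharp bound on the capacity appearing on the right. I would attack it via the Dirichlet principle with a one-dimensional interpolating test function in the variable $N-\eta_x$, analogous to $q$ in \eqref{e_defq} but now anchored at the single configuration $\zeta_N^{x}$; a computation parallel to that of Lemma \ref{p_capb}, together with the logarithmic gain from the weight $\mu_N(\mc{R}_k)^{-1}$, should give a capacity of order $\mu_N(\zeta_N^{x})\,\theta_N/m_N^{2}$ (possibly with an additional logarithmic improvement), which combined with $\mb{u}_N = m_N^{2}/\theta_N$ closes the estimate. Proposition \ref{p_mixmain} then follows, and since $\mb{u}_N \to 0$ the condition $\mf{V}$ is an immediate consequence.
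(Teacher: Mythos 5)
There is a genuine gap in the core of your argument. The $L^{2}$--Chebyshev inequality of Lemma \ref{p_BLbd} bounds the probability of \emph{hitting a set quickly}, and it is useful there precisely because the relevant capacity is \emph{small}. You want the opposite: an upper bound on the probability of \emph{not} hitting $\zeta_N^{x}$ before a short time. No $L^{2}$-Chebyshev bound of the form you wrote exists, and even setting the form aside, the quantities you plug in make it vacuous. Since $\mc{W}_N^{x}\setminus\{\zeta_N^{x}\}$ contains all neighbours of $\zeta_N^{x}$, the equilibrium potential $h_{\{\zeta_N^{x}\},\,\mc{W}_N^{x}\setminus\{\zeta_N^{x}\}}$ is simply the indicator of $\zeta_N^{x}$, so $\textup{cap}_{N}(\{\zeta_N^{x}\},\,\mc{W}_N^{x}\setminus\{\zeta_N^{x}\}) = \mu_{N}(\zeta_N^{x})\,\lambda_{N}(\zeta_N^{x})$, which is of order $\mu_{N}(\zeta_N^{x})\,\theta_{N}$; the one-dimensional test-function computation you propose, analogous to Lemma \ref{p_capb}, cannot beat this trivial lower bound because the Dirichlet form already contributes $\mu_N(\zeta_N^x)\lambda_N(\zeta_N^x)$ from the edges out of $\zeta_N^x$. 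Inserting this into your displayed bound gives a right-hand side of order $\mb{u}_{N}\,\theta_{N}/\mu_{N}(\mc{E}_N^{x})\asymp m_{N}^{2}\to\infty$, so the inequality carries no information.

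The paper's proof is genuinely different and hinges on tools you do not invoke. To show the process hits $\zeta_N^{x}$ fast, one uses the first-moment Markov bound $\mb{P}[\tau>t]\le \mb{E}[\tau]/t$ together with the mean-hitting-time representation of \cite[Proposition 6.10]{BL1}, which requires a \emph{lower} bound on $\textup{cap}_{N}^{x}(\eta,\,\zeta_N^{x})$, not an upper bound; this lower bound is \cite[Lemma 9.3]{LMS} and gives Lemma \ref{lem_hitbot}. Before that, one must control the time to reach the deep well $\mc{D}_N^{x}$ from $\mc{E}_N^{x}$; the paper does this with the explicit super-harmonic function $G_{N}^{x}$ of \cite[Section 10]{LMS}, extended to the reflected dynamics in Lemma \ref{lem_superh}, and optional stopping (Lemma \ref{lem38}). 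These two ingredients combine through Chebyshev in Lemma \ref{p_mix2a}, and are then transferred from the reflected to the original process by a coupling argument using Proposition \ref{p_mix1}. Your first reduction step (discarding the event $\tau_{(\mc{W}_N^{x})^{c}}\le\mb{h}_{N}$ via Proposition \ref{p_mix1}) is in the right spirit, but the subsequent transfer to $\pi_N^{x}$ followed by the $L^{2}$-Chebyshev capacity bound does not replace the super-harmonic/mean-hitting-time machinery, and without a lower bound on the capacity to the bottom configuration you have no way to control $\mb{E}[\tau_{\zeta_N^{x}}]$.
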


This result is crucially used in the next section to verify the
requirement \eqref{33} of condition $\mf{M}$.

\subsection{A super-harmonic function\label{sec32}}

We first establish, in Lemma \ref{p_mix2a} below, the estimate stated
in Proposition \ref{p_mixmain} for the process which is reflected at
the boundary of $\mc{W}_{N}^{x}$. The proof of Lemma \ref{p_mix2a} is
based on the construction, carried out in \cite[Section 10]{LMS}, of a
function $G_{N}^{x}:\mc{H}_{N}\rightarrow\bb{R}$, $x\in S$, which is
super-harmonic on $\mc{W}_{N}^{x}\setminus\mc{D}_{N}^{x}$.  For the
sake of completeness, we recall its definition and main properties
below.

Fix $x_{0}\in S$, and let $S_{0}=S\setminus\{x_{0}\}$. For a subset
$\mc{C}$ of $\mc{H}_{N}$, let
\begin{gather*}
\text{int}\,\mc{C}\;=\;\{\,\eta\in\mc{C}:\sigma^{x,\,y}\eta\in\mc{C}\text{ for all }x,\,y\text{ with }r(x,\,y)>0\,\}\;,\\
\partial\mc{C}\;=\;\mc{C}\setminus\text{int }\mc{C}\;,\\
\overline{\mc{C}}\;=\;\{\eta\in\mc{H}_{N}:\eta\in\mc{C}\text{ or }\sigma^{x,\,y}\eta\in\mc{C}\text{ for some }x,\,y\text{ with }r(x,\,y)>0\}\;.
\end{gather*}
With this notation, let
\begin{equation*}
\mc{U}_{N}^{x_{0}}\;=\;\overline{\mc{W}_{N}^{x_{0}}\setminus\mc{D}_{N}^{x_{0}}}\quad\text{so that}\quad\text{int}\,\mc{U}_{N}^{x_{0}}=\mc{W}_{N}^{x_{0}}\setminus\mc{D}_{N}^{x_{0}}\;.
\end{equation*}

Recall, from \eqref{e_capX}, that $h_{A,\,B}(\cdot)$ and $\textup{cap}_{X}(\cdot,\,\cdot)$
represent the equilibrium potential and the capacity, respectively,
associated to the underlying random walk $X$. For each non-empty
subset $A$ of $S_{0}$, consider the sequence $(b_{x,\,y}^{A})_{x,\,y\in S}$
defined by
\begin{equation*}
b_{x,\,y}^{A}\;=\;\frac{1}{\kappa}\,\frac{h_{x,\,A^{c}}(y)}{\textup{cap}_{X}(x,\,A^{c})}\;,\quad x\;,y\,\in\,A\;,
\end{equation*}
and $b_{x,\,y}^{A}=0$ otherwise. By elementary properties of the
capacity and the equilibrium potential, $b_{x,\,y}^{A}=b_{y,\,x}^{A}$
for all $x,\,y\in S$ (cf. \cite[Lemma 10.2]{LMS}). Moreover, by
\cite[Lemma 10.3]{LMS},
\begin{equation}
b_{x,\,y}^{A}\le b_{x,\,y}^{B}\;\;\text{for all }x,\,y\in S\label{increasing}
\end{equation}
if $A\subset B\subset S_{0}$.

For each non-empty subset $A$ of $S_{0}$, define the quadratic function
$P^{A}(\cdot)$ as
\begin{equation*}
P^{A}(\eta)\;=\;\frac{1}{2}\,\sum_{x\in A}b_{x,\,x}^{A}\,\eta_{x}\,(\eta_{x}-1)\;+\;\sum_{\{x,\,y\}\subset A}b_{x,\,y}^{A}\,\eta_{x}\,\eta_{y}\;.
\end{equation*}
By \cite[Lemma 10.8]{LMS},
\begin{equation}
c_{1}\,\Big(\sum_{x\in S_{0}}\eta_{x}\Big)^{2}\;\le\;P^{S_{0}}(\eta)\;\le\;c_{2}\,\Big(\sum_{x\in S_{0}}\eta_{x}\Big)^{2}\;.\label{LMSLem10.8}
\end{equation}

Fix $A\subsetneq S_{0}$. For each constant $c_{A}>0$ and positive
integer $\ell\ge1$, let $P_{\ell}^{A}:\mc{U}_{N}^{x_{0}}\rightarrow\bb{R}$
be given by
\begin{equation*}
P_{\ell}^{A}(\eta)\;=\;P^{A}(\eta)\;-\;c_{A}\,\ell^{2}\;.
\end{equation*}
The dependence of $P_{\ell}^{A}$ on the constant $c_{A}$ is omitted
from the notation. Taking $P_{\ell}^{\varnothing}(\eta)=0$ for all
$\eta\in\mc{U}_{N}^{x_{0}}$, define the corrector function $W_{\ell}:\mc{U}_{N}^{x_{0}}\rightarrow\bb{R}$
by
\begin{equation*}
W_{\ell}(\eta)=\min\left\{ P_{\ell}^{A}(\eta):A\subset S_{0}\,,\,A\not=S_{0}\,\right\} \;.
\end{equation*}
By \cite[Lemma 10.10]{LMS}, there exists a constant $0<C<\infty$
such that
\begin{equation*}
-\,C\,\ell^{2}\le W_{\ell}(\eta)\le0\;.
\end{equation*}
Hence, by \eqref{LMSLem10.8} and the previous bound, $P^{S_{0}}(\eta)-W_{\ell}(\eta)>0$
for all $\eta\in\mc{U}_{N}^{x_{0}}$.

For each positive integer $m>2$, define the function $G_{N}^{x_{0}}:\mc{H}_{N}\rightarrow\bb{R}$
by
\begin{equation}
G_{N}^{x_{0}}(\eta)\;=\;\begin{cases}
{\displaystyle \sum_{\ell=2}^{m}\frac{1}{\ell}\,[\,P^{S_{0}}(\eta)\,-\,W_{\ell}(\eta)\,]^{1/2}\;,} & \eta\,\in\,\mc{U}_{N}^{x_{0}}\;,\\
\vphantom{\Big\{}0\;, & \zeta\,\in\,\mc{H}_{N}\setminus\mc{U}_{N}^{x_{0}}\;.
\end{cases}\label{gnx}
\end{equation}
Here, again, the dependence of $G_{N}^{x_{0}}$ on $m$ is omitted.  The
next result is \cite[Theorem 9.2]{LMS}. Recall that
$\ms{L}_{N}^{\xi} = \theta_{N} \ms{L}_{N}$, introduced in Subsection
\ref{sec23}, is the generator of the speeded-up process.

\begin{thm}
\label{tlms1} For large enough $m$ and a suitable selection of constants
$(c_{A})_{A\subsetneq S_{0}}$, the function $G_{N}^{x_{0}}$ is super-harmonic
in $\mc{W}_{N}^{x_{0}}\setminus\mc{D}_{N}^{x_{0}}$. More
precisely, there exists a positive constant $C>0$ such that
\begin{equation*}
(\ms{L}_{N}^{\xi}\,G_{N}^{x_{0}})(\eta)\;\le\;-\frac{C\,\theta_{N}}{N-\eta_{x_{0}}}\text{\;\;\;\;for all }\eta\,\in\,\mc{W}_{N}^{x_{0}}\setminus\mc{D}_{N}^{x_{0}}\;.
\end{equation*}
Additionally, there exist constants $0<c_{1}<c_{2}<\infty$ such that
\begin{equation}
c_{1}\,(N-\eta_{x_{0}})\;\le\;G_{N}^{x_{0}}(\eta)\;\le\;c_{2}\,(N-\eta_{x_{0}})\label{boundG}
\end{equation}
for all $\eta\,\in\,\mc{W}_{N}^{x_{0}}\setminus\mc{D}_{N}^{x_{0}}$.
\end{thm}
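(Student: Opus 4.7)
The plan is to verify the two conclusions separately, exploiting the fact that each building block $\sqrt{P^{S_0}(\eta) - W_\ell(\eta)}$ is close to $\sqrt{(N-\eta_{x_0})^2}$ up to a lower-order correction, and that the square root damps the fluctuations enough for a discrete Itô-type calculation to close.

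For the sandwich bound \eqref{boundG}, note that on $\mc U_N^{x_0}$ one has $N - \eta_{x_0} = \sum_{x \in S_0} \eta_x$, so \eqref{LMSLem10.8} gives $c_1 (N-\eta_{x_0})^2 \le P^{S_0}(\eta) \le c_2 (N-\eta_{x_0})^2$. Combining this with the cited bound $-C \ell^2 \le W_\ell \le 0$, and using that on $\mc W_N^{x_0} \setminus \mc D_N^{x_0}$ we have $N - \eta_{x_0} \ge N^\gamma$, which dwarfs $\ell^2$ for every fixed $2 \le \ell \le m$ once $N$ is large, we obtain $P^{S_0}(\eta) - W_\ell(\eta) \asymp (N-\eta_{x_0})^2$ uniformly. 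Summing the finitely many square roots yields $G_N^{x_0}(\eta) \asymp (N - \eta_{x_0})$.

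For the super-harmonicity, the main step is a discrete chain rule: writing $F_\ell = P^{S_0} - W_\ell$, one has
\begin{equation*}
\ms L_N^\xi \sqrt{F_\ell}(\eta) \;=\; \frac{\ms L_N^\xi F_\ell(\eta)}{2\sqrt{F_\ell(\eta)}} \;-\; \theta_N \sum_{x,y \in S} g(\eta_x)\, r(x,y)\, \Phi\bigl(F_\ell(\eta), F_\ell(\sigma^{x,y}\eta)\bigr),
\end{equation*}
where $\Phi(a,b) = \tfrac{1}{2}(\sqrt{b}-\sqrt{a})^2/\sqrt{a}$ is the nonnegative curvature term coming from the concavity of the square root. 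Because $P^{S_0}$ is an explicit quadratic form whose increments under $\sigma^{x,y}$ are of order $1$ and $g(\eta_x) \to 1$ for $\eta_x \ge 1$, the curvature term is $O(\theta_N/(N-\eta_{x_0}))$, i.e.\ of the same order as the claimed bound. The task is therefore to drive the first term below $-C' \theta_N/(N - \eta_{x_0})$ with enough margin to absorb the curvature. For this one uses the standard sub-additivity of $\ms L$ at a minimum: $\ms L_N^\xi W_\ell(\eta) \ge \ms L_N^\xi P_\ell^{A^\ast(\eta)}(\eta)$ where $A^\ast(\eta)$ attains the minimum in the definition of $W_\ell$. So it suffices to control $\ms L_N^\xi(P^{S_0} - P^{A^\ast(\eta)})(\eta)$ from below uniformly over $\eta$ and over the finitely many choices of $A^\ast \subsetneq S_0$.

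The principal obstacle, and the reason the combinatorial constants $(b_{x,y}^A)$ and the free parameters $(c_A)_{A \subsetneq S_0}$ are built the way they are, is to calibrate the quadratic functions $P_\ell^A$ so that this last bound holds with enough negative slack for every possible minimizer. The monotonicity \eqref{increasing} of $b_{x,y}^A$ in $A$ and the identification of the coefficients with quantities from the reduced random walk $X(\cdot)$ are what guarantee that as one removes a site from $A^\ast$ the local generator of $P^A$ picks up the right harmonic-type correction; choosing $c_A$ inductively on $|A|$ (from large to small), each $c_A$ being made much larger than its predecessors, ensures that the minimizer $A^\ast(\eta)$ can only equal $A$ when $\eta$ has its mass concentrated in $A$, and on this region $\ms L_N^\xi P^A$ exhibits the needed negativity. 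Finally, summing the resulting bounds on $\ms L_N^\xi \sqrt{F_\ell}$ with the weights $1/\ell$ over $\ell = 2, \dots, m$ produces, after taking $m$ large, a total that beats the curvature term with a uniform margin of order $\theta_N/(N-\eta_{x_0})$, establishing the stated super-harmonicity.
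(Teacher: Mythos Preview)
This theorem is not proved in the present paper; it is quoted from \cite[Theorem~9.2]{LMS}. Your argument for the sandwich bound \eqref{boundG} is correct: the quoted bounds on $P^{S_0}$ and $W_\ell$, together with $N-\eta_{x_0}\ge N^\gamma\gg m\ge\ell$ on $\mc W_N^{x_0}\setminus\mc D_N^{x_0}$, give $F_\ell\asymp(N-\eta_{x_0})^2$ uniformly in $\ell\in\{2,\dots,m\}$, and the finite sum of square roots preserves the order.

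Your super-harmonicity argument, however, contains a sign error that blocks the stated reduction. If $W_\ell(\eta)=\min_{A\subsetneq S_0} P_\ell^A(\eta)=P_\ell^{A^\ast}(\eta)$, then for every neighbour $\sigma^{x,y}\eta$ one has $W_\ell(\sigma^{x,y}\eta)\le P_\ell^{A^\ast}(\sigma^{x,y}\eta)$, and summing with positive rates gives
\[
(\ms L_N^\xi W_\ell)(\eta)\;\le\;(\ms L_N^\xi P_\ell^{A^\ast})(\eta),
\]
the reverse of the inequality you wrote. Consequently $\ms L_N^\xi F_\ell=\ms L_N^\xi P^{S_0}-\ms L_N^\xi W_\ell\ge\ms L_N^\xi(P^{S_0}-P^{A^\ast})$, which is a \emph{lower} bound on $\ms L_N^\xi F_\ell$; it cannot be used to push $\ms L_N^\xi F_\ell/(2\sqrt{F_\ell})$ below $-C'\theta_N/(N-\eta_{x_0})$. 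Relatedly, in your chain-rule identity the curvature term $\theta_N\sum g\,r\,\Phi$ is nonnegative and is subtracted, so it already helps and does not need to be ``absorbed''; the genuine difficulty is to control the drift term $\ms L_N^\xi F_\ell$ from above, and the minimum structure of $W_\ell$ gives no direct handle in that direction. The construction in \cite{LMS} organises the computation differently: the coefficients $b^A_{x,y}$ are chosen from the harmonic data of $X(\cdot)$ so that $\ms L_N P^A$ can be computed explicitly and controlled on the region where $A$ is the active minimiser, with the constants $c_A$ and the weighted sum over $\ell$ supplying the required uniform negative margin. Your outline has the right ingredients, but with the key inequality reversed the reduction you propose does not go through.
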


\subsection{Reflected processes}

Denote by ${\color{blue}(\widehat{\xi}_{N}^{x}(t))_{t\ge0}}$ the
continuous-time Markov chain on $\mc{W}_{N}^{x}$ obtained by
reflecting the zero-range process $\xi_{N}(\cdot)$ at the boundary
of this set. In other words, the process $\widehat{\xi}_{N}^{x}(\cdot)$
behaves as the zero-range process inside the well $\mc{W}_{N}^{x}$,
but its jumps to the set $(\mc{W}_{N}^{x})^{c}$ are suppressed.
Denote by \textcolor{blue}{$\widehat{\mb{P}}_{\eta}^{N,\,x}$}
the law of the reflected process $\widehat{\xi}_{N}^{x}(\cdot)$,
and by \textcolor{blue}{$\widehat{\mb{E}}_{\eta}^{N,\,x}$}
the expectation with respect to $\widehat{\mb{P}}_{\eta}^{N,\,x}$.

The next result asserts that the function $G_{N}^{x_{0}}$ is also super-harmonic
at $\mc{W}_{N}^{x_{0}}\setminus\mc{D}_{N}^{x_{0}}$ for
the reflected process. Denote by ${\color{blue}\ms{L}_{N}^{x_{0}}}$
the generator associated to the reflected process $\widehat{\xi}_{N}^{x_{0}}(\cdot)$.
\begin{lem}
\label{lem_superh} Fix $x_{0}\in S$, and let $G_{N}^{x_{0}}$ be
the function given by \eqref{gnx}. Then, there exists $C>0$ such
that
\begin{equation*}
(\ms{L}_{N}^{x_{0}}\,G_{N}^{x_{0}})(\eta)\;\le\;-\frac{C\,\theta_{N}}{N-\eta_{x_{0}}}\text{\;\;\;\;for all }\eta\,\in\,\mc{W}_{N}^{x_{0}}\setminus\mc{D}_{N}^{x_{0}}\;.
\end{equation*}
\end{lem}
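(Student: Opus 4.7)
The plan is to deduce the bound directly from Theorem \ref{tlms1} by a careful comparison of the generators $\ms L_N^{x_0}$ and $\ms L_N^\xi$. By construction these two generators differ on $\mc W_N^{x_0}$ only through those transitions $\eta \to \sigma^{x,y}\eta$ that exit $\mc W_N^{x_0}$; and since a single jump changes $\eta_{x_0}$ by at most one, such exits from $\eta \in \mc W_N^{x_0}\setminus \mc D_N^{x_0}$ can occur only when $\eta_{x_0} = N-m_N$, and only through jumps of type $(x_0, z)$ with $z \in S_0 := S \setminus \{x_0\}$. Consequently, for $\eta$ with $\eta_{x_0} > N-m_N$ the two generators act identically on every function and Theorem \ref{tlms1} yields the bound immediately, so I would focus on the boundary layer $\{\eta_{x_0}=N-m_N\}$.

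On this boundary layer I would decompose
\[
(\ms L_N^{x_0} G_N^{x_0})(\eta) \;=\; (\ms L_N^\xi G_N^{x_0})(\eta) \;-\; \theta_N\, g(\eta_{x_0})\sum_{z\in S_0} r(x_0,z)\,\bigl[\,G_N^{x_0}(\sigma^{x_0,z}\eta) - G_N^{x_0}(\eta)\,\bigr],
\]
where $\sigma^{x_0,z}\eta \in \mc U_N^{x_0}$, so that $G_N^{x_0}$ is defined there by the formula \eqref{gnx}. Theorem \ref{tlms1} already bounds the first summand above by $-C\theta_N/(N-\eta_{x_0})$, so the proof reduces to establishing the discrete monotonicity
\[
G_N^{x_0}(\sigma^{x_0,z}\eta) \;\ge\; G_N^{x_0}(\eta) \quad \text{for every } z \in S_0.
\]

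The main obstacle is this monotonicity. My plan is to prove it term-by-term in \eqref{gnx}, that is, to check $P^{S_0}(\sigma^{x_0,z}\eta) - W_\ell(\sigma^{x_0,z}\eta) \ge P^{S_0}(\eta) - W_\ell(\eta)$ for each $\ell \in [2,m]$. Taking a minimizer $A^* \subsetneq S_0$ defining $W_\ell(\eta)$, the min property yields $W_\ell(\sigma^{x_0,z}\eta) - W_\ell(\eta) \le P^{A^*}(\sigma^{x_0,z}\eta) - P^{A^*}(\eta)$, reducing the claim to the coefficient-wise bound $P^{A^*}(\sigma^{x_0,z}\eta) - P^{A^*}(\eta) \le P^{S_0}(\sigma^{x_0,z}\eta) - P^{S_0}(\eta)$. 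If $z \notin A^*$ the left side vanishes (since $P^{A^*}$ depends only on coordinates in $A^*$) while the right side is non-negative; if $z \in A^*$ both sides are linear in $(\eta_w)$ with coefficients $b_{z,w}^{A^*}$ and $b_{z,w}^{S_0}$ respectively, and the inequality reduces to $b_{z,w}^{A^*} \le b_{z,w}^{S_0}$ for $w \in A^*$, which is precisely \eqref{increasing}. This completes the comparison and gives the claimed bound uniformly on $\mc W_N^{x_0}\setminus \mc D_N^{x_0}$.
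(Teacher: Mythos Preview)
Your proposal is correct and follows essentially the same route as the paper: reduce to the boundary layer $\{\eta_{x_0}=N-m_N\}$, write the difference of generators as the suppressed $(x_0,z)$-jumps, and then prove the term-by-term monotonicity $P^{S_0}(\sigma^{x_0,z}\eta)-W_\ell(\sigma^{x_0,z}\eta)\ge P^{S_0}(\eta)-W_\ell(\eta)$ via the coefficient inequality \eqref{increasing}. Your treatment is in fact slightly cleaner than the paper's, since by using the min property $W_\ell(\sigma^{x_0,z}\eta)\le P_\ell^{A^*}(\sigma^{x_0,z}\eta)$ for the minimizer $A^*$ at $\eta$ you avoid the paper's case split on whether the minimizers at $\eta$ and $\sigma^{x_0,z}\eta$ coincide.
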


The main difference between this lemma and Theorem \ref{tlms1} is
the analysis around the boundary of $\mc{W}_{N}^{x_{0}}$, since
the generator $\ms{L}_{N}^{x_{0}}$ differs from $\ms{L}_{N}$
there, as the jumps to set $(\mc{W}_{N}^{x_{0}})^{c}$ are excluded.
\begin{proof}
Since we possibly have $(\ms{L}_{N}^{x_{0}}G_{N}^{x_{0}})(\eta)\neq(\ms{L}_{N}G_{N}^{x_{0}})(\eta)$
only at the boundary ${\color{blue}\partial\mc{W}_{N}^{x_{0}}}=\{\eta:\eta_{x_{0}}=N-m_{N}\}$,
it suffices to show that
\begin{equation*}
({\ms{L}}_{N}^{x_{0}}\,G_{N}^{x_{0}})(\eta)\;\le\;({\ms{L}}_{N}^{\xi}\,G_{N}^{x_{0}})(\eta)\text{\;\;\;\; for all }\eta\in\partial\mc{W}_{N}^{x_{0}}\;.
\end{equation*}
At $\partial\mc{W}_{N}^{x_{0}}$, the reflected process cannot
decrease the number of particles at site $x_{0}$. Thus,
\begin{equation*}
({\ms{L}}_{N}^{x_{0}}\,G_{N}^{x_{0}})(\eta)\;=\;({\ms{L}}_{N}^{\xi}\,G_{N}^{x_{0}})(\eta)\,-\,\theta_{N}\,\sum_{y\in S}g(\eta_{x_{0}})\,r(x_{0},\,y)\,[\,G_{N}^{x_{0}}(\sigma^{x_{0},\,y}\eta)\,-\,G_{N}^{x_{0}}(\eta)\,]\;,
\end{equation*}
and it is enough to show that
\begin{equation*}
G_{N}^{x_{0}}(\sigma^{x_{0},\,y}\eta)\;\ge\;G_{N}^{x_{0}}(\eta)\text{ \;\;\;\;for all }\eta\,\in\,\partial\mc{W}_{N}^{x_{0}}\;.
\end{equation*}
Actually, by the definition \eqref{gnx} of $G_{N}^{x_{0}}$, it is
enough to show that
\begin{equation}
P^{S_{0}}(\sigma^{x_{0},\,y}\eta)\,-\,W_{\ell}(\sigma^{x_{0},\,y}\eta)\;\ge\;P^{S_{0}}(\eta)\,-\,W_{\ell}(\eta)\label{e_39}
\end{equation}
for all $\ell\ge2$ and $\eta\,\in\,\partial\mc{W}_{N}^{x_{0}}$.

Fix $A\subsetneq S_{0}$, by definitions of $P^{S_{0}}$ and $P^{A}$
along with the increasing property \eqref{increasing},
\begin{equation}
P^{S_{0}}(\sigma^{x_{0},\,y}\eta)\,-\,P^{S_{0}}(\eta)\;=\;\sum_{z\in S_{0}}b_{y,\,z}^{S_{0}}\,\eta_{z}\;\ge\;\sum_{z\in A}b_{y,\,z}^{A}\,\eta_{z}\;=\;P^{A}(\sigma^{x_{0},\,y}\eta)\,-\,P^{A}(\eta)\;.\label{e_310}
\end{equation}
Hence, if $W_{\ell}(\eta)=P_{\ell}^{A}(\eta)$ and $W_{\ell}(\sigma^{x,\,y}\eta)=P_{\ell}^{A}(\sigma^{x,\,y}\eta)$
for the same set $A\subsetneq S\setminus\{x_{0}\}$, (\ref{e_39})
follows from (\ref{e_310}).

On the other hand, if $W_{\ell}(\eta)=P_{\ell}^{A}(\eta)$ and $W_{\ell}(\sigma^{x,\,y}\eta)=P_{\ell}^{B}(\sigma^{x,\,y}\eta)$
for some $A\neq B$, by definition of $W_{\ell}$ and (\ref{e_310}),
\begin{align*}
W_{\ell}(\sigma^{x,\,y}\eta)\,-\,W_{\ell}(\eta) & \;=\;P_{\ell}^{B}(\sigma^{x,y}\eta)\,-\,P_{\ell}^{A}(\eta)\\
 & \le\;P_{\ell}^{A}(\sigma^{x,y}\eta)\,-\,P_{\ell}^{A}(\eta)\;\le\;P(\sigma^{x,y}\eta)\,-\,P(\eta)\;.
\end{align*}
This completes the proof of (\ref{e_39}), and the one of the lemma.
\end{proof}

\subsection{Hitting times of the reflected process\label{sec34}}

In this subsection, we establish, in Lemma \ref{p_mix2a} below, that
the assertion of Proposition \ref{p_mixmain} holds for the reflected
process $\widehat{\xi}_{N}^{x}(\cdot)$. The first result asserts
that the process $\widehat{\xi}_{N}^{x}(\cdot)$ hits the set $\mc{D}_{N}^{x}$
quickly when it starts from a configuration in $\mc{E}_{N}^{x}$.
\begin{lem}
\label{lem38}
There exists $C>0$ such that, for all $x\in S$, $N\ge1$,
\begin{equation*}
\sup_{\eta\in\mc{E}_{N}^{x}}\widehat{\mb{E}}_{\eta}^{N,\,x}\,[\,\tau_{\mc{D}_{N}^{x}}\,]\;\le\;C\,\frac{m_{N}\,\ell_{N}}{\theta_{N}}\;\cdot
\end{equation*}
\end{lem}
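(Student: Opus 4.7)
The plan is to run a standard Dynkin/martingale argument against the super-harmonic function $G_N^x$ constructed in \cite[Section 10]{LMS} and recalled in \eqref{gnx}, using the sharper super-harmonicity for the reflected generator established in Lemma \ref{lem_superh}. Fix $x\in S$ and $\eta\in \mc E_N^x$. Let $\tau=\tau_{\mc D_N^x}$ for the reflected process $\widehat\xi_N^x(\cdot)$. Since $\widehat\xi_N^x(\cdot)$ stays in $\mc W_N^x$ and, on $[0,\tau)$, the trajectory lies in $\mc W_N^x\setminus \mc D_N^x$, Dynkin's formula gives, for every $t\ge 0$,
\begin{equation*}
\widehat{\mb E}_\eta^{N,x}\bigl[\,G_N^x(\widehat\xi_N^x(t\wedge\tau))\,\bigr]-G_N^x(\eta)
\;=\;\widehat{\mb E}_\eta^{N,x}\!\left[\int_0^{t\wedge\tau}(\ms L_N^x G_N^x)(\widehat\xi_N^x(s))\,ds\right].
\end{equation*}

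By Lemma \ref{lem_superh}, on $\mc W_N^x\setminus\mc D_N^x$ the integrand is bounded above by $-C\theta_N/(N-\eta_x)$, which, since $N-\eta_x\le m_N$ throughout $\mc W_N^x$, is at most $-C\theta_N/m_N$. Therefore
\begin{equation*}
\widehat{\mb E}_\eta^{N,x}\bigl[\,G_N^x(\widehat\xi_N^x(t\wedge\tau))\,\bigr]-G_N^x(\eta)
\;\le\;-\,\frac{C\,\theta_N}{m_N}\;\widehat{\mb E}_\eta^{N,x}[\,t\wedge\tau\,].
\end{equation*}
Next I would use the lower bound $G_N^x\ge 0$, which holds on $\mc U_N^{x}$ because $P^{S_0}-W_\ell\ge 0$ by \eqref{LMSLem10.8} and the inequality $W_\ell\le 0$ from \cite[Lemma 10.10]{LMS} (and $G_N^x\equiv 0$ outside $\mc U_N^x$), together with the upper bound $G_N^x(\eta)\le c_2(N-\eta_x)\le c_2\ell_N$ from \eqref{boundG}, valid because $\eta\in\mc E_N^x$ forces $N-\eta_x\le \ell_N$. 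Rearranging yields
\begin{equation*}
\widehat{\mb E}_\eta^{N,x}[\,t\wedge\tau\,]\;\le\;\frac{m_N}{C\theta_N}\,G_N^x(\eta)\;\le\;\frac{c_2}{C}\,\frac{m_N\,\ell_N}{\theta_N}.
\end{equation*}

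Letting $t\to\infty$, monotone convergence gives $\widehat{\mb E}_\eta^{N,x}[\tau]\le C'\,m_N\ell_N/\theta_N$ with $C'=c_2/C$, uniformly in $\eta\in\mc E_N^x$ and in $N$, which is the claim. The only genuine subtleties are (i) verifying that the super-harmonic bound genuinely holds for the reflected generator $\ms L_N^x$ on the boundary $\partial\mc W_N^x$, which is precisely what Lemma \ref{lem_superh} delivers, and (ii) maintaining the correct comparison $N-\eta_x\le m_N$ in the lower bound for $-\ms L_N^x G_N^x$; both have already been handled, so the argument reduces to the short computation above.
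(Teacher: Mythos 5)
Your proof is correct and follows essentially the same route as the paper's: Dynkin's formula applied to the super-harmonic function $G_N^x$, the uniform bound $(\ms L_N^x G_N^x)\le -C\theta_N/m_N$ from Lemma \ref{lem_superh} (since $N-\eta_x\le m_N$ on $\mc W_N^x$), the non-negativity of $G_N^x$, the linear upper bound $G_N^x(\eta)\le c_2\ell_N$ on $\mc E_N^x$ from \eqref{boundG}, and then $t\to\infty$. The only difference is that you make the non-negativity of $G_N^x$ slightly more explicit (via $P^{S_0}-W_\ell\ge 0$ and the vanishing of $G_N^x$ outside $\mc U_N^x$), which is a welcome clarification but not a different approach.
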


\begin{proof}
By the martingale formulation, for every $t>0$,
\begin{equation*}
\widehat{\mb{E}}_{\eta}^{N,\,x}\,[\,G_{N}^{x}(\,\widehat{\eta}_{N}^{x}(\tau_{\mc{D}_{N}^{x}}\wedge t)\,)\,]\;=\;G_{N}^{x}(\eta)\,+\,\widehat{\mb{E}}_{\eta}^{N,\,x}\,\Big[\,\int_{0}^{\tau_{\mc{D}_{N}^{x}}\wedge t}(\ms{L}_{N}^{x}G_{N}^{x})(\widehat{\eta}_{N}^{x}(s))ds\,\Big]\;.
\end{equation*}
By Lemma \ref{lem_superh}, there exists a positive constant
$C$, whose value may change from line to line, such that
\begin{equation*}
(\ms{L}_{N}^{x}\,G_{N}^{x})(\eta)\;\le
\;-\,\frac{C\,\theta_{N}}{N-\eta_{x}}\;\le
\;-\,\frac{C\,\theta_{N}}{m_{N}}\;\;\;\;
\text{for }\eta\,\in\,\mc{W}_{N}^{x}\;.
\end{equation*}
On the other hand, by \eqref{boundG}, $G_{N}^{x}$ is
non-negative. Therefore, by the next to last displayed equation,
\begin{equation*}
\frac{C\,\theta_{N}}{m_{N}}\,\widehat{\mb{E}}_{\eta}^{N,\,x}\,[\,\tau_{\mc{D}_{N}^{x}}\wedge t\,]\;\le\;G_{N}^{x}(\eta)\;.
\end{equation*}
By \eqref{boundG}, there exist a finite constant $C_{1}$ such that
$G_{N}^{x}(\eta)\le C_{1}\,(N-\eta_{x})$. Hence, since $N-\eta_{x}\le\ell_{N}$
for $\eta\in\mc{E}_{N}^{x}$,
\begin{equation*}
\frac{C\,\theta_{N}}{m_{N}}\,\widehat{\mb{E}}_{\eta}^{N,\,x}\,[\,\tau_{\mc{D}_{N}^{x}}\wedge t\,]\;\le\;C_{1}\,\ell_{N}\;.
\end{equation*}
To complete the proof of the lemma, it remains to let $t\to\infty$.
\end{proof}
The next result asserts that $\widehat{\xi}_{N}^{x}(\cdot)$ hits the
configuration $\zeta_N^x$ quickly when it starts from a configuration
in $\mc{D}_{N}^{x}$.
\begin{lem}
\label{lem_hitbot} There exists a finite constant $C$ such that,
\begin{equation*}
\sup_{\eta\in\mc{D}_{N}^{x}}\,\widehat{\mb{E}}_{\eta}^{N,\,x}\,[\,\tau_{\zeta_N^x}\,]\;\le\;C\,\frac{N^{\gamma\kappa}\,(\log N)^{\kappa-1}}{\theta_{N}}
\end{equation*}
for all $x\in S$ and $N\ge1$.
\end{lem}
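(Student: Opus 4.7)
\smallskip
\noindent\textbf{Proof plan.} The idea is to bound the mean hitting time of the singleton $\{\zeta_N^x\}$ by the reciprocal of a suitable capacity via potential theory, and then to provide a lower bound on that capacity using the Thomson principle. The reflected chain $\widehat\xi_N^x(\cdot)$ is reversible with invariant measure $\mu_N$ restricted to $\mc W_N^x$. Applying the identity \eqref{34} to this chain with $\mc A=\{\eta\}$ and $\mc B=\{\zeta_N^x\}$, noting that the equilibrium measure on a singleton is a Dirac mass, and adding back the sojourn time at $\eta$ (which equals $\mu_N(\eta)/\widehat{\Cap}_N^x(\eta,\zeta_N^x)$ by the standard geometric-holding-time computation) yields
\begin{equation*}
\widehat{\mb E}^{N,x}_\eta\,[\,\tau_{\zeta_N^x}\,] \;=\;
\frac{\mu_N(\eta) \;+\; \sum_{\xi\in\mc W_N^x\setminus\{\eta,\zeta_N^x\}} \mu_N(\xi)\,\widehat h^x_{\eta,\zeta_N^x}(\xi)}
{\widehat{\Cap}_N^x(\eta,\zeta_N^x)}
\;\le\; \frac{1}{\widehat{\Cap}_N^x(\eta,\zeta_N^x)}\,,
\end{equation*}
since $\widehat h^x_{\eta,\zeta_N^x}\le 1$ and $\mu_N$ is a probability measure. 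Here $\widehat h^x$ and $\widehat{\Cap}_N^x$ denote the equilibrium potential and capacity for the reflected chain.

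It thus suffices to establish, for every $\eta\in\mc D_N^x$, the lower bound
\begin{equation*}
\widehat{\Cap}_N^x(\eta,\zeta_N^x)\;\ge\;\frac{c\,\theta_N}{N^{\gamma\kappa}(\log N)^{\kappa-1}}\,.
\end{equation*}
The analogous bound on the unreflected capacity $\Cap_N(\eta,\zeta_N^x)$ is precisely \cite[Lemma 9.3]{LMS}. Because reflection only suppresses edges leaving $\mc W_N^x$, the general inequality goes the wrong way, $\widehat{\Cap}_N^x\le\Cap_N$, so \cite[Lemma 9.3]{LMS} cannot be invoked as a black box. Instead, I will apply the Thomson principle: constructing (or importing from the proof of \cite[Lemma 9.3]{LMS}) a unit flow from $\eta$ to $\zeta_N^x$ that ships particles outside $x$ one at a time to site $x$ along a fixed ordering. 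This flow is supported on edges that remain inside $\mc D_N^x\subset\mc W_N^x$ whenever $\eta\in\mc D_N^x$, and its squared $\ell^2$\nobreakdash-norm with respect to the reflected conductances coincides with its $\ell^2$\nobreakdash-norm in the original chain. Hence the very same $\ell^2$\nobreakdash-norm estimate used to obtain \cite[Lemma 9.3]{LMS} yields the desired lower bound for $\widehat{\Cap}_N^x(\eta,\zeta_N^x)$.

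The main obstacle is precisely this transfer of the capacity lower bound from the original process to the reflected one, since the naive monotonicity $\widehat{\Cap}_N^x\le\Cap_N$ is in the wrong direction. The decisive technical point is the verification that the flow (equivalently, the Dirichlet-principle test function) used in \cite[Lemma 9.3]{LMS} is supported in $\mc W_N^x$, which should follow from the fact that natural paths connecting two configurations in $\mc D_N^x$ stay inside the deep well and $\mc D_N^x\subset\mc W_N^x$ with large margin (because $N^{\gamma}\ll m_N$). Once this geometric fact is in place, combining the hitting-time identity with the capacity bound yields the stated estimate uniformly in $x\in S$ and $\eta\in\mc D_N^x$.
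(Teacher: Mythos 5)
Your proposal follows essentially the same route as the paper's proof: express the expected hitting time as (an average of the equilibrium potential)/(the capacity), bound the equilibrium potential by $1$, and then invoke the capacity lower bound of \cite[Lemma 9.3]{LMS} transferred to the reflected chain. The only substantive difference is that you spell out why that capacity bound transfers to the reflected process — flagging that $\widehat{\Cap}_N^x\le\Cap_N$ goes the wrong way, and arguing via the Thomson principle that the flow used in the LMS proof is supported inside $\mc W_N^x$ so its energy is unchanged under reflection. The paper compresses this to the terse remark that ``the same proof applies to the reflected process,'' so your version makes explicit a step the paper leaves implicit, which is a genuine improvement in rigor but not a different argument.
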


\begin{proof}
If $\eta=\zeta_N^x$, there is nothing to prove. For
$\eta\neq\zeta_N^x$, we recall the well-known identity
(cf. \cite[Proposition 6.10]{BL1})
\begin{equation}
\widehat{\mb{E}}_{\eta}^{N,\,x}\,[\,\tau_{\zeta_N^x}\,]\;=\;\frac{\bb{E}_{\widehat{\mu}_{N}^{x}}\,[\,\mf{h}_{\eta,\,\zeta_N^x}^{N,\,x}\,]}{\textrm{cap}_{N}^{x}(\eta,\,\zeta_N^x)}\;,\label{e_mf}
\end{equation}
where $\mf{h}_{\eta,\,\zeta_N^x}^{N,\,x}$ and
$\textrm{cap}_{N}^{x}(\eta,\,\zeta_N^x)$ denote the equilibrium
potential and the capacity between $\eta$ and $\zeta_N^x$ with respect
to the reflected process $\widehat{\xi}_{N}^{x}(\cdot)$, respectively,
and where $\widehat{\mu}_{N}^{x}(\cdot)$ denotes the invariant measure
conditioned on $\mc{W}_{N}^{x}$, i.e.,
\begin{equation*}
\widehat{\mu}_{N}^{x}(\cdot)\;=\;\mu_{N}(\cdot\,|\mc{W}_{N}^{x})\;=\;\frac{\mu_{N}(\cdot)}{\mu_{N}(\mc{W}_{N}^{x})}\;.
\end{equation*}
Observe that $\widehat{\mu}_{N}^{x}(\cdot)$ is the invariant measure
of the reflected process $\widehat{\xi}_{N}^{x}(\cdot)$.

Applying the trivial bound $\mf{h}_{\eta,\,\zeta_N^x}^{N,\,x}\le1$
to \eqref{e_mf}, we get
\begin{equation*}
\widehat{\mb{E}}_{\eta}^{N,\,x}\,[\,\tau_{\zeta_N^x}\,]\;\le\;\frac{1}{\textrm{cap}_{N}^{x}(\eta,\,\zeta_N^x)}\;\cdot
\end{equation*}
By \cite[Lemma 9.3]{LMS},
\begin{equation*}
\textrm{cap}_{N}^{x}(\eta,\,\zeta_N^x)\;\ge\;\frac{C\,\theta_{N}}{N^{\gamma\kappa}\,(\log N)^{\kappa-1}}\;\cdot
\end{equation*}
Actually, in \cite[Lemma 9.3]{LMS} this bound is proved for the capacity
with respect to the original zero-range process, but the same proof
applies to the reflected process. To complete the proof, it remains
to combine the previous bounds.
\end{proof}
\begin{lem}
\label{p_mix2a} For all $x\in S$,
\begin{equation*}
\lim_{N\to\infty}\sup_{\eta\in\mc{E}_{N}^{x}}\widehat{\mb{P}}_{\eta}^{N,\,x}\,[\,\tau_{\zeta_N^x}\ge\mb{u}_{N}\,]\;=\;0\;.
\end{equation*}
\end{lem}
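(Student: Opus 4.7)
The plan is to reduce the statement to an $L^1$ bound on $\tau_{\zeta_N^x}$ for the reflected process, and then combine the two preceding lemmas, using Markov's inequality. This is really the natural payoff of the super-harmonic function machinery that was set up in the previous subsections.

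First, I would use the strong Markov property applied at the stopping time $\tau_{\mc D_N^x}$: since $\mc D_N^x \subset \mc E_N^x \subset \mc W_N^x$, for any $\eta \in \mc E_N^x$,
\begin{equation*}
\widehat{\mb E}_\eta^{N,x}[\,\tau_{\zeta_N^x}\,] \;\le\; \widehat{\mb E}_\eta^{N,x}[\,\tau_{\mc D_N^x}\,] \;+\; \sup_{\zeta \in \mc D_N^x} \widehat{\mb E}_\zeta^{N,x}[\,\tau_{\zeta_N^x}\,]\;.
\end{equation*}
By Lemma \ref{lem38}, the first term is bounded by $C m_N \ell_N / \theta_N$. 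Using $m_N = N/(\log N)^\delta$, $\ell_N = N/\log N$, and $\theta_N = N^2 \log N$, this quantity equals $C/(\log N)^{2+\delta}$. By Lemma \ref{lem_hitbot}, the second term is bounded by $C N^{\gamma\kappa}(\log N)^{\kappa-1}/\theta_N = C N^{\gamma\kappa - 2}(\log N)^{\kappa-2}$.

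Next I would apply Markov's inequality:
\begin{equation*}
\sup_{\eta\in\mc E_N^x}\widehat{\mb P}_\eta^{N,x}[\,\tau_{\zeta_N^x}\ge \mb u_N\,] \;\le\; \frac{1}{\mb u_N}\,\sup_{\eta\in\mc E_N^x}\widehat{\mb E}_\eta^{N,x}[\,\tau_{\zeta_N^x}\,]\;.
\end{equation*}
Recalling $\mb u_N = 1/(\log N)^{1+2\delta}$, the two contributions are, respectively,
\begin{equation*}
\frac{C\,(\log N)^{1+2\delta}}{(\log N)^{2+\delta}} \;=\; \frac{C}{(\log N)^{1-\delta}} \quad\text{and}\quad C\,N^{\gamma\kappa-2}\,(\log N)^{\kappa-1+2\delta}\;.
\end{equation*}
The first vanishes because $\delta\in(0,1)$, and the second vanishes because $\gamma\kappa - 2<0$ (as $\gamma< 2/\kappa$), so the polynomial decay in $N$ dominates the logarithmic growth. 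This completes the proof.

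I do not expect any genuine obstacle here: the difficult analytic work (the construction of the super-harmonic function $G_N^{x_0}$ and the capacity lower bound from \cite{LMS}) has already been absorbed into Lemmas \ref{lem38} and \ref{lem_hitbot}. The remaining step is simply to check that the two time-scales $m_N \ell_N/\theta_N$ and $N^{\gamma\kappa}(\log N)^{\kappa-1}/\theta_N$ are both $o(\mb u_N)$, which is exactly why $\mb u_N$ was chosen of order $1/(\log N)^{1+2\delta}$ with $\delta\in(0,1)$ and $\gamma<2/\kappa$.
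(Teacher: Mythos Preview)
Your proof is correct and follows essentially the same approach as the paper: combine Lemmata \ref{lem38} and \ref{lem_hitbot} via the strong Markov property to bound $\widehat{\mb E}_\eta^{N,x}[\tau_{\zeta_N^x}]$, then apply Markov's (Chebyshev's) inequality and check the resulting bound is $o(\mb u_N)$ using $\delta\in(0,1)$ and $\gamma<2/\kappa$. The paper's proof is simply a terser version of exactly this argument.
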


\begin{proof}
By Lemmata \ref{lem38}, \ref{lem_hitbot}, and the strong Markov
property,
\begin{equation*}
\widehat{\mb{E}}_{\eta}^{N,\,x}\,[\,\tau_{\zeta_N^x}\,]\;\le\;\frac{C}{\theta_{N}}\,\Big\{\,m_{N}\,\ell_{N}+C\,N^{\gamma\kappa}\,(\log N)^{\kappa-1}\Big\}\;\ll\;\frac{m_{N}^{2}}{\theta_{N}}\;,
\end{equation*}
since we assumed that $\gamma<2/\kappa$. The assertion of the lemma
follows from Chebyshev inequality.
\end{proof}

\subsection{Proof of Proposition \ref{p_mixmain}\label{sec35}}

Consider the canonical coupling of the zero-range process $\xi_{N}(\cdot)$
and the reflected process $\widehat{\xi}_{N}^{x}(\cdot)$ starting
together at $\eta\in\mc{W}_{N}^{x}$. The two processes move
together until $\xi_{N}(\cdot)$ hits $(\mc{W}_{N}^{x})^{c}$.
From this point on, they move independently according to their respective
dynamics. By Proposition \ref{p_mix1}, starting from $\mc{E}_{N}^{x}$,
we can couple the original zero-range process and the reflected process
$\widehat{\xi}_{N}^{x}(\cdot)$ up to time $\mb{h}_{N}$ with
a probability close to $1$.

The joint law of $\xi_{N}(\cdot)$ and $\widehat{\xi}_{N}^{x}(\cdot)$
under this canonical coupling is represented by \textcolor{blue}{
$\widehat{\bf{P}}_{\eta}^{N,\,x}$}. Denote by $\tau_{\mc{A}}$
and $\widehat{\tau}_{\mc{A}}$ the hitting time of a set $\mc{A}$
with respect to $\xi_{N}(\cdot)$ and $\widehat{\xi}_{N}^{x}(\cdot)$,
respectively.

\begin{proof}[Proof of Proposition \ref{p_mixmain}]
Recall the definition of the sequence $\mb{u}_{N}$ introduced at the
beginning of Section \ref{sec10}, and the one of $\mb{h}_{N}$
presented in \eqref{h_n}. Fix $\eta\in\mc{E}_{N}^{x}$. By Proposition
\ref{p_mix1},
\begin{equation*}
\mb{P}_{\eta}^{N}\,[\,\tau_{\zeta_N^x}\,\ge\,\mb{u}_{N}\,]\;=\;\mb{P}_{\eta}^{N}\,[\,\tau_{\zeta_N^x}\,\ge\,\mb{u}_{N}\,,\,\,\tau_{(\mc{W}_{N}^{x})^{c}}\,>\,\mb{h}_{N}\,]\,+\,o_{N}(1)\;.
\end{equation*}

Recall the canonical coupling introduced above. On the event
$\{\tau_{(\mc{W}_{N}^{x})^{c}}>\mb{h}_{N}\}$, the two processes
$\xi_{N}(t)$ and $\widehat{\xi}_{N}^{x}(t)$ move together until
$\mb{h}_{N}$. Since $\mb{u}_{N}\ll\mb{h}_{N}$, on the previous event,
the sets $\{\tau_{\zeta_N^x}\ge\mb{u}_{N}\}$ and
$\{\widehat{\tau}_{\zeta_N^x}\ge\mb{u}_{N}\}$ coincide.  Thus,
\begin{align*}
\mb{P}_{\eta}^{N}\,[\,\tau_{\zeta_N^x}\,\ge\,\mb{u}_{N}\,,\,\,\tau_{(\mc{W}_{N}^{x})^{c}}\,>\,\mb{h}_{N}\,]\;=\; & \widehat{\bf{P}}_{\eta}^{N,\,x}\,[\,\tau_{\zeta_N^x}\,\ge\,\mb{u}_{N}\,,\,\,\tau_{(\mc{W}_{N}^{x})^{c}}\,>\,\mb{h}_{N}\,]\\
=\; & \widehat{\bf{P}}_{\eta}^{N,\,x}\,[\,\widehat{\tau}_{\zeta_N^x}\,\ge\,\mb{u}_{N},\,\tau_{(\mc{W}_{N}^{x})^{c}}\,>\,\mb{h}_{N}\,]\;.
\end{align*}

Since,
\begin{equation*}
\widehat{\bf{P}}_{\eta}^{N,\,x}[\widehat{\tau}_{\zeta_N^x}\,\ge\,\mb{u}_{N}\,,\,\,\tau_{(\mc{W}_{N}^{x})^{c}}\,>\,\mb{h}_{N}]\;\le
\;\widehat{\bf{P}}_{\eta}^{N,\,x}[\widehat{\tau}_{\zeta_N^x}\,\ge\,\mb{u}_{N}]\;=\;\widehat{\mb{P}}_{\eta}^{N,\,x}[\tau_{\zeta_N^x}\ge\mb{u}_{N}]\;,
\end{equation*}
by Lemma \ref{p_mix2a}, this quantity vanishes as $N\to\infty$.
It remains to combine the previous estimates.
\end{proof}

\section{Condition $\mf{M}$ for critical zero-range processes}
\label{sec4}

In this section, we prove condition \eqref{33} for a time-scale
$\mb{s}_{N} \ll \mb h_N$ and the large wells $\mc V^x_N$ introduced in
\eqref{large_well}.  For $N\ge1$, define
\begin{equation}\label{s_N}
\mb{s}_{N}\;=\;(\,1+(\log N)^{1/4}\,)\,\mb{u}_{N}\;.
\end{equation}
Note that $\mathbf{s}_N \ll \mathbf{h}_N$.

Recall from Subsection \ref{sec61} and equation \eqref{dtv} the
definitions of the reflected process $\xi^{R, x}_N(\cdot)$ and of the
total variation distance $d_{\rm TV}^x (\cdot,\,\cdot)$. For $t\ge 0$,
let
\begin{equation*}
D^x_{\rm TV}(t) \;:=\; \sup_{\eta\in\mc V^x_N} d_{\rm TV}^x
(\,\delta_\eta \ms{P}_N^{R,x} (t)\,,\, \,\pi^{R,x}\,)\;.
\end{equation*}
Note here that we prove a stronger version of mixing than the one required in the condition $\mf{M}$ since the supremum in the definition $D^x_{\rm TV}(t)$ is taken over all configurations in $\mc{V}_N^x$.

\begin{prop}
\label{pro_mix2}For all $x\in S$,
\begin{equation*}
\lim_{N\rightarrow\infty}D^x_{\rm TV}(\mb{s}_{N})=0\;.
\end{equation*}
\end{prop}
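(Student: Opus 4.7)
The plan is to reduce the total-variation estimate to a hitting-time estimate for the configuration $\zeta_N^x$ under the reflected dynamics, and then to extend Proposition \ref{p_mixmain} to all starts in $\mc{V}_N^x$.

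First I would invoke a standard coupling argument based on the strong Markov property at the first visit to $\zeta_N^x$, reducing the proposition to showing
\begin{equation*}
\sup_{\eta\in\mc{V}_N^x}\mb{P}_\eta^{R,x}\bigl[\,\tau_{\zeta_N^x}>\mb{s}_N\,\bigr]\;=\;o_N(1).
\end{equation*}
For starting configurations $\eta\in\mc{E}_N^x$ this estimate is immediate: by Corollary \ref{cor_m1} the original and the $\mc{V}_N^x$-reflected zero-range processes coincide up to time $\mb{h}_N$ with probability $1-o_N(1)$, uniformly in $\eta\in\mc{E}_N^x$, and since $\mb{u}_N \ll \mb{h}_N$, Proposition \ref{p_mixmain} transfers directly to give $\sup_{\eta\in\mc{E}_N^x}\mb{P}_\eta^{R,x}[\,\tau_{\zeta_N^x}>\mb{u}_N\,]=o_N(1)$, which is stronger than required.

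The main obstacle is to handle starting configurations $\eta\in\mc{V}_N^x\setminus\mc{E}_N^x$, for which the reflected process must first reach $\mc{E}_N^x$. For these I would prove the stronger estimate $\sup_{\eta\in\mc{V}_N^x}\mb{E}_\eta^{R,x}[\,\tau_{\mc{E}_N^x}\,]=O(\mb{u}_N)$, and then use Markov's inequality together with $\mb{s}_N - \mb{u}_N \sim (\log N)^{1/4}\mb{u}_N$ and the strong Markov property at $\tau_{\mc{E}_N^x}$ to reduce to the previous case. The plan for the expected-hitting bound is to extend the super-harmonic function $G_N^x$ of Theorem \ref{tlms1} from $\mc{W}_N^x$ to $\mc{V}_N^x$ via its defining polynomial expression, and to verify super-harmonicity for the $\mc{V}_N^x$-reflected generator on $\mc{V}_N^x\setminus\mc{D}_N^x$. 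The interior analysis is that of \cite{LMS}; the delicate new step, analogous to Lemma \ref{lem_superh}, is to check at the boundary $\partial\mc{V}_N^x$ that every jump suppressed by the reflection strictly increases $G_N^x$, so that removing these jumps only decreases the action of the generator. Once super-harmonicity is secured, the martingale argument of Lemma \ref{lem38} combined with the bound $G_N^x(\eta)\le C(N-\eta_x)\le Cm_N$ throughout $\mc{V}_N^x$ yields $\sup_{\eta\in\mc{V}_N^x}\mb{E}_\eta^{R,x}[\,\tau_{\mc{D}_N^x}\,]\le Cm_N^2/\theta_N = C\mb{u}_N$, and Lemma \ref{lem_hitbot} adapts to the $\mc{V}_N^x$-reflection (using only monotonicity of capacities) to supply the remaining bound from $\mc{D}_N^x$ to $\zeta_N^x$.
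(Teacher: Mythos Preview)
Your first step---reducing $D^x_{\rm TV}(\mb{s}_N)=o_N(1)$ to the hitting-time estimate $\sup_{\eta}\mb{P}_\eta^{R,x}[\tau_{\zeta_N^x}>\mb{s}_N]=o_N(1)$---is not valid, and this is the essential gap. Applying the strong Markov property at $\tau=\tau_{\zeta_N^x}$ only shows that, on $\{\tau\le\mb{s}_N\}$, the law of the chain at time $\mb{s}_N$ is a mixture of the laws $\delta_{\zeta_N^x}\ms{P}_N^{R,x}(\mb{s}_N-\tau)$; for this to be close to $\pi^{R,x}$ you still need the chain \emph{started at $\zeta_N^x$} to have mixed within the residual time $\mb{s}_N-\tau$, which under your hypothesis may be arbitrarily small. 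There is no ``standard coupling argument'' that circumvents this: two copies of the chain that hit $\zeta_N^x$ at different random times cannot simply be merged afterwards. The paper supplies precisely this missing piece through the spectral gap. Theorem~\ref{t_gap} bounds the gap of the $\mc{V}_N^x$-reflected process from below, and Lemma~\ref{lem_mix2} combines it with the explicit estimate $\pi^{R,x}(\zeta_N^x)\ge c/(\log N)^{\kappa-1}$ and $L^2$-contraction to obtain $d_{\rm TV}^x\bigl(\delta_{\zeta_N^x}\ms{P}_N^{R,x}(t),\pi^{R,x}\bigr)=o_N(1)$ for all $t\ge(\log N)^{1/4}\mb{u}_N$. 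The proof then splits $\mb{s}_N=\mb{u}_N+(\log N)^{1/4}\mb{u}_N$: the first window is used to reach $\zeta_N^x$ (this is Lemma~\ref{lem_mix1}, essentially your step~2), and the second to mix from $\zeta_N^x$. Without the spectral-gap input your scheme cannot close.

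A secondary remark on your step~3: the paper's proof in fact fixes $\eta\in\mc{E}_N^x$, and only this range is used downstream in Proposition~\ref{prop_eqbl}. Your proposed extension of $G_N^x$ to $\mc{V}_N^x$ is plausible in spirit but not immediate: the interior super-harmonicity of Theorem~\ref{tlms1} is established only on $\mc{W}_N^x\setminus\mc{D}_N^x$, and pushing it to $\mc{V}_N^x\setminus\mc{W}_N^x$---where several coordinates $\eta_y$, $y\neq x$, may simultaneously be of order $m_N$---would require reworking the interior estimates of~\cite{LMS}, not merely the boundary check you sketch.
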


It follows from this result that for all $\epsilon>0$ the mixing time
$t^x_{\rm mix}(\epsilon)$ is bounded by $\mb s_N$ for $N$ sufficiently
large. In particular, condition \eqref{33} holds because
$\mb{s_N}\ll \mb{h_N}$.

\begin{cor}
\label{corM}
The condition $\mf{M}$ holds for the critical zero-range processes.
\end{cor}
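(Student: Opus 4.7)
The plan is essentially to observe that Corollary \ref{corM} is just the assembly of two pieces already established elsewhere in the paper, since condition $\mf{M}$ consists of two separate requirements: the escape estimate \eqref{23} on the scale $\mb h_N$, and the mixing estimate \eqref{33} on the same scale. Both pieces have been prepared with exactly the same choice of enlarged wells $\mc V_N^x$ defined in \eqref{large_well} and the same macroscopic scale $\mb h_N$ defined in \eqref{h_n}.

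First, I would dispatch \eqref{23}. This is literally the content of Corollary \ref{cor_m1} in Section \ref{sec3}, which was deduced from Proposition \ref{p_mix1} together with the inclusion $\mc W_N^x \subset \mc V_N^x$ stated in \eqref{setorder}. So for every $x\in S$,
\begin{equation*}
\lim_{N\to\infty}\sup_{\eta\in\mc E_N^x}\mb P_\eta^N\bigl[\,\tau_{(\mc V_N^x)^c}\le\mb h_N\,\bigr]\;=\;0\;,
\end{equation*}
which is precisely \eqref{23}.

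Next, I would verify \eqref{33}. Fix $x\in S$ and $\epsilon>0$. By Proposition \ref{pro_mix2}, $D^x_{\rm TV}(\mb s_N)\to 0$ as $N\to\infty$, so there exists $N_0=N_0(x,\epsilon)$ such that $D^x_{\rm TV}(\mb s_N)\le \epsilon$ for all $N\ge N_0$. By the definition of $t^x_{\rm mix}(\epsilon)$ given in Subsection \ref{sec61}, this yields $t^x_{\rm mix}(\epsilon)\le \mb s_N$ for all such $N$. Since $\mb s_N/\mb h_N = (1+(\log N)^{1/4})(\log N)^{-1/2}\to 0$ by \eqref{s_N} and \eqref{h_n}, we have $\mb s_N\le \mb h_N$ for all $N$ large enough, and therefore $t^x_{\rm mix}(\epsilon)\le \mb h_N$ for $N$ sufficiently large, which is \eqref{33}. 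As $S$ is finite, one can make the threshold uniform in $x$ by taking a maximum over $S$.

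There is no real obstacle at this stage, since the genuine work is carried out in Proposition \ref{p_mix1} (escape from the shallow well via the capacity bound of Lemma \ref{p_capb}), Corollary \ref{cor_m1} (transferring the escape statement to $\mc V_N^x$), and Proposition \ref{pro_mix2} (the mixing-time bound). The corollary merely packages the scale comparison $\mb s_N\ll \mb h_N$ into the two clauses of condition $\mf M$.
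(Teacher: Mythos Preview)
Your proof is correct and follows exactly the paper's approach: invoke Corollary \ref{cor_m1} for \eqref{23}, Proposition \ref{pro_mix2} for the mixing bound, and use $\mb s_N\ll\mb h_N$ to obtain \eqref{33}. The paper's own proof is a single sentence listing these same three ingredients, so you have simply written out the details.
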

\begin{proof}
The proof follows from Corollary \ref{cor_m1}, Proposition
\ref{pro_mix2}, and the fact that $\mb{s_N}\ll \mb{h_N}$.
\end{proof}

The proof of Proposition \ref{pro_mix2} is divided into several
steps. We first show that the process $\xi^{R,x}_N(\cdot)$ hits the
configuration $\zeta_N^x$ in the time-scale $\mb{u}_{N}$. The
reasoning carried out in the proof of Lemma \ref{p_mix2a} does not
apply to the process $\xi^{R,x}_N(\cdot)$ because Lemma
\ref{lem_superh} does not hold for it. We present below an alternative
argument, based on Propositions \ref{p_mix1} and \ref{p_mixmain}.

Recall from Subsection \ref{sec35} the definition of the canonical
coupling of the zero-range process $\xi_{N}(\cdot)$ and the reflected
process $\widehat{\xi}_{N}^{x}(\cdot)$. The same definition permits to
couple $\xi_{N}(\cdot)$ and $\xi^{R,x}_N(\cdot)$. Denote by
$\color{blue} \widehat{\bf{P}}_{\eta}^{R,\,x}$ the joint law of
$\xi_{N}(\cdot)$ and ${\xi}_{N}^{R,x}(\cdot)$ under the canonical
coupling.

\begin{lem}
\label{lem_mix1}
For all $x\in S$,
\begin{equation*}
\lim_{N\rightarrow\infty}\sup_{\eta\in\mc{E}_{N}^{x}}
\mb P^{R,x}_\eta\,[\,\tau_{\zeta_N^x}
\,\ge\,\mb{u}_{N}\,]\;=\;0\;.
\end{equation*}
\end{lem}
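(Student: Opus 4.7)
The plan is to transfer the fast-hitting estimate from $\xi_N(\cdot)$ to $\xi^{R,x}_N(\cdot)$ via the canonical coupling $\widehat{\bf P}^{R,x}_\eta$ introduced just above the statement: under this coupling, the two processes evolve identically until $\xi_N(\cdot)$ first exits $\mc V^x_N$, after which only $\xi_N(\cdot)$ is allowed to leave. All the genuine work — the super-harmonic function of Theorem \ref{tlms1} and the capacity bound of Lemma \ref{p_capb} — has already been done; what remains is a routine coupling argument.

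Write $\tau=\tau_{\zeta_N^x}$ for the hitting time along $\xi_N(\cdot)$ and $\widehat{\tau}$ for the analogous hitting time along $\xi^{R,x}_N(\cdot)$. By construction of $\widehat{\bf P}^{R,x}_\eta$, on the event $\{\tau_{(\mc V^x_N)^c}>\mb u_N\}$ the two trajectories agree on $[0,\mb u_N]$, so $\tau<\mb u_N$ forces $\widehat{\tau}=\tau<\mb u_N$. Taking complements yields the key inclusion
$$\{\widehat{\tau}\ge \mb u_N\}\;\subset\;\{\tau\ge \mb u_N\}\;\cup\;\{\tau_{(\mc V^x_N)^c}\le \mb u_N\}\;.$$
Since the marginal law of $\xi^{R,x}_N(\cdot)$ under $\widehat{\bf P}^{R,x}_\eta$ is $\mb P^{R,x}_\eta$ and that of $\xi_N(\cdot)$ is $\mb P^N_\eta$, we obtain
$$\mb P^{R,x}_\eta\big[\,\tau_{\zeta_N^x}\ge \mb u_N\,\big]\;\le\;\mb P^N_\eta\big[\,\tau_{\zeta_N^x}\ge \mb u_N\,\big]\;+\;\mb P^N_\eta\big[\,\tau_{(\mc V^x_N)^c}\le \mb u_N\,\big]\;.$$

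Taking the supremum over $\eta\in\mc E^x_N$ and letting $N\to\infty$, the first term vanishes by Proposition \ref{p_mixmain}. For the second, by \eqref{h_n} and the definition of $\mb u_N$ we have $\mb u_N=(\log N)^{-1-2\delta}\le (\log N)^{-1/2-2\delta}=\mb h_N$ for all $N$ large enough, so Corollary \ref{cor_m1} applies and the second term vanishes as well. No substantive obstacle is foreseen: the argument is a short coupling transfer, and the only nontrivial check is the comparison $\mb u_N\le \mb h_N$, which is immediate from the exponents.
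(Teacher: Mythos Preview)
Your proof is correct and takes essentially the same approach as the paper: both use the canonical coupling $\widehat{\bf P}^{R,x}_\eta$ to identify the two trajectories on $[0,\mb u_N]$ off a negligible event, then invoke Proposition~\ref{p_mixmain} and the escape estimate (Proposition~\ref{p_mix1}/Corollary~\ref{cor_m1}) together with $\mb u_N\le \mb h_N$. Your presentation via the inclusion $\{\widehat\tau\ge\mb u_N\}\subset\{\tau\ge\mb u_N\}\cup\{\tau_{(\mc V^x_N)^c}\le\mb u_N\}$ is in fact a slightly cleaner packaging of the same argument.
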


\begin{proof}
By Proposition \ref{p_mixmain},
\begin{equation*}
o_{N}(1)\;=\;\mb{P}_{\eta}^{N}\,[\,\tau_{\zeta_N^x}\,\ge\,\mb{u}_{N}\,]
\;\ge\;\mb{P}_{\eta}^{N}\,[\,\tau_{\zeta_N^x}\,\ge\,\mb{u}_{N},
\,\tau_{(\mc{V}_N^x)^{c}}\,>\,\mb{h}_{N}\,]\;.
\end{equation*}
Let $\tau^R_{\zeta_N^x}$ stand for the hitting time of the
configuration $\zeta_N^x$ with respect to the reflected process
$\xi^{R,x}_N(\cdot)$. Replace the probability measure on the
right-hand side of the previous equation by the coupling measure
$\widehat{\bf P}^{R,x}_\eta$.  Since $\mb{h}_{N}\gg\mb{u}_{N}$,
\begin{equation*}
\widehat{\bf P}^{R,x}_\eta\,[\,\tau_{\zeta_N^x}
\,\ge\,\mb{u}_{N},\,\tau_{(\mc{V}_N^x)^{c}}\,>\,\mb{h}_{N}\,]
\;=\;\widehat{\bf P}^{R,x}_\eta\,[\, {\tau}^R_{\zeta_N^x}
\,\ge\,\mb{u}_{N},\,\tau_{(\mc{V}_N^x)^{c}}\,>\,\mb{h}_{N}\,]
\end{equation*}
By Proposition \ref{p_mix1}, the right-hand side is equal to
\begin{equation*}
\widehat{\bf P}^{R,x}_\eta\,[\,\tau^R_{\zeta_N^x}
\ge\mb{u}_{N}]\,-\,o_{N}(1)\;=\;
\mb P^{R,x}_\eta\,[\,\tau^R_{\zeta_N^x}\,\ge\,\mb{u}_{N}\,]\,-\,o_{N}(1)\;,
\end{equation*}
as claimed.
\end{proof}

We next recall a bound on the spectral gap established in \cite{LMS}.

\begin{thm}
\label{t_gap}
There exists a constant $c_{0}>0$ such that the spectral
gap of the reflected process $\xi^{R,x}_N(\cdot)$ on
$\mc{V}_N^x$ is bounded below by $c_{0}\,\mb{s}_{N}^{-1}$
for all $N\ge1$.
\end{thm}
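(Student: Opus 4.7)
Since this is a recalled result from \cite{LMS}, the simplest option is to invoke the cited statement directly. For an independent derivation, the natural strategy is a Dirichlet form comparison combined with a variance decomposition along the ``slow'' coordinate $\eta_x$, the number of particles sitting at the condensate site.

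By reversibility of $\xi^{R,x}_N(\cdot)$ with respect to $\pi^{R,x} = \mu_N(\cdot\,|\,\mc V^x_N)$, the spectral gap satisfies
\begin{equation*}
\mathrm{gap}_N \;=\; \inf_{f\not\equiv\mathrm{const}} \frac{\ms D_N^{R,x}(f)}{\mathrm{Var}_{\pi^{R,x}}(f)}\;.
\end{equation*}
Decomposing the variance by conditioning on $\eta_x$, each fibre $\{\eta_x = N-k\}$, $k \le (\kappa-1)m_N$, corresponds to distributing $k$ particles on $S\setminus\{x\}$ subject to the cap $\eta_y\le m_N$; the induced dynamics is a bounded-rate zero-range-type chain with at most $k$ particles, whose spectral gap is of order $k^{-2}\ge c\,m_N^{-2}$ in microscopic units, i.e., of order $\theta_N/m_N^2 = \mathbf u_N^{-1}$ after the speed-up. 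Mixing in the $\eta_x$-direction is even faster, being governed by the escape rate $g(\eta_x)\asymp 1$ from the condensate. A standard two-scale Poincaré inequality then combines the two contributions into an overall gap of order $\mathbf u_N^{-1}$.

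The main obstacle is the reflection at $\partial\mc V^x_N = \{\eta:\eta_y = m_N\text{ for some }y\neq x\}$, which removes transitions from the Dirichlet form and could in principle degrade the gap. This is handled by the super-harmonic function $G_N^x$ of Theorem \ref{tlms1}, whose super-harmonicity is preserved under reflection by Lemma \ref{lem_superh}: it forces any potential extremizer to be small in a neighborhood of $\partial\mc V^x_N$, so that the missing transitions contribute negligibly to the Dirichlet ratio. The factor $1+(\log N)^{1/4}$ separating $\mathbf s_N$ from $\mathbf u_N$ is precisely the slack needed to absorb these boundary corrections and related logarithmic losses, which accounts for the mild discrepancy between the two scales. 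The full execution, including the comparison between the gaps on different fibres, is carried out in \cite{LMS}.
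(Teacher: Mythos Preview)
Your primary approach---citing \cite{LMS} directly---matches the paper exactly: the paper's entire ``proof'' is the sentence ``This result is \cite[Theorem 6.1]{LMS}. One just has to replace $\ell_N$ by $m_N$ in the statement and in the proof of that result.'' No independent argument is given.

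One cautionary remark on your heuristic sketch: your invocation of Lemma~\ref{lem_superh} and the super-harmonic function $G_N^x$ to control the reflection boundary is misplaced here. Those tools concern the process reflected at $\partial\mc W_N^x=\{\eta:\eta_x=N-m_N\}$, whereas Theorem~\ref{t_gap} is about the process reflected at $\partial\mc V_N^x=\{\eta:\eta_y=m_N\text{ for some }y\neq x\}$, a different set with a different boundary geometry. The spectral-gap argument in \cite{LMS} does not route through the super-harmonic function; it is a self-contained Poincar\'e-type comparison on the enlarged well. This does not affect the validity of your deferral to \cite{LMS}, but the sketched mechanism for handling the boundary is not the right one.
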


This result is \cite[Theorem 6.1]{LMS}. One just has to replace
$\ell_{N}$ by $m_{N}$ in the statement and in the proof of that
result. \smallskip{}

Recall from Section \ref{sec61} that $\pi^{R,x}$, $x\in S$, represents
the stationary state of the reflected process
$\xi^{R,x}_N(\cdot)$. Moreover, for $\eta\in\mc{V}_N^x$ and $t>0$, the
measure $\delta_\eta\, \ms P^{R,x}_N(t)$ on $\mc{V}_N^x$ stands for
the distribution of the reflected process $\xi^{R,x}_N(t)$ starting at
$\eta$. Let
\begin{equation*}
\pi_{N}^{x}(\cdot,\,t)\;=\;\delta_{\zeta_N^x}\, \ms P^{R,x}_N(t)\;.
\end{equation*}
The next result asserts that the reflected process $\xi^{R,x}_N(\cdot)$
starting from $\zeta_N^x$ mixes in the time scale $(\log N)^{1/4}\mb{u}_{N}$.

\begin{lem}
\label{lem_mix2}
There exist two constants $C_{1},\,C_{2}>0$ such
that, for all $x\in S$ and $t\ge(\log N)^{1/4}\mb{u}_{N}$,
\begin{equation*}
d_{\rm TV}^x (\, \pi_{N}^{x}(\cdot,\,t) \,,\,\,
\pi^{R,x}  \,)
\;\le\;C_{1}e^{-C_{2}(\log N)^{1/8}}\;.
\end{equation*}
\end{lem}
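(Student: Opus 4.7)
The plan is to derive Lemma \ref{lem_mix2} by iterating the quick-hitting estimate of Lemma \ref{lem_mix1} inside a classical coupling argument, rather than by applying Theorem \ref{t_gap} directly: since $\mb{s}_N\sim(\log N)^{1/4}\mb{u}_N$, one instance of the Poincar\'e contraction coming from the spectral gap of order $1/\mb{s}_N$ produces only a constant in the exponent at time $t=(\log N)^{1/4}\mb{u}_N$, which is insufficient to achieve the stretched-exponential bound $C_1 e^{-C_2(\log N)^{1/8}}$.

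The coupling pairs two copies $\xi^1,\xi^2$ of $\xi_N^{R,x}(\cdot)$, with $\xi^1$ started from $\zeta_N^x$ and $\xi^2$ from $\pi^{R,x}$, evolving independently until a random time $\sigma$ at which both are simultaneously at $\zeta_N^x$, and moving together thereafter; then $d_{\rm TV}^x(\pi_N^x(\cdot,t),\pi^{R,x})$ is bounded by the coupling probability of the event $\{\sigma>t\}$. The key quantitative input is a rate in Lemma \ref{lem_mix1}: Chebyshev's inequality applied to the hitting-time expectation bound of Lemma \ref{p_mix2a}, combined with the identities $m_N\ell_N/(\theta_N\mb{u}_N)=(\log N)^{\delta-1}$ and $N^{\gamma\kappa-2}\to 0$ super-polynomially, yields $\mb{P}^{R,x}_\eta[\tau_{\zeta_N^x}\ge\mb{u}_N]\le\epsilon_N$ uniformly in $\eta\in\mc{E}_N^x$, with $\epsilon_N=O((\log N)^{\delta-1})$. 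Since $\pi^{R,x}(\mc{E}_N^x)\to 1$ (from Theorem \ref{t_cond} and $\mc{V}_N^x\setminus\mc{E}_N^x\subset\Delta_N$), the failure probability of a successful rendezvous within any single window of length $\mb{u}_N$ starting with both copies in $\mc{E}_N^x$ is bounded by $2\epsilon_N+o(1)$.

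Iterating the strong Markov property over $\lfloor(\log N)^{1/4}\rfloor$ disjoint windows that partition $[0,(\log N)^{1/4}\mb{u}_N]$, the probability that no rendezvous occurs is dominated by $(C\epsilon_N)^{\lfloor(\log N)^{1/4}\rfloor}\le \exp\bigl(-(1-\delta)(\log N)^{1/4}\log\log N+O((\log N)^{1/4})\bigr)$, which is smaller than $C_1 e^{-C_2(\log N)^{1/8}}$ for any fixed $C_2>0$ and $N$ large. The main technical obstacle is ensuring that $\xi^1$ -- unlike $\xi^2$, which is stationary -- lies in $\mc{E}_N^x$ at the start of each window: this can be managed by combining Proposition \ref{p_mix1} (which keeps $\xi^1$ inside $\mc{W}_N^x$ throughout $[0,\mb{h}_N]$ with $\mb{h}_N\gg t$) with the attraction property \eqref{deep_atract} and Lemma \ref{lem38} (which force a quick return of $\xi^1$ to $\mc{D}_N^x\subset\mc{E}_N^x$ whenever it drifts), so that the incremental cost of re-entering $\mc{E}_N^x$ at each iteration is negligible compared to $\mb{u}_N$.
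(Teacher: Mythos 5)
Your premise for abandoning the spectral-gap route is mistaken. You argue that the gap from Theorem \ref{t_gap} is of order $\mb s_N^{-1}$, so that at $t=(\log N)^{1/4}\mb u_N\approx\mb s_N$ the $L^2$-contraction yields only $e^{-O(1)}$. But the gap carried over from \cite[Theorem 6.1]{LMS} (relaxation time $\asymp m_N^2/\theta_N$ after the substitution $\ell_N\to m_N$) is of order $\mb u_N^{-1}$, and the paper's proof of this lemma uses exactly that: the Cauchy--Schwarz/$\chi^2$ bound is contracted by the factor $e^{-c_0 t/\mb u_N}$, giving $e^{-c_0(\log N)^{1/4}}\cdot(\log N)^{\kappa-1}$ after the lower bound $\pi^{R,x}(\zeta_N^x)\ge c_1(\log N)^{-(\kappa-1)}$, which is stronger than what the lemma asserts. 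The factor $\mb s_N^{-1}$ appearing in the displayed statement of Theorem \ref{t_gap} should be read as $\mb u_N^{-1}$; $\mb s_N$ is the mixing time produced by \emph{combining} the relaxation time $\mb u_N$ with an extra $(\log N)^{1/4}$ factor to kill the $\chi^2$ coefficient. So the clean and intended argument is precisely the one you rejected, and you should first verify whether a spectral bound fails before constructing an alternative.

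Beyond the motivation, the coupling itself has a genuine gap. Your key step is that ``the failure probability of a successful rendezvous within any single window of length $\mb{u}_N$ starting with both copies in $\mc{E}_N^x$ is bounded by $2\epsilon_N+o(1)$.'' Lemma \ref{lem_mix1} (and its quantitative version via Lemmata \ref{lem38} and \ref{lem_hitbot}) gives that \emph{each} copy hits $\zeta_N^x$ somewhere in $[0,\mb u_N]$ with probability $\ge 1-\epsilon_N$. But coalescence requires the two independent copies to be at $\zeta_N^x$ \emph{at the same instant}, and that does not follow from each hitting $\zeta_N^x$ at its own random time. The holding rate at $\zeta_N^x$ under the accelerated dynamics is of order $\theta_N$, so the occupation of $\zeta_N^x$ at any fixed time has probability of order $\pi^{R,x}(\zeta_N^x)\sim(\log N)^{-(\kappa-1)}$, not of order one; the simultaneity event needs a separate estimate (some uniform lower bound on $\mb P^{R,x}_\eta[\xi^{R,x}_N(\mb u_N)=\zeta_N^x]$, or a monotonicity/Doeblin argument) that you have neither stated nor derived. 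In addition, the tools you invoke to re-center both copies in $\mc E_N^x$ at window boundaries — Proposition \ref{p_mix1}, \eqref{deep_atract}, Lemma \ref{lem38} — concern the original process $\xi_N(\cdot)$ or the $\mc W_N^x$-reflected process $\widehat\xi_N^x(\cdot)$, not the $\mc V_N^x$-reflected process $\xi^{R,x}_N(\cdot)$ whose mixing you must bound, so additional transfer arguments are needed there as well. As written, the proposal does not give a proof of the lemma.
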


\begin{proof}
By the Cauchy-Schwarz inequality,
\begin{equation*}
d_{\rm TV}^x (\, \pi_{N}^{x}(\cdot,\,t) \,,\,\,\pi^{R,x}\,)^{2}
\;\le\;\frac{1}{4}\sum_{\eta\in\mc{V}_N^x}
\Big\{\,\frac{\pi_{N}^{x}(\eta,\,t)}{\pi^{R,x}(\eta)}\,-\,1\,\Big\}^{2}
\,\pi^{R,x}(\eta)\;.
\end{equation*}
Since the process $\xi_{N}(\cdot)$ is reversible, the conditioned
measure $\mu_{N}(\,\cdot\,|\, \mc V^x_N)$ is the stationary measure
for the reflected process $\xi^{R,x}_N(\cdot)$. Hence, by the standard
$L^{2}$-contraction inequality (cf. \cite[Lemma 20.5]{LPW}) and
Theorem \ref{t_gap}, the summation at the right-hand side, which is
actually the square of $L^{2}$-distance between
$\pi_{N}^{x}(\cdot,\,t)$ and $\pi^{R,x}(\cdot)$, is less than or equal
to
\begin{equation*}
e^{-c_{0}(t/\mb{u}_{N})}\,\sum_{\eta\in\mc{V}_N^x}\,
\Big\{\,\frac{\pi_{N}^{x}(\eta,\,0)}{\pi^{R,x}(\eta)}\,-\,1\,\Big\}^{2}
\,\pi^{R,x}(\eta)
\end{equation*}
for some constant $c_{0}>0$ independent of $N$. As
$t\ge(\log N)^{1/4}\mb{u}_{N}$ and
$\pi_{N}^{x}(\eta,\,0)=\mb{1}\{\eta=\zeta_N^x\}$, this expression is
bounded by
\begin{equation*}
e^{-c_{0}\,(\log N)^{1/4}}\,\Big(\,
\frac{1}{\pi^{R,x}(\zeta_N^x)}\,-\,1\,\Big)\;,
\end{equation*}

By the explicit formula for the invariant measure $\mu_{N}$,
\begin{equation*}
\pi^{R,x}(\zeta_N^x)\;=\;\frac{1}{\mu_{N}(\mc{V}_N^x)}
\frac{N}{Z_{N,\kappa}(\log N)^{\kappa-1}}
\frac{1}{\mb{a}(\zeta_N^x)}\;\ge\;\frac{c_{1}}{(\log N)^{\kappa-1}}\;
\end{equation*}
for some constant $c_{1}>0$. Putting together the previous estimates
yields that
\begin{equation*}
d_{\rm TV}^x(\, \pi_{N}^{x}(\cdot,\,t) \,,\,\,\pi^{R,x} \,)
\;\le\;c_{1}^{-1}\,e^{-c_{0}(\log N)^{1/4}}(\log N)^{\kappa-1}
\ll c_{1}^{-1}\,e^{-(c_{0}/2)(\log N)^{1/4}}\;.
\end{equation*}
This completes the proof.
\end{proof}

\begin{proof}[Proof of Proposition \ref{pro_mix2}]
The proof relies on Lemmata \ref{lem_mix1} and \ref{lem_mix2}.  Fix
$x\in S$, $\eta\in\mc{E}_{N}^{x}$ and $\mc{A}\subset\mc{V}_{N}^x$.  By
Lemma \ref{lem_mix1}, we can write
\begin{equation}
\mb P^{R,x}_\eta[\,\xi^{R,x}_N(\mb{s}_{N})\in\mc{A}\,]
\;=\;\mb P^{R,x}_\eta[\,\xi^{R,x}_N(\mb{s}_{N})\in\mc{A}
\,|\,\tau^R_{\zeta_N^x}<\mb{u}_{N}\,]+o_{N}(1)\;,
\label{emx3}
\end{equation}
where the error term $o_{N}(1)$ at the right-hand side is bounded by
$2\mb P^{R,x}_\eta[\tau^R_{\zeta_N^x}\ge\mb{u}_{N}]$ and hence is
independent of $\mc{A}$.

Denote by $\alpha_{N}^{x}(t)dt$ the distribution of $\tau^R_{\zeta_N^x}$
conditioned on $\tau^R_{\zeta_N^x}<\mb{u}_{N}$. Then, by the
strong Markov property, we can write the probability at the right-hand
side as
\begin{equation}
\int_{0}^{\mb{u}_{N}} {\mb{P}}_{\zeta_N^x}^{R,\,x
}[\,\xi^{R,x}_N(\mb{s}_{N}-t)\in\mc{A}\,]\,\alpha_{N}^{x}(t)\,dt\;.
\label{emx2}
\end{equation}
Since $\mb{s}_{N}-t\ge(\log N)^{1/4}\mb{u}_{N}$ for all
$t\in[0,\,\mb{u}_{N}]$, by definition of $\mb{s}_{N}$, it follows from
Lemma \ref{lem_mix2} that
\begin{align}
|{\mb{P}}_{\zeta_N^x}^{R,\,x}[\,\xi^{R,x}_N(\mb{s}_{N}-t)
\in\mc{A}\,]\,-\,\pi^{R,x}(\mc{A})| \;
&\le\;
d_{\rm TV}^x(\,\pi_{N}^{x}(\cdot,\,\mb{s}_{N}-t) \,,\,\,
\pi^{R,x}\,) \nonumber \\
& \le\; C_{1}e^{-C_{2}(\log N)^{1/8}}\;,
\label{emx1}
\end{align}
where we used the fact that
\begin{equation}
d_{\rm TV}^x (\,\nu_{1},\,\nu_{2}\,)
\;=\;\sup_{\mc{A}\subset\mc{V}_{N}^x}|\,\nu_{1}(\mc{A})
\,-\,\nu_{2}(\mc{A})\,|
\label{tvn}
\end{equation}
for any probability measures $\nu_{1}$ and $\nu_{2}$ on $\mc{H}_{N}$.
By \eqref{emx1} and \eqref{emx2}, we can assert that the right-hand
side of \eqref{emx3} is $\pi^{R,x}(\mc{A})+o_{N}(1)$.
Thus,
\begin{equation*}
\mb P^{R,x}_\eta\,[\,\xi^{R,x}_N(\mb{s}_{N})\in\mc{A}\,]
\,-\,\pi^{R,x}(\mc{A})\;=\;o_{N}(1)\;,
\end{equation*}
where the error term is independent of $\mc{A}$. Therefore,
by \eqref{tvn}, we can conclude that
\begin{equation*}
D^x_{\rm TV}(\mb{s}_N )\;=\; \sup_{\mc A\subset \mc V_N^x}  | \,\mb P^{R,x}_\eta[\,\xi^{R,x}_N(\mb{s}_{N})\in\mc A\,]\,-\,
\pi^{R,x}(\mc A)\,|\;=\;o_{N}(1)\;,
\end{equation*}
as claimed.
\end{proof}

\section{Condition (H0) for critical zero-range processes}
\label{sec9}

In this section, we verify the condition (H0) by establishing the
following proposition. For each $A\subset S$, write
$$
\mathcal{E}_N(A)\;=\;\bigcup_{x\in A} \mc{E}_N^x\;.
$$

\begin{prop}
\label{p131}Fix a non-empty subset $S_{1}\subsetneq S$, and let
$S_{2}=S\setminus S_{1}$. Then, we have
\begin{equation*}
\lim\limits _{N\rightarrow\infty}\,{\rm {cap}}_{N}(\mc{E}_{N}(S_{1}),\,\mc{E}_{N}(S_{2}))\;=\;6\sum_{x\in S^{1},\,y\in S^{2}}{\rm cap}_{X}(x,\,y)\;.
\end{equation*}
\end{prop}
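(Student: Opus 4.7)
\textbf{Proof plan for Proposition \ref{p131}.} The strategy is to establish matching upper and lower bounds on $\text{cap}_N(\mc{E}_N(S_1), \mc{E}_N(S_2))$ using the Dirichlet and Thomson principles, bootstrapping the pairwise computation $\text{cap}_N(\mc{E}_N^x, \mc{E}_N^y) \to 6\,\text{cap}_X(x,y)$ established in \cite{LMS} to the case of unions. Note that the claim reduces to that pairwise statement when $|S_1| = |S_2| = 1$, and to the identity $\text{cap}_N(\mc E_N^x, \breve{\mc E}_N^x) \to 6\sum_{y\ne x}\text{cap}_X(x,y)$ (which, via the second assertion of Lemma \ref{l07b}, is already embedded in the formula for mean jump rates) when $|S_1|=1$.

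For the \textbf{upper bound}, I will apply the Dirichlet principle to an explicit test function $\Phi_N \colon \mc H_N \to [0,1]$ built from the equilibrium potential of the underlying random walk. Let $h\colon S \to [0,1]$ be defined by $h(x) = \mb 1\{x \in S_1\}$, i.e., the equilibrium potential of $X$ between $S_1$ and $S_2$ (which is trivial since $S = S_1 \cup S_2$). Define $\Phi_N(\eta) = h(\Psi_N(\eta))$ on $\mc E_N$, and extend to $\Delta_N$ by an interpolation built from the two-block functions used in \cite{LMS} for individual pairs of wells; concretely, on a configuration $\eta \in \Delta_N$ whose two leading occupancies are at sites $a,b$, set $\Phi_N(\eta) = h_{a,b}^{x,y}(\eta)$ where $h_{a,b}^{x,y}$ is a suitable one-dimensional interpolation along the transfer channel from $\mc E_N^a$ to $\mc E_N^b$. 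The Dirichlet form $\mathscr{D}_N(\Phi_N)$ then decomposes over pairs $(x,y) \in S \times S$, the interior-well contribution is $o(1)$ since $\Phi_N$ is constant on each $\mc E_N^x$, and the $\Delta_N$-contribution to each pair reproduces $6\,\text{cap}_X(x,y) \cdot [h(y)-h(x)]^2$ via the same summation $\sum_{k=1}^{N-1} k^{-1}(N-k)^{-1} \sim 2 \log N /N$ that yields the factor $6$ in \cite{LMS}. Summing gives $6\sum_{x\in S_1, y\in S_2} \text{cap}_X(x,y) + o_N(1)$.

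For the matching \textbf{lower bound}, I will use the Thomson principle: construct a divergence-free unit flow $\psi_N$ from $\mc E_N(S_1)$ to $\mc E_N(S_2)$ and estimate its energy. For each ordered pair $(x,y) \in S_1 \times S_2$, let $\psi_N^{x,y}$ be the (near-)optimal unit flow from $\mc E_N^x$ to $\mc E_N^y$ arising from the capacity computation in \cite{LMS}, whose energy is $[6\,\text{cap}_X(x,y)]^{-1}+o(1)$ and which is concentrated on configurations with essentially all particles at $x$ or $y$. Form $\psi_N = \sum_{x \in S_1, y \in S_2} w_{x,y}\,\psi_N^{x,y}$ with weights $w_{x,y} = \text{cap}_X(x,y)/\sum_{x'\in S_1, y'\in S_2}\text{cap}_X(x',y')$, adjusted so that $\sum_{y \in S_2} w_{x,y}$ and $\sum_{x \in S_1} w_{x,y}$ produce the correct total mass crossing $\partial\mc E_N^x$ and $\partial\mc E_N^y$. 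Since each $\psi_N^{x,y}$ lives (up to negligible error) on the ``channel'' of configurations with mass split between sites $x$ and $y$, different channels have essentially disjoint support in $\Delta_N$ and the energy of $\psi_N$ is the sum of individual energies up to $o(1)$, yielding the reciprocal of the claimed limit.

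The \textbf{main obstacle} is the construction and control of the test function $\Phi_N$ on $\Delta_N$: one must show that configurations where three or more sites simultaneously carry a non-negligible number of particles contribute only $o(1)$ to $\mathscr{D}_N(\Phi_N)$, so that the decomposition into pairwise channels is valid at leading order. This is analogous to the analysis of the transition region in \cite[Sections~9--10]{LMS}, and relies on the fact that under $\mu_N$ the mass of such ``multi-site'' configurations is sub-leading, combined with the explicit bound $g(n) \le C$ on the jump rates which keeps the Dirichlet form under control on these thin pieces. The matching analysis for the flow $\psi_N$ (ensuring near-disjointness of channels and verifying the boundary conditions $\sum_y w_{x,y} = \mu_Y\text{-current out of }x$) is the symmetric counterpart of this obstacle on the Thomson side.
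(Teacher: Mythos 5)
Your upper bound is essentially the paper's own approach: the Dirichlet principle applied to a test function built from the equilibrium potentials $h_{x,y}$ of the random walk $X$, interpolated smoothly through the transition region, with the main work being to show that configurations where three or more sites carry a non-negligible number of particles contribute only $o(1)$ to the Dirichlet form. That is exactly what the paper does (Proposition \ref{p_upper} together with Lemma \ref{lem_upper2}), following the construction of \cite{BL3}. You have correctly identified the key technical point.

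Your lower bound, however, takes a genuinely different route. You propose the Thomson principle: glue together near-optimal pairwise unit flows with weights $w_{x,y}$ proportional to $\Cap_X(x,y)$, verify the divergence-free and boundary conditions, and argue that the channels are near-disjoint so the energies add. This is a valid strategy, but the paper avoids flow constructions altogether. It lower bounds ${\rm cap}_N(\mc E_N(S_1),\mc E_N(S_2)) = \ms D_N(\mf h)$ directly, where $\mf h$ is the equilibrium potential for the union problem, by restricting the Dirichlet sum to the explicit disjoint tubes $\mc I_N^{x,y}$ in $\mc H_{N-1}$ (disjointness is \eqref{e_I}); then on each tube it applies the Dirichlet principle for $X$ in the ``small'' variable and a Cauchy--Schwarz inequality in the one-dimensional channel coordinate, producing the factor $[\int_0^1 u(1-u)\,du]^{-1}=6$. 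The paper's route is somewhat cleaner in that it sidesteps the need to enforce divergence-freeness and the boundary-mass bookkeeping for the combined flow, which are exactly the points you flag as the ``symmetric counterpart'' of the main obstacle; on the other hand a Thomson-side argument would be more robust to a failure of explicit control on the equilibrium potential. Two small inaccuracies worth noting: the factor $6$ does not arise from the sum $\sum_k k^{-1}(N-k)^{-1}\sim 2\log N/N$ (that one feeds into the time scale $\theta_N = N^2\log N$), but rather from the Riemann-sum limit $N^{-3}\sum_i i(N-i)\to\int_0^1 u(1-u)\,du = 1/6$; and the paper does not in fact ``bootstrap'' from a pairwise capacity statement in \cite{LMS} --- Proposition \ref{p131} is proved directly for general $S_1$, with the pairwise case included as a special case rather than used as input.
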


The proof is similar to that of \cite[Theorem 2.2]{BL3}. As carried
out in \cite{BL3}, we prove the lower and upper bounds separately.
The respective proofs is given in Sections \ref{sec91} and \ref{sec92}.
We prove several technical lemmata in Section \ref{sec93}.

Note that the zero-range dynamics that we are considering now is
reversible, and thus we can express the mean-jump rate $r_{N}(x,\,y)$
as in \eqref{mjrc}. Hence, the following is a immediate consequence of
Theorem \ref{t_cond} and Proposition \ref{p131}.

\begin{cor}
\label{corH0}
The critical zero-range processes satisfies condition (H0) with
\begin{equation*}
r(x,\,y)\;=\;6\,\kappa\,{\rm cap}_{X}(x,\,y)\,,\;\;\;\;x,\,y\in S\;.
\end{equation*}
\end{cor}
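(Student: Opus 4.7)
The plan is to establish matching upper and lower bounds on the capacity, following the blueprint of \cite[Theorem 2.2]{BL3} for condensing zero-range processes, adapted to the critical regime with the aid of the one-dimensional reduction developed in \cite{LMS}. The underlying heuristic is that the dominant contribution to the capacity comes from configurations lying on the one-dimensional ``bridges'' between pairs of wells, where essentially all particles are shared between two sites.

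First I would handle the upper bound via the Dirichlet principle. The test function $F_N\colon\mc H_N\to[0,1]$ is built by lifting the equilibrium potential $h^X_{S_1,S_2}$ from $S$ to $\mc H_N$. Declare $F_N\equiv 1$ on $\mc E_N(S_1)$, $F_N\equiv 0$ on $\mc E_N(S_2)$, and extend $F_N$ on the transition region as follows. For an ordered pair $(x,y)\in S\times S$, let $\mc B_N^{x,y}\subset\Delta_N$ be the ``bridge'' of configurations in which $\eta_x+\eta_y\ge N-\ell_N'$ for some auxiliary scale $\ell_N'$ and the remaining $\kappa-2$ coordinates are small. On $\mc B_N^{x,y}$, define $F_N(\eta)=h^X_{S_1,S_2}(x)+[h^X_{S_1,S_2}(y)-h^X_{S_1,S_2}(x)]\,q_N(\eta_y)$, where the profile $q_N\colon\{0,\dots,N\}\to[0,1]$ minimizes the one-dimensional Dirichlet form analogous to \eqref{e_defq}. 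Elsewhere in $\Delta_N$ (configurations with three or more significantly populated sites), set $F_N$ by harmonic extension or by the analogous multi-bridge interpolation. A direct calculation, using $g(n)\to 1$, the explicit form of $\mu_N$, and $Z_{N,\kappa}\to\kappa$, shows that the contribution of each bridge to $\mc D_N(F_N)$ is asymptotically $6\,r(x,y)\,m(x)\,[h^X_{S_1,S_2}(x)-h^X_{S_1,S_2}(y)]^2$; summing over pairs and comparing with the Dirichlet form $D_X(h^X_{S_1,S_2})$ yields the desired upper bound.

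For the matching lower bound, I would apply the Thomson principle by constructing a unit flow $\phi_N$ from $\mc E_N(S_1)$ to $\mc E_N(S_2)$. The flow is built as a superposition, over ordered pairs $(x,y)\in S_1\times S_2$, of elementary currents that travel through the bridge $\mc B_N^{x,y}$ with total throughput proportional to $\mathrm{cap}_X(x,y)$. Along each bridge, the one-dimensional current is taken to be the optimizer of the associated one-dimensional Thomson problem, which matches the interpolation profile $q_N$ used in the upper bound. Computing the total energy and invoking the elementary identity $\sum_{x\in S_1,\,y\in S_2}\mathrm{cap}_X(x,y)=\mathrm{cap}_X(S_1,S_2)$ (plus the symmetry of $X$) gives the matching lower bound.

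The main obstacle will be the detailed computation on the bridges together with the proof that configurations outside the bridges contribute negligibly. For the first task, one must establish that the one-dimensional minimizer $q_N$ is explicit enough for a sharp evaluation; the constant $6$ emerges from the asymptotics of $\sum_{k}1/[k(N-k)]\cdot\mathrm{(measure\ factor)}$ combined with the normalization $N/(\log N)^{\kappa-1}$ in $\mu_N$ and the limit $Z_{N,\kappa}\to\kappa$. For the second task, one shows that configurations with three or more macroscopically occupied sites form a set whose capacity contribution is of lower order, using monotonicity of the Dirichlet form and the bounds on $\mu_N(\Delta_N)$ from Theorem~\ref{t_cond}. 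Most of the technical estimates needed for both tasks are already available in Sections~4 and 9 of \cite{LMS}, so the work reduces to carefully assembling these into the global test function and global flow above.
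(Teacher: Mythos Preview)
Your lower bound has a genuine gap. The identity $\sum_{x\in S_1,\,y\in S_2}{\rm cap}_X(x,y)={\rm cap}_X(S_1,S_2)$ that you invoke is false in general (take the path graph on three vertices with $S_1=\{1\}$, $S_2=\{2,3\}$). Moreover, a Thomson flow concentrated on the bridges $\mc B_N^{x,y}$ with throughput ${\rm cap}_X(x,y)$ is not straightforward to realise: when $r(x,y)=0$ there is no edge $\sigma^{x,y}$ in the zero-range graph, so the flow must detour through intermediate sites, and you have not explained how to route it so that the energy reproduces ${\rm cap}_X(x,y)$ rather than the bare conductance $r(x,y)$. The paper does not use Thomson at all. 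Its lower bound restricts the Dirichlet form of the true equilibrium potential $\mf h$ to disjoint tubes $\mc I_N^{x,y}$ and then, for each fixed $\xi$ in a tube, applies the Dirichlet principle \emph{for the underlying random walk $X$} to the function $v\mapsto\mf h(\xi+\mf o_v)$. This yields the pointwise inequality
\[
\tfrac12\sum_{z,w}r(z,w)\,[\mf h(\xi+\mf o_w)-\mf h(\xi+\mf o_z)]^2\;\ge\;\kappa\,{\rm cap}_X(x,y)\,[\mf h(\xi+\mf o_x)-\mf h(\xi+\mf o_y)]^2\,,
\]
which is exactly what turns the $\kappa$-site increment into a two-site one and makes ${\rm cap}_X(x,y)$ (rather than $r(x,y)$) appear. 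After that the computation collapses to a one-dimensional Cauchy--Schwarz along each tube.

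For the upper bound, lifting $h^X_{S_1,S_2}$ to the bridges gives a valid test function but not a sharp one: on a cross bridge your $F_N$ jumps from $1$ to $0$, and the transverse moves $\sigma^{y,z}$ contribute to the Dirichlet form through $\sum_z r(y,z)$, so the resulting bound is naturally expressed in terms of edge conductances, not the pairwise capacities ${\rm cap}_X(x,y)$. The paper instead reuses the test function from \cite[Section~5]{BL3}: for each $x\in S_1$ and each $y\neq x$ it employs the \emph{pairwise} equilibrium potential $h_{x,y}$ of the walk $X$ to interpolate in the transverse directions (via the layered functions $F^j_{xy}$), and glues the tubes with a smooth partition of unity $\Theta^x_y$. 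It is precisely this finer transverse structure that makes the constant come out as $6\sum_{x\in S_1,\,y\in S_2}{\rm cap}_X(x,y)$, matching the lower bound.
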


Now we turn to the proof of Proposition \ref{p131}.

\subsection{\label{sec91}Lower bound}

We start with a lower bound whose proof is a modification of
\cite[Proposition 4.1]{BL3}.  For the proof, we have to introduce a
notion of tube along which the metastable transition occurs. For
$x,\,y\in S,x\neq y$, define the tube $\mc{I}_{N}^{x,\,y}$ between
$\mc{E}_{N}^{x}$ and $\mc{E}_{N}^{y}$ as,
\begin{align*}
\mc{I}_{N}^{x,\,y}\;=\;\big\{\,\xi\in\mc{H}_{N-1}\,:\, & \xi_{x}\,+\,\xi_{y}\,\geq\,(N-1)\,-\,\ell_{N}/3\\
 & \text{\ensuremath{\qquad} and }\xi_{x},\,\xi_{y}\,\leq\,(N-1)\,-\,\ell_{N}\,\big\}\;.
\end{align*}
Then we can observe that
\begin{equation}
\mc{I}_{N}^{x,\,y}\,=\,\mc{I}_{N}^{y,\,x}\;\;\;\;\;\text{and \;\;\;\;}\mc{I}_{N}^{x,\,y}\,\cap\,\mc{I}_{N}^{z,\,w}\,=\,\emptyset\text{ if }\{x,y\}\,\neq\,\{z,w\}\label{e_I}
\end{equation}
for all large enough $N$. From now on, all the computations implicitly
assume that $N$ is large enough. This is legitimate since we will
send $N$ to $\infty$ in the end. The former one in \eqref{e_I}
is immediate from the definition. For the latter one, the statement
is obvious if $\{x,\,y\}\cap\{z,\,w\}=\emptyset$. To check the other
case, let us suppose that $\xi\in\mc{I}_{N}^{x,\,y}\cap\mc{I}_{N}^{x,\,w}$
for some $x,\,y,\,w\in S$. Then, we must have
\begin{align*}
2(N-1-\ell_{N}/3) & \;\leq\;\xi_{x}\,+\,(\xi_{y}+\xi_{x}+\xi_{w})\;\leq\;\xi_{x}+N-1
\end{align*}
and hence we get $\xi_{x}\geq N-1-2\ell_{N}/3$. This contradicts
with the condition $\xi_{x}\leq N-1-\ell_{N}$ of $\mc{I}_{N}^{x,\,y}$.
\begin{prop}
\label{p_lower} Fix a non-empty subset $S_{1}\subsetneq S$, and
let $S_{2}=S\setminus S_{1}$. Then,
\begin{equation*}
\liminf\limits _{N\rightarrow\infty}\,{\rm {cap}}_{N}(\mc{E}_{N}(S_{1}),\,\mc{E}_{N}(S_{2}))\;\geq\;6\sum_{x\in S^{1},\,y\in S^{2}}{\rm cap}_{X}(x,\,y)\;.
\end{equation*}
\end{prop}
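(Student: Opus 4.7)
The plan is to apply the Thomson variational principle: for any non-negative divergence-free flow $\phi$ from $\mc{E}_N(S_1)$ to $\mc{E}_N(S_2)$ carrying a total current $F$,
$$\textup{cap}_N(\mc{E}_N(S_1), \mc{E}_N(S_2))\;\ge\; \frac{F^2}{\mc E(\phi)}, \qquad \mc E(\phi) \,=\, \frac{1}{2}\sum_{\eta,\eta'}\frac{\phi(\eta,\eta')^2}{\mu_N(\eta)R_N(\eta,\eta')}.$$
The task is to construct a test flow whose ratio $F^2/\mc E(\phi)$ converges to $6\sum_{x\in S_1,y\in S_2}\textup{cap}_X(x,y)$, following the overall strategy of \cite[Proposition 4.1]{BL3} adapted to the critical scaling.

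I would decompose $\phi = \sum_{(x,y)\in S_1\times S_2} \phi^{x,y}$, where each $\phi^{x,y}$ is supported on the tube $\mc I_N^{x,y}$ and carries a prescribed current $f(x,y) = \textup{cap}_X(x,y)$ from $\mc E_N^x$ to $\mc E_N^y$. Within a tube, $\phi^{x,y}$ is obtained by \emph{lifting} the optimal electric flow on $S$ between $x$ and $y$ (the one realizing $\textup{cap}_X(x,y)$) to a flow on $\mc H_N$: as the condensate population at $y$ grows from near $0$ to near $N$, particles travel from $x$ to $y$ either directly or through intermediate sites $z\in S\setminus\{x,y\}$, with the flow values on the corresponding zero-range edges chosen proportional to the edge currents of the underlying flow on $S$. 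By the disjointness \eqref{e_I} of the tubes, distinct $\phi^{x,y}$ share no common edge, so $\mc E(\phi) = \sum_{(x,y)} \mc E(\phi^{x,y})$.

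The main analytical step is the computation $\mc E(\phi^{x,y}) \to f(x,y)^2 / [6\,\textup{cap}_X(x,y)]$. Using the explicit form
$$\mu_N(\eta) \,\asymp\, \frac{N}{Z_{N,\kappa}(\log N)^{\kappa-1}\,\eta_x \eta_y \prod_{z\neq x,y} a(\eta_z)}$$
on $\mc I_N^{x,y}$ together with the rate $R_N(\eta, \sigma^{u,v}\eta) = \theta_N\, g(\eta_u)\, r(u,v) \asymp \theta_N r(u,v)$, the sum of edge resistances along the condensate coordinate $k = \eta_y$ produces, via the Riemann sum $\sum_k k(N-k)/N^3 \to \int_0^1 u(1-u)\,du = 1/6$ and $\theta_N = N^2\log N$, a prefactor $1/6$ modulated by an appropriate residual weight. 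Parallel summation over the residual configurations on $S\setminus\{x,y\}$ recovers the missing $(\log N)^{\kappa-2}$ via the asymptotics $Z_{n,\kappa-2}\to\kappa-2$ of \cite[Proposition 4.1]{LMS}, and the series-parallel combination of the lifted transitions converts $r(x,y)/\kappa$ into $\textup{cap}_X(x,y)$. Combining with $Z_{N,\kappa}\to\kappa$ gives the stated limit; with $f(x,y) = \textup{cap}_X(x,y)$ and total current $F = \sum_{x,y}\textup{cap}_X(x,y)$, the total energy is $\mc E(\phi) \sim F/6$, so Thomson yields $\textup{cap}_N \ge F^2 / (F/6 + o(1)) = 6F + o(1)$, as required.

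The main obstacle is the construction of the lifted flow with intermediate transitions when $|S|\ge 3$: for the two-state case, $\textup{cap}_X(x,y) = r(x,y)/\kappa$ and direct $x\to y$ moves alone suffice inside the tube, but in general one must lift the optimal flow on $S$ realizing $\textup{cap}_X(x,y)$ to an admissible divergence-free zero-range flow within the tube, which requires carefully synchronizing changes in the residual configuration $\xi_-$ with the growing $y$-population. A secondary, but technically important, step is to extend $\phi^{x,y}$ into the source well $\mc E_N^x$ (and into $\mc E_N^y$) in a divergence-free way that simultaneously accommodates all currents $\{f(x,y)\}_{y\in S_2}$ emerging from $\mc E_N^x$; the fast mixing inside wells provided by the spectral gap bound (Theorem \ref{t_gap}) should render the associated intra-well energy contribution negligible.
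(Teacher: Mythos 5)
Your proposal takes a genuinely different route from the paper. You propose the Thomson (flow) variational principle, constructing an explicit divergence-free flow supported on the tubes $\mc I_N^{x,y}$ and bounding $\textup{cap}_N \ge F^2/\mc E(\phi)$ from below. The paper instead works directly with the Dirichlet form of the equilibrium potential $\mf h = h_{\mc E_N(S_1),\mc E_N(S_2)}$: since $\textup{cap}_N = \ms D_N(\mf h)$, one may simply \emph{drop} all terms in $\ms D_N(\mf h)$ outside the disjoint tubes, then apply the Dirichlet principle on $S$ ``fibrewise'' over each edge of $\mc H_{N-1}$ to bring out $\textup{cap}_X(x,y)$, and finally use Cauchy--Schwarz along the one-dimensional condensate coordinate $i=\eta_{x_0}$ together with the boundary values $\mf h = 1$ on $\mc E_N^{x_0}$ and $\mf h = 0$ on $\mc E_N^{y_0}$ to produce the $6/N^3$ factor. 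This is algebraically closed and entirely avoids constructing any flow. Both routes are legitimate capacity lower-bound techniques; the paper's is considerably more economical here precisely because the hard part of yours — the flow construction — is sidestepped.

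Concerning that hard part: your proposal contains a genuine gap. For $\kappa \ge 3$ the ``lifting'' of the optimal current on $S$ to a divergence-free zero-range flow on the tube is not a minor technicality, and you correctly identify it as the main obstacle but do not supply the construction. As a point of comparison, the paper's restriction-plus-Cauchy--Schwarz argument needs no such lifting: the inner Dirichlet-on-$S$ bound $(1/2)\sum_{z,w} r(z,w)[\mf h(\xi+\mf o_w)-\mf h(\xi+\mf o_z)]^2 \ge \kappa\,\textup{cap}_X(x,y)\,[\mf h(\xi+\mf o_x)-\mf h(\xi+\mf o_y)]^2$ is applied independently at each edge-label $\xi$, with $\mf h$ fixed. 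There is also a secondary misconception in your proposal: the spectral gap bound (Theorem \ref{t_gap}) controls relaxation to equilibrium, not the energy of a divergence-free flow inside a well, so it is not the right tool for the intra-well extension. In fact no such extension is needed: the Thomson principle only requires the flow to be divergence-free on $\Delta_N = \mc H_N\setminus\mc E_N$, and the tubes $\mc I_N^{x,y}$ already touch the wells (the endpoints of the condensate coordinate range, namely $i=\ell_N-k$ and $i=N-\ell_N$, yield configurations in $\mc E_N^{y_0}$ and $\mc E_N^{x_0}$ respectively), so the flow may simply be absorbed there without further ado. The substantive missing piece is thus the explicit lifted flow inside the tube, which would have to be spelled out before the proposal could be called a proof.
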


\begin{proof}
Write $\mf{h}=h_{\mc{E}_{N}(S_{1}),\,\mc{E}_{N}(S_{2})}^{N}$
(cf. \eqref{03}) the equilibrium potential between $\mc{E}_{N}(S_{1})$
and $\mc{E}_{N}(S_{2})$ so that ${\rm {cap}}_{N}(\mc{E}_{N}(S_{1}),\,\mc{E}_{N}(S_{2}))=\ms{D}_{N}(\mf{h})$.

Let $\mf{o}_{x}\in\mc{H}_{1}$ denote the configuration
with one particle at site $x$ and no particle at the other sites.
We can write the Dirichlet form as
\begin{align*}
\ms{D}_{N}(\mf{h}) & \;=\;\frac{N\,\theta_{N}}{2\,(\log N){}^{\kappa-1}\,Z_{N,\,\kappa}}\,\sum_{\xi\in\mc{H}_{N-1}}\,\sum_{z,\,w\in S}\,\frac{r(z,\,w)}{\mb{a}(\xi)}\,[\,\mf{h}(\xi+\mf{o}_{w})\,-\,\mf{h}(\xi+\mf{o}_{z})\,]^{2}\;.
\end{align*}
Thus, by \eqref{e_I}, we can bound $\ms{D}_{N}(\mf{h})$
from below by
\begin{equation}
\frac{N\,\theta_{N}}{2\,(\log N){}^{\kappa-1}\,Z_{N,\,\kappa}}\sum_{x\in S_{1},\,y\in S_{2}}\,\sum_{\xi\in\mc{I}_{N}^{x,\,y}}\,\sum_{z,\,w\in S}\,\frac{r(z,\,w)}{\mb{a}(\xi)}\,[\,\mf{h}(\xi+\mf{o}_{w})\,-\,\mf{h}(\xi+\mf{o}_{z})\,]^{2}\label{e_93}
\end{equation}

For $x\in S_{1}$ and $y\in S_{2}$, fix a configurations $\xi\in\mc{I}_{N}^{x,\,y}$
such that $\mf{h}(\xi+\mf{o}_{x})\neq\mf{h}(\xi+\mf{o}_{y})$.
Define a function $f:S\rightarrow\bb{R}$ as
\begin{equation*}
f(v)\;=\;\frac{\mf{h}(\xi+\mf{o}_{v})\,-\,\mf{h}(\xi+\mf{o}_{y})}{\mf{h}(\xi+\mf{o}_{x})\,-\,\mf{h}(\xi+\mf{o}_{y})}\;\;\;\;;\;v\in S\;.
\end{equation*}
Since $f(x)=1$ and $f(y)=0$, we can apply the Dirichlet principle
for the underlying random walk to get (cf. \eqref{e_capX} and \eqref{e_DirX})
\begin{align*}
 & \frac{1}{2}\sum_{z,\,w\in S}r(z,\,w)\,[\,\mf{h}(\xi+\mf{o}_{w})\,-\,\mf{h}(\xi+\mf{o}_{z})\,]^{2}\\
\;=\; & \kappa\,D_{X}(f)\,[\,\mf{h}(\xi+\mf{o}_{x})\,-\mf{\,h}(\xi+\mf{o}_{y})\,]^{2}\;\geq\;\kappa\,{\rm cap}_{X}(x,\,y)\,[\,\mf{h}(\xi+\mf{o}_{x})\,-\,\mf{h}(\xi+\mf{o}_{y})\,]^{2}\;.
\end{align*}
The same inequality obviously holds when $\mf{h}(\xi+\mf{o}_{x})=\mf{h}(\xi+\mf{o}_{y})$.
Hence, we can bound the summation at \eqref{e_93} from below by
\begin{equation}
\sum_{x\in S_{1},\,y\in S_{2}}\,\Big[\,\kappa\,{\rm cap}_{X}(x,\,y)\sum_{\xi\in\mc{I}_{N}^{x,\,y}}\frac{1}{\,\mb{a}(\xi)}\,[\,\mf{h}(\xi+\mf{o}_{x})-\mf{h}(\xi+\mf{o}_{y})\,]^{2}\,\Big]\;.\label{e_94}
\end{equation}

Fix $x_{0}\in S_{1}$ and $y_{0}\in S_{2}$ and denote $S_{0}=S\setminus\{x_{0},\,y_{0}\}$.
For each $\zeta\in\mc{H}_{k,\,S_{0}}$ (cf. \eqref{e_Hn}) with
$k\le\ell_{N}/3$, let $G_{\zeta}:\{0,\,\dots,\,N-1-k\}\rightarrow\bb{R}$
be the function defined by $G_{\zeta}(i)=\mf{h}(\xi)$ in which
$\xi$ is the configuration in $\mc{H}_{N}$ given by $\xi_{v}=\zeta_{v}$
for $v\in S_{0}$, $\xi_{x_{0}}=i$, and $\xi_{y_{0}}=N-k-i$. Then,
we can rewrite the second sum of \eqref{e_94} as
\begin{equation*}
\,\sum_{k=0}^{\ell_{N}/3}\,\sum_{\zeta\in\mc{H}_{k,\,S_{0}}}\,\Big[\,\frac{1}{\mb{a}(\zeta)}\,\sum_{i=\ell_{N}-k}^{N-\ell_{N}-1}\,\frac{1}{i\,(N-k-1-i)}\,[\,G_{\zeta}(i+1)\,-\,G_{\zeta}(i)\,]^{2}\,\Big]\;.
\end{equation*}
By the Cauchy-Schwarz inequality, the third sum above is bounded from
below by
\begin{equation}
\Big[\,\sum_{i=\ell_{N}-k}^{N-\ell_{N}-1}i\,(N-k-1-i)\,\Big]^{-1}\,[\,G_{\zeta}(N-\ell_{N})-G_{\zeta}(\ell_{N}-k)\,]^{2}\;.\label{e_95}
\end{equation}
By an elementary estimate
\begin{equation*}
\lim\limits _{N\to\infty}N^{3}\,\Big[\,\sum_{i=\ell_{N}-k}^{N-\ell_{N}-1}i\,(N-k-1-i)\,\Big]^{-1}\;=\;\Big[\,\int_{0}^{1}\,u(1-u)\,du\,\Big]^{-1}\;=\;6
\end{equation*}
and by the fact that $G_{\zeta}(N-\ell_{N})=1$ and $G_{\zeta}(\ell_{N}-k)=0$,
we can assert that \eqref{e_95} is $\frac{6+o_{N}(1)}{N^{3}}$. Summing
up, we have shown so far that
\begin{align*}
\ms{D}_{N}(\mf{h})\geq\;[\,1+o_{N}(1)\,]\, & \frac{6\,\kappa}{Z_{N,\,\kappa}\,}\,\sum_{x\in S_{1},\,y\in S_{2}}{\rm cap}_{X}(x,\,y)\\
 & \times\,\Big[\,\frac{1}{(\log N){}^{\kappa-2}}\,\sum_{k=0}^{\ell_{N}/3}\,\sum_{\zeta\in\mc{H}_{k,\,S_{0}}}\,\frac{1}{\mb{a}(\zeta)}\,\Big]\;.
\end{align*}
Now it suffices to apply Lemma \ref{lem_aux1} and \cite[Proposition 4.1]{LMS}
to complete the proof.
\end{proof}

\subsection{\label{sec92}Upper bound}

Now we deduce the upper bound of the capacity.
\begin{prop}
\label{p_upper}Fix a non-empty subset $S_{1}\subsetneq S$, and let
$S_{2}=S\setminus S_{1}$. Then
\begin{equation*}
\limsup\limits _{N\rightarrow\infty}{\rm {cap}}_{N}(\mc{E}_{N}(S_{1}),\,\mc{E}_{N}(S_{2}))\;\le\;6\sum_{x\in S_{1},\,y\in S_{2}}{\rm cap}_{X}(x,\,y)\;.
\end{equation*}
\end{prop}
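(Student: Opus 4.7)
The strategy is to apply the Dirichlet principle
$$
\textup{cap}_N(\mc E_N(S_1),\mc E_N(S_2)) \;\le\; \ms D_N(F)
$$
for any function $F:\mc H_N\to\bb R$ with $F\equiv 1$ on $\mc E_N(S_1)$ and $F\equiv 0$ on $\mc E_N(S_2)$, and to construct a test function $F_N$ whose Dirichlet form saturates the lower bound obtained in Proposition \ref{p_lower}. The construction should reverse the two key inequalities used there: the Cauchy--Schwarz bound on the one-dimensional slice of each tube $\mc I_N^{x,y}$, and the Dirichlet-principle bound replacing $D_X(f)$ by $\textup{cap}_X(x,y)$.

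The test function I would build has the form $F_N(\xi+\mf o_w)=\Phi_\xi(w)$, with $\Phi_\xi:S\to\bb R$ specified as follows. When $\xi\in\mc I_N^{x,y}$ for (necessarily unique) $x\in S_1$, $y\in S_2$, let $\zeta\in\mc H_{k,S_0}$ be the "background" part (with $S_0=S\setminus\{x,y\}$ and $k=N-1-\xi_x-\xi_y\le \ell_N/3$), and set
$$
\Phi_\xi(w) \;=\; \psi_k(\xi_x)\, h^X_{\{x\},\{y\}}(w) \;+\; \bigl[\,1-\psi_k(\xi_x)\,\bigr]\,\mathbf 1\{w=\text{"at well}\ x\text{"}\},
$$
where $\psi_k$ is the one-dimensional discrete harmonic function with weights $1/(i(N-k-1-i))$ and boundary values $\psi_k(\ell_N-k)=0$, $\psi_k(N-\ell_N)=1$ (so that Cauchy--Schwarz in the lower bound becomes an asymptotic equality). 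On the wells $\mc E_N^z$, $z\in S$, set $F_N\equiv \mathbf 1_{S_1}(z)$; on the remaining part of $\Delta_N$ (configurations not of the form $\xi+\mf o_w$ with $\xi$ in any tube), extend $F_N$ by harmonic extension or by a convenient interpolation of the values on the tubes and wells that are $O(1)$-bounded.

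For the Dirichlet form, decompose the sum over edges into three regions. Jumps internal to a well $\mc E_N^z$ contribute $0$ since $F_N$ is constant on each well. Jumps internal to the tubes form the main term: on each tube $\mc I_N^{x,y}$, the choice of $\psi_k$ makes the Cauchy--Schwarz step of the lower bound sharp, giving the factor $6/N^3$ after the elementary Riemann-sum computation, while the choice of $\Phi_\xi$ as $h^X_{\{x\},\{y\}}$ in the $w$-variable makes $\tfrac12\sum_{z,w}r(z,w)[\Phi_\xi(w)-\Phi_\xi(z)]^2 = \kappa\, D_X(h^X_{\{x\},\{y\}})=\kappa\,\textup{cap}_X(x,y)$. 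Summing over the tubes, and using that the combinatorial count $(\log N)^{-(\kappa-2)}\sum_{k\le \ell_N/3}\sum_{\zeta\in\mc H_{k,S_0}}1/\mb a(\zeta)$ converges (as in Lemma \ref{lem_aux1}) and that $Z_{N,\kappa}\to\kappa$, reproduces exactly the constant $6\sum_{x\in S_1,\,y\in S_2}\textup{cap}_X(x,y)$. The third region consists of edges straddling the interface between a well and a tube or between two neighboring tubes, as well as jumps inside the exceptional part of $\Delta_N$.

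The main obstacle is the control of this third region. The interfaces between $\mc E_N^z$ and $\mc I_N^{x,y}$, and configurations in $\Delta_N$ outside all tubes, must contribute only $o(1)$ to $\ms D_N(F_N)$. Here one uses that $F_N$ and its jump differences are uniformly bounded, together with the fact that these regions contain only a sub-polynomial fraction of the mass of $\mu_N$--the same counting lemmas that enter the proof of the lower bound show that $\mu_N(\Delta_N\setminus \bigcup_{x,y}\{\xi+\mf o_w:\xi\in\mc I_N^{x,y}\})=o(\theta_N^{-1})$ after multiplication by the rates. If needed, the extension of $F_N$ on this exceptional region can be chosen to be the unique harmonic extension given the boundary values already defined, which minimises its local Dirichlet contribution; this makes the control of the third region a purely quantitative counting estimate and matches the upper and lower bounds.
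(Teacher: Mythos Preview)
Your proposal has two substantial gaps.

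\textbf{Well-definedness.} You set $F_N(\xi+\mf o_w)=\Phi_\xi(w)$, but a configuration $\eta\in\mc H_N$ admits several decompositions $\eta=\xi+\mf o_w$ (one for each site $w$ with $\eta_w\ge 1$), and your formula assigns different values to them. For instance, on a tube $\mc I_N^{x,y}$, writing $\eta=(\eta-\mf o_x)+\mf o_x=(\eta-\mf o_y)+\mf o_y$ gives $\Phi_{\eta-\mf o_x}(x)$ versus $\Phi_{\eta-\mf o_y}(y)$; with your choice $\Phi_\xi(w)=\psi_k(\xi_x)\,h^X_{x,y}(w)+[\cdots]$ these are $\psi_k(\eta_x-1)\cdot 1+[\cdots]$ and $\psi_k(\eta_x)\cdot 0+[\cdots]$, which do not agree. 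The lower bound only needed the \emph{differences} $\mf h(\xi+\mf o_w)-\mf h(\xi+\mf o_z)$ for fixed $\xi$, so no consistency was required there; reversing that argument to build an actual function on $\mc H_N$ is an overdetermined problem.

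\textbf{The exceptional region.} Your claim that $\mu_N$ of the region outside wells and tubes is $o(\theta_N^{-1})$ is false by a factor of $N^2$. The computation behind Lemma~\ref{lem_upper2} (see \eqref{e_97} and what follows) shows that the $\mu_N$-mass of $\{\eta:\epsilon N\le \eta_x\le (1-\epsilon)N\}\setminus\ms I_N^x$ is of order $(\log\log N)^2/(\log N)^2$. With a test function whose nearest-neighbour differences are merely $O(1)$, the Dirichlet contribution from this region would be of order $\theta_N\cdot(\log\log N)^2/(\log N)^2=N^2(\log\log N)^2/\log N$, which diverges. A harmonic extension does minimise the local contribution, but it cannot beat the cost of interpolating between boundary values that jump by $O(1)$ across a region carrying this much mass.

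The paper's construction (imported from \cite{BL3}) avoids both issues by building the test function out of \emph{smooth} functions of $\eta/N$: the map $H(t)=6\int_0^{\phi(t)}u(1-u)\,du$ gives the one-dimensional profile, the dependence on $w$ is encoded through the cumulative sums $\sum_{i\le j}\eta_{z_i}/N$ with sites ordered by $h^X_{x,y}$, and a smooth partition of unity $\Theta_y^x$ glues the tubes together. This makes $F_{S_1}$ a genuine function of $\eta$ with differences uniformly $O(1/N)$, which is exactly what is needed in Lemma~\ref{lem_upper2} to show the off-tube contribution is $o(1)$. The extra factor $(1+\epsilon^{1/2})^3$ is then removed by letting $\epsilon\to 0$.
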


We remark at this moment that Proposition \ref{p131} is an immediate
consequence of Propositions \ref{p_lower} and \ref{p_upper}.

We fix $S_{1}$ and $S_{2}$ throughout this subsection. We prove
this proposition in an identical manner as \cite[Section 5]{BL3}
at which a test function is constructed and then the upper bound is
established via the Dirichlet principle. This test function can be
constructed as a suitable approximation for the equilibrium potential
$\mf{h}_{\mc{E}_{N}(S_{1}),\,\mc{E}_{N}(S_{2})}$.
We repeat the construction of test function in the exactly same way
as \cite[Section 5]{BL3} here.

The set $\ms{D}\subset\bb{R}^{S}$ is defined as
\begin{equation*}
\ms{D}\;=\;\Big\{\,u\,\in\,[\,0,\,1\,]^{S}\,:\,\sum_{x\in S}u_{x}\,=\,1\,\Big\}\;,
\end{equation*}
and for each $x,y\in S$ and $\epsilon\in(0,\,1/6)$ we define
\begin{align*}
 & \ms{D}_{\epsilon}^{x}\;=\;\{\,u\,\in\,\ms{D}\,:\,u_{x}\,>\,1-\epsilon\,\}\;,\\
 & \ms{L}_{\epsilon}^{x,\,y}\;=\;\{\,u\,\in\,\ms{D}\,:\,u_{x}+u_{y}\,\geq\,1-\epsilon\,\}\;.
\end{align*}
Fix from now on $x\in S$ and $\epsilon\in(0,\,1/6)$. Let $\phi:[0,1]\rightarrow[0,1]$
be a smooth bijective function such that $\phi(t)+\phi(1-t)=1$ for
all $t\in[0,1]$ and $\phi\equiv0$ on $[0,3\epsilon]$. Then, define
${H:[0,1]\rightarrow[0,1]}$ as ${\color{blue}{H(t)=6\int_{0}^{\phi(t)}\,u\,(1-u)\,du}}$.
For each $y\in S\setminus\{x\}$, we write $h_{x,\,y}=h^X_{\{x\},\,\{y\}}$ (cf. \eqref{epX} the equilibrium potential between $x$ and $y$ with respect to the
underlying random walk. Then, enumerate the elements of $S$ by $x=z_{1},\,z_{2},\,\dots,\,z_{\kappa}=y$
in such a manner that
\begin{equation*}
1\,=\,h_{x,\,y}(z_{1})\,\ge\,h_{x,\,y}(z_{2})\,\ge\,\cdots\,\ge\,h_{x,\,y}(z_{\kappa})\,=\,0\;.
\end{equation*}
Then, for each $y\in S\setminus\{x\}$, define $F_{xy}^{j}:\mc{H}_{N}\rightarrow\bb{R},1\leq j\leq\kappa-1$,
as $F_{xy}^{1}(\eta)=H(\eta_{x}/N)$ and
\begin{equation*}
F_{xy}^{j}(\eta)\;=\;H\,\Big(\,\frac{\eta_{x}}{N}\,+\,\min\,\Big\{\,\frac{1}{N}\sum_{i=1}^{j}\eta_{z_{j}},\,\epsilon\,\Big\}\,\Big)\,\;\;\;\;\text{for }j\in[2,\,\kappa-1]\;.
\end{equation*}
Define $F_{xy}:\mc{H}_{N}\rightarrow\bb{R}$ as
\begin{equation*}
F_{xy}(\eta)\;=\;\sum_{j=1}^{\kappa-1}\,[\,h_{x,\,y}(z_{j})\,-\,h_{x,\,y}(z_{j+1})\,]\,F_{xy}^{j}(\eta)\;.
\end{equation*}
For $y\neq x$, write $\ms{K}_{y}^{x}=\ms{L}_{\epsilon}^{x,\,y}\setminus\ms{D}_{3\epsilon}^{x}$.
We can observe that $\kappa-1$ sets $\ms{K}_{y}^{x}$, $y\neq x$,
are pairwise disjoint compact subsets of $\ms{D}$. Therefore,
there exists a smooth partition of unity $\Theta_{y}^{x}:\ms{D}\rightarrow[0,1]$,
$y\neq x$, in the sense that,
\begin{equation*}
\sum_{y\in S\setminus\{x\}}\Theta_{y}^{x}\,\equiv\,1\;\;\text{on }\ms{D}\;\;\;\;\text{and\;\;\;\;}\Theta_{y}^{x}\,\equiv\,1\;\;\text{on }\ms{K}_{y}^{x}\text{ for }y\,\in\,S\setminus\{x\}\;.
\end{equation*}
With the constructions above, we define $F_{x}:\mc{H}_{N}\rightarrow\bb{R}$
as
\begin{equation*}
F_{x}(\eta)\;=\;\sum_{y\in S\setminus\{x\}}\Theta_{y}^{x}(\eta/N)\,F_{xy}(\eta)\;.
\end{equation*}
Finally, the test function $F_{S_{1}}:\mc{H}_{N}\rightarrow\bb{R}$
is defined by
\begin{equation*}
F_{S_{1}}(\eta)\;=\;\sum_{x\in S_{1}}F_{x}(\eta)\;.
\end{equation*}
The main property of this test function is the following lemma.

\begin{lem}
\label{lem_upper1} We have that
\begin{equation*}
\limsup\limits _{N\rightarrow\infty}\ms{D}_{N}(F_{S_{1}})\leq6\,(\,1+\epsilon^{1/2}\,)^{3}\,\sum_{x\in S_{1},\,y\in S_{2}}{\rm cap}_{X}(x,\,y)\;.
\end{equation*}
\end{lem}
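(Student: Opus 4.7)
The plan, following the strategy of \cite[Section 5]{BL3}, is to exploit the fact that $F_{S_1}$ has been designed to be constant on each well $\mc E_N^x$ (equal to $1$ for $x \in S_1$ and $0$ for $x\in S_2$, by the choice of $H$ vanishing on $[0, 3\epsilon]$ and of the partition of unity), so that the Dirichlet form $\ms D_N(F_{S_1})$ concentrates on the transition tubes $\mc I_N^{x,y}$. The first step is to expand $\ms D_N(F_{S_1})$ and split the sum over edges $(\eta, \sigma^{u,v}\eta)$ according to whether $\eta/N$ lies inside some $\ms D_{3\epsilon}^x$ (where $F_{S_1}$ is locally constant so the contribution vanishes), inside some $\ms K_y^x = \ms L_\epsilon^{x,y}\setminus\ms D_{3\epsilon}^x$ (the tube region, where $\Theta_y^x \equiv 1$ and $F_x \equiv F_{xy}$), or in a transition zone of the partition of unity. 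By Cauchy–Schwarz applied to the decomposition $F_{S_1} = \sum_{x\in S_1} F_x$ and $F_x = \sum_y \Theta_y^x F_{xy}$, I expect an estimate of the form
\begin{equation*}
\ms D_N(F_{S_1}) \;\le\; (1+\epsilon^{1/2})^2 \sum_{x\in S_1} \sum_{y\in S\setminus\{x\}} \ms D_N\big(F_{xy}\, \chi_{\{\eta/N \in \ms K_y^x\}}\big) \;+\; \text{negligible terms},
\end{equation*}
where the error terms arise from derivatives of $\Theta_y^x$ and can be made $o(1)$ as $N\to\infty$ uniformly in $\epsilon$, using that $\|\nabla\Theta_y^x\|_\infty$ is bounded and that the measure of the transition zones is controlled by $\mu_N(\Delta_N) \to 0$.

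Second, for each pair $(x,y)$ with $x\in S_1$, $y\in S_2$, I would reduce the Dirichlet form on the tube $\mc I_N^{x,y}$ to a one-dimensional problem in $i=\eta_x$. On this tube, $\sum_{z\notin\{x,y\}}\eta_z \le \ell_N/3$, so the min in the definition of $F_{xy}^j$ equals $\tfrac{1}{N}\sum_{i=2}^j\eta_{z_i}$, which is $O(\ell_N/N)$. For a jump $\eta\to\sigma^{x,y}\eta$, a first-order Taylor expansion gives
\begin{equation*}
F_{xy}(\sigma^{x,y}\eta)-F_{xy}(\eta) \;=\; -\frac{1}{N}\Big(\sum_{j=1}^{\kappa-1}[h_{x,y}(z_j)-h_{x,y}(z_{j+1})]\Big)H'(\eta_x/N)\,+\,O(1/N^2)\,+\,O(\ell_N/N^2),
\end{equation*}
and the telescoping sum collapses to $h_{x,y}(x)-h_{x,y}(y)=1$. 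For jumps between $x$ (or $y$) and a third site $z$, $F_{xy}$ changes by $O(\|H'\|_\infty/N)$ as well, but the number of particles available for such jumps is at most $\ell_N/3$, so the corresponding contribution is a factor $\ell_N/N$ smaller and negligible. For jumps $\eta\notin\mc I_N^{x,y}$, either $F_{xy}(\sigma^{u,v}\eta)=F_{xy}(\eta)$ (inside a well, where $H$ is constant) or the configuration is off the skeleton and contributes $o(1)$ by the partition-function asymptotics of \cite[Section 4]{LMS}.

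Third, summing the remaining contribution, with $\mu_N(\eta) \sim \tfrac{N}{Z_{N,\kappa}(\log N)^{\kappa-1}}\tfrac{1}{i(N-k-i)\mb a(\zeta)}$ for $\eta\in\mc I_N^{x,y}$ with $\eta_x=i$, $\eta_{z}=\zeta_z$ for $z\notin\{x,y\}$, $k=\sum_{z\notin\{x,y\}}\zeta_z$, and with $\theta_N g(\eta_x)r(x,y) \sim \theta_N r(x,y)$, the tube contribution becomes
\begin{equation*}
\frac{\theta_N r(x,y)}{Z_{N,\kappa}(\log N)^{\kappa-1}}\sum_{k=0}^{\ell_N/3}\sum_{\zeta\in\mc H_{k,S_0}}\frac{1}{\mb a(\zeta)}\sum_{i}\frac{N}{i(N-k-i)}\frac{[H'(i/N)]^2}{N^2}\,(1+o(1)).
\end{equation*}
A Riemann-sum approximation gives $\tfrac{1}{N}\sum_i \tfrac{N}{i(N-k-i)}[H'(i/N)/N]^2 \simeq \tfrac{1}{N^2}\int_0^1 \tfrac{[H'(t)]^2}{t(1-t)}\,dt$. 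Using $H'(t)=6\phi'(t)\phi(t)(1-\phi(t))$ and the symmetry $\phi(t)+\phi(1-t)=1$, the integral can be made arbitrarily close to $6\int_0^1 u(1-u)\,du\cdot \text{const}=6$ by the choice of $\phi$ near the identity outside $[0,3\epsilon]\cup[1-3\epsilon,1]$; this accounts for one factor of $(1+\epsilon^{1/2})$ in the final bound. Combining with the asymptotic $Z_{N,\kappa}\to\kappa$ and \cite[Proposition~4.1]{LMS} (which gives the $\kappa$-dependent partition-function normalization), the sum over $k,\zeta$ yields a factor cancelling with the prefactor, producing $6\,\text{cap}_X(x,y)$ per tube. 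Summing over $x\in S_1,\,y\in S_2$ and absorbing all error factors in $(1+\epsilon^{1/2})^3$ gives the claimed bound.

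The main obstacle is Step 2: rigorously showing that the approximation $F_{xy}(\sigma^{x,y}\eta)-F_{xy}(\eta)\approx -H'(\eta_x/N)/N$ (and the negligibility of the third-site jumps and of the partition-of-unity transition zones) can be made uniform in a way that the resulting error is $o(1)$ relative to the leading term $O(1)$. This is delicate because the leading term itself already requires dividing by $\theta_N/N^2 = \log N$, so quadratic errors must beat the $\log N$ blow-up; the construction of $H$ via $\phi$ (in particular its flatness near $0$ and $1$) is precisely what enforces this.
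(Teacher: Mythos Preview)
Your outline follows the same architecture as \cite[Section~5]{BL3}, which is exactly what the paper invokes (the paper gives no self-contained proof of this lemma but defers to \cite[(5.11), (5.12), Proposition~5.3]{BL3}). The decomposition into wells, tubes and a remainder, the reduction of the tube contribution to a one-dimensional Riemann sum in $i=\eta_x$ via the expansion of $F_{xy}(\sigma^{x,y}\eta)-F_{xy}(\eta)\approx -H'(\eta_x/N)/N$, and the telescoping $\sum_j[h_{x,y}(z_j)-h_{x,y}(z_{j+1})]=1$ are all correct and match the reference.

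There is, however, a genuine gap in your control of the off-tube contribution, and it is precisely the one point at which the critical case $\alpha=1$ departs from \cite{BL3}. You dismiss this contribution by appealing to ``$\mu_N(\Delta_N)\to 0$'' and, in your final paragraph, to the flatness of $\phi$ near $0$ and $1$. Neither suffices. The flatness of $\phi$ only kills the contribution from the wells $\ms D_{3\epsilon}^x$. On the region $\{\epsilon N\le\eta_x\le(1-\epsilon)N\}\setminus\ms I_N^x$ the function $F_x$ still has increments of order $C_\epsilon/N$, so each edge contributes $O(\theta_N\,\mu_N(\eta)/N^2)$ to $\ms D_N(F_x)$. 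Since $\theta_N=N^2\log N$, the total off-tube contribution is $O\big((\log N)\,\mu_N(\{\epsilon N\le\eta_x\le(1-\epsilon)N\}\setminus\ms I_N^x)\big)$, and the qualitative statement $\mu_N(\Delta_N)\to 0$ is not enough to make this vanish: you need this $\mu_N$-mass to be $o(1/\log N)$. This is exactly the content of Lemma~\ref{lem_upper2}, which the paper supplies as the replacement for \cite[Lemma~5.2]{BL3}; its proof rests on the inductive estimate of Lemma~\ref{lem_aux2} and gives the bound $C_\epsilon(\log\log N)^2/(Z_{N,\kappa}N^2(\log N)^2)$ for the unspeeded Dirichlet sum, hence $O((\log\log N)^2/\log N)=o(1)$ after multiplying by $\theta_N$.

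A minor point: you work with the lower-bound tubes $\mc I_N^{x,y}\subset\mc H_{N-1}$, whereas the upper bound uses the larger sets $\ms I_N^{xy}=\{\eta\in\mc H_N:\eta_x+\eta_y\ge N-\ell_N\}$; this does not affect the structure of the argument but does affect which ``off-tube'' region you must control, and it is the latter that Lemma~\ref{lem_upper2} handles. Once you insert Lemma~\ref{lem_upper2} at the right place, the remainder of your sketch is in line with \cite{BL3}.
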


We omit the proof of this lemma since it is identical to those of
\cite[(5.11), (5.12), Proposition 5.3]{BL3}. Even if they proved
these results for $\alpha>1$, the same argument also holds for $\alpha=1$.
The only different part is \cite[Lemma 5.2]{BL3} which is used in
the proof of \cite[(5.11)]{BL3}. We substitute this lemma by the
following lemma.
\begin{lem}
\label{lem_upper2}For $x,\,y\in S$, define
\begin{equation*}
\ms{I}_{N}^{xy}\;=\;\{\,\eta\,\in\,\mc{H}_{N}\,:\,\eta_{x}+\eta_{y}\,\geq\,N-\ell_{N}\,\}\;.
\end{equation*}
and let $\ms{I}_{N}^{x}=\cup_{y\in S\setminus\{x\}}\ms{I}_{N}^{xy}$.
Then, for all $x\in S$ and $\epsilon\in(0,\,1/6)$, there exists
a constant $C_{\epsilon}>0$ depending only on $\epsilon>0$ such
that
\begin{equation}
\frac{1}{2}\!\sum_{\eta\in\mc{H}_{N}\setminus\ms{I}_{N}^{x}}\!\sum_{z,w\in S}\mu_{N}(\eta)\,\mb{g}(\eta_{z})\,r(z,w)\,[\,F_{x}(\sigma^{zw}\eta)-F_{x}(\eta)\,]^{2}\;\leq\;\frac{C_{\epsilon}\,(\log\log N)^{2}}{Z_{N,\,\kappa}\,N^{2}\,(\log N)^{2}}\;.\label{e_96}
\end{equation}
\end{lem}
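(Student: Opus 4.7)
The plan is to combine a pointwise Taylor bound for $F_x(\sigma^{zw}\eta) - F_x(\eta)$ with a careful estimate of $\mu_N$ on the relevant subregion of $\mc H_N \setminus \ms I_N^x$. The case $\kappa = 2$ is immediate: then $\eta_x + \eta_y = N > N - \ell_N$ for the unique $y \ne x$ and large $N$, so $\mc H_N \setminus \ms I_N^x$ is empty and the bound is trivial. Assume $\kappa \ge 3$ henceforth.

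First I would derive the uniform pointwise bound $|F_x(\sigma^{zw}\eta) - F_x(\eta)| \le C_\epsilon / N$. Since $F_x$ is built from the smooth partition of unity $(\Theta_y^x)$ on $\ms D$ and the smooth function $H$ composed with normalized partial sums of $\eta$, while the swap $\sigma^{zw}$ changes $\eta/N$ by $O(1/N)$, a first-order Taylor expansion yields the bound, with $C_\epsilon$ depending on the Lipschitz constants of $H$ and the $\Theta_y^x$. Moreover, by the construction of $\phi$, which vanishes on $[0,3\epsilon]$ and equals $1$ on $[1-3\epsilon,1]$, the derivative $H'$ is supported in $[3\epsilon,1-3\epsilon]$. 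Consequently, $F_{xy}^j$ is locally constant whenever $\eta_x/N + \min\{S_j/N,\epsilon\}$ lies outside this interval; in particular, one needs $\eta_x/N \in (2\epsilon,\,1-3\epsilon)$ for the discrete gradient of the $F_{xy}$-part to be nonzero. An analogous support restriction holds for $\nabla \Theta_y^x$, which is concentrated in an $O(1/N)$-neighborhood of the interfaces between the disjoint sets $\ms K_{y'}^x$. Hence the left-hand side of \eqref{e_96} reduces, up to a negligible boundary contribution, to the same sum restricted to the transition band $\eta_x \in (2\epsilon N,\,(1-3\epsilon)N)$.

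Next I would estimate the constrained measure sum. On $\mc H_N \setminus \ms I_N^x$, the constraint $\eta_y \le N - \ell_N - \eta_x - 1$ holds for every $y \ne x$. Fixing $\eta_x = i$ and letting $M = N - i$, the inner sum
\begin{equation*}
\Sigma_M \;=\; \sum_{\substack{\zeta\in\mc H_{M,\,S\setminus\{x\}}\\ \max\zeta \,\le\, M-\ell_N-1}} \frac{1}{\mb a(\zeta)}
\end{equation*}
should satisfy $\Sigma_M \le C\,(\log\log N)^{\kappa-2}/M$. This follows by iterating the one-variable estimate $\sum_{k=\ell_N+1}^{M-\ell_N-1} 1/[k(M-k)] = O(\log(M/\ell_N)/M) = O(\log\log N/M)$ across the $\kappa-1$ sites of $S\setminus\{x\}$, using $\ell_N = N/\log N$ so that $\log(M/\ell_N) = \log\log N$ for $M \asymp N$. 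Combining this with $\mb g(\eta_z) r(z,w) \le C$, the representation $\mu_N(\eta) = [N/(Z_{N,\kappa}(\log N)^{\kappa-1})]/\mb a(\eta)$, the pointwise bound on the squared difference, and the elementary estimate $\sum_{i} 1/[i(N-i)] \le C_\epsilon/N$ (with $i$ ranging in the transition band) yields
\begin{equation*}
\text{LHS of \eqref{e_96}} \;\le\; \frac{C_\epsilon}{N^2}\cdot\frac{N}{Z_{N,\kappa}(\log N)^{\kappa-1}}\cdot\frac{(\log\log N)^{\kappa-2}}{N} \;\le\; \frac{C_\epsilon\,(\log\log N)^2}{Z_{N,\kappa}\,N^2\,(\log N)^2},
\end{equation*}
the last inequality using $(\log\log N)^{\kappa-4} \le (\log N)^{\kappa-3}$ for $\kappa \ge 3$ and $N$ sufficiently large.

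The main obstacle is the delicate bookkeeping in the second step: rigorously tracking the $(\log\log N)^{\kappa-2}$ factor in $\Sigma_M$ via the iterated one-variable estimate, and handling the contribution of $\nabla\Theta_y^x$ near the interfaces between the sets $\ms K_{y'}^x$, which lie in the transition zones between different metastable tubes. The proof parallels \cite[Lemma 5.2]{BL3}, but the key new ingredient for the critical case $\alpha = 1$ is the replacement, under the constraint $\eta \notin \ms I_N^x$, of each $\log N$ in the partition-function asymptotics of \cite[Proposition 4.1]{LMS} by $\log(N/\ell_N) = \log\log N$: the constraint removes the configurations concentrated on two sites which contributed the leading logarithm in the unconstrained sum.
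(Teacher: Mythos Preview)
Your overall strategy matches the paper's: reduce to the band $\eta_x\in[\epsilon N,(1-\epsilon)N]$ via the support properties of $H$, use the uniform Lipschitz bound $|F_x(\sigma^{zw}\eta)-F_x(\eta)|\le C_\epsilon/N$, and then estimate the resulting measure sum $\sum_\eta \mu_N(\eta)$ by splitting off the $\eta_x$-coordinate. The gap is in your bound on $\Sigma_M$.

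The claim $\Sigma_M\le C(\log\log N)^{\kappa-2}/M$ is false for $\kappa\ge 4$, and the ``iteration'' you sketch does not produce it. The point is that after you sum out the first coordinate $\zeta_1=k_1$, the remaining $\kappa-2$ coordinates have total $M-k_1$ but the constraint on them is still $\zeta_j\le M-\ell_N$. Whenever $k_1\ge \ell_N$ (which is the bulk of the range), this constraint is \emph{vacuous}, since $\zeta_j\le M-k_1\le M-\ell_N$ automatically; the inner sum is then the full unconstrained partition sum, of order $(\log(M-k_1))^{\kappa-3}/(M-k_1)$, not $(\log\log N)^{\kappa-3}/(M-k_1)$. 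Concretely, for $\kappa=4$ one checks directly that the contribution from $s=\zeta_2+\zeta_3\in[\ell_N,M/2]$ already gives
\[
\Sigma_M\;\gtrsim\;\frac{1}{M}\sum_{s=\ell_N}^{M/2}\frac{\log s}{s}\;\asymp\;\frac{(\log M)^2-(\log\ell_N)^2}{M}\;\asymp\;\frac{\log N\cdot\log\log N}{M}\;\gg\;\frac{(\log\log N)^2}{M}\,.
\]
So only one factor of $\log\log N$ is gained from the constraint; the remaining $\kappa-3$ logarithms stay as $\log N$.

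The correct estimate is $\Sigma_M\le C(\log N)^{\kappa-3}\log\log N/M$, and this is exactly what the paper isolates as the auxiliary Lemma~\ref{lem_aux2}, proved by induction on $\kappa$ (the induction is needed because the constraint has to be renormalised at each step, passing from $\ell_N$ to $\ell_{N-i}^{(a+1)}$). Plugging this correct bound into your final display gives
\[
\text{LHS of \eqref{e_96}}\;\le\;\frac{C_\epsilon}{N^2}\cdot\frac{N}{Z_{N,\kappa}(\log N)^{\kappa-1}}\cdot\frac{(\log N)^{\kappa-3}\log\log N}{N}\cdot C_\epsilon\;=\;\frac{C_\epsilon\,\log\log N}{Z_{N,\kappa}\,N^2\,(\log N)^2}\,,
\]
which is even stronger than the stated bound. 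So your skeleton is right; what is missing is the inductive lemma replacing your iteration, and the recognition that only a single $\log\log N$ is gained from the $\ms I_N^x$-constraint. (Minor aside: the gradients $\nabla\Theta_y^x$ are not supported in an $O(1/N)$-neighbourhood of interfaces; they are merely bounded smooth functions. This does not affect the $C_\epsilon/N$ Lipschitz bound, which is all you need.)
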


\begin{proof}
Basically, we perform the same proof as \cite[Lemma 5.2]{BL3}, but
the fact that $\alpha=1$ makes it slightly different.

Since $F_{x}(\eta)=1$ if $\eta_{x}\geq(1-3\epsilon)N$ and $F_{x}(\eta)=0$
if $\eta_{x}\leq2\epsilon N$, we can restrict the first sum of \eqref{e_96}
to configurations $\eta\in\mc{H}_{N}\setminus\ms{I}_{N}^{x}$
satisfying $\epsilon N\leq\eta_{x}\leq(1-\epsilon)N$. Since there
exists $C_{\epsilon}>0$ such that
\begin{equation*}
\max\limits _{\eta\in\mc{H}_{N}}\,|\,F_{x}(\sigma^{z,\,w}\eta)-F_{x}(\eta)\,|\;\leq\;\frac{C_{\epsilon}}{N}\;,
\end{equation*}
we can bound the left-hand side of \eqref{e_96} from above by
\begin{equation}
\label{e_97}
\begin{aligned}
 & \frac{C_{\epsilon}}{N^{2}}\sum_{\substack{\eta\in\mc{H}_{N}\setminus\ms{I}_{N}^{x}:\\
\epsilon N\leq\eta_{x}\leq(1-\epsilon)N
}
}\mu_{N}(\eta) \\
 & \qquad\leq\;\frac{C_{\epsilon}\,N}{Z_{N,\,\kappa}\,(\log N)^{\kappa-1}\,N^{2}}\sum_{i=\epsilon N}^{(1-\epsilon)N}\!\sum_{\substack{\eta:\eta_{x}=i\\
\max\{\eta_{y}:y\neq x\}\leq N-i-\ell_{N}
}
}\frac{1}{\mb{a}(\eta)} \\
 & \qquad=\;\frac{C_{\epsilon}}{Z_{N,\,\kappa}\,(\log N)^{\kappa-1}\,N}\sum_{i=\epsilon N}^{(1-\epsilon)N}\sum_{\xi\in\mc{H}_{N-i,S\setminus\{x\}}(\ell_{N})}\frac{1}{i}\,\frac{1}{\mb{a}(\xi)}\;,
\end{aligned}
\end{equation}

where, for $S_{0}\subset S$, we define
\begin{equation}
\mc{H}_{N,\,S_{0}}(\ell)\;=\;\{\eta\,\in\,\mc{H}_{N,\,S_{0}}\,:\,\eta_{x}\leq
N-\ell\text{ for all }x\in S_{0}\}\;.
\label{e_Hns}
\end{equation}
Since $\mc{H}_{N-i,\,S\setminus\{x\}}(\ell_{N})$ is a subset
of $\mc{H}_{N-i,\,S\setminus\{x\}}(\ell_{N-i})$, we can further
bound \eqref{e_97} from above by
\begin{align*}
 & \frac{C_{\epsilon}}{Z_{N,S}\log(N)^{\kappa-1}N}\sum_{i=\epsilon N}^{(1-\epsilon)N}\frac{1}{i}\sum_{\xi\in\mc{H}_{N-i,S\setminus\{x\}}(\ell_{N-i})}\frac{1}{\mb{a}(\xi)}\\
 & \leq\frac{C_{\epsilon}\log(\log(N))}{Z_{N,S}N^{2}\log(N)}\left\{ \frac{N}{\log(N)^{\kappa-1}}\sum_{i=\epsilon N}^{(1-\epsilon)N}\frac{\log(N-i)^{\kappa-2}}{i(N-i)}\right\} \\
 & \leq\frac{C_{\epsilon}\log(\log(N))}{Z_{N,S}N^{2}\log(N)}\left\{ \frac{N}{\log(N)}\sum_{i=N/\log(N)}^{N-N/\log(N)}\frac{1}{i(N-i)}\right\} \leq\frac{C_{\epsilon}\log(\log(N))^{2}}{Z_{N,S}N^{2}\log(N)^{2}}
\end{align*}
where the first and last inequality follows from Lemma \ref{lem_aux2}.
\end{proof}
Now we are ready to prove the upper bound.
\begin{proof}[Proof of Proposition \ref{p_upper}]
It is immediate that $F_{S_{1}}$ satisfies
\begin{equation*}
F_{S_{1}}(\eta)\;=\;\begin{cases}
1 & \text{if }\eta\in\mc{E}_{N}(S_{1})\\
0 & \text{if }\eta\in\mc{E}_{N}(S_{2})\;.
\end{cases}
\end{equation*}
Hence, by Lemma \ref{lem_upper1} and the Dirichlet principle, we
get
\begin{equation*}
\limsup\limits _{N\rightarrow\infty}{\rm {cap}}_{N}(\mc{E}_{N}(S_{1}),\,\mc{E}_{N}(S_{2}))\;\leq\;(\,1+\epsilon^{1/2}\,)^{3}\,\frac{6}{\kappa}\,\sum_{x\in S_{1},\,y\in S_{2}}{\rm cap}_{X}(x,\,y)\;.
\end{equation*}
Letting $\epsilon\rightarrow0$ completes the proof.
\end{proof}

\subsection{Auxiliary Lemmata\label{sec93}}

In this subsection we prove two technical lemmata. The first one is
used in the proof of Proposition \ref{p_lower}.
\begin{lem}
\label{lem_aux1}For all $c>0$ and $x,\,y\in S$, we have
\begin{equation}
\lim\limits _{N\rightarrow\infty}\frac{1}{(\log N)^{\kappa}}\,\sum_{n=0}^{c\ell_{N}}\,\sum_{\xi\in\mc{H}_{n,\,S}}\frac{1}{\mb{a}(\xi)}\;=\;1\;.\label{e_99}
\end{equation}
\end{lem}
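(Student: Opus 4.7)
The plan is to rewrite the inner sum through the partition function. By definition \eqref{e_partition}, for every $n\ge 1$,
$$
\sum_{\xi\in\mc{H}_{n,S}}\frac{1}{\mb{a}(\xi)} \;=\; Z_{n,\kappa}\,\frac{(\log n)^{\kappa-1}}{n}\;,
$$
with the convention $(\log 1)^{\kappa-1}=0$, so that the left-hand side at $n=1$ is simply $\kappa$ and comes from the $\kappa$ one-particle configurations. Together with the $n=0$ contribution (which equals $1$), this transforms the claim into
$$
\lim_{N\to\infty}\frac{1}{(\log N)^{\kappa}}\sum_{n=2}^{c\ell_N} Z_{n,\kappa}\,\frac{(\log n)^{\kappa-1}}{n}\;=\;1\;,
$$
since the missing terms for $n\in\{0,1\}$ are bounded and vanish after dividing by $(\log N)^\kappa$.

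Next I would invoke the convergence $Z_{n,\kappa}\to\kappa$ from \cite[Proposition 4.1]{LMS} together with the (easily verified) uniform bound $\sup_{n\ge 1}Z_{n,\kappa}<\infty$. Given $\epsilon>0$, choose $n_0$ so that $|Z_{n,\kappa}-\kappa|\le\epsilon$ for all $n\ge n_0$. Split the sum at $n_0$: the portion $\sum_{n=2}^{n_0-1} Z_{n,\kappa}(\log n)^{\kappa-1}/n$ is $O(1)$, and for the tail write
$$
\sum_{n=n_0}^{c\ell_N}Z_{n,\kappa}\,\frac{(\log n)^{\kappa-1}}{n}\;=\;\kappa\sum_{n=n_0}^{c\ell_N}\frac{(\log n)^{\kappa-1}}{n}\;+\;\sum_{n=n_0}^{c\ell_N}\bigl(Z_{n,\kappa}-\kappa\bigr)\frac{(\log n)^{\kappa-1}}{n}\;.
$$
The remainder is bounded in absolute value by $\epsilon\sum_{n=n_0}^{c\ell_N}(\log n)^{\kappa-1}/n$.

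The only remaining calculation is the integral asymptotics
$$
\sum_{n=n_0}^{M}\frac{(\log n)^{\kappa-1}}{n}\;=\;\int_{n_0}^{M}\frac{(\log t)^{\kappa-1}}{t}\,dt\;+\;O\!\left(\frac{(\log M)^{\kappa-1}}{n_0}\right)\;=\;\frac{(\log M)^{\kappa}}{\kappa}\;+\;O\!\bigl((\log M)^{\kappa-1}\bigr)\;,
$$
applied with $M=c\ell_N=cN/\log N$. Since $\log M=\log N-\log\log N+\log c$, one has $(\log M)^{\kappa}=(\log N)^{\kappa}\bigl(1+O((\log\log N)/\log N)\bigr)$. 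Dividing by $(\log N)^{\kappa}$ and taking $N\to\infty$ yields, for every $\epsilon>0$,
$$
\limsup_{N\to\infty}\left|\,\frac{1}{(\log N)^{\kappa}}\sum_{n=0}^{c\ell_N}\sum_{\xi\in\mc{H}_{n,S}}\frac{1}{\mb{a}(\xi)}\;-\;1\,\right|\;\le\;\frac{\epsilon}{\kappa}\;,
$$
and letting $\epsilon\to 0$ concludes the proof. There is no serious obstacle here: the only subtle point is to keep careful track of the transition from $\ell_N=N/\log N$ to $\log N$ inside the $\kappa$-th power, and to verify that the boundary terms at small $n$ (where either $\log n$ is zero or the $Z_{n,\kappa}$ asymptotics has not yet kicked in) are absorbed in the normalization.
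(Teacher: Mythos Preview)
Your proof is correct and follows essentially the same route as the paper: rewrite the inner sum via the partition function $Z_{n,\kappa}$, use $Z_{n,\kappa}\to\kappa$ from \cite[Proposition 4.1]{LMS}, and evaluate the asymptotics of $\sum_n (\log n)^{\kappa-1}/n$ together with $\log\ell_N/\log N\to 1$. The only cosmetic difference is that the paper splits the sum at the growing cutoff $n=\log N$ while you split at a fixed $n_0=n_0(\epsilon)$; both are standard Ces\`aro-type arguments and neither gains anything over the other here.
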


\begin{proof}
By the definition \eqref{e_partition} of the partition function,
we can rewrite \eqref{e_99} as
\begin{equation*}
\lim_{N\rightarrow\infty}\frac{1}{(\log N)^{\kappa}}\,\sum_{n=1}^{c\ell_{N}}\frac{(\log n)^{\kappa-1}}{n}\,Z_{n,\,\kappa}\;=\;1
\end{equation*}
as the case $n=0$ is negligible. Since $(Z_{n,\,\kappa})_{n\in\bb{N}}$
is bounded by \cite[Proposition 4.1]{LMS}, and since
\begin{equation*}
\frac{1}{(\log\log N)^{\kappa}}\sum_{n=1}^{\log N}\frac{(\log n)^{\kappa-1}}{n}\;\simeq\;\frac{1}{\kappa}\;,
\end{equation*}
it suffices to prove that
\begin{equation*}
\lim_{N\rightarrow\infty}\frac{1}{(\log N)^{\kappa}}\,\sum_{n=\log N}^{c\ell_{N}}\frac{(\log n)^{\kappa-1}}{n}\,Z_{n,\,\kappa}\;=\;1\;.
\end{equation*}
This follows from \cite[Proposition 4.1]{LMS}, the elementary fact
that
\begin{equation*}
\sum_{n=\log N}^{c\ell_{N}}\frac{(\log n)^{\kappa-1}}{n}\;\simeq\;\frac{(\log(c\ell_{N}))^{\kappa}-(\log\log N)^{\kappa}}{\kappa}
\end{equation*}
and that $\lim_{N\rightarrow\infty}\log\ell_{N}/\log N=1$.
\end{proof}
The next one is used in the proof of Proposition \ref{p_upper}. Recall
the definition of $\mc{H}_{N,\,S_{0}}(\ell)$ from \eqref{e_Hns}.
We simply write $\mc{H}_{N}(\ell)=\mc{H}_{N,\,S}(\ell)$.
\begin{lem}
\label{lem_aux2}Define $\ell_{N}^{(a)}=N/(\log N)^{a}$. For $\kappa\geq2$
and $a>0$, there exists a constant $C=C_{\kappa,\,a}>0$ such that
\begin{equation}
\frac{N}{(\log N)^{\kappa-1}}\,\sum_{\eta\in\mc{H}_{N}(\ell_{N}^{(a)})}\frac{1}{\mb{a}(\eta)}\;\leq\;C\,\frac{\log\log N}{\log N}\;.\label{e_910}
\end{equation}
\end{lem}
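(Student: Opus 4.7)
The plan is to extract a single coordinate carrying a macroscopic fraction of the particles, and then apply the sharp asymptotic from \cite[Proposition 4.1]{LMS} to the sum over the remaining $\kappa-1$ coordinates. Every $\eta\in\mc H_N$ has at least one coordinate with $\eta_{x_0}\ge N/\kappa$, so using symmetry (fix $x_0$, multiply by $\kappa$) and noting that for $i\ge N/\kappa$ and $N$ large the constraint $\xi_y\le N-\ell_N^{(a)}$ on the remaining coordinates is automatic from $\xi_y\le N-i\le (\kappa-1)N/\kappa$, one obtains
\begin{equation*}
\sum_{\eta\in\mc H_N(\ell_N^{(a)})}\frac{1}{\mb a(\eta)}\;\le\;\kappa\sum_{i=\lceil N/\kappa\rceil}^{N-\ell_N^{(a)}}\frac{1}{a(i)}\sum_{\xi\in\mc H_{N-i,\,S\setminus\{x_0\}}}\frac{1}{\mb a(\xi)}\;.
\end{equation*}

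Next I would apply \cite[Proposition 4.1]{LMS} to the inner sum. With $m:=N-i\in[\ell_N^{(a)},\,(\kappa-1)N/\kappa]$ and the fact that $Z_{m,\,\kappa-1}\to\kappa-1$, one has $Z_{m,\,\kappa-1}\le C$ uniformly on this range since $\ell_N^{(a)}\to\infty$, and hence
\begin{equation*}
\sum_{\xi\in\mc H_{m,\,\kappa-1}}\frac{1}{\mb a(\xi)}\;=\;\frac{Z_{m,\,\kappa-1}\,(\log m)^{\kappa-2}}{m}\;\le\;\frac{C\,(\log N)^{\kappa-2}}{N-i}\;.
\end{equation*}
For $\kappa=2$ this reduces to $1/(N-i)$, fitting the same form with the convention $Z_{m,1}=1$. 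Substituting back, using $a(i)=i$ on the relevant range, and using the partial-fraction identity $\tfrac{1}{i(N-i)}=\tfrac{1}{N}\bigl(\tfrac{1}{i}+\tfrac{1}{N-i}\bigr)$, one gets
\begin{equation*}
\sum_{\eta\in\mc H_N(\ell_N^{(a)})}\frac{1}{\mb a(\eta)}\;\le\;C'\,(\log N)^{\kappa-2}\cdot\frac{1}{N}\sum_{i=\lceil N/\kappa\rceil}^{N-\ell_N^{(a)}}\Bigl(\frac{1}{i}+\frac{1}{N-i}\Bigr)\;.
\end{equation*}
The first piece is $O(1)$ (the sum has bounded logarithmic ratio), while after the change of variables $j=N-i$ the second piece equals $\sum_{j=\ell_N^{(a)}}^{(\kappa-1)N/\kappa}1/j \,=\, \log(N/\ell_N^{(a)})+O(1) \,=\, a\log\log N + O(1)$. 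The estimate \eqref{e_910} is obtained by multiplying by $N/(\log N)^{\kappa-1}$.

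The only delicate point is the uniform control of $Z_{m,\,\kappa-1}$ over $m\in[\ell_N^{(a)},(\kappa-1)N/\kappa]$; however since this range consists only of integers tending to infinity and $Z_{\,\cdot\,,\kappa-1}$ converges, the required boundedness is automatic. No induction on $\kappa$ is needed: the estimate reduces in a single step to an elementary harmonic-type sum. For small $N$ where the pigeonhole reduction is vacuous, the inequality holds by absorbing everything into the constant $C_{\kappa,a}$.
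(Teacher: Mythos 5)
Your proof is correct, and it takes a genuinely different route from the paper's. The paper argues by induction on $\kappa$: the base case $\kappa=2$ is handled directly, and the inductive step splits the sum over $n=\eta_{x_0}$ into a small-$n$ regime (handled by the hypothesis with $a$ replaced by $a+1$, via the inclusion $\mc A_{N,n}^{x,a}\subset\mc H_{N-n,S\setminus\{x\}}(\ell_{N-n}^{(a+1)})$) and a large-$n$ regime (handled by Proposition~4.1 of \cite{LMS}). You dispense with the induction entirely: by pigeonhole, some coordinate carries at least $N/\kappa$ particles, and once you condition on that coordinate the constraint $\xi_y\le N-\ell_N^{(a)}$ on the remaining ones is automatic, so the inner sum is computed directly from \cite[Proposition~4.1]{LMS}. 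The partial-fraction step and the two harmonic estimates are routine, and the $Z_{m,\kappa-1}$ bound is indeed uniform since $m\ge\ell_N^{(a)}\to\infty$. What your approach buys is a single-pass, non-recursive argument that also makes the provenance of the $\log\log N$ factor transparent (it is precisely the length $\log(N/\ell_N^{(a)})=a\log\log N$ of the harmonic tail in $j=N-i$). The paper's induction, on the other hand, is structured to reuse the statement for smaller $\kappa$ with a shifted exponent $a+1$, which is natural if one has already set up the family of sets $\mc H_{N,S_0}(\ell)$ but is not strictly necessary. One minor point worth making explicit if this were to be written up: the constraint on the distinguished coordinate itself (namely $i\le N-\ell_N^{(a)}$) must be retained, which you do correctly by stopping the $i$-sum at $N-\ell_N^{(a)}$; this is exactly what produces the $\log\log N$ gain over the trivial bound.
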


\begin{proof}
We proceed by induction. For $\kappa=2$, we can rewrite and bound
the left-hand side of \eqref{e_910} as
\begin{align*}
\frac{N}{\log N}\sum_{n=\ell_{N}^{(a)}}^{N-\ell_{N}^{(a)}}\frac{1}{n\,(N-n)} & \;\leq\;\frac{C}{\log N}\,\sum_{n=\ell_{N}^{(a)}}^{N/2}\frac{1}{n}\;\leq\;C\,\frac{\log\log N}{\log N}\;.
\end{align*}

Now we assume the result holds for all $\kappa\in[2,\,\kappa_{0}-1]$
and for all $a>0$. Then, we will show that \eqref{e_910} holds for
$\kappa=\kappa_{0}$ and $a>0$. Define, for $n\le\ell_{N}^{(a+1)}$,
\begin{equation*}
\mc{A}_{N,\,n}^{x,\,a}\;=\;\big\{\,\xi\,\in\,\mc{H}_{N-n,\,S\setminus\{x\}}\,:\,\xi_{y}\,\leq\,N-\ell_{N}^{(a)}\text{ for all }y\,\in\,S\setminus\{x\}\,\big\}\;.
\end{equation*}
We first claim that
\begin{equation}
\mc{A}_{N,\,n}^{x,\,a}\;\subset\;\mc{H}_{N-n,\,S\setminus\{x\}}(\,\ell_{N-n}^{(a+1)}\,)\;.\label{e_911}
\end{equation}
To verify this, it suffices to check that,
\begin{equation*}
(N-n)\,-\,\frac{N-n}{(\log(N-n))^{a+1}}\;\geq\;N\,-\,\frac{N}{(\log N)^{a}}\;\cdot
\end{equation*}
This follows for $n\le\ell_{N}^{(a+1)}$ from the inequality
\begin{equation*}
n\,+\,\frac{N-n}{(\log(N-n))^{a+1}}\;\leq\;\frac{2N}{(\log N)^{a+1}}\;\leq\;\frac{N}{(\log N)^{a}}\;.
\end{equation*}
Observe that we can write
\begin{align}
 & \frac{N}{(\log N)^{\kappa_{0}-1}}\,\sum_{\eta\in\mc{H}_{N}(\ell_{N}^{(a)})}\frac{1}{\mb{a}(\eta)}\nonumber \\
=\; & \Big\{\!\sum_{n=0}^{\ell_{N}^{(a+1)}}\,+\!\sum_{n=\ell_{N}^{(a+1)}+1}^{N-\ell_{N}^{(a)}}\,\Big\}\!\Big[\,\frac{N}{\mb{a}(n)\,(\log N)^{\kappa_{0}-1}}\!\sum_{\xi\in\mc{A}_{N,\,n}^{x,\,a}}\!\frac{1}{\mb{a}(\xi)\,}\,\Big]\;.\label{e_912}
\end{align}
By the induction hypothesis and \eqref{e_911}, the first summation
above is bounded by
\begin{align*}
 & \frac{N}{(\log N)^{\kappa_{0}-1}}\,\sum_{n=0}^{\ell_{N}^{(a+1)}}\,\Big[\,\frac{1}{\mb{a}(n)}\,\sum_{\xi\in\mc{H}_{N-n,S\setminus\{x\}}(\ell_{N-n}^{(a+1)})}\,\frac{1}{\mb{a}(\xi)}\,\Big]\\
\leq\; & \frac{CN}{(\log N)^{\kappa_{0}-1}}\,\sum_{n=0}^{\ell_{N}^{(a+1)}}\frac{1}{\mb{a}(n)}\,\frac{(\log(N-n))^{\kappa_{0}-2}}{N-n}\,\frac{\log\log(N-n)}{\log(N-n)}\\
\leq\; & C\frac{\log\log N}{(\log N)^{2}}\,\sum_{n=0}^{\ell_{N}^{(a+1)}}\frac{1}{\mb{a}(n)}\;\le\;C\,\frac{\log\log N}{\log N}\;.
\end{align*}
On the other hand, for the second summation of \eqref{e_912}, we
can enlarge $\mc{A}_{N,\,n}^{x,\,a}$ to $\mc{H}_{N-n,\,S\setminus\{x\}}$
so that the summation is bounded by
\begin{equation*}
C\,\frac{N}{(\log N)^{\kappa_{0}-1}}\!\sum_{n=\ell_{N}^{(a+1)}}^{N-\ell_{N}^{(a+1)}}\,\frac{Z_{N,\,\kappa_{0}-1}\,(\log(N-n))^{\kappa_{0}-2}}{(N-n)}\,\frac{1}{\mb{a}(n)}\;\leq\;C\,\frac{\log\log N}{\log N}
\end{equation*}
by \cite[Proposition 4.1]{LMS}.
\end{proof}

\noindent \textbf{Acknowledgment.} C. L. has been partially supported
by FAPERJ CNE E-26/201.207/2014, by CNPq Bolsa de Produtividade em
Pesquisa PQ 303538/2014-7, by ANR-15-CE40-0020-01 LSD of the French
National Research Agency. D. M. has received financial support from
CNPq during the development of this paper. I. S. was supported by
the Samsung Science and Technology Foundation
(Project Number SSTF-BA1901-03) and  National Research Foundation(NRF) of Korea grant funded by the Korea government(MSIT) (No. 2022R1F1A106366811 and 2022R1A5A600084012).

\bibliographystyle{authordate1}

\end{document}